\numberwithin{equation}{section}
\title[Gauge Theory for Spectral Triples and the Unbounded Kasparov Product]{Gauge Theory for Spectral Triples and the \\[2pt] Unbounded Kasparov Product}
\author{Simon Brain, Bram Mesland and Walter D. van Suijlekom} 
\address{Institute for Mathematics, Astrophysics and Particle Physics, Faculty of Science, Radboud University Nijmegen, Heyendaalseweg 135, 6525 AJ Nijmegen, The Netherlands} 
\address{Mathematics Institute,  Zeeman Building, University of Warwick, Coventry CV4 7AL, United Kingdom}
\address{Institute for Mathematics, Astrophysics and Particle Physics, Faculty of Science, Radboud University Nijmegen, Heyendaalseweg 135, 6525 AJ Nijmegen, The Netherlands} 
\email{s.brain@math.ru.nl, b.mesland@warwick.ac.uk, waltervs@math.ru.nl}
\date{\today}
\newtheorem{thm}{Theorem}[section]
\newtheorem{cor}[thm]{Corollary}
\newtheorem{lem}[thm]{Lemma}
\newtheorem{prop}[thm]{Proposition}
\newtheorem{rem}[thm]{Remark}
\theoremstyle{definition}
\newtheorem{defn}[thm]{Definition}
\newtheorem{example}[thm]{Example}
\def\black{\color{black}}
\newcommand{\End}{\textup{End}}
\newcommand{\D}{\textup{d}}
\newcommand{\n}{\nabla}
\newcommand{\M}{\textup{M}}
\newcommand{\Dom}{\mathfrak{Dom}}
\newcommand{\lrh}{\leftrightharpoons}
\newcommand{\im}{\mathfrak{Im}}
\renewcommand{\d}{\textup{d}}
\newcommand{\KK}{\textup{KK}}
\newcommand{\LL}{\textup{L}}
\DeclareFontFamily{OT1}{pzc}{}
\DeclareFontShape{OT1}{pzc}{m}{it}{<-> s * [1.20] pzcmi7t}{}
\DeclareMathAlphabet{\mathpzc}{OT1}{pzc}{m}{it}
\newcommand{\A}{\mathcal{A}}
\newcommand{\B}{\mathcal{B}}
\newcommand{\E}{\mathcal{E}}
\newcommand{\h}{\mathcal{H}}
\newcommand{\ZZ}{\mathbb{Z}}
\newcommand{\C}{\mathbb{C}}
\newcommand{\RR}{\mathbb{R}}
\newcommand{\R}{\mathbb{R}}
\newcommand{\TT}{\mathbb{T}}
\newcommand{\hotimes}{\tilde{\otimes}}
\newcommand{\Lip}{\textnormal{Lip}}
\def\A{\mathcal{A}}
\DeclareMathOperator{\ad}{ad}
\def\Aut{\mathrm{Aut}}
\def\bT{\mathbb{T}}
\def\B{\mathcal{B}}
\def\BB{\mathbb{B}}
\def\bar{\overline}
\def\C{\mathbb{C}}
\def\cC{\mathcal{C}}
\def\D{\mathfrak{D}}
\def\dirac{\partial\mkern-9.5mu/\,}
\def\E{\mathcal{E}}
\def\EE{\mathpzc{E}}
\def\End{\textup{End}}
\def\F{\mathcal{F}}
\def\G{\mathcal{G}}
\def\GG{\mathpzc{G}}
\def\H{\mathcal{H}}
\def\Hom{\mathrm{Hom}}
\def\half{\tfrac{1}{2}}
\def\into{\hookrightarrow}
\def\K{\mathbb{K}}
\def\L{\mathcal{L}}
\def\bN{\mathbb{N}}
\def\R{\mathbb{R}}
\def\cS{\mathcal{S}}
\def\bS{\mathbb{S}}
\DeclareMathOperator{\SU}{SU}
\DeclareMathOperator{\Spin}{Spin}
\def\T{\mathbb{T}_\theta}
\def\tilde{\widetilde}
\DeclareMathOperator{\U}{U}
\def\Z{\mathbb{Z}}
\newbox\ncintdbox \newbox\ncinttbox
\begin{document}
\bibliographystyle{plainmath}

\begin{abstract}
We explore factorizations of noncommutative Riemannian spin geometries over commutative base manifolds in unbounded KK-theory. After setting up the general formalism of unbounded KK-theory and improving upon the construction of internal products, we arrive at a natural bundle-theoretic formulation of gauge theories arising from spectral triples. We find that the unitary group of a given noncommutative spectral triple arises as the group of endomorphisms of a certain Hilbert bundle;  the inner fluctuations split in terms of connections on, and endomorphisms of, this Hilbert bundle. Moreover, we introduce an extended gauge group of unitary endomorphisms and a corresponding notion of gauge fields. We work out several examples in full detail, to wit Yang--Mills theory, the noncommutative torus and the $\theta$-deformed Hopf fibration over the two-sphere.
\end{abstract}

\maketitle

\tableofcontents

\parskip 0.5ex
\section{Introduction}

In this paper we use the internal product of cycles in unbounded KK-theory to introduce a new framework for studying gauge theories in noncommutative geometry. In the current literature one finds some equally appealing but mutually incompatible ways of formulating the notion of a gauge group associated to a noncommutative algebra. Herein we extend and then employ the formulation of the unbounded Kasparov product to study fibrations and factorizations of manifolds in noncommutative geometry, yielding a fresh approach to gauge theory which provides a unifying framework for some of the various existing constructions.

Gauge theories arise very naturally in noncommutative geometry: rather notably they arise from spectral triples \cite{C96}. In fact, one of the main features of a noncommutative $*$-algebra is that it possesses a non-trivial group of inner automorphisms coming from the group of unitary elements of the algebra. In many situations and applications, this group of inner automorphisms is identified with the gauge group of the spectral triple. Moreover, the so-called inner fluctuations ---again a purely noncommutative concept--- are recognized as gauge fields, upon which this gauge group acts naturally.

In a more conventional approach, gauge theories are described by vector bundles and connections thereon, with the gauge group appearing as the group of unitary endomorphisms of the space of sections of the bundle. This approach has also been extended to the noncommutative world (see \cite{C94} and references therein, also \cite{Lnd97}). 
The present paper is an attempt to see where these approaches can be unified, in the setting of noncommutative gauge theories on a {\em commutative} base. That is to say, we explore the question of whether (or when) the unitary gauge group of an algebra can be realized as endomorphisms of a vector bundle and whether the inner fluctuations arise as connections thereon. 

We do this by factorizing noncommutative spin manifolds, i.e. spectral triples, into two pieces consisting of a commutative `horizontal' base manifold and a part which describes the `vertical' noncommutative geometry. The vertical part is described by the space of sections of a certain Hilbert bundle over the commutative base, upon which the unitary gauge group acts as bundle endomorphisms. Moreover, the inner fluctuations of the original spectral triple decompose into  connections on this Hilbert bundle and endomorphisms thereof.

Thus, the setting of the paper is that of spectral triples, the basic objects of Connes' noncommutative geometry \cite{C96}. Such a spectral triple, denoted $(A,\mathcal{H},D)$ consists of a $C^{*}$-algebra represented on a Hilbert space $\mathcal{H}$, together with a self-adjoint operator $D$ with compact resolvent. Moreover, the $*$-subalgebra $\mathcal{A}\subset A$ consisting of elements $a\in A$ for which $a\left(\Dom (D)\right)\subseteq \Dom (D)$ and $[D,a]$ is bounded on $\Dom (D)$ is required to be dense in $A$. The prototype of a spectral triple is obtained by representing the $*$-algebra of continuous functions on a compact spin manifold $M$ upon the Hilbert space of  $L^{2}$-sections of its spinor bundle, on which the Dirac operator acts with all of the desired properties. Over twenty years of active research on spectral triples has yielded a heap of noncommutative examples of such structures, coming from dynamics, quantum groups and various deformation techniques.

The main idea explored in this paper is that of fibering an arbitrary spectral triple $(A,\mathcal{H}, D)$ over a second, commutative spectral triple $(B,\mathcal{H}_{0},D_{0})$, that is to say over a classical Riemannian spin manifold. The notion of fibration we will be using is that of a \emph{correspondence}, adopting the point of view of \cite{CS} that bounded KK-cycles are generalizations of algebraic correspondences. In the setting of unbounded KK-theory, a correspondence is defined in \cite{Mes09b} as a triple $(\mathcal{E},S,\nabla)$ consisting of: an $\mathcal{A}$-$\mathcal{B}$-bimodule $\mathcal{E}$ that is an orthogonal summand of the countably generated free module $\mathcal{H}_{\mathcal{B}}$;  a self-adjoint regular operator $S$ on $\mathcal{E}$;  a connection $\nabla:\mathcal{E}\rightarrow\mathcal{E}\hotimes_{\mathcal{B}}\Omega^{1}(\mathcal{B})$ on this module. The module $\mathcal{E}$ admits a natural closure as a $C^{*}$-module $\mathpzc{E}$ over $B$, such that $(\mathpzc{E},S)$ is an unbounded cycle for Kasparov's KK-theory \cite{Kas} in the sense of Baaj and Julg \cite{baaj-julg}, and hence represents a correspondence in the sense of \cite{CS}. The datum $(\mathcal{E},S,\nabla)$ is required to relate the two spectral triples, in the sense that $(A,\mathcal{H},D)$ is unitarily equivalent to 
\[(\mathcal{E},S,\nabla)\otimes_{B}(B,\mathcal{H}_{0},D_{0}):=(A,\mathcal{E}\hotimes_{\mathcal{B}}\mathcal{H}_{0}, S\otimes 1+1\otimes_{\nabla}D_{0}),\] and thus in particular represents the Kasparov product of $(\mathpzc{E},S)$ and $(B,\mathcal{H}_{0},D_{0})$. In \cite{Mes09b} this was shown to be the case when $[\nabla,S]$ is bounded; a similiar construction, which is simpler and more general, was presented in \cite{KaLe}. In particular this allows for the use of connections for which $[\nabla,S](S\pm i)^{-1}$ is bounded.\black

Since the $C^*$-algebra $B$ is commutative, by Gel'fand duality it is isomorphic to $C(X)$ for some (compact) topological Hausdorff space $X$. Moreover, by \cite{Tak79} the $C^*$-module $\mathpzc{E}$ consists of continuous sections of a Hilbert bundle over $X$, upon which the algebra $A$ acts by endomorphisms. As a consequence, the unitary group $\U(A)$ acts by unitary endomorphisms on this bundle, thus putting the inner automorphism group of the algebra $A$ in the right place, as a subgroup of the group of unitary bundle endomorphisms. Moreover, the inner fluctuations of $(A,\mathcal H,D)$ can be split into connections on the Hilbert bundle and endomorphisms thereof. Summarizing, this puts into place all ingredients necessary for doing gauge theory on $X$. 

In order to deal with the examples in this paper, we enlarge the class of modules $\mathcal{E}$ used to construct unbounded Kasparov products. The class of modules used in \cite{KaLe,Mes09b} is not closed under arbitrary countable direct sums. This inconvenience is due to the fact that the module $\mathcal{H}_{\mathcal{B}}$ admits projections of arbitrarily large norm. The theta-deformed Hopf fibration treated in the last section of the present paper illustrates this phenomenon, which is present in full force already in the classical case.  It is proved in \cite{Kaad} that, for $\B$ commutative, the bounded projections in $\H_{\B}$ correspond to bundles of bounded geometry, a class which indeed does not contain the bundle appearing in the Hopf fibration.

Indeed, the Peter-Weyl theorem for $\SU(2)$ tells us that module $\E$ necessary for expressing the Hopf fibration $\bS^{3}\rightarrow\bS^{2}$ as a Kasparov product is isomorphic to a direct sum over $n\in\Z$ of rank one modules $\mathcal{L}_{n}$. As a $C^{*}$-module this yields a well defined direct sum, yet the projections $p_{n}$ defining the bundles $\mathcal{L}_{n}$ have the property that their differential norms with respect to the Dirac operator on $\bS^{2}$ grow increasingly with $n$. To accommodate this phenomenon, we develop a theory of unbounded projections on the free module $\H_\mathcal{B}$. We show that the range of such projections define certain closed submodules of $\mathcal{H}_\B$ 
and that such modules admit connections and regular operators. We then proceed to show that the unbounded Kasparov product can be constructed in this setting in very much the same way as in \cite{KaLe, Mes09b}.

In this way we obtain an explicit description of the noncommutative Hopf fibration in terms of an unbounded KK-product, thus going beyond the projectivity studied in \cite{DS10,DSZ13}. A similar construction, in the context of modular spectral triples, appeared in \cite{KS11} to construct Dirac operators on a total space carrying a circle action (namely, on $\SU_q(2)$). There the base space is the standard noncommutative Podle\'s sphere, whereas here we aim for a commutative base. 

Our proposal for gauge theories has a potential application in the study of instanton moduli spaces. Namely, in \cite{BL09a,bvs,B13} additional gauge parameters were introduced to describe the moduli space of instantons on a certain noncommutative four-sphere $\bS^4_\theta$. We expect that these extra gauge parameters can be accommodated inside the group of unitary endomorphisms of the corresponding Hilbert bundle described above.


\bigskip

The paper is organized as follows. In Section \ref{sect:KK} we set up the operator formalism of unbounded KK-theory and describe the (unbounded) internal Kasparov product. We then extend this to the setting of so-called {\em Lipschitz} modules, as called for by the examples that we discuss later. 

In Section \ref{sect:gauge-KK} we explain how a factorization of a noncommutative spin manifold gives rise to a natural (commutative) geometric setup and describe how the inner automorphisms and inner fluctuations can be described in terms of (vertical) Hilbert bundle data.

In the remaining part of the paper we illustrate our factorization in unbounded KK-theory by means of three classes of examples. Namely, in Section \ref{sect:YM} we recall \cite{CC,BoeS10} how ordinary Yang--Mills theory can be described by a spectral triple and explain how this is naturally formulated using a KK-factorization. Section \ref{sect:nc-torus} contains another example of our construction, namely the factorization of the noncommutative torus as a circle bundle over a base given by a circle \cite{Mes10}. Most importantly, Section \ref{sect:nc-hopf} contains a topologically non-trivial example, which is the noncommutative Hopf fibration of the theta-deformed three-sphere $\bS^3_\theta$ over the classical two-sphere $\mathbb S^2$.

\subsubsection*{Notation and terminology} 
In this paper, all $C^*$-algebras are assumed to be unital; we denote them by $A,B$, {etc.}, with densely contained $*$-algebras denoted by $\A,\B$, respectively. For the general theory of $C^*$-modules over a $C^*$-algebra, we refer for example to \cite{Lance}. We write $\mathpzc{E}\leftrightharpoons B$ to denote a right $C^*$-module $\mathpzc{E}$ over the $C^*$-algebra $B$. 

We assume some familiarity with the representation theory of $C^*$-algebras on $C^*$-modules, writing $\End^*_B(\mathpzc{E})$ for the $C^*$-algebra of adjointable operators on a right $C^*$-module $\mathpzc{E}\leftrightharpoons B$. If $\mathpzc{E}$ is equipped with a representation $\pi:A\to\End^*_B(\mathpzc{E})$, we write $A\to \mathpzc{E}\leftrightharpoons B$ and say that $\mathpzc{E}$ is a \textbf{Hilbert} $A$-$B$-\textbf{bimodule}. The algebra of compact operators on $\mathpzc{E}\lrh B$ is denoted by $\K_B(\mathpzc{E})$. In the special case where $B=\C$ and so $\EE=\H$ is simply a Hilbert space, we write $\K(\H)$ for the compact operators and $\BB(\H)$ for the $C^*$-algebra of bounded operators on $\H$.

By a {\bf grading} of a vector space $V$ we shall always mean a $\ZZ_2$-grading, i.e. a self-adjoint linear operator $\Gamma:V\to V$ such that $\Gamma^2=1_V$. By a representation of a graded $C^*$-algebra $A$ on a graded Hilbert module  $\mathpzc{E}\leftrightharpoons B$, we shall always mean a graded representation. We describe (possibly unbounded) linear operators on $\mathpzc{E}\leftrightharpoons B$ using the notation $D:\Dom(D)\to\mathpzc{E}$, where $\Dom(D)\subseteq \mathpzc{E}$ denotes the domain of $D$, a dense linear subspace of $\mathpzc{E}$.

Similarly, we assume some familiarity with operator algebras and their representation theory \cite{db:book}. Recall that a linear map $\phi:\A\to\B$ between operator spaces $\A$ and $\B$ is said to be \textbf{completely bounded} if its extension $\phi\otimes 1:\A\otimes\K(\H)\to\B\otimes\K(\H)$ is bounded.  We shall often abbreviate our terminology by describing maps such as completely bounded isomorphisms, completely bounded isometries etc. as ``cb-isomorphisms'', ``cb-isometries'' and so on. In this paper, the correct tensor product of operator spaces $\A$, $\B$ is given by the (graded) {\bf Haagerup tensor product}, which we denote by $\A\,\tilde\otimes\,\B$. Here it is understood that the tensor product is over the complex numbers $\C$. Its balanced variant, for a right operator module $\mathcal{E}$ and a left operator module $\mathcal{F}$ over an operator algebra $\mathcal{B}$ is denoted $\mathcal{E}\hotimes_{\mathcal{B}}\mathcal{F}$.

For each $i=0,1,2,\ldots,$ we write $\C_i$ for the $i$th {\bf Clifford algebra}, {i.e.} the graded complex unital $*$-algebra generated by the even unit $\gamma^0$ and the odd elements $\gamma^k$, $k=1,2,\ldots,i$, modulo the relations $\gamma^k\gamma^l+\gamma^l\gamma^k=2\delta^{kl}$ and $(\gamma^k){}^*=\gamma^k$. As a complex vector space, the Clifford algebra $\C_i$ is $2^i$-dimensional.

\subsubsection*{Acknowledgments} SB was supported by fellowships granted through FNR (Luxembourg) and INdAM (Italy),  each cofunded under the Marie Curie Actions of the European Commission FP7-COFUND. BM was supported by the EPSRC grant EP/J006580/2.
We would like to thank Nigel Higson, Adam Rennie and Magnus Goffeng for several useful discussions and Alain Connes for some helpful suggestions. We are grateful to an anonymous referee for noticing a serious gap in an earlier version of the paper.

\section{Operator Modules and Unbounded KK-Theory}
\label{sect:KK}

As already mentioned, this article is concerned with the study of spectral triples in noncommutative geometry and the extent to which these define a gauge theory \cite{C94,C96,CC}. \black
Our investigation will for the most part be facilitated by the unbounded version of Kasparov's bivariant KK-theory for $C^*$-algebras. In this section we explain the main definitions and techniques that we shall need later in the paper.

\subsection{Noncommutative spin geometries}
Recall that a noncommutative spin manifold (in the sense of Connes) is defined in terms of a spectral triple, which in turn is defined as follows.

\begin{defn}\label{de:st}
A  {\bf spectral triple} $(A,\H,D)$  consists of:
\begin{enumerate}[\hspace{0.5cm} (i)]
\item a unital $C^*$-algebra $A$, a Hilbert space $\H$ and a faithful representation $\pi:A\to\BB(\H)$ of $A$ on $\H$;
\item an unbounded self-adjoint linear operator $D:\Dom(D)\to \H$ with compact resolvent,
\end{enumerate}
such that the $*$-subalgebra 
\[
\A:=\{a\in A: [D,\pi(a)]\textnormal{ extends to an element of }\mathbb{B}(\H)\}
\] 
is dense in $A$.
Such a triple is said to be {\bf even} if it is graded, i.e. if it is equipped with a self-adjoint operator $\Gamma:\H\to\H$ with $\Gamma^2=1_\H$ such that $\Gamma D+D\Gamma=0$ and $\Gamma \pi(a)=\pi(a)\Gamma$ for all $a\in A$. Otherwise the spectral triple is said to be {\bf odd}. With $0<m<\infty$, the triple $(A,\H,D)$ is said to be {\bf $m^+$-summable} if the operator $(1+D^2)^{-1/2}$ is in the Dixmier ideal $\mathcal{L}^{m^+}(\H)$.
\end{defn}

The latter definition is motivated by the following classical example, which we will need throughout the present paper. Let $M$ be a closed Riemannian spin manifold and let $A=C(M)$ be the unital $C^*$-algebra of continuous complex-valued functions on $M$. Write $\H=L^2(M,\cS)$ for the Hilbert space of square-integrable sections of the spinor bundle $\cS$ and denote by $\dirac_M$ the Dirac operator on $M$, which we recall is defined to be the composition
$$
\dirac_M:\Dom(\dirac_M)\to\H,\qquad \dirac_M:=c\circ \n_\cS,
$$
where $c$ denotes ordinary Clifford multiplication and $\n_\cS$ is the canonical spin connection on $\cS$ for the Riemannian metric. Then $A$ is faithfully represented upon $\H$ by pointwise multiplication and $\A=\Lip(M)$ is nothing other that the pre-$C^*$-algebra of Lipschitz functions on $M$.

\begin{defn}\label{de:can-st}
The datum $(C(M),L^2(M,\cS),\dirac_M)$ is called the {\bf canonical spectral triple} over the closed Riemannian spin manifold $M$.
\end{defn}

The canonical spectral triple is even if and only if the underlying manifold $M$ is even-dimensional, with the grading of the Hilbert space $\H$ induced by the corresponding $\ZZ_2$-grading of the spinor bundle $\cS$. If $M$ is an $m$-dimensional manifold then the corresponding spectral triple can be shown to be $m^+$-summable.

Crucially, every spectral triple admits a canonical first order differential calculus over the dense $*$-algebra $\A$. Indeed, given a spectral triple $(A,\H,D)$, the associated differential calculus is defined to be the $\A$-$\A$-bimodule
\begin{equation}\label{ccalc}
\Omega^1_D(\A):=\{\sum_j a_j[D,\pi(b_j)]~|~a_j,b_j\in\A\}\subseteq\mathbb{B}(\H),
\end{equation}
where the sums are understood to be convergent in the norm topology of $\mathbb{B}(\H)$ (in contrast with the definition given in e.g. \cite{C94}). For the canonical spectral triple $(C(M),\H,\dirac_M)$ over a closed Riemannian spin manifold $M$, the differential calculus $\Omega^1_D(\A)$ is isomorphic to the $\Lip(M)$-bimodule $\Omega^1(M)$ of continuous one-forms on $M$.%

Next we come to recall the main definitions and techniques of the unbounded version of Kasparov's bivariant KK-theory for $C^*$-algebras \cite{Kas, baaj-julg}. 
Given a Banach space $X$, recall that a linear operator $D:\Dom (D)\rightarrow X$ is said to be {\bf closed} whenever its graph
\begin{equation}
\label{graph} \mathfrak{G}(D):=\left\{\begin{pmatrix}x\\Dx\end{pmatrix}~|~x\in \Dom (D)\right\}\subseteq X\oplus X\end{equation}
is a closed subspace of $X\oplus X$. A closed, densely defined, self-adjoint linear operator $D$ on a $C^{*}$-module $\mathpzc{E}\lrh B$ is said to be {\bf regular} if and only if the operators $D\pm i:\Dom (D)\rightarrow \mathpzc{E}$ have dense range, which in turn happens if and only if these operators are bijective.

With these concepts in mind, let $A$ and $B$ be graded $C^*$-algebras, let $\mathpzc{E}\leftrightharpoons B$ be a graded right $C^*$-module over $B$ equipped with a representation $\pi:A\to\End^{*}_{B}(\mathpzc{E})$ and let $D:\Dom(D)\rightarrow\mathpzc{E}$ be an odd unbounded self-adjoint regular operator. In this situation, we make the following definition.

\begin{defn}[\cite{baaj-julg}]
\label{defn:KK-unbounded} 
The pair $(\mathpzc{E},D)$ is said to be an {\bf even unbounded $(A,B)$ KK-cycle} if:
\begin{enumerate}[\hspace{0.5cm} (i)]
\item the operator $D:\Dom(D)\rightarrow\mathpzc{E}$ has compact resolvents, i.e. $(D\pm i)^{-1}\in\K_{B}(\mathpzc{E})$;
\item the unital $*$-subalgebra
$$
\A:=\{a\in A~:~[D,\pi(a)]\in\End^{*}_{B}(\mathpzc{E})\}\subseteq A
$$
is dense in $A$. 
\end{enumerate}
We write $\Psi_0(A,B)$ for the set of even unbounded $(A,B)$ KK-cycles modulo unitary equivalence. 
\end{defn}

\begin{example}
It is clear from the definitions that every even spectral triple $(A,\H,D)$ over a $C^*$-algebra $A$ determines an unbounded cycle in $\Psi_0(A,\C)$. We will deal with the case of odd spectral triples at the end of this section.
\end{example}

Kasparov's KK-groups \cite{Kas} are homotopy quotients of the sets of unbounded cycles $\Psi_{0}(A,B)$ in the following sense. Associated to a given self-adjoint regular operator $D$ on $\mathpzc{E}$ is its \emph{bounded transform}
\[ \mathfrak{b}(D):=D(1+D^{2})^{-1/2},\]
which determines $D$ uniquely (see \cite{Lance} for details). The pair $(\mathpzc{E},\mathfrak{b}(D))$ is a \textbf{Kasparov module}: these are the bounded analogues of the elements of $\Psi_{0}(A,B)$, defined to be pairs $(\mathpzc{E},F)$ with $F\in\End^{*}_{B}(\mathpzc{E})$ such that, for all $a\in A$, we have
\[ F^{2}-1,~ [F,a], ~F-F^{*}\in\K_{B}(\mathpzc{E}).\] 
In \cite{baaj-julg} it is shown that, for every unbounded KK-cycle $(\mathpzc{E},D)\in\Psi_{0}(A,B)$, its bounded transform $(\mathpzc{E},\mathfrak{b}(D))$ is a Kasparov module;  conversely every Kasparov module arises in this way as the bounded transform of some unbounded KK-cycle.
Two elements in $\Psi_{0}(A,B)$ are said to be {\bf homotopic} if their bounded transforms are so; the set of homotopy equivalence classes is denoted $\KK_{0}(A,B)$. Kasparov proved in \cite{Kas} that this is an Abelian group under the operation of taking direct sums of bimodules.

An important feature of the KK-groups $\KK_0(A,B)$ is that they admit an internal product
$$
\otimes_B:\KK_0(A,B)\times \KK_0(B,C)\to \KK_0(A,C).
$$ 
Kasparov proved existence and uniqueness of this product at the homotopy level
but, as of yet, a concrete expression for the product of two bounded Kasparov modules is still lacking and might not exist at all. As already mentioned, Connes and Skandalis \cite{CS} gave an insightful interpretation of Kasparov's product at the bounded level in terms of {\em correspondences}, in which the fingerprints of the geometric nature of the construction are clearly visible. On the other hand, in the unbounded picture, the work of Kucerovsky \cite{Kuc97} provides sufficient conditions for an unbounded cycle to represent the product of two given cycles. 

Indeed, the pair $(\mathpzc{E},D)$ is said to be the {\bf unbounded Kasparov product} of the cycles $(\mathpzc{E}_1,D_1)$ and $(\mathpzc{E}_2,D_2)$, denoted
$$
(\mathpzc{E},D)\simeq (\mathpzc{E}_1,D_1)\otimes_B (\mathpzc{E}_2,D_2),
$$
if together they satisfy the conditions of \cite[Thm~13]{Kuc97}. The conditions of the latter theorem give a hint of the actual form of the product operator in the unbounded picture. Indeed, the constructions of \cite{KaLe,Mes09b} yield an explicit description of the unbounded Kasparov product, under certain smoothness assumptions imposed on the KK-cycles involved. Later on we shall sketch the details of how this unbounded product is formed: to do so we need first to introduce some background theory.

\subsection{Projective operator modules and their properties} The key observation in \cite{KaLe,Mes09b} is that, in order to define the product of a pair of unbounded KK-cycles, one needs to impose certain differentiability conditions upon the underlying $C^*$-modules. This section is devoted to giving a precise meaning to this notion of differentiability and a description of the class of modules that we shall need in the present paper. \black

The required notion of differentiability for $C^*$-modules is motivated by the special case of spectral triples. Indeed, let $(B,\mathcal{H},D)$ be a spectral triple as in Definition~\ref{de:st}. The corresponding dense subalgebra
\[\mathcal{B}:=\{b\in B~:~ [D,\pi(b)]\in\mathbb{B}(\mathcal{H})\}\]
will be called the {\bf Lipschitz subalgebra} of $B$. We will always consider it with the topology given by the representation
\begin{equation}\label{Liprep}\pi_{D}:\mathcal{B}\to \BB(\H\oplus\H),\qquad b\mapsto\begin{pmatrix}\pi(b) &0 \\ [D,\pi(b)] & \pi(b)\end{pmatrix}.\end{equation} As such it is a closed subalgebra of the $C^*$-algebra of operators on a Hilbert space, that is to say it is an \textbf{operator algebra}. Moreover, the involution in $\mathcal{B}$ satisfies the identity
\begin{equation}\label{v}\pi_{D}(b)^{*}=v\,\pi_{D}(b^{*})v^{*},\qquad \quad \textnormal{where}\quad v=\begin{pmatrix}0 & -1 \\1&0\end{pmatrix}.\end{equation}
More generally, recall that such algebras have a name \cite{Mes09b}.

\begin{defn} An {\bf involutive operator algebra} is an operator space $\mathcal{B}$ with completely bounded multiplication $\mathcal{B}\,\hotimes\,\mathcal{B}\rightarrow \mathcal{B}$, together with an involution $b\mapsto b^{*}$ which becomes a completely bounded anti-isomorphism when extended to matrices in the usual way.
\end{defn}

Note that $C^{*}$-algebras in particular fit this definition, as do Lipschitz algebras according to property \eqref{v}. Throughout the remainder of this section, we let $\B$ denote an arbitrary involutive operator algebra (although always keeping the special Lipschitz case in mind). As one might expect, involutive operator algebras admit a class of modules analogous to $C^{*}$-modules, which we now describe. 

First of all, let us denote $\hat{\Z}:=\Z\setminus\{ 0\}$. Then the Hilbert space $\ell^{2}(\hat{\Z})$ comes equipped with a natural $\Z/2$-grading. We define $\mathcal{H}_{\mathcal{B}}$ to be the right $\B$-module $\H_\B:=\ell^{2}(\hat{\Z})\,\tilde{\otimes}\,\mathcal{B}$, where $\tilde{\otimes}$ denotes the graded Haagerup tensor product. This module can be visualized as the space of $\ell^{2}$ column vectors with entries in $\mathcal{B}$, in the sense that a given column vector $(a_{i})_{i\in \hat{\Z}}$ is an element of $\mathcal{H}_{\mathcal{B}}$ if and only if $\sum_{i}\pi(a_{i})^{*}\pi (a_{i})\in \BB(\mathcal{H})$ for some completely bounded representation $\pi:\mathcal{B}\rightarrow \BB(\mathcal{H})$.

\begin{lem} 
The module $\mathcal{H}_{\mathcal{B}}$ admits a canonical inner product defined by
\begin{equation}\label{inprod} \langle (a_{i}), (b_{i})\rangle :=\sum_{i} a_{i}^{*}b_{i}\end{equation}
for each pair of column vectors $(a_i), (b_i)\in \H_\B$.
\end{lem}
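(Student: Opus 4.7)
The plan is to establish two things: first, that the series $\sum_i a_i^* b_i$ converges in an appropriate operator norm so as to define an element of $\mathcal{B}$; second, that the resulting pairing satisfies the usual axioms of a right $\mathcal{B}$-valued inner product (sesquilinearity, conjugate symmetry and positivity). Throughout I would work by way of a fixed completely bounded faithful representation $\pi:\mathcal{B}\to\mathbb{B}(\mathcal{H})$, exploiting the defining property of $\mathcal{H}_{\mathcal{B}}$ that $\sum_i\pi(a_i)^*\pi(a_i)$ is a convergent positive operator in $\mathbb{B}(\mathcal{H})$.

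For convergence, I would apply the operator Cauchy--Schwarz inequality to finite subsets $F\subset\hat{\mathbb{Z}}$:
\[
\left\|\sum_{i\in F}\pi(a_i)^*\pi(b_i)\right\|\leq \left\|\sum_{i\in F}\pi(a_i)^*\pi(a_i)\right\|^{1/2}\left\|\sum_{i\in F}\pi(b_i)^*\pi(b_i)\right\|^{1/2},
\]
and applied to set differences $F\setminus F'$ this shows the partial sums form a Cauchy net in $\mathbb{B}(\mathcal{H})$, hence converge to a bounded operator. To identify this limit as an element of $\mathcal{B}$ rather than merely of $\mathbb{B}(\mathcal{H})$, I would argue that the map $(a_i)\otimes (b_i)\mapsto \sum_i a_i^* b_i$ factors through the canonical contraction of the graded Haagerup tensor product
\[
(\ell^2(\hat{\mathbb{Z}})^*\,\tilde{\otimes}\,\mathcal{B})\,\tilde{\otimes}\,(\ell^2(\hat{\mathbb{Z}})\,\tilde{\otimes}\,\mathcal{B})\longrightarrow\mathcal{B}\,\tilde{\otimes}\,\mathcal{B}\longrightarrow\mathcal{B},
\]
where the first arrow is induced by the inner product of $\ell^2(\hat{\mathbb{Z}})$ composed with the involution on $\mathcal{B}$ (which is completely bounded by definition of an involutive operator algebra) and the second is the completely bounded multiplication of $\mathcal{B}$. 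This exhibits $\sum_i a_i^* b_i$ as a bona fide element of $\mathcal{B}$, independent of the choice of $\pi$.

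The three axioms then fall out routinely: right $\mathcal{B}$-linearity follows directly from the formula; conjugate symmetry uses the identity $\langle (b_i),(a_i)\rangle^*=\bigl(\sum_i b_i^* a_i\bigr)^*=\sum_i a_i^* b_i=\langle (a_i),(b_i)\rangle$ together with the anti-isomorphism property of the involution recalled in \eqref{v}; and positivity $\langle (a_i),(a_i)\rangle=\sum_i a_i^* a_i\geq 0$ holds inside any enveloping $C^*$-algebra (e.g.~the closure of $\pi(\mathcal{B})$ in $\mathbb{B}(\mathcal{H})$), so that the element is positive in the order inherited from $B$.

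The main technical obstacle will be the second point above, namely ensuring that the limit of the Cauchy net lands inside the operator algebra $\mathcal{B}$ itself, not just inside $\mathbb{B}(\mathcal{H})$, and that this membership is independent of the chosen completely bounded representation. This is a genuine subtlety in the operator-algebraic setting, where one does not have the luxury of a spectral calculus as in the $C^*$-case, and resolves via the universal property of the Haagerup tensor product together with the stipulation that both the multiplication and the involution on $\mathcal{B}$ are completely bounded. Once this point is settled, all remaining checks reduce to formal manipulations with the defining formula.
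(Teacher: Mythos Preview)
Your argument is correct, and at its core it is the same as the paper's: the key step is the row-times-column matrix product estimate coming from the Haagerup tensor product, together with complete boundedness of the involution. The paper, however, does this in one stroke: it writes $\langle (a_i),(b_i)\rangle = (a_i^*)^t\cdot (b_i)$ directly as a matrix product in the operator space $\mathcal{B}$ and estimates
\[
\|(a_i^*)^t\cdot(b_i)\|\leq C\,\|(a_i^*)^t\|\,\|(b_i)\|\leq C\,\|(a_i)\|\,\|(b_i)\|,
\]
the second inequality using complete boundedness of the involution; applied to tails this gives Cauchy convergence inside $\mathcal{B}$ immediately. Your detour through operator Cauchy--Schwarz in $\mathbb{B}(\mathcal{H})$ is therefore unnecessary---it only establishes convergence in the ambient $C^*$-algebra and then forces you to run a second argument (your Haagerup factorization) to pull the limit back into $\mathcal{B}$, whereas that second argument already suffices on its own. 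The paper also does not bother verifying the inner-product axioms, treating them as evident once convergence is settled; your checks are fine but routine. One minor point: in your Haagerup factorization the row space should be $\mathcal{B}\,\tilde\otimes\,\ell^2(\hat{\mathbb{Z}})^*$ rather than $\ell^2(\hat{\mathbb{Z}})^*\,\tilde\otimes\,\mathcal{B}$, since the Haagerup tensor product is not symmetric and the ordering matters for the matrix-product interpretation.
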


\begin{proof} We must show the series on the right-hand side converges. To this end we write the inner product as a matrix product of column vectors and estimate (using complete boundedness of the involution) that
\[ \begin{split} \| \langle (a_{i}),(b_{i})\rangle \| & = \|(a_{i}^{*})^{t} \cdot (b_{i}) \|
\leq C\|(a_{i}^{*})^{t}\|\, \|(b_{i})\|\leq C\|(a_{i})\|\, \|(b_{i}) \| \end{split}\]
for some constant $C>0$. Now since $(a_{i}),(b_{i})\in\mathcal{H}_{\mathcal{B}}$, the norm of their tails will tend to zero and so the above estimate shows the inner product series is indeed convergent.
\end{proof}

\begin{rem}\textup{
It is important to note that it is only in the case where $\mathcal{B}$ is an honest $C^{*}$-algebra that the inner product \eqref{inprod} determines the topology of $\mathcal{H}_{\mathcal{B}}$. Nevertheless, just as in the $C^{*}$-module case, we define $\End^{*}_{\mathcal{B}}(\mathcal{H}_{\mathcal{B}})$ to be the $*$-algebra of operators on $\mathcal{H}_{\mathcal{B}}$ that admit an adjoint with respect to the inner product \eqref{inprod}.
}
\end{rem}

The elements of the $*$-algebra $\End^{*}_{\mathcal{B}}(\mathcal{H}_{\mathcal{B}})$ are automatically $\mathcal{B}$-linear and completely bounded. As such it is perfectly natural to consider {\em stably rigged} $\B$-modules, that is to say right $\mathcal{B}$-modules  which are cb-isomorphic to $p\mathcal{H}_{\mathcal{B}}$ for some (completely bounded) projection $p\in\End^{*}_{\mathcal{B}}(\mathcal{H}_{\mathcal{B}})$. Stably rigged modules were the cornerstone of the construction in \cite{KaLe,Mes09b}, however in the present paper we shall need a larger class of modules.

\begin{defn}\label{proj}Let $\mathcal{B}$ be an involutive operator algebra. A {\bf projection operator} on $\mathcal{H}_{\mathcal{B}}$ is a densely defined self-adjoint operator $p:\Dom (p)\rightarrow \mathcal{H}_{\mathcal{B}}$ such that $p^{2}=p$.
\end{defn}

The latter definition thus allows for the possibility of {\em unbounded} projection operators. Note that for an unbounded idempotent operator we necessarily have $\im (p)\subseteq\Dom (p)$. It is shown in \cite{Mes09b} that a closed, densely defined, self-adjoint operator $D$ on $\mathcal{H}_{\mathcal{B}}$ is regular if and only if there is a unitary isomorphism $\mathfrak{G}(D)\oplus v\,\mathfrak{G}(D)\cong \mathcal{H}_{\mathcal{B}}\oplus\mathcal{H}_{\mathcal{B}}$, with $v$ as in \eqref{v} and the isomorphism being given by coordinatewise addition.  This fact yields the following characterization of when a given projection is bounded.

\begin{prop} A projection $p$ on $\H_\B$ is bounded if and only if it is regular.
\end{prop}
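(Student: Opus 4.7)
The strategy is to exploit the graph characterization of regularity stated just before the proposition: $p$ is regular if and only if the coordinatewise addition map $\mathfrak{G}(p)\oplus v\,\mathfrak{G}(p)\to \mathcal{H}_{\mathcal{B}}\oplus\mathcal{H}_{\mathcal{B}}$ is a (completely bounded) isomorphism. Unravelled, this asks whether, for every $(y,z)\in\mathcal{H}_{\mathcal{B}}\oplus\mathcal{H}_{\mathcal{B}}$, there exist unique $x_{1},x_{2}\in\Dom(p)$ with $x_{1}-px_{2}=y$ and $px_{1}+x_{2}=z$, depending boundedly on $(y,z)$. Substituting $x_{1}=y+px_{2}$ into the second equation and using the key identity $p^{2}=p$ reduces this to the single equation $(1+p)x_{2}=z-py$. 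The whole proposition thereby becomes a statement about invertibility and boundedness of $1+p$.

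For the direction ``bounded implies regular'', with $p\in\End^{*}_{\mathcal{B}}(\mathcal{H}_{\mathcal{B}})$ one verifies directly that $(1-\tfrac{1}{2}p)(1+p)=(1+p)(1-\tfrac{1}{2}p)=1$ via $p^{2}=p$, so $(1+p)^{-1}=1-\tfrac{1}{2}p$ exists as an element of $\End^{*}_{\mathcal{B}}(\mathcal{H}_{\mathcal{B}})$. The explicit formulas $x_{2}=(1-\tfrac{1}{2}p)(z-py)$ and $x_{1}=y+px_{2}$ then provide a bounded inverse to the addition map, and the same formulas applied entrywise yield complete boundedness. Injectivity, i.e.\ $\mathfrak{G}(p)\cap v\,\mathfrak{G}(p)=0$, is immediate: from $x_{1}=px_{2}$ and $x_{2}=-px_{1}$ together with $p^{2}=p$, we get $x_{2}=-p^{2}x_{2}=-px_{2}$, hence $px_{2}=0$ and $x_{1}=x_{2}=0$.

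For the converse, the plan is to apply the cb-isomorphism in the special case $y=0$: this produces, for every $z\in\mathcal{H}_{\mathcal{B}}$, elements $x_{1},x_{2}\in\Dom(p)$ with $x_{1}=px_{2}$ and $(1+p)x_{2}=z$, the dependence $z\mapsto x_{2}$ being bounded. This yields a bounded $\mathcal{B}$-linear map $R:\mathcal{H}_{\mathcal{B}}\to\mathcal{H}_{\mathcal{B}}$ with $\mathrm{Im}(R)\subseteq\Dom(p)$ and $(1+p)R=1$, and the same idempotency argument as above shows $(1+p)$ is injective on $\Dom(p)$, so $R=(1+p)^{-1}$. The main obstacle I anticipate is in lifting this to the statement $\Dom(p)=\mathcal{H}_{\mathcal{B}}$: for this I would take arbitrary $y\in\mathcal{H}_{\mathcal{B}}$, set $u=Ry\in\Dom(p)$, and write $y=(1+p)u=u+pu$; since $pu\in\mathrm{Im}(p)\subseteq\Dom(p)$ (which is automatic for any idempotent operator), both summands lie in $\Dom(p)$ and hence so does $y$. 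Finally, applying $p$ to $y=u+pu$ and using $p^{2}=p$ gives $py=2pu=2(1-R)y$, exhibiting $p=2(1-R)$ as a bounded operator and completing the proof.
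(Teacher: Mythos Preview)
Your argument is correct. For the forward direction, the paper takes the shorter route of observing that a bounded self-adjoint $p$ has $p\pm i$ invertible, hence regular by the very definition; your detour through $1+p$ and the graph characterization works but is unnecessary there. For the converse, both proofs hinge on the same key fact $\im(p)\subseteq\Dom(p)$, but the paper reads off the \emph{first} coordinate of the graph isomorphism to obtain $\mathcal{H}_{\mathcal{B}}=\{x-py:x,y\in\Dom(p)\}$ and then notes that $x-py\in\Dom(p)$ immediately, finishing in one line. Your version instead works in the \emph{second} coordinate (with $y=0$) to produce a bounded right inverse $R$ for $1+p$ and then unwinds $y=Ry+pRy\in\Dom(p)$; this is slightly longer but yields the pleasant explicit formula $p=2(1-R)$ as a bonus.
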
 

\begin{proof} If $p$ is bounded then the operators $p\pm i$ are invertible, whence $p$ is regular. Conversely, suppose that $p$ is regular. Then there is a unitary isomorphism
\[
\begin{split}\mathfrak{G}(p)\oplus v\,\mathfrak{G}(p)\xrightarrow{\sim}& \mathcal{H}_{\mathcal{B}}\oplus\mathcal{H}_{\mathcal{B}},\\
\begin{pmatrix}\begin{pmatrix} x\\ px\end{pmatrix},\begin{pmatrix} -py\\ y\end{pmatrix}\end{pmatrix}\mapsto &\begin{pmatrix}x-py\\px+y\end{pmatrix},\end{split}
\]
and so in particular we have that
\[
\mathcal{H}_{\mathcal{B}}=\{x-py~|~x,y\in\Dom (p)\}.
\]

Since $p$ is a projection we know that $py\in\Dom (p)$ and so $\mathcal{H}_{\mathcal{B}}\subseteq\Dom (p)$, whence $p$ is adjointable and therefore bounded.
\end{proof}

\begin{lem} Let $p$  be a closed idempotent operator on $\mathcal{H}_{\mathcal{B}}$. Then $\im (p) =p(\Dom (p))$ is a closed submodule of $\mathcal{H}_{\mathcal{B}}$.
\end{lem}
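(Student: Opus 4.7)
The plan is to establish closedness first, then the submodule property, both following essentially from the idempotency $p^{2}=p$ together with closedness and $\mathcal{B}$-linearity of $p$.

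For closedness of $\im(p)$, I would start by noting the elementary observation that for any idempotent operator one has $\im(p)\subseteq\Dom(p)$ and $py=y$ for every $y\in\im(p)$; in fact $\im(p)$ coincides with the set $\{y\in\Dom(p)\,:\,py=y\}$. Now suppose $(y_{n})$ is a sequence in $\im(p)$ with $y_{n}\to y$ in $\mathcal{H}_{\mathcal{B}}$. Then $y_{n}\in\Dom(p)$ and $py_{n}=y_{n}\to y$ as well, so the graph sequence $\bigl(y_{n},py_{n}\bigr)$ converges to $(y,y)$ in $\mathcal{H}_{\mathcal{B}}\oplus\mathcal{H}_{\mathcal{B}}$. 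Closedness of $p$ then forces $y\in\Dom(p)$ and $py=y$, giving $y\in\im(p)$.

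For the submodule property I would invoke that $p$, being a densely defined operator on the right $\mathcal{B}$-module $\mathcal{H}_{\mathcal{B}}$, is automatically right $\mathcal{B}$-linear on its domain, and that $\Dom(p)$ is a right $\mathcal{B}$-submodule. Then for $y\in\im(p)$ and $b\in\mathcal{B}$, the element $yb$ lies in $\Dom(p)$ with $p(yb)=(py)b=yb$, so $yb\in\im(p)$ by the characterization above.

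The potentially delicate point is the tacit use of right $\mathcal{B}$-linearity and $\mathcal{B}$-invariance of $\Dom(p)$, which in the operator-module setting is not quite automatic in the way one is used to from the $C^{*}$-module situation. I would address this by pointing out that the definition of an adjointable (unbounded) operator $p$ on $\mathcal{H}_{\mathcal{B}}$ built into the framework of the preceding section forces $\Dom(p)$ to be a right $\mathcal{B}$-submodule and $p$ to commute with right multiplication by elements of $\mathcal{B}$ on $\Dom(p)$; in particular the self-adjointness assumption in Definition~\ref{proj} combined with the fact that every element of $\End^{*}_{\mathcal{B}}(\mathcal{H}_{\mathcal{B}})$ is $\mathcal{B}$-linear gives this for free. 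Everything else in the proof is a one-line graph argument.
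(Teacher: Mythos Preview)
Your closedness argument is correct and is exactly the paper's proof: write an arbitrary element of $\im(p)$ as $y_n=px_n$, use $p^2x_n=px_n$ to see the graph sequence $(y_n,py_n)$ converges to $(y,y)$, and invoke closedness of $p$. The paper stops there and does not spell out the right $\mathcal{B}$-module property; your additional paragraph handling that point is fine, though note that the lemma as stated concerns a general closed idempotent rather than a self-adjoint projection, so the appeal to Definition~\ref{proj} is not quite on target --- one simply takes right $\mathcal{B}$-linearity as part of what ``operator on $\mathcal{H}_{\mathcal{B}}$'' means in this framework.
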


\begin{proof} Let $(px_{n})$ be a Cauchy sequence in $\im (p)\subseteq\Dom (p)$ with limit $y$. Since $p^{2}x_{n}=px_{n}$ and $p$ is closed, we have that $y\in\Dom (p)$ and $py=y$, from which it follows that $y\in\im (p)$.
\end{proof}

As already mentioned, $C^*$-algebras and their modules are very well behaved within the class of operator algebras and their modules. Indeed, in $C^{*}$-modules there are no unbounded projections, as the following lemma shows.

\begin{lem}\label{C*B} Let $B$ be a $C^{*}$-algebra and $p$ a projection on $\mathcal{H}_{B}$. Then $p$ is bounded.
\end{lem}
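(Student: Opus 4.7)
The plan is to reduce the claim to the previous proposition, which asserts that a projection on $\H_\B$ is bounded iff it is regular. Hence it suffices to verify regularity of $p$ when $B$ is a $C^*$-algebra. Since for $C^*$-modules a closed densely defined self-adjoint operator $D$ is regular iff $(D\pm i)(\Dom(D))$ is dense in the module, the goal becomes showing that $(p\pm i)$ has dense range.

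The first step is to set up an orthogonal algebraic decomposition of $\Dom(p)$. The identity $p^2=p$ forces $\im(p)\subseteq\Dom(p)$ and that $p$ acts as the identity on $\im(p)$; a short computation then shows that $1-p$ is itself a closed self-adjoint idempotent with $\Dom(1-p)=\Dom(p)$. Applying the previous lemma to both $p$ and $1-p$ yields that $\im(p)$ and $\im(1-p)$ are closed $B$-submodules of $\H_B$. Using self-adjointness together with $p^2=p$ one computes $\langle px,(1-p)y\rangle=\langle px,y\rangle-\langle p^2 x, y\rangle = 0$, so these submodules are mutually orthogonal, and from $x=px+(1-p)x$ one obtains the orthogonal decomposition $\Dom(p)=\im(p)\oplus\im(1-p)$.

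The second step is to solve $(p\pm i)x=y$ explicitly for $y$ in the dense subspace $\Dom(p)$. Decomposing $y=a+b$ with $a\in\im(p)$ and $b\in\im(1-p)$, the natural ansatz $x_\pm:=(1\pm i)^{-1}a\mp i^{-1}b$ lies in $\Dom(p)$, and a direct check using $pa=a$ and $pb=0$ confirms $(p\pm i)x_\pm=y$. Hence $(p\pm i)(\Dom(p))\supseteq\Dom(p)$, which is dense in $\H_B$, giving regularity of $p$. The previous proposition then forces $p$ to be bounded.

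The main subtlety is that the decomposition $\Dom(p)=\im(p)\oplus\im(1-p)$ takes place inside the dense domain and need not extend to an orthogonal splitting of all of $\H_B$ (in a $C^*$-module orthogonal complements need not split). Thus I will not try to globalize the decomposition; it is precisely the internal algebraic splitting that produces dense range for $p\pm i$, which is the manifestation of regularity available in the $C^*$-module setting. The hypothesis that $B$ be a $C^*$-algebra enters both through the non-degeneracy of the inner product (ensuring the direct sum is honest) and through the equivalence of regularity with the dense range criterion.
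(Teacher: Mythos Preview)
Your argument is correct, but it takes a substantially different route from the paper's proof. The paper gives a one-line positivity computation: for $x\in\Dom(p)$,
\[
0\leq\langle(1-p)x,(1-p)x\rangle=\langle x,x\rangle-\langle px,px\rangle,
\]
whence $\|px\|\leq\|x\|$ directly. This uses the $C^*$-property in the most elementary way possible, namely that $\langle y,y\rangle\geq 0$ in $B$. Your proof instead shows $(p\pm i)(\Dom(p))\supseteq\Dom(p)$ via the algebraic splitting $\Dom(p)=\im(p)\oplus\im(1-p)$, invokes the $C^*$-module fact that dense range of $D\pm i$ implies regularity, and then appeals to the earlier characterisation ``bounded $\Leftrightarrow$ regular'' for projections. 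This works, but is considerably less direct; the paper's argument bypasses regularity entirely and even yields the sharper conclusion that $p$ is contractive.

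One small correction to your commentary: the algebraic decomposition $\Dom(p)=\im(p)\oplus\im(1-p)$ and its directness require neither non-degeneracy of the inner product nor the $C^*$-hypothesis---they follow purely from $p^2=p$ (if $z\in\im(p)\cap\im(1-p)$ then $pz=z$ and $pz=0$). The $C^*$-assumption enters your argument \emph{only} at the step ``dense range of $p\pm i$ implies regularity'', where one needs the contractivity of $(p\pm i)^{-1}$ that comes from $\langle(p\pm i)x,(p\pm i)x\rangle\geq\langle x,x\rangle$. Over a general involutive operator algebra this step fails, which is consistent with the paper's later use of genuinely unbounded projections.
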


\begin{proof} For all $x$ in the domain of $p$ we have the estimate
\[0\leq\langle(1-p)x,(1-p)x\rangle=\langle x,x \rangle - 2\langle px,x\rangle + \langle px,px \rangle = \langle x,x\rangle -\langle px,px\rangle.\]
Therefore $\langle px,px\rangle \leq \langle x,x\rangle$ and so $p$ is bounded.
\end{proof}

Later in the paper it will be necessary to consider modules which are not $C^*$ but nevertheless have a certain projectivity property. The following definition makes this idea precise.  

\begin{defn} Let $\mathcal{B}$ be an involutive operator algebra. A {\bf projective operator module} $\E\lrh\mathcal{B}$ is a right operator $\mathcal{B}$-module $\mathcal{E}$, equipped with a completely bounded $\mathcal{B}$-valued inner product, with the property that $\E$ is completely isometrically unitarily isomorphic to $\im (p)$ for some projection operator $p$ on $\mathcal{H}_{\mathcal{B}}$.
\end{defn}

\begin{rem}\textup{As opposed to the definition of stably rigged module, we require an \emph{isometric} isomorphism with $\im (p)=p\,\Dom (p)$ in the above definition. This is in view of the following proposition concerning infinite direct sums: the isometry condition is needed to prevent the norms going to infinity in the direct sum (clearly not a problem for finite sums of stably rigged modules).
}
\end{rem}

\begin{prop}\label{directsum} Let $(\mathcal{E}_{i})_{i\in I}$ be  countable family of projective operator modules. Their algebraic direct sum can be completed into a projective operator module $\bigoplus_{i\in I}\mathcal{E}_{i}$, unique up to cb-isomorphism. 
\end{prop}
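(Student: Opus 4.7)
The plan is to realise the direct sum as the image of a single projection operator on $\mathcal{H}_{\mathcal{B}}$, assembled block-diagonally from the individual projections. By hypothesis, each $\mathcal{E}_i$ is completely isometrically unitarily isomorphic to $\im(p_i)$ for some projection operator $p_i$ on $\mathcal{H}_{\mathcal{B}}$. Since $I$ is countable, I first fix a partition $\hat{\Z} = \bigsqcup_{i\in I} S_i$ into infinite subsets, yielding a completely isometric decomposition $\mathcal{H}_{\mathcal{B}} \cong \bigoplus_{i\in I} \ell^{2}(S_i)\,\tilde{\otimes}\,\mathcal{B}$ in which each summand is cb-isometrically isomorphic to $\mathcal{H}_{\mathcal{B}}$ itself. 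Transporting $p_i$ via the chosen isomorphism $\mathcal{H}_{\mathcal{B}} \cong \ell^{2}(S_i)\,\tilde{\otimes}\,\mathcal{B}$, I may view the family $\{p_i\}$ as acting on mutually ``orthogonal'' copies of $\mathcal{H}_{\mathcal{B}}$ sitting inside a single ambient $\mathcal{H}_{\mathcal{B}}$.

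On the algebraic direct sum of the domains $\mathcal{D}_0 := \bigoplus_{i\in I}^{\mathrm{alg}} \Dom(p_i) \subseteq \mathcal{H}_{\mathcal{B}}$ I define the block-diagonal operator $p_0\bigl((x_i)_i\bigr) := (p_i x_i)_i$; it is densely defined and satisfies $p_0^2 = p_0$. Let $p$ denote its closure. Closability of $p_0$ follows by applying closedness of each $p_i$ block by block: if a sequence $(x^{(n)})$ in $\mathcal{D}_0$ tends to $0$ while $p_0 x^{(n)} \to y$, then for each $i$ the projection of $x^{(n)}$ onto $\ell^{2}(S_i)\,\tilde{\otimes}\,\mathcal{B}$ tends to $0$ while its image under $p_i$ tends to the $i$-th block of $y$, so closedness of $p_i$ forces that block to vanish, whence $y = 0$. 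Idempotency of $p$ passes from $\mathcal{D}_0$ to $\Dom(p)$ by graph-norm density, while self-adjointness reduces block by block to the self-adjointness of each $p_i$ which holds by hypothesis.

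By the lemma preceding the proposition, $\im(p) = p(\Dom(p))$ is a closed submodule of $\mathcal{H}_{\mathcal{B}}$, and it contains the algebraic direct sum $\bigoplus_{i}^{\mathrm{alg}} \im(p_i)$, which via the chosen isomorphisms is completely isometrically identified with $\bigoplus_i^{\mathrm{alg}} \mathcal{E}_i$, as a dense subspace. Equipped with the restricted inner product \eqref{inprod} and operator space structure inherited from $\mathcal{H}_{\mathcal{B}}$, this image is by construction a projective operator module, which I declare to be the required completion $\bigoplus_{i\in I}\mathcal{E}_i$. Uniqueness up to cb-isomorphism then follows: if $\mathcal{F}$ is another projective operator module densely and completely isometrically containing $\bigoplus_i^{\mathrm{alg}} \mathcal{E}_i$, the identity on this dense subspace extends by continuity in the operator space norms to a cb-isomorphism $\mathcal{F} \cong \im(p)$.

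The main technical obstacle I anticipate is the verification of self-adjointness of the closed idempotent $p$ in the operator-module sense of Definition~\ref{proj}. As emphasised in the remark following equation \eqref{inprod}, the $\mathcal{B}$-valued inner product does not determine the topology of $\mathcal{H}_{\mathcal{B}}$, so the standard Hilbert-space arguments for self-adjointness of direct sums of self-adjoint operators do not apply verbatim. The point of the chosen partition of $\hat{\Z}$ is precisely to force enough orthogonality between the blocks that adjointness of $p$ unwinds blockwise into the adjointness of each individual $p_i$; absent such a decomposition, controlling the adjoint of a countable sum of unbounded operators acting on an operator module (rather than a genuine Hilbert module) would be considerably more delicate.
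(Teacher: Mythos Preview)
Your approach is essentially the same as the paper's: realise $\bigoplus_{i\in I}\mathcal{H}_{\mathcal{B}}\cong\mathcal{H}_{\mathcal{B}}$, transport the $p_i$ to orthogonal blocks, assemble the block-diagonal projection $\bigoplus_i p_i$, and take its image as the completed direct sum. The paper invokes Blecher's theory of rigged modules for the isomorphism $\bigoplus_{i\in I}\mathcal{H}_{\mathcal{B}}\cong\mathcal{H}_{\mathcal{B}}$ where you write down an explicit partition of $\hat{\Z}$, and the paper simply asserts that ``it is straightforward to check that $\bigoplus_{i\in I}p_{i}$ defines a self-adjoint idempotent'' where you supply the closability and block-by-block self-adjointness arguments; your worry about self-adjointness is legitimate but the paper treats it as routine, and your orthogonal-block setup is exactly what makes it so.
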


\begin{proof} By assumption, each $\mathcal{E}_{i}$ is isometrically isomorphic to $\im (p_{i})\subseteq \mathcal{H}_{\mathcal{B}}$ for some $p_{i}$ a projection. As $\mathcal{H}_{\mathcal{B}}$ is a rigged module, the direct sum $\bigoplus_{i\in I}\mathcal{H}_{\mathcal{B}}$ is canonically defined in \cite{Blech} and isometrically isomorphic to $\mathcal{H}_{\mathcal{B}}$. As such, the algebraic direct sum of the modules $\im (p_{i})$ sits naturally in $\mathcal{H}_{\mathcal{B}}$ and we define $\bigoplus_{i\in I} \im (p_{i})$ to be its closure. It is straightforward to check that $\bigoplus_{i\in I}p_{i}$ defines a self-adjoint idempotent on $\mathcal{H}_{\mathcal{B}}$. We define $\bigoplus_{i\in I}\mathcal{E}_{i}$ by identifying it with $\bigoplus_{i\in I}\im (p_{i})$. In the case where $I$ is finite, this yields a space which is cb-isomorphic to the column direct sum $\bigoplus_{i\in I}^{c}\mathcal{E}_{i}$ ({\em cf}. \cite{Blech}).
\end{proof}

\begin{cor}\label{cor:finitesum} Let $(\E_{i})_{i\in I}$ be a countable family of algebraically finitely generated projective $\mathcal{B}$-modules. Then $\bigoplus_{i\in I}\E_{i}$ is a projective operator module.
\end{cor}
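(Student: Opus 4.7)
The strategy is to reduce the corollary to Proposition \ref{directsum}. It suffices to show that each algebraically finitely generated projective right $\B$-module $\E_i$ admits the structure of a projective operator module in the sense of the preceding definition; the corollary then follows by direct application of that proposition to the countable family $(\E_i)_{i \in I}$.

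Fix $i$ and write $\E := \E_i$. By the definition of finitely generated and projective, there exist an integer $n$ and an idempotent $e \in M_n(\B)$ such that $\E \cong e\B^n$ as right $\B$-modules, where $M_n(\B)$ carries its natural operator algebra structure. The free module $\B^n$ embeds isometrically into $\H_\B$ as the first $n$ coordinates, so we may regard $e$ as a bounded idempotent on $\H_\B$ by extending by zero on the orthogonal complement.

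The key step is to upgrade the idempotent $e$ to a self-adjoint projection $p$ with the same range. Following the standard Murray--von Neumann trick, one produces an invertible element $s \in M_n(\B)$ such that $p := s e s^{-1}$ is a self-adjoint idempotent and $\im(p)$ is (module-)isomorphic to $\im(e)$. Equipping $\E$ with the $\B$-valued inner product pulled back from $p\H_\B$ under this isomorphism renders $\E$ completely isometrically unitarily isomorphic to $\im(p)$, so that $\E$ qualifies as a projective operator module. The resulting $p$ is a bounded projection, hence a projection operator in the sense of Definition \ref{proj}.

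With each $\E_i$ thus realized as a projective operator module with associated bounded projection $p_i$, Proposition \ref{directsum} applies and gives the desired conclusion. Observe that, although every individual $p_i$ is bounded, the direct sum $p = \bigoplus_i p_i$ on $\H_\B$ is typically unbounded when $\sup_i \|p_i\| = \infty$, and this is precisely the scenario that Definition \ref{proj} and Proposition \ref{directsum} were set up to accommodate. The main technical obstacle is in the third paragraph: in a $C^*$-algebra the passage from an idempotent to a similar self-adjoint projection is routine, but in the involutive operator algebra setting one must check that the required invertibility takes place within $M_n(\B)$ itself and that the resulting similarity induces a cb-isometric unitary isomorphism of modules rather than merely a topological one.
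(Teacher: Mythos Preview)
Your proposal is correct and follows exactly the paper's strategy: realize each algebraically finitely generated projective $\E_i$ as a projective operator module via a bounded projection, then invoke Proposition~\ref{directsum}. You in fact supply more detail than the paper's two-line argument and rightly flag the passage from an idempotent to a self-adjoint projection in $M_n(\B)$ as the one point requiring care in the general involutive operator algebra setting, a subtlety the paper leaves implicit.
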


\begin{proof} Each of the finitely generated projective modules $\E_i$ is in particular a projective operator module (for which the projection can be chosen to be bounded). The result now follows from the previous proposition.
\end{proof}

\begin{rem}\textup{We stress that there is a difference here between internal and external direct sums. For an unbounded projection $p$ on $\mathcal{H}_{\mathcal{B}}$, the internal direct sum $p(\Dom (p))+(1-p)(\Dom (p))$ is orthogonal, but it is not closed. The external direct sum $p(\Dom (p))\oplus(1-p)(\Dom (p))$ is closed by construction and therefore cannot be isomorphic to the internal sum. This phenomenon illustrates the difference between $C^{1}$- and $C^{*}$-modules on one hand and projective operator modules on the other.}
\end{rem}
Given a right projective $\B$-module $\E$, we define the algebra $\End^{*}_{\mathcal{B}}(\mathcal{E})$ to be the collection of completely bounded maps $T:\mathcal{E}\rightarrow\mathcal{E}$ which admit an adjoint $T^{*}$, so that $\langle Te,f\rangle=\langle e,T^{*}f\rangle$ for all $e,f\in\mathcal{E}$. Note that unitary operators in $\End^{*}_{\mathcal{B}}(\E)$ are invertible but need not be isometric. We define the algebra $\K_{\mathcal{B}}(\mathcal{E})\subseteq \End^{*}_{\mathcal{B}}(\mathcal{E})$ of {\bf compact operators} to be the norm closure of the space of finite rank operators. Note that the proof of self-duality $\E\cong\E^{*}$ as described in \cite{KaLe, Mes09b} in the case of bounded projections breaks down for unbounded projections.
The next results explain the behaviour of projective operator modules upon taking their tensor products with $C^*$-modules. Indeed,  let $\mathcal{E}\lrh \mathcal{B} $ be a projective operator module, let $\mathpzc{F}\lrh C$ be a $C^{*}$-module and let $\pi:\mathcal{B}\rightarrow\End^{*}_{C}(\mathpzc{F})$ be a completely bounded homomorphism (but not necessarily a $*$-homomorphism).

\begin{prop}The Haagerup tensor product $\mathcal{E}\,\hotimes_{\mathcal{B}}\,\mathpzc{F}$ is canonically cb-isomorphic to a $C^{*}$-module. 
\end{prop}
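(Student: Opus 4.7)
The plan is to exploit the defining embedding $\mathcal{E}\hookrightarrow\mathcal{H}_\mathcal{B}$ coming from the projective operator module structure, together with the functoriality and injectivity of the (module) Haagerup tensor product, in order to realize $\mathcal{E}\hotimes_\mathcal{B}\mathpzc{F}$ as a closed submodule of an honest $C^*$-module.

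First I would handle the model case $\mathcal{E}=\mathcal{H}_\mathcal{B}$. Writing $\mathcal{H}_\mathcal{B}=\ell^2(\hat{\Z})\,\tilde\otimes\,\mathcal{B}$, associativity of the (module) Haagerup tensor product together with the isomorphism $\mathcal{B}\hotimes_\mathcal{B}\mathpzc{F}\cong\mathpzc{F}$ induced by the unital homomorphism $\pi$ yields a completely bounded isomorphism
\[
\mathcal{H}_\mathcal{B}\hotimes_\mathcal{B}\mathpzc{F}\;\cong\;\ell^2(\hat{\Z})\,\tilde\otimes\,\mathpzc{F}.
\]
The right-hand side is the Haagerup tensor product of the (graded) column Hilbert space with the operator space $\mathpzc{F}$, which is well known to be completely isomorphic to the column $C^*$-module $\mathpzc{F}^{\infty}=\bigoplus_{n\in\hat{\Z}}\mathpzc{F}$ with its standard $C$-valued inner product. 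Thus $\mathcal{H}_\mathcal{B}\hotimes_\mathcal{B}\mathpzc{F}$ is canonically cb-isomorphic to the $C^*$-module $\mathpzc{F}^{\infty}\lrh C$.

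Next, by the definition of a projective operator module, there is a projection operator $p$ on $\mathcal{H}_\mathcal{B}$ and a completely isometric unitary isomorphism $\mathcal{E}\cong\im(p)\hookrightarrow\mathcal{H}_\mathcal{B}$. By the injectivity of the Haagerup tensor product with respect to completely isometric inclusions (applied here to the balanced version over $\mathcal{B}$, which is meaningful because the relations modded out are the same on both sides), the induced map
\[
\mathcal{E}\hotimes_\mathcal{B}\mathpzc{F}\;\longrightarrow\;\mathcal{H}_\mathcal{B}\hotimes_\mathcal{B}\mathpzc{F}\;\cong\;\mathpzc{F}^\infty
\]
is a cb-isomorphism onto a closed subspace, and is manifestly a right $C$-module map. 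The closure of its image is therefore a closed right $C$-submodule of the $C^*$-module $\mathpzc{F}^\infty$, and every such closed submodule of a $C^*$-module is itself a $C^*$-module over $C$ with the restricted inner product. This closed submodule provides the desired $C^*$-module to which $\mathcal{E}\hotimes_\mathcal{B}\mathpzc{F}$ is canonically cb-isomorphic.

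I expect the main technical obstacle to be the precise verification that the module Haagerup tensor product preserves completely isometric inclusions in this balanced setting, given that the left action $\pi:\mathcal{B}\to\End^*_C(\mathpzc{F})$ is only completely bounded rather than a $*$-homomorphism; one must check that balancing over the image of $p$ is compatible with balancing over $\mathcal{H}_\mathcal{B}$, and that the natural semi-inner product inherited from $\mathpzc{F}^\infty$ agrees (up to cb-equivalence) with the one coming from the operator-module structure on $\mathcal{E}\hotimes_\mathcal{B}\mathpzc{F}$. Once this functoriality is in place, the conclusion is immediate from the stability of $C^*$-modules under taking closed submodules.
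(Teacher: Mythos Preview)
Your overall strategy matches the paper's: reduce to the model case $\mathcal{H}_\mathcal{B}\hotimes_\mathcal{B}\mathpzc{F}\cong\bigoplus_{i\in\hat{\Z}}\mathpzc{F}$, then realize $\mathcal{E}\hotimes_\mathcal{B}\mathpzc{F}$ as a closed submodule of this $C^*$-module. The difference lies precisely in how the embedding is obtained. You appeal to injectivity of the balanced Haagerup tensor product with respect to the completely isometric inclusion $\im(p)\hookrightarrow\mathcal{H}_\mathcal{B}$, and you correctly flag this as the delicate point, since for module tensor products over a general operator algebra (with $\pi$ only completely bounded) such injectivity is not available off the shelf.

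The paper sidesteps this issue entirely. Rather than tensoring the inclusion $\im(p)\hookrightarrow\mathcal{H}_\mathcal{B}$, it transports the \emph{graph} of $p$: one sets
\[
\mathfrak{G}(p\otimes 1):=\mathfrak{G}(p)\,\hotimes_{\mathcal{B}}\,\mathpzc{F}\subseteq(\mathcal{H}_\mathcal{B}\oplus\mathcal{H}_\mathcal{B})\hotimes_\mathcal{B}\mathpzc{F}\cong(\mathcal{H}_\mathcal{B}\hotimes_\mathcal{B}\mathpzc{F})\oplus(\mathcal{H}_\mathcal{B}\hotimes_\mathcal{B}\mathpzc{F}),
\]
which defines a closed idempotent $p\otimes 1$ on the ambient $C^*$-module, and then identifies $\mathcal{E}\hotimes_\mathcal{B}\mathpzc{F}$ with $\im(p\otimes 1)=(\mathrm{pr}_2\otimes 1)(\mathfrak{G}(p\otimes 1))$. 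The point is that $\mathfrak{G}(p)\subseteq\mathcal{H}_\mathcal{B}\oplus\mathcal{H}_\mathcal{B}$ is already embedded in a free module, so one only needs the model-case computation together with functoriality for the coordinate projections, not any injectivity statement for the balanced tensor product. This graph trick is exactly what resolves the obstacle you anticipated; your argument would become complete once you replace the direct appeal to injectivity by this construction.
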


\begin{proof} By definition we may identify $\mathcal{E}$ with a module $p(\Dom( p))\subseteq\mathcal{H}_{B}$. Replacing $\mathpzc{F}$ with the essential submodule  $\overline{\pi(\mathcal{B})\mathpzc{F}}$, the closure of the linear span of elements of the form $\pi(b)f$, we may assume that $\pi$ is a unital homomorphism. From 
\cite{Blech} we know that there is a cb-isomorphism
\[ \mathcal{H}_{\mathcal{B}}\,\hotimes_{\mathcal{B}}\,\mathpzc{F}\cong\bigoplus_{i\in\hat{\Z}}\mathpzc{F},
\]
where the left-hand side is a $C^{*}$-module. We define the closed idempotent operator $p\otimes1$ via its graph, that is 
\[\mathfrak{G}(p\otimes 1):=\mathfrak{G}(p)\,\hotimes_{\mathcal{B}}\,\mathpzc{F}\subseteq(\mathcal{H}_{\mathcal{B}}\oplus\mathcal{H}_{\mathcal{B}})\,\hotimes\,\mathpzc{F}\cong (\mathcal{H}_{\mathcal{B}}\,\hotimes\,\mathpzc{F})\oplus(\mathcal{H}_{\mathcal{B}}\,\hotimes\,\mathpzc{F}),\]
so that
\[\Dom (p\otimes 1) =(\textnormal{pr}_{1}\otimes 1)(\mathfrak{G}(p\otimes 1)),\qquad\im (p\otimes 1)=(\textnormal{pr}_{2}\otimes 1)(\mathfrak{G}(p\otimes 1)).\] 
Then 
\[\mathcal{E}\,\hotimes_{\mathcal{B}}\,\mathpzc{F}\cong \im (p\otimes 1)\subseteq\mathcal{H}_{\mathcal{B}}\,\hotimes\,\mathpzc{F},\] 
whence it is a closed submodule of a $C^{*}$-module and hence itself a $C^{*}$-module.
\end{proof}

\begin{cor}\label{co:*hom} If $\mathcal{B}$ is a Lipschitz algebra and $\pi$ is a $*$-homomorphism then $\mathcal{E}\hotimes_{\mathcal{B}}\mathpzc{F}\cong \mathpzc{E}\hotimes_{B}\mathpzc{F}$, where $B$ and $\mathpzc{E}$ are the $C^{*}$-envelopes of $\mathcal{B}$ and $\mathcal{E}$ respectively.
\end{cor}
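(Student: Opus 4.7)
The plan is to construct a natural completely bounded map $\phi: \mathcal{E}\hotimes_{\mathcal{B}}\mathpzc{F}\to\mathpzc{E}\hotimes_{B}\mathpzc{F}$ and show it is a cb-isometric isomorphism, by exploiting the $C^*$-module structure granted to both sides. First, since $\pi:\mathcal{B}\to\End^{*}_{C}(\mathpzc{F})$ is a $*$-homomorphism into a $C^*$-algebra, the universal property of the $C^*$-envelope extends it uniquely to a $*$-homomorphism $\bar{\pi}:B\to\End^{*}_{C}(\mathpzc{F})$. Combining the completely isometric inclusion $\mathcal{E}\hookrightarrow\mathpzc{E}$ (by definition of the $C^{*}$-envelope of the operator module) with $\bar{\pi}$ extending $\pi$, the functoriality of the (balanced) Haagerup tensor product produces the map $\phi$ as a cb-homomorphism of operator modules over $C$.

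The preceding proposition shows that $\mathcal{E}\hotimes_{\mathcal{B}}\mathpzc{F}$ is cb-isomorphic to a $C^{*}$-module over $C$, while $\mathpzc{E}\hotimes_{B}\mathpzc{F}$ is a $C^{*}$-module over $C$ by the standard theory of interior tensor products. On a simple tensor $e_{1}\otimes f_{1},\,e_{2}\otimes f_{2}$ with $e_{i}\in\mathcal{E}\subseteq\mathpzc{E}$ and $f_{i}\in\mathpzc{F}$, the $C$-valued inner product computed in either module is
\[
\langle e_{1}\otimes f_{1},\,e_{2}\otimes f_{2}\rangle_{C}=\langle f_{1},\,\pi(\langle e_{1},e_{2}\rangle_{\mathcal{E}})f_{2}\rangle_{\mathpzc{F}},
\]
using that the $\mathcal{B}$-valued inner product on $\mathcal{E}$ coincides with the restriction of the $B$-valued inner product on $\mathpzc{E}$, and that $\bar{\pi}$ agrees with $\pi$ on $\mathcal{B}$. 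Hence $\phi$ is isometric on the algebraic tensor product $\mathcal{E}\odot\mathpzc{F}$.

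To conclude, $\mathcal{E}$ is norm-dense in $\mathpzc{E}$, and so the image of $\mathcal{E}\odot\mathpzc{F}$ under $\phi$ is dense in $\mathpzc{E}\hotimes_{B}\mathpzc{F}$. Completeness of both $C^{*}$-modules, together with the isometry property, then promotes $\phi$ to an isomorphism of $C^{*}$-modules over $C$, which in particular is a cb-isomorphism.

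The step I anticipate to be the main obstacle is the verification that the $C^{*}$-module structure produced on $\mathcal{E}\hotimes_{\mathcal{B}}\mathpzc{F}$ in the preceding proposition, via the identification with $\im(p\otimes 1)\subseteq\mathcal{H}_{\mathcal{B}}\hotimes_{\mathcal{B}}\mathpzc{F}$, genuinely matches the one inherited from $\mathpzc{E}\hotimes_{B}\mathpzc{F}$ via $\phi$; in particular that the unboundedness of $p$ does not introduce additional closure relations into the Haagerup norm that would not be seen on the interior tensor product side. The inner-product computation on simple tensors reduces this to a purely algebraic check, after which the uniqueness of completions of pre-$C^{*}$-modules takes over.
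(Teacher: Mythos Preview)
Your proof is correct, but it takes a different route from the paper. The paper's argument is shorter and structural: since $\pi$ is a $*$-homomorphism, it is automatically contractive for the $C^{*}$-norm on $\mathcal{B}$ and so factors through $B$; consequently $\mathcal{H}_{\mathcal{B}}\hotimes_{\mathcal{B}}\mathpzc{F}\cong\mathcal{H}_{B}\hotimes_{B}\mathpzc{F}$ is a genuine $C^{*}$-module, and the idempotent $p\otimes 1$ constructed in the preceding proposition is then a self-adjoint projection on a $C^{*}$-module, hence \emph{bounded} by Lemma~\ref{C*B}. The identification $\mathcal{E}\hotimes_{\mathcal{B}}\mathpzc{F}\cong(p\otimes 1)(\mathcal{H}_{B}\hotimes_{B}\mathpzc{F})=(p\mathcal{H}_{B})\hotimes_{B}\mathpzc{F}\cong\mathpzc{E}\hotimes_{B}\mathpzc{F}$ follows in one line. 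Your approach instead builds the comparison map directly and checks it preserves inner products; this works, but the delicate point you flag in your last paragraph---that the Haagerup completion on the left does not pick up elements invisible to the inner-product norm---is precisely what the paper's observation that $p\otimes 1$ is bounded dispatches immediately. In effect, Lemma~\ref{C*B} is the single lemma that resolves your ``main obstacle'' cleanly, and the paper's proof is organised around it.
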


\begin{proof} When $\pi$ is a $*$-homomorphism, it is automatically continuous (even contractive) with respect to the $C^{*}$-norm on $\mathcal{B}$. The idempotent $p\otimes 1$ is a projection which is bounded by Lemma~\ref{C*B}. Therefore \[\mathcal{E}\hotimes_{\mathcal{B}}\,\mathpzc{F}\cong(p\otimes 1)(\mathcal{H}_{\mathcal{B}}\hotimes_{\mathcal{B}}\,\mathpzc{F})=(p\mathcal{H}_{B})\hotimes_{B}\,\mathpzc{F}\cong\mathpzc{E}\hotimes_{B}\mathpzc{F},\]
and the latter is isomorphic to the standard $C^{*}$-module tensor product, {\em cf}. \cite{Blech2}. 
\end{proof}

The notion of a self-adjoint regular operator extends to projective operator modules. In this setting a densely defined self-adjoint operator $D:\mathfrak{Dom} (D)\rightarrow \mathcal{E}$ is said to be {\bf regular} if the operators $D\pm i$ are surjective. In the $C^{*}$-situation, the resolvents $(D\pm i )^{-1}$ are automatically contractive and so there it is sufficient to require that $D\pm i$ have dense range. To construct regular operators in practice the following lemma is useful. It is proved in the same way as in \cite{Mes09b}.

\begin{lem}\label{pmi} Let $D$ be a densely defined closed symmetric operator on $\mathcal{E}$. Then the following are equivalent: 
\begin{enumerate}[\hspace{0.5cm} (i)]
\item $D$ is self-adjoint and regular; 
\item $\im (D\pm i)$ are dense in $\mathcal{E}$ and $(D\pm i)^{-1}$ are completely bounded for the operator space norm on $\mathcal{E}$.
\end{enumerate} 
 If either (and hence both) of these conditions holds, then $(D\pm i)^{-1}\in\End^{*}_{\mathcal{B}}(\mathcal{E})$.
\end{lem}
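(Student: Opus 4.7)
The plan is to establish both implications separately, with the main work in (i) $\Rightarrow$ (ii), where complete boundedness of the resolvents must be extracted from regularity.

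For (ii) $\Rightarrow$ (i), I would first use that complete boundedness implies ordinary boundedness to extend $(D\pm i)^{-1}$ by continuity to bounded operators $R_\pm$ on all of $\E$. A standard closedness argument then shows that $R_\pm\E\subseteq\Dom(D)$ and $(D\pm i)R_\pm=1_\E$: for $y\in\E$, pick a sequence $y_n\in\im(D\pm i)$ with $y_n\to y$, observe that $(D\pm i)^{-1}y_n\to R_\pm y$ while $(D\pm i)(D\pm i)^{-1}y_n=y_n\to y$, and appeal to closedness of $D\pm i$. This yields surjectivity of $D\pm i$, i.e. that $D$ is regular. Self-adjointness follows from the classical argument: for $y\in\Dom(D^*)$, use regularity to write $D^*y-iy=(D-i)x$ for some $x\in\Dom(D)$, then pair with arbitrary $z\in\Dom(D)$ and use symmetry of $D$ to obtain $\langle y-x,(D+i)z\rangle=0$; surjectivity of $D+i$ together with non-degeneracy of the $\B$-valued inner product (inherited from $\H_\B$) forces $y=x\in\Dom(D)$, so $D^*\subseteq D$.

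For (i) $\Rightarrow$ (ii), surjectivity of $D\pm i$ from regularity gives density of the images for free, so the content is complete boundedness. Here I would exploit the graph decomposition recalled earlier in the paper at the level of $\H_\B$, applied to the closed projective operator submodule $\mathfrak{G}(D)\subseteq\E\oplus\E$: coordinatewise addition
\[
((x,Dx),(-Dy,y))\longmapsto(x-Dy,\,Dx+y)
\]
is a unitary cb-isomorphism $\mathfrak{G}(D)\oplus v\,\mathfrak{G}(D)\xrightarrow{\sim}\E\oplus\E$. The orthogonal projection onto $\mathfrak{G}(D)$ thereby becomes an adjointable cb-operator on $\E\oplus\E$, whose matrix entries are (up to signs) the operators $(1+D^2)^{-1}$ and $D(1+D^2)^{-1}$. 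Combining these with $(D\pm i)^{-1}=(D\mp i)(1+D^2)^{-1}$ yields complete boundedness of the resolvents. For the final assertion that $(D\pm i)^{-1}\in\End^*_\B(\E)$, cb has just been proved (or is hypothesised), while adjointability is the direct computation $((D+i)^{-1})^*=(D-i)^{-1}$, using symmetry of $D$ on $\Dom(D)$ and the resolvent identities $(D\pm i)(D\pm i)^{-1}=1$.

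The hard part will be the complete boundedness in (i) $\Rightarrow$ (ii): in the $C^*$-module case ordinary boundedness of $(D\pm i)^{-1}$ is automatic and lifts to cb for free, but for general projective operator modules one must exploit the operator space structure explicitly, via the compatibility of the graph decomposition with the matricial norms on $\E\oplus\E$. The proof mirrors that of the corresponding statement in \cite{Mes09b} for stably rigged modules, with the additional care that $\E=\im(p)$ for a possibly unbounded projection $p$.
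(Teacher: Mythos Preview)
Your (ii)$\Rightarrow$(i) argument is essentially the paper's: both extend the resolvents by boundedness and use closedness of $D$ to obtain surjectivity of $D\pm i$; your self-adjointness step (solve $D^*y-iy=(D-i)x$ and pair against $(D+i)z$) is a valid rephrasing of the paper's resolvent identity $f=(D-i)^{-1}(D^*-i)f$, both relying on non-degeneracy of the $\B$-valued inner product.

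For (i)$\Rightarrow$(ii), however, your route through the graph decomposition is circular within the paper's logical structure. The only place where $\mathfrak{G}(D)\oplus v\,\mathfrak{G}(D)\cong\E\oplus\E$ is established for a general projective operator module $\E$ is Proposition~\ref{selfreg}, and its proof explicitly invokes the present lemma to construct the unitary $u:\mathfrak{G}(D)\to\E$ via $(D+i)^{-1}$. The statement you cite ``at the level of $\H_\B$'' does not transfer directly to $\E=\im(p)$ with $p$ possibly unbounded. More to the point, the inverse of the coordinatewise-addition map is itself assembled from $(1+D^2)^{-1}$ and $D(1+D^2)^{-1}$, so showing it is a cb-isomorphism (and hence that the graph projection has cb matrix entries) already presupposes the complete boundedness of the resolvents you are trying to prove.

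The paper sidesteps this by a more direct observation: once $D\pm i$ are bijective, the everywhere-defined inverses satisfy $\langle(D+i)^{-1}e,f\rangle=\langle e,(D-i)^{-1}f\rangle$, so they are mutually adjoint, and the paper concludes ``whence they must be bounded and adjointable'' (the step inherited from \cite{Mes09b}). This delivers $(D\pm i)^{-1}\in\End^*_\B(\E)$ without passing through the graph, and only \emph{afterwards} is the graph decomposition deduced in Proposition~\ref{selfreg}. Your final assertion about adjointability of the resolvents matches the paper.
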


\begin{proof} If $D$ is self-adjoint and regular, then the resolvents $D\pm i:\Dom (D)\rightarrow \mathcal{E}$ are surjective. They are also injective by a standard argument. The inverses $(D\pm i)^{-1}:\mathcal{E}\rightarrow\Dom (D)$ are mutually adjoint, whence they must be bounded and adjointable. 

To obtain the converse, we denote by $r_{\pm}$ the extensions of the operators $(D\pm i)^{-1}$ from $\im (D\pm i)$ to $\E$.  Given a sequence $(x_{n})\subset \im(D\pm i)$ converging to $x\in\mathcal{E}$, boundedness implies that $r_{\pm}x_{n}\rightarrow r_{\pm}x$ and that $D(r_{\pm}x_{n})=(1\mp i r_{\pm})x_{n}$ is convergent. Since $D$ is closed, we deduce that $r_{\pm}x\in \Dom (D)$ and so $\im (r_{\pm})\subseteq \Dom (D)$. Therefore $r_{\pm}=(D\pm i)^{-1}$ and $D\pm i:\Dom D\rightarrow \mathcal{E}$ are bijective, so $\Dom(D)=\im (D+i)^{-1}$. Now for all $e\in\E$ and each $f\in \Dom (D^{*})$ we compute that
\[\langle (D+i)^{-1}e,D^{*}f\rangle=\langle D(D+i)^{-1}e,f\rangle=\langle e,f\rangle-\langle i(D+i)^{-1}e,f\rangle\]
and hence that
\[\langle e,f\rangle=\langle (D+i)^{-1}e,D^{*}f\rangle+\langle i(D+i)^{-1}e,f\rangle=\langle e,(D-i)^{-1}(D^{*}-i)f\rangle. \]
It follows that $f=(D-i)^{-1}(D^{*}-i)f\in\Dom (D)$ and so $D$ is self-adjoint, as required.
\end{proof}

Immediately we are led to the following result, which gives the aforementioned practical characterization of self-adjoint regular operators on projective operator modules.

\begin{prop}\label{selfreg} Let $\mathcal{E}$ be a projective operator module and $D:\Dom (D)\rightarrow \mathcal{E}$ a self-adjoint regular operator on $\mathcal{E}$. Then $\mathfrak{G}(D)$ is a projective operator module and $\mathfrak{G}(D)\oplus v\, \mathfrak{G}(D)\cong \mathcal{E}\oplus\mathcal{E}$.
\end{prop}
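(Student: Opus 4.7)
The plan is to exhibit the coordinatewise addition map
\[
\Phi\colon \mathfrak{G}(D)\oplus v\,\mathfrak{G}(D)\longrightarrow \mathcal{E}\oplus\mathcal{E},\qquad \Phi\bigl((x,Dx),(-Dy,y)\bigr) := (x-Dy,\,Dx+y)
\]
as a completely isometric unitary isomorphism, and then to read off the projective operator module structure on $\mathfrak{G}(D)$ from its realization as the range of a bounded self-adjoint projection inside $\mathcal{E}\oplus\mathcal{E}$.

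First I would show that $\mathfrak{G}(D)$ and $v\,\mathfrak{G}(D)$ are mutually orthogonal in $\mathcal{E}\oplus\mathcal{E}$: for $x,y\in\Dom(D)$, self-adjointness gives $\langle(x,Dx),(-Dy,y)\rangle = -\langle x,Dy\rangle+\langle Dx,y\rangle = 0$. The same cancellation, applied term by term in the expansion of $\langle\Phi(\xi),\Phi(\eta)\rangle$, shows that $\Phi$ preserves the direct-sum inner product. Since the identical identity persists after tensoring with matrices, $\Phi$ is actually a complete isometry; injectivity follows from the orthogonality just established.

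For surjectivity I would invoke Lemma~\ref{pmi}: the resolvents $(D\pm i)^{-1}\in\End^*_\mathcal{B}(\mathcal{E})$ map $\mathcal{E}$ onto $\Dom(D)$. Given $(a,b)\in\mathcal{E}\oplus\mathcal{E}$, the system $x-Dy=a$, $Dx+y=b$ becomes, via the substitutions $u:=x+iy$ and $w:=x-iy$, the decoupled pair $(1+iD)u=a+ib$ and $(1-iD)w=a-ib$. Using the factorisations $1\pm iD = \pm i(D\mp i)$, I invert these to
\[
u = -i(D-i)^{-1}(a+ib),\qquad w = i(D+i)^{-1}(a-ib),
\]
both of which lie in $\Dom(D)$, and then recover $x=(u+w)/2$, $y=(u-w)/(2i)$ in $\Dom(D)$ satisfying $\Phi((x,Dx),(-Dy,y))=(a,b)$. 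Because these formulas express $\Phi^{-1}$ as a sum of compositions of the adjointable bounded operators $(D\pm i)^{-1}$, the inverse is automatically completely bounded.

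Finally, to exhibit $\mathfrak{G}(D)$ as a projective operator module, I would fix an isomorphism $\mathcal{E}\cong\im(p)\subseteq\mathcal{H}_\mathcal{B}$ with $p$ a projection in the sense of Definition~\ref{proj}, so that $\mathcal{E}\oplus\mathcal{E}$ is cb-isomorphic to the range of $p\oplus p$ inside $\mathcal{H}_\mathcal{B}\oplus\mathcal{H}_\mathcal{B}\cong\mathcal{H}_\mathcal{B}$. The orthogonal projection $P$ onto the first summand of $\mathfrak{G}(D)\oplus v\,\mathfrak{G}(D)$ is a bounded self-adjoint idempotent on $\mathcal{E}\oplus\mathcal{E}$; extending $P$ by zero on the complement yields a bounded (hence regular) projection $\tilde{P}$ on $\mathcal{H}_\mathcal{B}$ whose image is completely isometrically unitarily isomorphic to $\mathfrak{G}(D)$. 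The main subtle point throughout is the complete-boundedness bookkeeping: the $C^*$-module version of the orthogonal decomposition is standard, whereas in the projective operator module setting one must verify at each step that the identities and norm estimates survive at the matrix level. Fortunately everything reduces to matrix-valued versions of the same two computations carried out above, so no genuine new estimate is needed.
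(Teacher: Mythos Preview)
Your argument for the isomorphism $\mathfrak{G}(D)\oplus v\,\mathfrak{G}(D)\cong\mathcal{E}\oplus\mathcal{E}$ via $\Phi$ is essentially the same as the paper's, only run in reverse: the paper writes down the explicit inverse matrix
\[
g=\begin{pmatrix}(D+i)^{-1}&-D(D+i)^{-1}\\ D(D+i)^{-1}&(D+i)^{-1}\end{pmatrix}
\]
and checks it is unitary, whereas you compute $\Phi^{-1}$ through the substitution $u=x+iy$, $w=x-iy$. So far so good.

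The gap is in your extraction of the projective operator module structure on $\mathfrak{G}(D)$. You want to extend the bounded projection $P$ on $\mathcal{E}\oplus\mathcal{E}\cong\im(p\oplus p)$ ``by zero on the complement'' to obtain a projection $\tilde P$ on $\mathcal{H}_\mathcal{B}$. But when $p$ is unbounded, the internal orthogonal sum $\im(p\oplus p)+\im((1-p)\oplus(1-p))$ equals only $\Dom(p\oplus p)$, which is dense but not closed (the paper makes exactly this point in the remark following Corollary~\ref{cor:finitesum}). So $\tilde P$ is only densely defined, and your claim that it is bounded fails: for $x\in\Dom(p\oplus p)$ with decomposition $x=z+w$, you would need $\|z\|\leq C\|x\|$, i.e.\ boundedness of $p\oplus p$ itself. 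Whether the resulting densely defined $\tilde P$ is self-adjoint in the sense of Definition~\ref{proj} is a separate, non-trivial question you have not addressed.

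The paper sidesteps all of this by observing directly that
\[
u\colon\mathfrak{G}(D)\to\mathcal{E},\qquad (e,De)\mapsto (D+i)e
\]
is a unitary isomorphism with inverse $u^*\colon e\mapsto((D+i)^{-1}e,\,D(D+i)^{-1}e)$. Since $\mathcal{E}$ is already a projective operator module, this immediately gives the projective structure on $\mathfrak{G}(D)$ without ever needing to manufacture a new projection on $\mathcal{H}_\mathcal{B}$. Note that this map $u^*$ is precisely the first column of the matrix $g$ above, so it is already implicit in your own computation of $\Phi^{-1}$; you simply did not isolate it.
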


\begin{proof} By Lemma \ref{pmi} the map 
\[u:\mathfrak{G}(D)\to \E,\qquad \begin{pmatrix}e\\ De\end{pmatrix}\mapsto (D+i)e,\] 
is a bijection whose adjoint is given by
\[u^{*}:\E\to\mathfrak{G}(D),\qquad e\mapsto \begin{pmatrix}(D+i)^{-1}e\\ D(D+i)^{-1}e\end{pmatrix}.\] It is straightforward to check that these maps preserve the inner product and hence that the map $u:\mathfrak{G}(D)\rightarrow \mathcal{E}$ is a unitary isomorphism. Therefore $\mathfrak{G}(D)$ is a projective operator module. Moreover, the matrix
\[g=\begin{pmatrix}
(D+i)^{-1} & -D(D+i)^{-1} \\ D(D+i)^{-1} & (D+i)^{-1}
\end{pmatrix}\]
defines a unitary operator mapping $\mathcal{E}\oplus\mathcal{E}$ onto $\mathfrak{G}(D)\oplus v\,\mathfrak{G}(D)$.
\end{proof}

\subsection{Lipschitz modules and connections} In the previous section we studied projective modules over an arbitrary involutive operator algebra $\B$. However, in the special case where $\B$ is the Lipschitz algebra associated to a given spectral triple $(B,\mathcal{H},D)$ there is a spectrally invariant dense embedding $\mathcal{B}\hookrightarrow B$. For such algebras we will restrict to the subclass of projections that admit a decomposition as a countable direct sum of projections in $\End^{*}_{\B}(\H_{\B})$. In other words, we restrict to direct sums of stably rigged modules. The main motivation for this restriction is to deal with connections and regularity of the operators they induce. In order to establish this regularity and to deal with connections on projective operator modules we need also to modify the notion of universal differential forms used in \cite{Mes09b}.

Recall that, given a spectral triple $(B,\mathcal{H},D)$, the associated space of one-forms $\Omega^{1}_{D}(\B)$ defined in eq.~\eqref{ccalc} 
is a $\mathcal{B}$-bimodule. It is in fact a left module over the $C^{*}$-algebra $B$: since $\mathcal{B}$ is dense in $B$, we can choose for each $b\in B$ a sequence $(b_{i})\subset\mathcal{B}$ with $b_{i}\rightarrow b$. Then for each $\omega\in\Omega^{1}_{D}(\B)$, the sequence $(b_{i}\omega)\subset\Omega^{1}_{D}(\B)$ is Cauchy and hence has a limit in $\Omega^{1}_{D}(\B)$ which, by uniqueness of limits, must be $b\omega$. The map
\[ \mathcal{B}\rightarrow \Omega^{1}_{D}(\B),\qquad b \mapsto [D,b],\]
is thus a graded bimodule derivation into a $(B,\mathcal{B})$-bimodule. This motivates the following  

\begin{defn} The space of {\bf universal one-forms} $\Omega^{1}(B,\mathcal{B})$ over a graded Lipschitz algebra $\mathcal{B}$ is defined to be the kernel of the map
\[m: B\,\hotimes\,\mathcal{B}\rightarrow B,\qquad a\otimes b \mapsto a\gamma(b)\]
where $\gamma$ denotes the grading on $\mathcal{B}$. This is a $(B,\mathcal{B})$-bimodule map when $B$ is viewed as a $(B,\gamma(\mathcal{B}))$-bimodule. The map 
\[\d:\mathcal{B}\rightarrow\Omega^{1}(B,\mathcal{B}), \qquad
b \mapsto 1\otimes b-\gamma(b)\otimes 1,\]
is called the {\bf universal derivation}. 
\end{defn}

\begin{prop}The derivation $\d$ is indeed universal: for any cb-derivation $\delta:\mathcal{B}\rightarrow M$ into a $(B,\mathcal{B})$ cb-operator bimodule $M$, there is a unique completely bounded $(B,\mathcal{B})$-bimodule map $j_{\delta}:\Omega^{1}(B,\mathcal{B})\rightarrow M$ such that $1\cdot\delta(b)\cdot 1=j_{\delta}\circ \d(b)$. 
\end{prop}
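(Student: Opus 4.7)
The plan is to extend the assignment $b\mapsto \delta(b)$ to $\Omega^{1}(B,\mathcal{B})$ by exploiting the left $B$-module structure of $M$. First I would construct an auxiliary cb linear map $\tilde{j}:B\hotimes\mathcal{B}\to M$ from the cb bilinear map $(a,b)\mapsto a\cdot\delta(b)$ via the universal property of the Haagerup tensor product: this map is the composition of $\id\times\delta$ with the left $B$-action $B\times M\to M$, both of which are completely bounded, and by $\CC$-bilinearity it factors through $B\hotimes\mathcal{B}$. Setting $j_{\delta}:=\tilde{j}|_{\Omega^{1}(B,\mathcal{B})}$ produces a cb linear map from $\Omega^{1}(B,\mathcal{B})$ to $M$. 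To check that $j_{\delta}\circ\d=\delta$, observe that any graded derivation satisfies $\delta(1)=\delta(1\cdot 1)=2\delta(1)$, hence $\delta(1)=0$, so $j_{\delta}(\d b)=\tilde{j}(1\otimes b)-\tilde{j}(\gamma(b)\otimes 1)=\delta(b)-\gamma(b)\cdot\delta(1)=\delta(b)$.

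The heart of the proof is verifying that $j_{\delta}$ is a $(B,\mathcal{B})$-bimodule map. Left $B$-linearity is immediate from the formula defining $\tilde{j}$. For right $\mathcal{B}$-linearity, take a typical element $\omega=\sum_{i}a_{i}\otimes b_{i}\in\Omega^{1}(B,\mathcal{B})$, so that by definition $\sum_{i}a_{i}\gamma(b_{i})=0$. Applying the graded Leibniz rule $\delta(b_{i}c)=\delta(b_{i})\cdot c+\gamma(b_{i})\cdot\delta(c)$ gives
\[
j_{\delta}(\omega\cdot c)=\sum_{i}a_{i}\delta(b_{i}c)=\Bigl(\sum_{i}a_{i}\delta(b_{i})\Bigr)c+\Bigl(\sum_{i}a_{i}\gamma(b_{i})\Bigr)\delta(c)=j_{\delta}(\omega)\cdot c,
\]
where the second term vanishes precisely because $\omega\in\ker m$. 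This cancellation is the crucial ingredient: it explains why one must pass to the kernel to obtain a genuine bimodule map, and not merely a cb linear map on all of $B\hotimes\mathcal{B}$.

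Uniqueness is then a consequence of a density argument showing that $\Omega^{1}(B,\mathcal{B})$ is the closed linear span of elements of the form $a\,\d b$. Indeed, if $\omega_{n}\to\omega$ is any approximation by algebraic tensors $\omega_{n}=\sum_{i}a_{i,n}\otimes b_{i,n}$, then the corrected elements $\omega_{n}-m(\omega_{n})\otimes 1$ lie in $\Omega^{1}(B,\mathcal{B})$, converge to $\omega$, and rewrite as $\sum_{i}a_{i,n}\,\d b_{i,n}$. Any $(B,\mathcal{B})$-bimodule map $j$ with $j\circ\d=\delta$ must agree with $j_{\delta}$ on such finite sums by bimodule linearity, and hence everywhere by cb continuity. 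The main technical obstacle I anticipate is confirming complete boundedness of $\tilde{j}$ rigorously in the graded Haagerup setting, checking that the involution and grading on $\mathcal{B}$ interact correctly so that the bilinear map genuinely factors through $B\hotimes\mathcal{B}$ with a cb norm estimate.
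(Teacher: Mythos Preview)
Your argument is essentially the paper's own proof: construct $\tilde{j}$ from the cb-bilinear map $(a,b)\mapsto a\,\delta(b)$ via the Haagerup universal property, restrict to the kernel of $m$, and verify right $\mathcal{B}$-linearity using the Leibniz rule together with $\sum a_i\gamma(b_i)=0$. Two small remarks: your claim $\delta(1)=0$ presupposes that $M$ is essential (otherwise $\delta(1)=1\cdot\delta(1)+\delta(1)\cdot 1$ need not force vanishing), which is exactly why the paper first replaces $M$ by $1\cdot M\cdot 1$; and for uniqueness the paper avoids any density argument by observing the purely algebraic identity $\omega=\sum a_i\otimes b_i=\sum a_i\otimes b_i-\sum a_i\gamma(b_i)\otimes 1=\sum a_i\,\d b_i$, valid for every $\omega\in\Omega^1(B,\mathcal{B})$.
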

\begin{proof}
By replacing $M$ by $1\cdot M \cdot 1$ and $\delta$ by $1\cdot \delta \cdot 1$ we may assume that $M$ is an essential bimodule and $\delta(1)=0$.
The map 
\[
B\times\mathcal{B}\rightarrow M,\qquad (a,b)\mapsto a\,\delta(b),
\]
is bilinear and completely bounded, whence it determines a unique completely bounded linear map on the Haagerup tensor product $B\,\hotimes\,\mathcal{B}\rightarrow M $. We define $j_{\delta}$ to be the restriction of this map to $\Omega^{1}(B,\mathcal{B})$. Then for each
$\omega=\sum a_{i}\otimes b_{i}\in\Omega^{1}(B,\mathcal{B})$ we have
\[\omega=\sum a_{i}\otimes b_{i}=\sum a_{i}\otimes b_{i}-a_{i}\gamma(b_{i})\otimes 1=\sum a_{i}\,\d b_{i},\]
since  $\sum a_{i}\gamma(b_{i})=0$ by definition of $\Omega^{1}(B,\mathcal{B})$. Thus $j_{\delta}$ is 
determined by the condition $j_{\delta}(\d b)=1\cdot\delta(b)-b\cdot\delta(1)$. It is obvious that $j_{\delta}$ is a left $B$ -module map. For the right $\mathcal{B}$-module structure we have
\[\begin{split}
j_{\delta}(\omega b) &=j_{\delta}\left(\sum a_{i}\otimes b_{i}b\right)
=\sum a_{i}\delta(b_{i}b)\\
&=\sum a_{i}\gamma(b_{i})\delta(b)+a_{i}\delta(b_{i})b \\
&=\sum a_{i}\delta(b_{i})b
=j_{\delta}(\omega)b,
\end{split}\]
from which it follows that $j_{\delta}$ is a $(B,\mathcal{B})$-bimodule map.
\end{proof}

 
Recall \cite{Mes09b} that $\Omega^{1}(\B):=\ker (m: \B\hotimes \B\rightarrow \B)$ is the universal module for $\B$-bimodule derivations, in the sense that for every $\B$-module derivation $\delta:\B\rightarrow M$ there is a unique bimodule map $\j_{\delta}:\Omega^{1}(\B)\rightarrow M$ satisfying $1\cdot\delta(b)\cdot 1=j_{\delta}\circ \d(b)$. Since every $(B,\B)$-bimodule is a $\B$-bimodule, the following observation relating the the two universal structures is useful.
\begin{lem}The natural multiplication map $$B\,\hotimes_{\B}\,\Omega^{1}(\mathcal{B}) \to \Omega^{1}(B,\mathcal{B}), $$
is a complete isometry compatible with the induction map $j_{\delta}$, for any cb-$(B,\B)$-bimodule derivation, in the sense that $j_{\delta}^{(B,\B)}=1\otimes j_{\delta}^{\B}$.
\end{lem}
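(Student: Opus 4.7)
My plan is to realize the displayed multiplication map as the restriction of a canonical cb-isomorphism
\[
\mu:B\,\hotimes_{\B}\,(\B\,\hotimes\,\B)\xrightarrow{\ \sim\ } B\,\hotimes\,\B,\qquad b\otimes_{\B}(a\otimes c)\mapsto ba\otimes c,
\]
coming from unitality of $\B$ together with associativity of the graded Haagerup tensor product (one first identifies $B\hotimes_{\B}\B\cong B$ completely isometrically via the inclusion $\B\into B$, then applies associativity). I would then check that the restriction of $\mu$ to the subspace $B\hotimes_{\B}\Omega^{1}(\B)$ has image exactly $\Omega^{1}(B,\B)$. One inclusion is automatic: if $\sum a_{i}\otimes c_{i}\in\Omega^{1}(\B)$ then $\sum a_{i}\gamma(c_{i})=0$, so $m(\sum ba_{i}\otimes c_{i})=b\cdot 0=0$ and the image lies in $\ker m=\Omega^{1}(B,\B)$.

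For surjectivity I would reuse the rewriting trick from the preceding proposition: given $\omega=\sum a_{i}\otimes c_{i}\in\Omega^{1}(B,\B)$, rewrite it as
\[
\omega=\sum\bigl(a_{i}\otimes c_{i}-a_{i}\gamma(c_{i})\otimes 1\bigr)=\sum a_{i}\,\d c_{i},
\]
so that $\sum a_{i}\otimes_{\B}\d c_{i}\in B\hotimes_{\B}\Omega^{1}(\B)$ is a preimage. Since $\mu$ is a complete isometry, its restriction to a closed subspace and corestriction to the matching closed subspace is again a complete isometry, which delivers the first half of the statement.

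For the compatibility $j_{\delta}^{(B,\B)}=1\otimes j_{\delta}^{\B}$ I would appeal to uniqueness in the two universal properties. Any cb-$(B,\B)$-bimodule derivation $\delta:\B\to M$ restricts to a cb-$\B$-bimodule derivation into the underlying $\B$-bimodule, producing $j_{\delta}^{\B}:\Omega^{1}(\B)\to M$; extending by the left $B$-action on $M$ then yields a cb-$(B,\B)$-bimodule map $1\otimes j_{\delta}^{\B}$ on $B\hotimes_{\B}\Omega^{1}(\B)$. Both this map and $j_{\delta}^{(B,\B)}$ are cb-$(B,\B)$-bimodule maps sending $1\otimes_{\B}\d b$ (the preimage of $\d b\in\Omega^{1}(B,\B)$) to $\delta(b)$, so they agree on $(B,\B)$-bimodule generators and therefore coincide. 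The part I expect to be most delicate is pinning down the ambient isomorphism $\mu$ in the graded setting, with correct signs and with the $(B,\B)$-bimodule actions compatible enough for the two multiplication kernels to correspond under $\mu$; once this bookkeeping is in place the remaining steps are either direct computation or an appeal to universality.
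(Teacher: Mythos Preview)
Your proposal is correct and follows essentially the same route as the paper. The paper shows the inverse map $\sum a_i\otimes b_i\mapsto \sum a_i\otimes 1\otimes b_i$ is completely contractive by a direct Haagerup-norm computation using that $b\mapsto 1\otimes b$ is a complete isometry $\B\to \B\hotimes\B$; your argument packages the same content as the structural isomorphism $B\hotimes_{\B}\B\cong B$ together with associativity, and both arguments handle the compatibility $j_{\delta}^{(B,\B)}=1\otimes j_{\delta}^{\B}$ by invoking uniqueness in the universal property.
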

\begin{proof}The multiplication is completely contractive, so it suffices to show that the inverse
 \[\sum a_{i}\otimes b_{i}\mapsto \sum a_{i}\otimes 1 \otimes b_{i},\]
is completely contractive as well. By definition, the Haagerup norm can be computed as
\[\|\sum a_{i}\otimes b_{i}\|_{h}=\inf\{ \|(a_{i}')^{t}\|\|(b_{i}')\|:\sum a_{i}'\otimes b_{i}'=\sum a_{i}\otimes b_{i}\},\]
where $(a_{i}')^{t}$ denotes a row vector and $(b'_{i})$ a column vector. Thus we can compute 
\[\begin{split}\|\sum a_{i}\otimes 1 \otimes b_{i}\|&=\inf \|(a'_{i})^{t}_{i}\|\|(1\otimes b_{i})_{i}\| \\
&=\inf \|(a'_{i})^{t}_{i}\|\|(b_{i})_{i}\| \\
&=\|\sum a_{i}\otimes b_{i}\|_{h} ,\end{split}\]
where the middle equality follows 
since $b\mapsto 1\otimes b$ is a complete isometry $\B\mapsto B\otimes \B$. Compatibility with the induction maps is immediate since $j_{\delta}^{\B}$ is a bimodule map and $j_{\delta}^{(B,\B)}$ is universal, so must therefore coincide with $1\otimes j_{\delta}^{\B}$.\end{proof}

 The symmetric module of forms $\Omega^{1}(\B)$ carries an involution $\omega\mapsto \omega^{*}$ induced by $a\otimes b\mapsto b^{*}\otimes a^{*}$. We have the following completely isometric maps relating the various structures.
\begin{lem}\label{iso} The Haagerup tensor product $\B\hotimes \B$ is an involutive operator algebra for the involution $a\otimes b\mapsto b^{*}\otimes a^{*}$. This involution restricts to $\Omega^{1}(\B)$. The natural multiplication map
\begin{equation}\label{mult}
\mathcal{H}_{\mathcal{B}}\,\hotimes_{\mathcal{B}}\,B\rightarrow\mathcal{H}_{B},
\end{equation}
is a complete isometry. Consequently, for a projective operator module $\E$ there are pairings
\begin{equation}\label{pairing}\mathcal{E}\times \mathcal{E}\hotimes_{\B}\Omega^{1}(B,\B)\rightarrow B\hotimes_{\B}\Omega^{1}(\B),\quad  \mathcal{E}\hotimes_{\B}\Omega^{1}(B,\B)\times \mathcal{E}\rightarrow \Omega^{1}(\B)\hotimes_{\B} B,\end{equation}via $\langle e,f\otimes\omega\rangle:=\langle e,f\rangle\omega$ and $\langle e\otimes\omega, f\rangle:=\omega^{*}\langle e,f\rangle$.
\end{lem}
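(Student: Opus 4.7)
The plan is to treat the four assertions of Lemma \ref{iso} in sequence, each following from universal properties of the graded Haagerup tensor product combined with the cb-involution on $\B$. For the involutive operator algebra structure on $\B \hotimes \B$, after specifying the natural product, I would verify the anti-homomorphism law for $\sigma(a \otimes b) := b^* \otimes a^*$ by direct computation on elementary tensors. The substantive point is complete boundedness of $\sigma$: I would factor it as the element-wise involution $a \otimes b \mapsto a^* \otimes b^*$ (which is cb on each tensor factor, hence cb on the whole by universality of $\hotimes$) composed with the coordinate flip. The naked flip fails to be cb on Haagerup tensor products in general, but when preceded by involution it exchanges the row and column operator-space structures in a controlled way. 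Explicitly, a Haagerup decomposition $\omega = \sum a_i \otimes b_i$ produces a decomposition $\sigma(\omega) = \sum b_i^* \otimes a_i^*$ whose row and column norms are bounded by the original column and row norms via the cb-involution on $\B$; the same bookkeeping works at the matrix level.

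The restriction of $\sigma$ to $\Omega^1(\B) = \ker m$ reduces to a direct algebraic manipulation. If $\sum a_i \gamma(b_i) = 0$, then the $\B$-involution gives $\sum \gamma(b_i^*)\, a_i^* = 0$, exploiting that $\gamma$ commutes with the $*$-operation, and a further application of $\gamma$ together with $\gamma^2 = 1$ yields $\sum b_i^*\, \gamma(a_i^*) = 0 = m(\sigma(\omega))$, as required.

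For the complete isometry $\H_\B \hotimes_\B B \to \H_B$, I would invoke associativity of the Haagerup tensor product to write
\[\H_\B \hotimes_\B B = \bigl(\ell^2(\hat\Z) \hotimes \B\bigr) \hotimes_\B B \cong \ell^2(\hat\Z) \hotimes \bigl(\B \hotimes_\B B\bigr),\]
reducing the claim to $\B \hotimes_\B B \cong B$ being a complete isometry. This holds because $\B$ is unital and dense in $B$: the multiplication $a \otimes b \mapsto ab$ is completely contractive, and admits the completely isometric inverse $b \mapsto 1 \otimes b$.

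Finally, the pairings follow immediately by combining the preceding assertions with the identification $B \hotimes_\B \Omega^1(\B) \cong \Omega^1(B, \B)$ provided by the previous lemma. Since $\langle e, f \rangle \in \B \subseteq B$, the product $\langle e, f \rangle \omega$ lives in $B \hotimes_\B \Omega^1(\B)$ once $\omega$ is transported there, and likewise $\omega^* \langle e, f \rangle \in \Omega^1(\B) \hotimes_\B B$ by applying the restricted involution from the second assertion before transport. The $\B$-balance of both pairings is automatic from $\B$-linearity of the inner product, and complete boundedness from the operator module structure. The principal obstacle throughout is the complete boundedness of $\sigma$ in the first step: the failure of the naked flip to be cb on Haagerup tensor products must be compensated by careful row-column bookkeeping, and this is the one place where the cb-involution on $\B$ enters in an essential, non-trivial way.
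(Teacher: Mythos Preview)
Your proposal is correct and follows essentially the same route as the paper. The paper carries out your ``row-column bookkeeping'' for $\sigma$ explicitly via the Lipschitz representation $\pi_D$ and the unitary $v$ of eq.~\eqref{v}, showing directly that $\|\sum b_i^*\otimes a_i^*\|_h = \|\sum a_i\otimes b_i\|_h$; the remaining steps (restriction to $\Omega^1(\B)$, the complete isometry \eqref{mult} via contractive multiplication and the contractive section $h\otimes b\mapsto h\otimes 1\otimes b$, and the induced pairings) match your outline almost verbatim.
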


\begin{proof} The involution is completely anti-isometric:
\[\begin{split} \|\sum b_{i}^{*}\otimes a_{i}^{*}\|_{h}^{2}&=\inf\{ \|\sum\pi_{D}(b_{i}^{*})\pi_{D}(b_{i}^{*})^{*}\|\| \sum\pi_{D}(a_{i}^{*})^{*}\pi_{D}(a_{i})\|\} \\
 & = \inf\{ \|\sum v\pi_{D}(b_{i})^{*}\pi_{D}(b_{i})v^{*}\|\| \sum v\pi_{D}(a_{i})\pi_{D}(a_{i})^{*}v^{*}\|\} \\
&=  \inf\{ \|\sum \pi_{D}(b_{i})^{*}\pi_{D}(b_{i})\|\| \sum v\pi_{D}(a_{i})\pi_{D}(a_{i})^{*}\|\} \\
&= \|\sum a_{i}\otimes b_{i}\|_{h}^{2}.\end{split}\]
It is straightforward to check that the involution  preserves $\Omega^{1}(\B)$. 
The map \eqref{mult} is completely contractive, since  the inclusion map $\mathcal{B}\rightarrow B$ and the multiplication  are so. Moreover, the inverse map $h\otimes b\mapsto h\otimes 1\otimes b$ is completely contractive as well. Consequently, there is a completely isometric isomorphism
\[\E\hotimes_{\B}\Omega^{1}(B,\B)\xrightarrow{\sim} \E\hotimes_{\B} B\hotimes_{\B}\Omega^{1}(\B)\xrightarrow{\sim} \mathpzc{E}\hotimes_{\B} \Omega^{1}(\B),\]
and the formulae
\[\langle e,f\otimes\omega\rangle:=\langle e,f\rangle\omega,\quad \langle e\otimes\omega, f\rangle:=\omega^{*}\langle e,f\rangle,\]
give the well-defined pairings \eqref{pairing}.
\end{proof}

The spaces $\Omega^{1}(\B)\hotimes_{\B}B$ and $B\hotimes_{\B}\Omega^{1}(\B)$ embed completely contractively into the $B$-bimodule $B\hotimes_{\B}\Omega^{1}(\B)\hotimes_{\B}B$ via $b\otimes \omega\mapsto b\otimes\omega\otimes 1$ and $\omega\otimes b\mapsto 1\otimes \omega\otimes b$.

\begin{defn}\label{connection} A {\bf connection} on a projective operator module $\E$ is a completely bounded map $\nabla:\E\rightarrow \mathpzc{E}\hotimes_{B}\Omega^{1}(B,\B)$ satisfying the Leibniz rule
$$\nabla(eb)=\nabla(e)b + e\otimes \d b$$
for all $e\in \E$ and all $b\in\B$.  The connection $\nabla$ is {\bf Hermitian} if it satisfies the equation 
$$\langle e,\nabla (f)\rangle\otimes 1 + 1\otimes\langle \nabla(e),f\rangle = 1\otimes\d \langle e,f\rangle\otimes 1 \in B\hotimes_{\B}\Omega^{1}(\B)\hotimes_{\B} B$$
for all $e,f\in\E$. 
\end{defn}

The reason for introducing the bimodule $B\hotimes_{\B}\Omega^{1}(\B)\hotimes_{\B} B$ is that to state the property of being Hermitian, we need to map the forms $\langle e,\nabla (f)\rangle$ and $\langle \nabla(e),f\rangle$ into the same space. We will later see that this definition is compatible with the notion of induced operator.

\begin{prop} Let $p$ be a projection operator on $\mathcal{H}_{\mathcal{B}}$. Then
the module $\im (p)\subseteq \mathcal{H}_{\mathcal{B}}$ admits a completely contractive Hermitian connection $$\nabla:\im (p)\rightarrow \im (p)\,\hotimes_{\mathcal{B}}\,\Omega^{1}(B,\mathcal{B}).$$ 
\end{prop}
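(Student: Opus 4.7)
The plan is to use a Grassmann-type construction: define a ``free'' connection $\d_0$ on $\mathcal{H}_{\mathcal{B}}$ componentwise from the universal derivation, and then project via $p\otimes 1$. Concretely, set
\[ \d_0: \mathcal{H}_{\mathcal{B}} \to \mathcal{H}_{\mathcal{B}} \hotimes_{\mathcal{B}} \Omega^1(B,\mathcal{B}), \qquad (b_i)_{i\in\hat{\Z}} \mapsto (\d b_i)_{i\in\hat{\Z}}, \]
where $\d:\mathcal{B}\to\Omega^1(B,\mathcal{B})$ is the universal derivation. Since $\d$ is completely bounded on $\mathcal{B}$, functoriality of the graded Haagerup tensor product ensures $\d_0$ is completely bounded on $\mathcal{H}_{\mathcal{B}} = \ell^2(\hat{\Z})\,\tilde\otimes\,\mathcal{B}$, and the Leibniz rule $\d_0(xb) = \d_0(x)b + x\otimes \d b$ is inherited from that of $\d$.

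Next, I appeal to Lemma \ref{iso} to identify $\mathcal{H}_{\mathcal{B}} \hotimes_{\mathcal{B}} \Omega^1(B,\mathcal{B}) \cong \mathcal{H}_B \hotimes_B \Omega^1(B,\mathcal{B})$ as a $C^*$-module, using the complete isometry $\mathcal{H}_{\mathcal{B}} \hotimes_{\mathcal{B}} B \cong \mathcal{H}_B$. By the preceding proposition on tensor products with $C^*$-modules, the graph-defined idempotent $p\otimes 1$ acts as a self-adjoint projection on this $C^*$-module, and hence is bounded by Lemma \ref{C*B}. Setting
\[ \nabla := (p\otimes 1)\circ \d_0|_{\im(p)} : \im(p) \to \im(p) \hotimes_B \Omega^1(B,\mathcal{B}), \]
I obtain a completely bounded (in fact completely contractive) map, as a composition of completely contractive maps. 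The Leibniz rule for $\nabla$ is immediate from that for $\d_0$, together with the observation that for $x\in\im(p)$ one has $px = x$, so $(p\otimes 1)(x\otimes \d b) = x\otimes \d b$.

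To verify the Hermitian condition, given $x,y\in\im(p)$, self-adjointness of $p\otimes 1$ against the pairings of Lemma \ref{iso} yields $\langle x,\nabla y\rangle = \langle x,(p\otimes 1)\d_0 y\rangle = \langle px,\d_0 y\rangle = \langle x,\d_0 y\rangle$, and symmetrically $\langle \nabla x,y\rangle = \langle \d_0 x,y\rangle$. The required identity in $B \hotimes_{\mathcal{B}}\Omega^1(\mathcal{B})\hotimes_{\mathcal{B}} B$ then reduces to
\[ 1\otimes\langle \d_0 x,y\rangle + \langle x,\d_0 y\rangle \otimes 1 = 1\otimes \d\langle x,y\rangle\otimes 1, \]
which is the componentwise avatar of the Leibniz rule for the universal derivation $\d$ on $\mathcal{B}$. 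The main subtlety I expect in this proof is precisely this self-adjointness manipulation: since $p$ itself is only densely defined on $\mathcal{H}_{\mathcal{B}}$ whereas $p\otimes 1$ becomes bounded on the $C^*$-envelope, one must verify that the pairings of Lemma \ref{iso} genuinely transport the adjointability of $p$ on its domain to the bounded adjoint relation between $p\otimes 1$ and the inner product, so that moving $p$ across the pairing is legitimate on all of $\im(p)$ and not only on a dense subspace thereof.
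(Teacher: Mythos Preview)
Your proposal is correct and follows essentially the same approach as the paper: define the componentwise Grassmann connection on $\mathcal{H}_{\mathcal{B}}$, use the complete isometry of Lemma~\ref{iso} together with Lemma~\ref{C*B} to see that $p\otimes 1$ extends to a bounded (indeed contractive) projection on the $C^*$-envelope $\mathcal{H}_B\hotimes_B\Omega^1(B,\mathcal{B})$, and verify the Hermitian condition by moving $p$ across the pairing. The subtlety you flag about transporting adjointability of $p$ to the bounded operator $p\otimes 1$ is exactly what the paper handles implicitly via the same isometric identification, so your concern is well placed but does not indicate a gap.
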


\begin{proof} Consider the Grassmann connection 
\[\begin{split} \d:\mathcal{H}_{\mathcal{B}}&\rightarrow\mathcal{H}_{\mathcal{B}}\,\tilde{\otimes}_{\mathcal{B}}\,\Omega^{1}(B,\mathcal{B})\\
(b_{i})_{i\in\Z} &\mapsto (\d b_{i})_{i\in\Z},\end{split}\]
which is Hermitian in the above sense because
\[\begin{split} 
\langle (a_{i}),(\d b_{i})\rangle\otimes 1 -1\otimes \langle (\d a_{i}),(b_{i})\rangle 
 &=\sum a_{i}^{*}\otimes 1 \otimes b_{i}\otimes 1- a_{i}^{*}\otimes b_{i}\otimes 1\otimes 1\\ &\quad\quad\quad -1\otimes a_{i}^{*}\otimes 1\otimes b_{i}+1\otimes 1\otimes  a_{i}^{*}\otimes b_{i} \\
&= \sum 1\otimes 1\otimes a_{i}^{*}b_{i}\otimes 1 - 1\otimes a_{i}^{*}b_{i}\otimes 1 \otimes 1 \\
&=1\otimes\d \langle (a_{i}), (b_{i})\rangle \otimes 1.
\end{split}\]
We wish to show that the compression $\nabla:=(p\otimes 1) \d p: p\H_{\B}\rightarrow p\H_{B}\hotimes_{B}\Omega^{1}(B,\B)$ is completely contractive and Hermitian. It is obvious that the restriction \[\d:\im(p)\rightarrow \mathcal{H}_{\mathcal{B}}\,\tilde{\otimes}_{\mathcal{B}}\,\Omega^{1}(B,\mathcal{B})\]
is a completely contractive map. Thus it remains to show that the operator 
\[p\otimes 1 : \mathcal{H}_{\mathcal{B}}\,\tilde{\otimes}_{\mathcal{B}}\,\Omega^{1}(B,\mathcal{B})\rightarrow \mathcal{H}_{\mathcal{B}}\,\tilde{\otimes}_{\mathcal{B}}\,\Omega^{1}(B,\mathcal{B})\]
is completely contractive. This follows from the completely isometric isomorphism
\[\mathcal{H}_{\mathcal{B}}\,\tilde{\otimes}_{\mathcal{B}}\,\Omega^{1}(B,\mathcal{B})\rightarrow \mathcal{H}_{B}\,\tilde{\otimes}_{B}\,\Omega^{1}(B,\mathcal{B}),\]
from Lemma \ref{iso} and from the fact that $p$ extends to a bounded projection on $\mathcal{H}_{B}$ ({\em cf}. Lemma~\ref{C*B}). Finally, the compression is Hermitian because for $e_{1}=pe_{1}, e_{2}=pe_{2}\in p\,\Dom (p)\subset \H_{\B}$ we have
\[\langle e_{1}, p\d e_{2}\rangle\otimes 1 -1\otimes \langle p\d e_{1}, e_{2}\rangle=\langle e_{1},\d e_{2}\otimes 1\rangle - 1\otimes \langle \d e_{1}, e_{2}\rangle=1\otimes \d\langle e_{1},e_{2}\rangle\otimes 1,\]
since $\d$ is Hermitian.
\end{proof}

Having dealt with the generalities of projective operator modules, we now restrict to the class of such modules that we shall need in the present paper.

\begin{defn} Let $\B$ be a Lipschitz algebra. An inner product operator module $\E\lrh\B$ over $\B$ is a {\bf Lipschitz module} if there is a countable set $I$ and a collection of stably rigged modules $\{\E_{i}:i\in I\}$, such that $\E$ is unitarily  isometrically isomorphic to the direct sum $\bigoplus_{i\in I}\E_{i}$.
\end{defn}

We stress that a Lipschitz module is in particular a projective operator module. To state the main theorem below, we need the following simple lemma on direct sums of regular operators between $C^{*}$-modules. Given a countable collection of regular operators
$$
D_{i}:\Dom (D_{i})\rightarrow \mathpzc{F}_{i}, \quad \Dom (D_{i})\subset \mathpzc{E}_{i},\qquad (i\in I),
$$ 
we define the \emph{algebraic direct sum operator}
$D:=\oplus_{i\in I}^{\textnormal{alg}}D_{i}$ to be the operator acting by $D_{i}$ in the respective component of the algebraic direct sum of the $\mathpzc{E}_{i}$. We remark here that, although have already introduced the unadorned symbol $\otimes$ to represent algebraic tensor products, we have so far no such notation for algebraic direct sums: we therefore write $\oplus^{\textnormal{alg}}$ when we feel the need to stress an algebraic sum, whereas $\oplus$ denotes a completed direct sum.

\begin{lem}\label{regsum} Let $D_{i}:\Dom (D_{i})\rightarrow \mathpzc{F}_{i},$ $i\in I$, be a countable collection of regular operators. Then 
\[\oplus_{i\in I}^{\textnormal{alg}}D_{i}:\bigoplus_{i\in I}^{\textnormal{alg}}\Dom (D_{i})\rightarrow \bigoplus_{i\in I}\mathpzc{F}_{i}\]
is closable and its closure is a  regular operator $D:=\oplus_{i\in I}D_{i}$ between $\mathpzc{E}:=\bigoplus_{i\in I}\mathpzc{E}_{i}$ and $\mathpzc{F}:=\bigoplus_{i\in I}\mathpzc{F}_{i}$. The map
\[ u_{D}:\bigoplus_{i\in I}\mathfrak{G}(D_{i}) \rightarrow \mathfrak{G}(D)\subset \mathpzc{E}\oplus\mathpzc{F},\qquad 
\begin{pmatrix} e_{i} \\ D_{i}e_{i}\end{pmatrix}_{i\in I}  \mapsto \begin{pmatrix}  (e_{i})_{i\in I} \\ (D_{i}e_{i})_{i\in I} \end{pmatrix},\]
is a unitary isomorphism of $C^{*}$-modules and $D^{*}=\oplus_{i\in I} D_{i}^{*}$. If each $D_{i}$ is self-adjoint, then the direct sum extends to an essentially self-adjoint regular operator. If $D_{i}\in\End^{*}_{B}(\mathpzc{E}_{i},\mathpzc{F}_{i})$ and $\sup_{i\in I}\|D_{i}\|<\infty$, then $\bigoplus_{i\in I} D_{i}\in \End_{B}^*(\mathpzc{E},\mathpzc{F})$.
\end{lem}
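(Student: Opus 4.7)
The plan is to work at the level of graphs in $C^*$-modules, exploiting that each $\mathfrak{G}(D_i)\subseteq \mathpzc{E}_i\oplus \mathpzc{F}_i$ is a closed submodule by assumption. First I would define the operator $D$ by declaring its graph to be the $C^*$-direct sum
\[
\mathfrak{G}(D) := \bigoplus_{i\in I}\mathfrak{G}(D_i) \subseteq \bigoplus_{i\in I}(\mathpzc{E}_i\oplus \mathpzc{F}_i) \cong \mathpzc{E}\oplus\mathpzc{F},
\]
which is a closed submodule as a $C^*$-direct sum of closed submodules. It is in fact the graph of a well-defined operator, since the first component $(e_i)_i=0$ forces $D_ie_i=0$ in each summand. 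Explicitly, $\Dom (D)$ consists of those $(e_i)\in\mathpzc{E}$ with each $e_i\in \Dom (D_i)$ and such that $\sum_i\langle D_ie_i,D_ie_i\rangle$ converges in $B$, with $D(e_i)=(D_ie_i)$; the map $u_D$ is then tautologically a unitary isomorphism of $C^*$-modules.

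Closability of the algebraic direct sum and the identification of its closure with $D$ both follow from a truncation argument: any $(e_i)\in\Dom (D)$ is the graph-norm limit of its finitely supported truncations $(e_1,\dots,e_n,0,0,\dots)\in\bigoplus_{i\in I}^{\textnormal{alg}}\Dom (D_i)$, since the partial sums of the convergent series $\sum_i\langle e_i,e_i\rangle$ and $\sum_i\langle D_ie_i,D_ie_i\rangle$ converge in $B$. Hence the algebraic sum is dense in $\mathfrak{G}(D)$, which is itself closed, identifying its closure with $D$.

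For the identity $D^*=\bigoplus_i D_i^*$, I use the defining property of the adjoint directly. Given $(f,g)\in\mathfrak{G}(D^*)\subseteq \mathpzc{F}\oplus\mathpzc{E}$ with $f=(f_i)$ and $g=(g_i)$, testing against an element $(0,\dots,0,e_i,0,\dots)\in\Dom (D)$ for arbitrary $e_i\in\Dom (D_i)$ gives $\langle D_ie_i,f_i\rangle=\langle e_i,g_i\rangle$, so $f_i\in\Dom (D_i^*)$ and $g_i=D_i^*f_i$ for every $i$. This shows $\mathfrak{G}(D^*)\subseteq\mathfrak{G}(\bigoplus_iD_i^*)$, and the reverse inclusion is immediate by summing the pointwise adjointness identities, since convergence of $(g_i)$ in $\mathpzc{E}$ allows one to pass to the limit over elements $(e_i)\in\Dom (D)$.

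The main obstacle is establishing regularity of $D$ in the self-adjoint case, for which uniform control of the resolvents is needed. Assuming each $D_i$ is self-adjoint and regular, the resolvent $(D_i\pm i)^{-1}$ is adjointable and the $C^*$-identity yields
\[
\|(D_i+i)^{-1}\|^2 = \|(D_i+i)^{-1}(D_i-i)^{-1}\| = \|(D_i^2+1)^{-1}\|\leq 1,
\]
with an analogous bound for $(D_i-i)^{-1}$. Hence the $C^*$-direct sum $R_\pm:=\bigoplus_i(D_i\pm i)^{-1}$ defines a contractive adjointable operator on $\mathpzc{E}$, and a componentwise check shows $R_\pm(D\pm i)=\id$ on $\Dom (D)$ together with $(D\pm i)R_\pm=\id$ on $\mathpzc{E}$. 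Thus $D\pm i$ is a bijection onto $\mathpzc{E}$, whence $D$ is self-adjoint and regular, and the algebraic direct sum is essentially self-adjoint since it is a core for $D$. Finally, when each $D_i\in\End^*_B(\mathpzc{E}_i,\mathpzc{F}_i)$ with $M:=\sup_i\|D_i\|<\infty$, the algebraic direct sum extends by continuity to an everywhere defined $B$-linear map of norm at most $M$, whose adjoint is the similarly bounded $\bigoplus_iD_i^*$, so it lies in $\End^*_B(\mathpzc{E},\mathpzc{F})$.
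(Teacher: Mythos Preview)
Your argument is largely correct and in several places more careful than the paper's own proof (for instance, your verification that $D^*=\bigoplus_i D_i^*$ via testing against singly-supported vectors is a genuine proof, whereas the paper simply \emph{defines} $D^*$ this way and relies on regularity to justify it after the fact). However, there is one real gap: the lemma asserts that the closure $D$ is regular for an arbitrary countable family of regular operators $D_i:\Dom(D_i)\to\mathpzc{F}_i$, not just in the self-adjoint case, and you only establish regularity under the additional hypothesis that each $D_i$ is self-adjoint. Your resolvent argument exploits the uniform bound $\|(D_i\pm i)^{-1}\|\le 1$, which is unavailable for non-self-adjoint regular operators (indeed $D_i\pm i$ need not even be invertible).

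The paper closes this gap by a different route. Rather than building a global resolvent, it uses the graph-complement characterisation of regularity: each $D_i$ being regular means that coordinatewise addition gives a unitary isomorphism $\mathfrak{G}(D_i)\oplus v\,\mathfrak{G}(D_i^*)\xrightarrow{\sim}\mathpzc{E}_i\oplus\mathpzc{F}_i$. Taking the $C^*$-direct sum of these unitaries and using your identifications $\mathfrak{G}(D)\cong\bigoplus_i\mathfrak{G}(D_i)$ and $\mathfrak{G}(D^*)\cong\bigoplus_i\mathfrak{G}(D_i^*)$ yields $\mathfrak{G}(D)\oplus v\,\mathfrak{G}(D^*)\cong\mathpzc{E}\oplus\mathpzc{F}$, which is precisely regularity of $D$. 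This argument fits naturally with your graph-level setup and would have required only one more paragraph. Alternatively, you could salvage your resolvent strategy in the general case by replacing $(D_i\pm i)^{-1}$ with the uniformly contractive operators $(1+D_i^*D_i)^{-1}$ and $D_i(1+D_i^*D_i)^{-1}$, but the graph-complement route is shorter.
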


\begin{proof} By definition of the direct sum completion, it is immediate that $\oplus^{\textnormal{alg}}D_{i}$ is closable with the indicated graph isomorphism. We \emph{define} $D^{*}$ to be 
$\oplus_{i\in I} D_{i}^{*}$. To prove regularity we argue as follows. For each $i$ separately, regularity of $D_{i}$ gives an isomorphism
\[\begin{split}\mathfrak{G}(D_{i})\oplus v\mathfrak{G}(D_{i}^{*})&\xrightarrow{+} \mathpzc{E}_{i}\oplus \mathpzc{F}_{i},\\
\left(\begin{pmatrix} x \\ D_{i}x\end{pmatrix},\begin{pmatrix} -D_i^{*} y \\ y\end{pmatrix} \right)&\mapsto \begin{pmatrix} x-D_{i}^{*}y \\ D_{i}x + y\end{pmatrix}.\end{split}\]
Therefore we find that
\[ u_{D}^{*}\oplus u^{*}_{D^{*}}\left(\mathfrak{G}(D)\oplus v\mathfrak{G}(D^{*})\right)\supset \bigoplus_{i\in I}^{\textnormal{alg}}\left(\mathfrak{G}(D_{i})\oplus v\mathfrak{G}(D_{i}^{*})\right)\xrightarrow{+}\bigoplus_{i\in I}^{\textnormal{alg}}\left( \mathpzc{E}_{i}\oplus \mathpzc{F}_{i}\right),\]
and therefore $\mathfrak{G}(D)\oplus v\mathfrak{G}(D^{*})$ is dense in $\left(\bigoplus_{i\in I} \mathpzc{E}_{i}\oplus \bigoplus_{i\in I}\mathpzc{F}_{i}\right)$, which implies that $D$ is regular. The statement about self-adjointness follows by a similar argument and the statement about uniform bounded sequences is immediate.
\end{proof}

Finally we arrive at the main result of this section. Suppose that we are given: 
\begin{enumerate}[\hspace{0.5cm} (i)]
\item a Lipschitz module $\mathcal{E}\cong\bigoplus_{i\in I}\E_{i}\lrh\mathcal{B}$ with Hermitian connection $\nabla:\mathcal{E}\rightarrow \mathcal{E}\,\tilde{\otimes}_{\mathcal{B}}\,\Omega^{1}(B,\mathcal{B})$; 
\item a $C^{*}$-module $\mathpzc{F}\lrh C$ equipped with a self-adjoint regular operator $T:\Dom (T)\rightarrow \mathpzc{F}$; 
\item a $*$-homomorphism $\pi:\mathcal{B}\rightarrow \End^{*}_{C}(\mathpzc{F})$ such that $b\mapsto [T,\pi(b)]$ is a cb-derivation on $\mathcal{B}$. 
\end{enumerate}
Then we define a linear operator $1\otimes_{\nabla}T:\E\otimes_{\B}\Dom (T)\to\E\,\hotimes_{\B}\mathpzc{F}=\mathpzc{E}\hotimes_{B}\mathpzc{F}$ on the algebraic tensor product, by
\[ (1\otimes_{\nabla}T) (e\otimes f):=\gamma(e)\otimes Tf + \nabla_{T} (\gamma(e))f, \qquad e\in\E,~f\in \mathpzc{F},\]
with $\nabla_{T}:\mathcal{E}\rightarrow \mathcal{E}\,\hotimes_{\mathcal{B}}\,\Omega^{1}_{T}(\mathcal{B})$ the connection on $\E$ induced by $\nabla$.
 
\begin{lem} The operator $1\otimes_{\nabla}T$ is symmetric on its domain.
\end{lem}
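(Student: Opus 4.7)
The plan is to verify symmetry of $1\otimes_\nabla T$ on elementary tensors $e_i\otimes f_i\in\E\otimes_\B\Dom(T)$, $i=1,2$, by expanding both inner products and showing that the resulting ``Dirac'' contribution (where $T$ acts directly on the $f$-component) and ``connection'' contribution (where $\nabla_T$ acts on the $e$-component) cancel each other. The cancellation mechanism is precisely the Hermitian property of $\nabla$ from Definition~\ref{connection}, pushed forward through the universal induction map associated with the cb-derivation $\delta_T:\B\to\End^*_C(\mathpzc{F})$, $b\mapsto [T,\pi(b)]$.

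Concretely, I would first unwind the definition $\nabla_T=(1\otimes j_{\delta_T})\circ\nabla$, where $j_{\delta_T}:\Omega^1(B,\B)\to\Omega^1_T(\B)\subseteq\End^*_C(\mathpzc{F})$ is the universal induction. Then I would compute the Dirac part
\[
\langle\gamma(e_1)\otimes Tf_1,\,e_2\otimes f_2\rangle-\langle e_1\otimes f_1,\,\gamma(e_2)\otimes Tf_2\rangle,
\]
which, using self-adjointness of $T$ on $\Dom(T)$ together with compatibility of $\pi$ with the grading $\gamma$, collapses to the commutator expression $\langle f_1,[T,\pi(\langle e_1,e_2\rangle)]f_2\rangle$. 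By the definition of $\delta_T$ and the universal property, this equals $\langle f_1,\,j_{\delta_T}(\d\langle e_1,e_2\rangle)\,f_2\rangle$.

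Next, I would rewrite the connection contributions by means of the two $C^*$-valued pairings of Lemma~\ref{iso}, obtaining
\[
\langle\nabla_T(\gamma(e_1))f_1,\,e_2\otimes f_2\rangle=\langle f_1,\,j_{\delta_T}(\langle\nabla(\gamma(e_1)),e_2\rangle)\,f_2\rangle,
\]
\[
\langle e_1\otimes f_1,\,\nabla_T(\gamma(e_2))f_2\rangle=\langle f_1,\,j_{\delta_T}(\langle e_1,\nabla(\gamma(e_2))\rangle)\,f_2\rangle,
\]
after extending $j_{\delta_T}$ to a $(B,\B)$-bimodule map on the ambient bimodule $B\hotimes_\B\Omega^1(\B)\hotimes_\B B$. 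This extension is legitimate because $\pi$ is a $*$-homomorphism and hence continuous with respect to the $C^*$-norm, as in Corollary~\ref{co:*hom}. Applying $1\otimes j_{\delta_T}\otimes 1$ to the Hermitian identity
\[
\langle e_1,\nabla(\gamma(e_2))\rangle\otimes 1+1\otimes\langle\nabla(\gamma(e_1)),e_2\rangle=1\otimes\d\langle\gamma(e_1),e_2\rangle\otimes 1
\]
then turns the difference of the two connection contributions into $-\langle f_1,\,j_{\delta_T}(\d\langle e_1,e_2\rangle)\,f_2\rangle$, which exactly cancels the Dirac commutator found above.

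The main obstacle will be the careful bookkeeping of signs dictated by the $\Z_2$-gradings on both $\B$ (encoded through $\gamma$) and on $\mathpzc{F}$, together with verifying that $j_{\delta_T}$, originally defined only on $\Omega^1(\B)$, extends consistently to the larger bimodule $B\hotimes_\B\Omega^1(\B)\hotimes_\B B$ so that the Hermitian identity actually evaluates to the desired commutator inside $\End^*_C(\mathpzc{F})$. Once these bookkeeping points are handled, symmetry is a direct consequence of the Leibniz rule for $\nabla$ and the defining property of the universal derivation $\d$.
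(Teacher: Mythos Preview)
Your proposal is correct and follows essentially the same route as the paper: both arguments push the Hermitian identity of Definition~\ref{connection} through the induction map $j_{\delta_T}$ (extended to $B\hotimes_\B\Omega^1(\B)\hotimes_\B B$) to obtain the relation $\langle e_1,\nabla_T(e_2)\rangle-\langle\nabla_T(e_1),e_2\rangle=[T,\langle e_1,e_2\rangle]$ inside $\End^*_C(\mathpzc{F})$, and then verify symmetry on elementary tensors. The only organisational difference is that the paper runs the verification as a single chain of equalities (and in fact silently drops the grading $\gamma$ in that computation), whereas you separate the ``Dirac'' and ``connection'' contributions and attempt to track $\gamma$ explicitly; be aware that the displayed Hermitian identity you wrote mixes the pairs $(e_1,\gamma(e_2))$ and $(\gamma(e_1),e_2)$, so when you carry out the sign bookkeeping you flagged, make sure you are applying Definition~\ref{connection} to a single pair and then invoking compatibility of $\gamma$ with the inner product.
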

\begin{proof} Since $\nabla$ is Hermitian we have
\[ \langle e_{1},\nabla(e_{2})\rangle \otimes 1 -1\otimes\langle \nabla(e_{1}), e_{2}\rangle =1\otimes \d\langle e_{1},e_{2}\rangle\otimes 1 \]
on the level of universal forms. After applying the map
\[\begin{split} 1\otimes j_{T}\otimes 1: B\otimes_{\B}\Omega^{1}(\B)\otimes_{\B}1 &\rightarrow \End^{*}_{C}(\mathpzc{F}), \\
a\otimes \d b\otimes c &\mapsto \pi(a)[\pi(1)T\pi(1),\pi(b)]\pi (c),\end{split}
\]
this amounts to
\[\langle e_{1},\nabla_{T}(e_{2})\rangle\pi(1) -\pi(1)\langle \nabla_{T}(e_{1}), e_{2}\rangle =\pi(1)[\pi(1)T\pi(1),\pi(\langle e_{1},e_{2}\rangle)]\pi(1)=\pi(1)[T,\langle e_{1},e_{2}\rangle ] \pi(1).\]
Replacing $\mathpzc{F}$ with $\pi(1)\mathpzc{F}$ if necessary, this in turn gives
\[\langle e_{1}\otimes f_{1},\nabla_{T}(e_{2})f_{2}\rangle -\langle \nabla_{T}(e_{1})f_{1}, e_{2}\otimes f_{2}\rangle =\langle f_{1},[T,\langle e_{1},e_{2}\rangle ]
f_{2}\rangle\]
and we can compute
\[\begin{split} \langle e_{1}\otimes f_{1}, e_{2}\otimes Tf_{2}+\nabla_{T}(e_{2})f_{2}\rangle & =\langle f_{1},\langle e_{1},e_{2}\rangle Tf_{2}\rangle +\langle f_{1}, \langle e_{1}, \nabla_{T} (e_{2})\rangle f_{2}\rangle \\
&=\langle f_{1},\langle e_{1},e_{2}\rangle Tf_{2}\rangle + \langle f_{1}, \langle  \nabla_{T}(e_{1}), e_{2}\rangle f_{2}\rangle + \langle f_{1},[T,\langle e_{1},e_{2}\rangle ]f_{2}\rangle \\
&=\langle Tf_{1},\langle e_{1}, e_{2}\rangle f_{2}\rangle +  \langle f_{1}, \langle  \nabla_{T}(e_{1}), e_{2}\rangle f_{2}\rangle \\
&=\langle e_{1}\otimes Tf_{1} + \nabla_{T}(e_{1})f_{1}, e_{2}\otimes f_{2}\rangle \end{split}\]
on the algebraic tensor product.
\end{proof}


\begin{thm}\label{indgraph} The operator $1\otimes_{\nabla}T$ is essentially self-adjoint and regular on $\mathpzc{E}\,\hotimes_{B}\mathpzc{F}$. The map
 \[ g:\left(\bigoplus_{i\in I}^{\textnormal{alg}}\E_{i}\right)\,\otimes_{\mathcal{B}}\,\mathfrak{G}(T) \rightarrow\mathfrak{G}(1\otimes_{\nabla}T), \qquad e\otimes \begin{pmatrix}f \\Tf \end{pmatrix} \mapsto \begin{pmatrix} e\otimes f \\ (1\otimes_{\nabla}T)(e\otimes f) \end{pmatrix},\] defined on the algebraic direct sum, extends to an everywhere defined  adjointable operator with dense range
\[g:\mathcal{E}\hotimes_{\B}\mathfrak{G}(T)\rightarrow \mathfrak{G}(1\otimes_{\nabla}T),\]
given by the same formula.
\end{thm}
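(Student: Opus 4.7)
The plan is to reduce the claim to the stably rigged case of \cite{Mes09b, KaLe} via the direct sum decomposition $\E = \bigoplus_{i \in I} \E_i$, and then to assemble the result by means of Lemma \ref{regsum}. First I would establish that the map $g$ is bounded on the algebraic tensor product: for $e \otimes \tbinom{f}{Tf}$ with $e \in \E$ and $f \in \Dom(T)$, the definition of $1 \otimes_\nabla T$ yields
\[
\bigl\| g\bigl(e \otimes \tbinom{f}{Tf}\bigr) \bigr\|^2 = \|e \otimes f\|^2 + \|e \otimes Tf + \nabla_T(e)\,f\|^2,
\]
which is controlled by $C(1 + \|\nabla_T\|^2)\,\|e\|^2\,(\|f\|^2 + \|Tf\|^2)$. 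The crucial uniformity here comes from complete boundedness of $\nabla$ on the entire module $\E$ (rather than merely on each summand separately). Hence $g$ extends by continuity to all of $\E \hotimes_\B \mathfrak{G}(T)$, and Hermiticity of $\nabla$ provides an adjoint by a parallel computation.

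Next, for each stably rigged summand $\E_i = p_i \mathcal{H}_\B$, I set $\nabla_i := (p_i \otimes 1) \nabla|_{\E_i}$, the compression of $\nabla$ back to $\E_i \hotimes_\B \Omega^1(B,\B)$; the Leibniz rule guarantees that $\nabla_i$ is itself a Hermitian connection on $\E_i$. The stably rigged construction of \cite{Mes09b, KaLe} then yields that each induced operator $1 \otimes_{\nabla_i} T$ is essentially self-adjoint and regular on $\mathpzc{E}_i \hotimes_B \mathpzc{F}$, together with a graph isomorphism $g_i : \mathpzc{E}_i \hotimes_B \mathfrak{G}(T) \to \mathfrak{G}(1 \otimes_{\nabla_i} T)$. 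Applying Lemma \ref{regsum} to the family $\{1 \otimes_{\nabla_i} T\}_{i \in I}$ produces an essentially self-adjoint regular operator $D_0 := \bigoplus_i (1 \otimes_{\nabla_i} T)$ on the $C^*$-module $\mathpzc{E} \hotimes_B \mathpzc{F} = \bigoplus_i \mathpzc{E}_i \hotimes_B \mathpzc{F}$, with graph isomorphism given by $\bigoplus_i g_i$.

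Finally, the difference $1 \otimes_\nabla T - D_0$ acts on $e \otimes f \in \E_i \otimes_\B \Dom(T)$ as $(1 - p_i \otimes 1)\nabla_T(e)\,f$, namely the off-diagonal part of $\nabla_T$ relative to the decomposition. Since the difference of connections $\nabla - \bigoplus_i \nabla_i$ is $\B$-linear (the Leibniz-type defects cancel), this perturbation extends to a well-defined bounded adjointable operator on $\mathpzc{E} \hotimes_B \mathpzc{F}$. Consequently $1 \otimes_\nabla T = D_0 + (\text{bounded symmetric})$ is itself essentially self-adjoint and regular, and the map $g$ is assembled as $\bigoplus_i g_i$ corrected by this bounded perturbation term.

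The main technical obstacle is the uniform control of this perturbation, since the bounded projections $p_i$ may have unbounded Lipschitz (differential) norms (as exhibited by the Hopf fibration example discussed in the introduction). Therefore the individual connections $\nabla_i$ are not uniformly completely bounded in the operator space structure of $\E$. The resolution is the observation that for the perturbation to extend adjointably to the $C^*$-completion $\mathpzc{E} \hotimes_B \mathpzc{F}$ one needs only the $C^*$-norm of the orthogonal projections $P_i : \mathpzc{E} \to \mathpzc{E}_i$, which is always at most one. This is precisely the insight that permits the Lipschitz-module framework to subsume the direct-sum examples lying beyond the reach of \cite{Mes09b, KaLe}.
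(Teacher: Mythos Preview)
Your overall strategy matches the paper's: reduce to the stably rigged summands, invoke \cite{Mes09b, KaLe} on each piece, assemble via Lemma~\ref{regsum}, and absorb the discrepancy as a bounded symmetric perturbation. The difference lies in the ordering and in which connection you put on each summand, and this is where your argument develops gaps.

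First, the estimate in your opening paragraph bounds the bilinear map $\E\times\mathfrak{G}(T)\to\mathfrak{G}(1\otimes_\nabla T)$ on elementary tensors only; extension to the Haagerup tensor product $\E\hotimes_\B\mathfrak{G}(T)$ requires a \emph{completely bounded} estimate, which a pointwise inequality on simple tensors does not give. Second, and more seriously, your final assembly ``$g=\bigoplus_i g_i$ corrected by the perturbation'' needs $\sup_i\|g_i\|<\infty$ for $\bigoplus_i g_i$ to extend adjointably (this is precisely the hypothesis in the last clause of Lemma~\ref{regsum}), and you never verify it. Your last paragraph correctly identifies that the Lipschitz norms of the $p_i$ may blow up, but the resolution you offer --- that only the $C^*$-norms of the $P_i$ matter --- addresses the perturbation term, not the uniform bound on the $g_i$ themselves.

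The paper handles both issues by a cleaner ordering. It first replaces $\nabla$ by the direct-sum Grassmann connection $p\d p=\bigoplus_i p_i\d p_i$; the difference $\nabla-p\d p$ is a cb $\B$-module map and hence induces a bounded skew-adjoint $R$ on $\mathpzc{E}\hotimes_B\mathpzc{F}$. With the Grassmann connection the induced operator is \emph{exactly} block diagonal --- no off-diagonal term survives --- so the whole analysis reduces to the summands. The uniform bound $\|g_i\|\leq 1$ then falls out of the explicit factorisation $g_i=p_i\circ u$, where $u:\mathcal{H}_\B\hotimes_\B\mathfrak{G}(T)\to\mathfrak{G}(1\otimes_\d T)$ is the unitary graph map on the free module and $p_i$ is the $C^*$-contractive projection; this is the step your argument is missing. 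Your compressions $\nabla_i=(p_i\otimes 1)\nabla|_{\E_i}$ could be made to work, but choosing the Grassmann connection on each piece is what makes the uniform contractivity of the $g_i$ transparent.
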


\begin{proof} First we observe that, since $\mathcal{E}$ is isomorphic to $\im (p)$ for a direct sum projection operator $p=\bigoplus_{i\in I}p_{i}$ on $\bigoplus_{i\in I}\mathcal{H}_{\mathcal{B}}\cong \H_{\B}$, the difference
\[\nabla-p\d p:\E\rightarrow \mathpzc{E}\hotimes_{B}\Omega^{1}(B,\B)\]
is completely bounded (\textit{cf}. Definition~\ref{connection}). The Haagerup tensor product is functorial for completely bounded module maps, so
\[(\nabla-p\d p)\otimes 1:\E\hotimes_{\B}\mathpzc{F}\rightarrow \mathpzc{E}\hotimes_{B}\Omega^{1}(B,\B)\hotimes_{\B}\mathpzc{F}\]
is completely bounded. Composition with the natural map induced by operator multiplication
\[\mathpzc{E}\hotimes_{B}\Omega^{1}(B,\B)\hotimes_{\B}\mathpzc{F}\rightarrow \mathpzc{E}\hotimes_{\B}\mathpzc{F}\]
thus gives a skew-adjoint operator $R:\mathpzc{E}\hotimes_{B}\mathpzc{F}\rightarrow \mathpzc{E}\hotimes_{B}\mathpzc{F}$.
Therefore, it suffices to prove the statement for the connection $p\d p$.

For each $\mathcal{E}_{i}$ separately one shows just as in \cite{Mes09b} ({\em cf}. \cite{KaLe}) that, with $\nabla_{i}$ the Grassmann connection on $\mathcal{E}_{i}$, the operator $t_{i}:=1\otimes_{\nabla_{i}}T$ is self-adjoint and regular. In this case the map  
\[g_{i}:\mathcal{E}_{i}\,\hotimes_{\mathcal{B}}\,\mathfrak{G}(T)\rightarrow\mathfrak{G}(1\otimes_{\nabla_{i}}T)\]
is a topological isomorphism. The operator $t:=1\otimes_{\nabla}T$ can be identified with the algebraic direct sum operator
\[\oplus_{i\in I}^{\textnormal{alg}} t_{i}:\bigoplus_{i\in I}^{\textnormal{alg}} \mathcal{E}_{i}\otimes_{ \mathcal{B}}\Dom (T)\rightarrow \mathpzc{E}_{i}\hotimes_B\mathpzc{F}.\]
Since each $t_{i}$ is self-adjoint in $\mathpzc{E}_{i}\hotimes_{B}\mathpzc{F}$, the direct sum is essentially self-adjoint by Lemma~\ref{regsum}. 

In fact we claim that $g_{i}$ is contractive. The this end, denote by $\d$ the Grassmann connection on $\mathcal{H}_{\B}$. 
The map
\[\begin{split}  u:\mathcal{H}_{\B}\hotimes_{\B}\mathfrak{G}(T)&\rightarrow \mathfrak{G}(1\otimes_{\d}T) \\
e\otimes\begin{pmatrix} f\\ Tf \end{pmatrix} & \mapsto \begin{pmatrix} e\otimes f \\ (1\otimes_{\d}T)e\otimes f \end{pmatrix}, \end{split}\]
is unitary, \textit{cf}. \cite[Theorem 5.4.1]{Mes09b}. Since $\mathcal{E}_{i}=p_{i}\mathcal{H}_{\B}$, the operator $1\otimes_{\nabla_{i}}T$ equals the operator $p_{i}(1\otimes_{\d}T)p_{i}$ on its domain, so we can write
\[\begin{split} g_{i}\left(\sum_{k=1}^{n}e_{k}\otimes\begin{pmatrix} f_{k} \\ T f_{k}\end{pmatrix}\right) &=\sum_{k=1}^{n} \begin{pmatrix} e_{k}\otimes f_{k} \\  p_{i} (1\otimes_{\d} T) e_{k}\otimes f_{k} \end{pmatrix}\\
&=\begin{pmatrix} p_{i} & 0 \\ 0 & p_{i} \end{pmatrix} \sum_{k=1}^{n}\begin{pmatrix} e_{k}\otimes f_{k} \\ (1\otimes_{\d} T) e_{k}\otimes f_{k} \end{pmatrix}\\
&=\begin{pmatrix} p_{i} & 0 \\ 0 & p_{i} \end{pmatrix} u \left (\sum_{k=1}^{n}e_{k}\otimes\begin{pmatrix} f_{k} \\ T f_{k}\end{pmatrix}\right),\end{split}\]
and thus $g_{i}$ factors as $p_{i}u$, which shows that $\|g_{i}\|\leq 1$.

The direct sum of the $g_{i}$ defines a map
\[\bigoplus_{i\in I}^{\textnormal{alg}} g_{i}:\bigoplus_{i\in I}^{\textnormal{alg}}\E_{i}\hotimes_{ \mathcal{B}}\, \mathfrak{G}(T_{i})\rightarrow \bigoplus_{i\in I}^{\textnormal{alg}}\mathfrak{G}(t_{i})\subset\mathfrak{G}(t),\]
since each $g_{i}$ extends to the Haagerup tensor product $\mathcal{E}_{i}\hotimes_{\B}\mathfrak{G}(T)$.
Since $\sup_{i}\|g_{i}\|\leq 1$ by the above discussion, the direct sum extends to an everywhere defined  adjointable operator
\[g:\mathcal{E}\hotimes_{\B}\mathfrak{G}(T)\rightarrow \mathfrak{G}(1\otimes_{\nabla}T).\]
Since each $g_{i}$ is invertible, the direct sum has dense range, which proves the statement on the graph map.
\end{proof}

\subsection{The unbounded Kasparov product} Having now obtained a theory of connections on Lipschitz modules, we are now ready to describe the product of KK-cycles in the unbounded setting. The key ingredient in doing so will be the following

\begin{defn}\label{de:lipcyc}
A {\bf Lipschitz cycle} between spectral triples $(A,\mathcal{H}_{1},D_{1})$ and $(B,\mathcal{H}_{2},D_{2})$ is a triple $(\mathcal{E},S,\nabla)$ consisting of:
\begin{enumerate}[\hspace{0.5cm} (i)]
\item a Lipschitz $(\mathcal{A},\mathcal{B})$-bimodule $\mathcal{E}$;
\item an odd self-adjoint regular operator $S$ in $\mathcal{E}$ such that $(S\pm i)^{-1}\in\K_{\mathcal{B}}(\mathcal{E})$;
\item the map $a\mapsto [S,a] \in \End^{*}_{B}(\mathpzc{E})$ is a cb-derivation $\mathcal{A}\rightarrow \End^{*}_{B}(\mathpzc{E})$. In particular, the commutators $[S,a]$ extend to bounded operators on the enveloping $C^{*}$-module $\mathpzc{E}$.
\item an even, completely bounded connection $\nabla:\mathcal{E}\rightarrow\mathcal{E}\tilde{\otimes}_{\mathcal{B}}\,\Omega^{1}(B,\mathcal{B})$ such that $[\nabla, S]=0$.
\end{enumerate}
Given a pair of Lipschitz algebras $\A,\B$, we denote by $\Psi_{0}^{\ell}(\mathcal{A},\mathcal{B})$ the set of unitary equivalence classes of $(\A,\B)$ Lipschitz cycles.
\end{defn}

As in \cite{Mes09b, KaLe}, the commutator condition on the connection in the latter definition can be weakened, requiring more intricate self-adjointness proofs. This will be dealt with elsewhere: the only examples we shall encounter in the present paper are commuting connections.
Given a Lipschitz cycle, the operator $S$ extends to the $C^{*}$-completion $\mathpzc{E}\cong \mathcal{E}\,\hotimes_{\mathcal{B}}\,B$ as $S\otimes 1$. The pair $(\mathpzc{E},S)$ is an unbounded KK-cycle for $(A,B)$. 

\begin{defn} Two self-adjoint regular operators $s$ and $t$ on a Lipschitz module $\E\lrh\B$ are said to {\bf anti-commute} if $\im \left((s+ i)^{-1}(t+i)^{-1}\right)=\im \left((t+ i)^{-1}(s+i)^{-1}\right)$ and $st+ts=0$ on this submodule.
\end{defn}

For an anti-commuting pair, the resolvent $(s\pm i)^{-1}$ preserves the domain of $t$ and the resolvent $(t\pm i)^{-1}$ preserves the domain of $s$. We have the following relations:
\begin{equation}\label{com1}
t(s+i)^{-1}+(s-i)^{-1}t=(s-i)^{-1}[s,t](s+i)^{-1}=0 \qquad\textnormal{on }~\Dom (t);\end{equation}
\begin{equation}\label{com2}s(t+i)^{-1}+(t-i)^{-1}s=(t-i)^{-1}[s,t](t+i)^{-1}=0 \qquad\textnormal{on }~\Dom (s).\end{equation}
From this it follows readily that $s$ commutes with $(1+t^{2})^{-1}$ on $\Dom (s)$ and $t$ commutes with $(1+s^{2})^{-1}$ on $\Dom (t)$. We therefore have equalities
\begin{equation}\label{rescom}(s\pm i)^{-1}(1+t^{2})^{-1}=(1+t^{2})^{-1}(s\pm i)^{-1},\qquad (t\pm i)^{-1}(1+s^{2})^{-1}=(1+s^{2})^{-1}(t\pm i)^{-1},
\end{equation}
of bounded operators on the Lipschitz module $\E\lrh\B$.

The proof of the following theorem is essentially contained in \cite{Mes09b} yet, since in our case of a commuting connection it simplifies greatly, we include it here for the sake of  completeness.

\begin{thm} The sum of anti-commuting operators $s$ and $t$ on a $C^{*}$-module $\mathpzc{E}$ is self-adjoint and regular on $\Dom (s)\cap\Dom (t)$ with core $\im(s+ i)^{-1}(t- i)^{-1}$.
\end{thm}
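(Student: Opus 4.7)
The plan is to verify the hypotheses of Lemma 2.16 (in its $C^*$-module specialization, where dense range is sufficient): namely, closed symmetry of $s+t$ on $\Dom(s)\cap\Dom(t)$, together with dense range of $s+t\pm i$. The candidate core $\mathcal{D}_0 := \im((s+i)^{-1}(t-i)^{-1})$ is visibly contained in $\Dom(s)\cap\Dom(t)$: the equations \eqref{com1}--\eqref{com2} imply that $(s\pm i)^{-1}$ preserves $\Dom(t)$ and $(t\pm i)^{-1}$ preserves $\Dom(s)$, so $(s+i)^{-1}(t-i)^{-1}\mathpzc{E}\subseteq\Dom(s)\cap\Dom(t)$. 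Symmetry of $s+t$ on this common domain is immediate from the individual self-adjointness of $s$ and $t$.

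The heart of the proof is the identity $(s+t)^2 = s^2+t^2$, which holds on $\mathcal{D}_0$ because anti-commutation gives $st+ts=0$ on the image $\im((s+i)^{-1}(t+i)^{-1})$ (and similarly on $\mathcal{D}_0$, by the same argument with a sign change via the bounded transformation $(t+i)^{-1}(t-i)=1-2i(t-i)^{-1}$). Consequently $(s+t-i)(s+t+i)=1+s^2+t^2$ on $\mathcal{D}_0$, so to find a right inverse for $s+t\pm i$ it suffices to make sense of $(1+s^2+t^2)^{-1}$ as a bounded operator. For this one uses the relations \eqref{rescom}, which show that the positive self-adjoint operators $s^2$ and $t^2$ have strongly commuting resolvents. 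Joint functional calculus then produces a bounded positive inverse $(1+s^2+t^2)^{-1}$ and defines the candidate bounded parametrix
\[
R_\pm := (s+t\mp i)(1+s^2+t^2)^{-1}: \mathpzc{E}\to \Dom(s)\cap\Dom(t).
\]

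One then verifies by direct computation, using the commutation relations on $\mathcal{D}_0$, that $(s+t\pm i)R_\pm = \id_{\mathpzc{E}}$ and $R_\pm(s+t\pm i) = \id$ on $\mathcal{D}_0$. This yields density (in fact surjectivity) of the range of $s+t\pm i$, hence by Lemma 2.16 the operator $s+t$ is essentially self-adjoint and regular on $\mathcal{D}_0$, with self-adjoint closure defined on $\Dom(s)\cap\Dom(t)$ and resolvents $R_\pm$. The resolvent identity also shows that $\mathcal{D}_0$ is a core, since $R_\pm\mathpzc{E}=\mathcal{D}_0$ is mapped onto $\mathpzc{E}$ by $s+t\pm i$.

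The main obstacle is Step (iv): producing $(1+s^2+t^2)^{-1}$ as a bounded operator on $\mathpzc{E}$. Although the resolvents of $s^2$ and $t^2$ commute, one still needs to argue that their sum is self-adjoint with $-1$ in its resolvent set. The cleanest route is to exhibit this operator as $(1+s^2)^{-1}\bigl(1 + t^2(1+s^2)^{-1}\bigr)^{-1}$, using that $t^2(1+s^2)^{-1}$ is a positive self-adjoint operator (its spectrum sits in $[0,\infty)$ by commutativity of $s$ and $t$ functional calculi) so that $1+t^2(1+s^2)^{-1}$ is invertible. Once this is secured, the remaining verifications are straightforward algebraic manipulations with the commutation identities \eqref{com1}--\eqref{rescom}.
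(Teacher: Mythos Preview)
Your strategy---build $(s+t\pm i)^{-1}$ explicitly as $(s+t\mp i)(1+s^2+t^2)^{-1}$ and invoke Lemma~\ref{pmi}---differs from the paper's, which uses the graph-decomposition criterion for regularity. The paper sets $x=(s+i)^{-1}(t+i)^{-1}$ so that $xx^*=(1+s^2)^{-1}(1+t^2)^{-1}$ by \eqref{rescom}; the operators $(s+t)x$, $(s+t)x^*$ are bounded via \eqref{com1}--\eqref{com2}, and $(s+t)$ commutes with $xx^*$. Then the $2\times 2$ matrix
\[
g=\begin{pmatrix}x & -(s+t)x\\(s+t)x & x\end{pmatrix}
\]
has $gg^*$ diagonal with entries $xx^*+(s+t)xx^*(s+t)$, strictly positive since $xx^*$ is. Hence $g$ has dense range, and since $g$ maps $\mathpzc{E}\oplus\mathpzc{E}$ into $\mathfrak{G}(s+t)\oplus v\,\mathfrak{G}(s+t)$, this subspace equals $\mathpzc{E}\oplus\mathpzc{E}$, so $s+t$ is self-adjoint and regular.

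The obstacle you flag is a genuine gap, and your proposed resolution is circular. You factor $(1+s^2+t^2)^{-1}=(1+s^2)^{-1}\bigl(1+t^2(1+s^2)^{-1}\bigr)^{-1}$ and assert that $t^2(1+s^2)^{-1}$ is a positive self-adjoint operator so that $1+t^2(1+s^2)^{-1}$ is invertible. But $t^2(1+s^2)^{-1}$ is unbounded and not even symmetric on its natural domain (its formal adjoint is $(1+s^2)^{-1}t^2$, defined on a different set); on a $C^*$-module, showing that such a composite is self-adjoint and \emph{regular}---which is what ``$1+$ it has a bounded inverse'' requires---is a statement of exactly the same flavour and difficulty as the theorem itself. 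Commuting bounded resolvents do not hand you regularity of unbounded combinations for free. The paper's manoeuvre is precisely to avoid $(1+s^2+t^2)^{-1}$ altogether and work with the product $(1+s^2)^{-1}(1+t^2)^{-1}$, which is manifestly a bounded strictly positive operator; this is what makes the argument go through without any further unbounded-operator analysis.
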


\begin{proof} The sum is closed and symmetric by a standard argument. It is self-adjoint and regular by the following argument. The operator $x=(s+i)^{-1}(t+i)^{-1}$ maps $\mathpzc{E}$ into $\Dom (s)\cap\Dom (t)$ and by \eqref{rescom} we have
\[xx^{*}=(s+i)^{-1}(1+t^{2})^{-1}(s-i)^{-1}=(1+s^{2})^{-1}(1+t^{2})^{-1}=(1+t^{2})^{-1}(1+s^{2})^{-1}.\]
The operators $(s+t)x$, $(s+t)x^{*}$ are bounded by \eqref{com1} and \eqref{com2}; moreover we find that $(s+t)xx^{*}=xx^{*}(s+t)$. Therefore the operator
\[g:=\begin{pmatrix}x & -(s+t)x \\ (s+t)x & x\end{pmatrix},\]
satisfies
\[gg^{*}=\begin{pmatrix} xx^{*}+(s+t)xx^{*}(s+t) & 0\\0 & xx^{*}+(s+t)xx^{*}(s+t)\end{pmatrix},\]
which is strictly positive since $xx^{*}$ is so. Thus $g$ has dense range in $\mathpzc{E}\oplus\mathpzc{E}$. It maps $\mathpzc{E}\oplus\mathpzc{E}$ into $\mathfrak{G}(D)\oplus v\,\mathfrak{G}(D)$, which must therefore be all of $\mathpzc{E}\oplus\mathpzc{E}$. It follows that $s+t$ is self-adjoint and regular.
\end{proof}

Let us fix some notation. Let $B$ be a $C^{*}$-algebra with a fixed Lipschitz subalgebra $\mathcal{B}$. 

\begin{defn}
We denote by $\Psi_{0}(\mathcal{B},C)$ the set of unitary equivalence classes of $(B,C)$ KK-cycles $(\mathpzc{F},T)$ with the property that the map
$$
\mathcal{B}\rightarrow\End^{*}_{C}(\mathpzc{F}),\qquad b\mapsto [T,b],
$$
is a cb-derivation. 
\end{defn}

Most importantly, there is a natural action of Lipschitz cycles upon such KK-cycles, which in turn induces the bounded Kasparov product in the following way.

\begin{thm} 
\label{thm:KK-prod}
Let $(\mathcal{E},S,\nabla)$ be a Lipschitz cycle for $(\mathcal{A},\mathcal{B})$ and let $(\mathpzc{F},T)$ be a $(\mathcal{B},C)$ KK-cycle. Then the pair 
$$
(\mathpzc{E}\,\hotimes_{B}\,\mathpzc{F}, S\otimes 1 +1\otimes_{\nabla}T)
$$ 
is an $(\mathcal{A},C)$ KK-cycle representing the Kasparov product of $(\mathpzc{E},S)$ and $(\mathpzc{F},T)$.
\end{thm}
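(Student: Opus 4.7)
The plan is to verify first that the pair $(\mathpzc{E}\hotimes_B\mathpzc{F},\, D := S\otimes 1 + 1\otimes_\nabla T)$ defines an unbounded $(A,C)$ KK-cycle in the sense of Definition~\ref{defn:KK-unbounded}, and then to invoke Kucerovsky's criterion \cite[Thm~13]{Kuc97} to identify $(\mathpzc{E}\hotimes_B\mathpzc{F},D)$ with the Kasparov product of $(\mathpzc{E},S)$ and $(\mathpzc{F},T)$. The three items to check for the KK-cycle axiom are self-adjointness/regularity of $D$, compactness of $(D\pm i)^{-1}$, and density of the subalgebra on which $[D,\pi(a)]$ is bounded.

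For self-adjointness and regularity, Theorem~\ref{indgraph} directly supplies the fact that $t := 1\otimes_\nabla T$ is essentially self-adjoint and regular on $\mathpzc{E}\hotimes_B\mathpzc{F}$. The operator $s := S\otimes 1$ is self-adjoint and regular on the same module since $S$ itself is self-adjoint and regular on $\mathpzc{E}$ and right-tensoring with $\mathpzc{F}$ preserves these properties, with graph canonically $\mathfrak{G}(S)\hotimes_B\mathpzc{F}$. The key point is then that the oddness of $S$, the oddness of $T$, the evenness of $\nabla$, and the hypothesis $[\nabla,S]=0$ together imply that $s$ and $t$ anti-commute on the core $\mathcal{E}\otimes_\mathcal{B}\Dom(T)$: unpacking definitions one finds that both $\gamma(e)\otimes Tf$ terms cancel using $S\gamma = -\gamma S$, while the connection terms cancel using $[\nabla,S]=0$. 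The anti-commuting sum theorem proved above then yields that $D = s+t$ is self-adjoint and regular, with core $\im\bigl((s+i)^{-1}(t+i)^{-1}\bigr)$, and furthermore the formula $(1+D^2)^{-1} = (1+s^2)^{-1}(1+t^2)^{-1}$ follows from the resolvent identities \eqref{rescom}.

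For compactness of the resolvents, I would exploit precisely this factorisation: since $(S\pm i)^{-1}\in\K_\mathcal{B}(\mathcal{E})$, the operator $(1+s^2)^{-1}$ lies in $\K_B(\mathpzc{E})\otimes 1$, which is absorbed by the $\K_C(\mathpzc{E}\hotimes_B\mathpzc{F})$-valued factor $(1+t^2)^{-1}$ after a standard approximation argument using the compactness of $(T\pm i)^{-1}$ locally in $\mathpzc{F}$. For the commutators, any $a\in\mathcal{A}$ acts as $a\otimes 1$; one splits
\begin{equation*}
[D,\pi(a)] = [S,a]\otimes 1 + [1\otimes_\nabla T,\, a\otimes 1],
\end{equation*}
where the first summand is bounded by the Lipschitz cycle hypothesis and the second is bounded because the cb-derivation property of $b\mapsto[T,b]$ on $\mathcal{B}$ extends through the connection (as it did in the proof of Theorem~\ref{indgraph}) to give a bounded operator. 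Density of such $a$ in $A$ is inherited from density of $\mathcal{A}$ in $A$.

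The hardest step, and where most care is required, is verifying Kucerovsky's three criteria for $(\mathpzc{E}\hotimes_B\mathpzc{F},D)$ to represent the Kasparov product of $(\mathpzc{E},S)$ and $(\mathpzc{F},T)$. The connection condition (boundedness of the graded commutator of $D$ with the creation operators $T_f:e\mapsto e\otimes f$ for $f$ in $\Dom(T)$) reduces, via the $s$-part giving zero automatically and the $t$-part giving exactly $\nabla_T(\gamma(e))f + \gamma(e)\otimes Tf$, to the Leibniz property and boundedness of $\nabla_T$, which is ensured by completely boundedness of $\nabla$ together with the cb-derivation property of $[T,\cdot]$. The domain condition $\Dom(D)\subseteq\Dom(s)$ is immediate from the anti-commuting sum core. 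Finally the positivity condition $\langle Dx,sx\rangle + \langle sx,Dx\rangle \geq -K\langle x,x\rangle$ reduces, again using anti-commutation of $s$ and $t$, to the nonnegativity of $2\langle sx,sx\rangle$ on the core, hence to the trivial estimate with $K=0$. Together these verify Kucerovsky's hypotheses and identify $(\mathpzc{E}\hotimes_B\mathpzc{F},D)$ as the Kasparov product, completing the proof.
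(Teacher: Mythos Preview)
Your overall strategy matches the paper's: establish that $s:=S\otimes 1$ and $t:=1\otimes_\nabla T$ anti-commute, invoke the anti-commuting sum theorem for self-adjointness and regularity of $D=s+t$, defer the compact resolvent to \cite{KaLe,Mes09b}, check bounded commutators for $\mathcal{A}$, and then appeal to Kucerovsky. Your sketch of Kucerovsky's three conditions is in fact more explicit than what the paper writes (the paper simply says one checks them as in \cite{KaLe,Mes09b}), and your positivity computation with $K=0$ is correct.

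There are, however, two concrete gaps. First, your verification of anti-commutation is only a formal computation of $st+ts$ on elementary tensors $e\otimes f$. This runs into domain issues: applying $s$ to $t(e\otimes f)$ requires knowing that $\nabla_T(\gamma(e))f$ lies in $\Dom(s)$, and more seriously the paper's \emph{definition} of anti-commutation is a statement about resolvent images, namely $\im\bigl((s+i)^{-1}(t+i)^{-1}\bigr)=\im\bigl((t+i)^{-1}(s-i)^{-1}\bigr)$, not merely vanishing of $st+ts$ on a common core. The paper handles this by computing the \emph{bounded} commutator $[t,(s\pm i)^{-1}]$ on the core $X=\textnormal{pr}_1\bigl(g(\mathcal{E}\hotimes_\mathcal{B}\mathfrak{G}(T))\bigr)$ from Theorem~\ref{indgraph}, showing it vanishes there via $[\nabla,(S\mp i)^{-1}]=0$, and then extending to all of $\Dom(t)$ by a closure argument; from this the resolvent-image equality is deduced by algebraic manipulation. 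Your formal anti-commutator does not supply this.

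Second, the identity $(1+D^2)^{-1}=(1+s^2)^{-1}(1+t^2)^{-1}$ is false: anti-commutation gives $D^2=s^2+t^2$, whereas $(1+s^2)(1+t^2)=1+s^2+t^2+s^2t^2$. What the resolvent identities \eqref{rescom} actually give is that $(1+s^2)^{-1}$ and $(1+t^2)^{-1}$ commute, and in the paper's proof of the anti-commuting sum theorem one finds $(1+s^2)^{-1}(1+t^2)^{-1}=xx^*$ for $x=(s+i)^{-1}(t+i)^{-1}$; this is not $(1+D^2)^{-1}$. The compact resolvent argument genuinely requires the more delicate considerations of \cite{KaLe,Mes09b}, which is why the paper simply cites them rather than claiming a one-line factorisation.

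Finally, a smaller point: for the bounded commutator $[t,a]$ you should observe, as the paper does, that on the core it equals $[\nabla,a]$, which is bounded because both $\nabla$ and $a$ are completely bounded maps on $\mathcal{E}$; invoking the cb-derivation property of $b\mapsto[T,b]$ here is not the right mechanism. The paper also takes care to exhibit a core $Y=(s+i)^{-1}X$ for $s+t$ that is preserved by $\mathcal{A}$, which your sketch omits.
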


\begin{proof} We shall prove here that the operators $s:=S\otimes 1$ and $t=1\otimes_{\nabla}T$ anti-commute, so that their sum $s+t$ is self-adjoint on the intersection of the domains, and we shall check that $\mathcal{A}$ preserves the domain of the sum and has bounded commutators. The sum has compact resolvent by the same considerations as \cite{KaLe, Mes09b}: all of this is enough to deduce that we do indeed have an $(\A,C)$ KK-cycle.  Just as in \cite{KaLe, Mes09b}, one can then check Kucerovsky's conditions \cite[Thm~13]{Kuc97} to verify that the Kasparov product is indeed represented in this way.

We first show that resolvents $(s \pm i)^{-1}$ preserve the domain of $t$. 
The submodule
\[X:= \left(1\otimes \textnormal{pr}_{1}\right) \left(\E \hotimes_{\B}\mathfrak{G}(T)\right)=\textnormal{pr}_{1}( g(\mathcal{E}\hotimes_{\mathcal{B}}\mathfrak{G}(T)))\]
is a core for $t$, since $g$ has dense range in the graph $\mathfrak{G}(1\otimes_{\nabla}T)$ by Theorem~\ref{indgraph}. The operators $(s\pm i)^{-1}$ map this core into the domain of $t$, because $(s\pm i)^{-1}=(S\pm i)^{-1}\otimes 1$ and $(S\pm i)^{-1}\in \End^{*}_{\B}(\mathcal{E})$ by assumption. Thus, on $X$ we can write
\[(s\pm i)^{-1} X=(s\pm i)^{-1}(1\otimes\textnormal{pr}_{1})(\mathcal{E}\hotimes_{\B}\mathfrak{G}(T))=(1\otimes \textnormal{pr}_{1})( (S\pm i)^{-1}\mathcal{E}\hotimes_{\B}\mathfrak{G}(T))\subset X,\]
and thus $(s\pm i)^{-1}$ preserve this core. 
Note that, since $t$ is an odd operator, the graded commutator $[t,a]$ is computed via
\[ [t,a]=ta-(-1)^{|t||a|}at=ta-(-1)^{|a|}at=ta-\gamma(a)t.\]
Also note that, since $s$ is odd, we have $\gamma(s\pm i)^{-1}=-(s\mp i)^{-1}$.
For each elementary tensor $e\otimes f \in X$ it holds that 
\[(s\pm i)^{-1}(e\otimes f)=(S\pm i)^{-1}e\otimes f \in X,\] and thus we can write 
\[\begin{split} [t& ,(s\pm i)^{-1}]e\otimes f \\& =\gamma ((S\pm i)^{-1} e)\otimes Tf +\nabla(\gamma(S\pm i)^{-1}e))f-\gamma((s\pm i)^{-1})(\gamma(e)\otimes Tf+\nabla(\gamma(e))f),\\
&= -((S\mp i)^{-1} \gamma(e))\otimes Tf -\nabla((S\mp i)^{-1}\gamma(e))f+((S\mp i)^{-1}\gamma(e))\otimes Tf+(s\mp i)^{-1}\nabla(\gamma(e))f)\\
&=(s\mp i)^{-1}\nabla(\gamma(e))f-\nabla((S\mp i)^{-1}\gamma(e))f\\
&=[\nabla,(S\mp i)^{-1}]\gamma(e)f\\
&=0. \end{split}\]
It follows that for $e\otimes f\in X$ we have
\begin{equation}\label{anticomm2} (s\pm i)^{-1} e\otimes f \in \Dom (t),\quad t(s\pm i)^{-1}e\otimes f=-(s\mp i)^{-1}t(e\otimes f).\end{equation}
Since $X$ is a core for $t$, for any $x\in \Dom (t)$ there is a sequence $x_{n}\in X$ converging to $x$ in the graph norm of $t$. Then by \eqref{anticomm2}
\[t(s\pm i)^{-1}x_{n}=-(s\mp i)^{-1}tx_{n} \rightarrow- (s\mp i)^{-1}tx,\]
and therefore $(s\pm i)^{-1}x\in \Dom (t)$. So the resolvents preserve the domain of $t$ and \begin{equation}\label{comm}t(s\pm i)^{-1}=(-s\pm i)^{-1}t,\quad t(1+s^{2})^{-1}=(1+s^{2})^{-1}t,\qquad\textnormal{on}~ \Dom (t)\end{equation}
(this is in fact a standard argument, see for example \cite[Prop.~2.1]{FMR13} for details).
From this we obtain the identities
\[\begin{split}((s+i)^{-1}(t+i)^{-1}+(t-i)^{-1}(s-i)^{-1})&=(t-i)^{-1}\left((s-i)^{-1}(t+i)+(t-i)(s+i)^{-1}\right)(t+i)^{-1}\\
&=(t-i)^{-1}\left(i(s-i)^{-1}-i(s+i)^{-1}\right)(t+i)^{-1}\\
&=2(t-i)^{-1}(1+s^{2})^{-1}(t+i)^{-1}\\
&=2(1+t^{2})^{-1}(1+s^{2})^{-1}\\
&=2(s+i)^{-1}(1+t^{2})^{-1}(s-i)^{-1},\end{split}\]
 which implies that
\[(s+i)^{-1}(t+i)^{-1}=(t-i)^{-1}(s-i)^{-1}( 2(s+i)^{-1}(t+i)^{-1}-1),\]
and
\[(t-i)^{-1}(s-i)^{-1}=(s+i)^{-1}(t+i)^{-1}( 2(t-i)^{-1}(s-i)^{-1}-1).\]
From this it is immediate that
\[\im \left((s+ i)^{-1}(t+i)^{-1}\right)\subset \im \left((t-i)^{-1}(s-i)^{-1}\right)\subset \im \left((s+i)^{-1}(t+i)^{-1}\right),
\] 
and that $[s,t]=0$ on this set. That is to say that $s$ and $t$ anti-commute and so the self-adjointness proof is complete.

To show that the algebra $\A$ preserves the domain of the sum operator, we first show that 
\[Y:=1\otimes\textnormal{pr}_{1}( (S+ i)^{-1}\mathcal{E}\hotimes_{\B}\mathfrak{G}(T))=(s+i)^{-1}X\subset \im \left((s+i)^{-1}(t+i)^{-1}\right)\]
is a core for $s+t$. This follows because $X$ is a core for $t$ so there is a dense submodule $Z\subset \mathpzc{E}\hotimes_{B}\mathpzc{F}$ such that $X=(t+i)^{-1}Z$. Fix an arbitrary element $w\in \mathpzc{E}\hotimes_{B} \mathpzc{F}$ and choose a sequence $z_{n}\in Z$ converging to $w$. Then
\[(s+t)(s+i)^{-1}(t+i)^{-1}z_{n}= (1-i(s+i)^{-1})(t+i)^{-1}z_{n} -(s-i)^{-1}(1-i(t+i)^{-1})z_{n},\]
which is convergent because $z_{n}$ is convergent. Therefore, for all $w\in \mathpzc{E}\hotimes_{B}\mathpzc{F}$ we have \[(s+i)^{-1}(t+i)^{-1}w\in \overline{Y}_{s+t},\] the closure of $Y$ in the graph norm of $s+t$. Therefore $\overline{Y}_{s+t}$ contains the core $\im (s+i)^{-1}(t+i)^{-1}$, and therefore $\overline{Y}_{s+t}$ is dense in the graph of $s+t$. Since it is also closed in the graph norm, this shows that $\overline{Y}_{s+t}=\Dom (s)\cap\Dom (t)$.

Now since $a\in \A$ preserves the domain of $s$, it maps $Y$ into the domain of $s$. Since both $(s+i)^{-1}$ and $a$ preserve the domain of $t$, $a$ maps $Y$ into the domain of $t$. So $a$ is a map 
\[a:Y\rightarrow \Dom (s)\cap\Dom (t)=\Dom (s+t).\] The commutator $[s,a]=[S,a]\otimes 1$ and therefore is bounded. Because $Y=(s+i)^{-1}X\subset X$, the commutator $[t,a]$ can be computed via
\[ [t,a]e\otimes f=[1\otimes_{\nabla}T,a]e\otimes f=[\nabla,a]e\otimes f,\]
which is bounded because by assumption both $\nabla$ and $a$ define completely bounded operators on $\E$. Thus, the elements of $a\in\mathcal{A}$ map the core $Y$ into the domain of $s+t$ and the commutators are bounded on $Y$. Therefore $a\in\mathcal{A}$ actually preserves the domain of $s+t$ and has bounded commutators there (once again see \cite[Prop.~2.1]{FMR13}).  This completes the proof.
\end{proof}

The situation of the previous theorem is captured in the following diagram:
\begin{diagram}\Psi_{0}^{\ell}(\mathcal{A},\mathcal{B}) & \times & \Psi_{0}(\mathcal{B},C) & \rTo & \Psi_{0}(\mathcal{A},C)\\
\dTo & & \dTo & & \dTo \\
\KK_{0}(A,B) & \times & \KK_{0}(B,C) & \rTo & \KK_{0}(A,C).
\end{diagram}

\begin{rem}\textup{
Of course, this method only gives a recipe for seeking the internal product of {\em even} unbounded KK-cycles. In this paper we will also be interested in taking the internal product of {\em odd} cycles. In the remainder of this section, we explain how to adapt the above construction in order to achieve this in the various possible cases. \\
We emphasise that all of the examples below are consequences of the theory developed in this section for products of even cycles. The formul{\ae} for the product operators below are very convenient expressions, however they do not give short-cuts for checking the above analysis (for example for checking Kucerovsky's conditions).
}
\end{rem}

An odd cycle for a pair of ungraded $C^{*}$-algebras consists of an $(A,B)$-bimodule $\mathpzc{E}$ and a self-adjoint regular  operator $D$ with compact resolvent such that $[D,a]$ extends to an operator in $\End^{*}_{B}(\mathpzc{E})$ for all $a$ in a dense subalgebra of $A$. The set of unitary equivalence classes of such cycles is denoted $\Psi_{-1}(A,B)$. Odd spectral triples constitute examples of elements in $\Psi_{-1}(A,\C)$. Odd Lipschitz cycles are similarly defined and denoted $\Psi_{-1}^{\ell}(\mathcal{A},\mathcal{B})$.

Given a pair of $C^*$-algebras $A,B$, it also makes sense to speak of unbounded $(A,B\otimes\C_i)$-cycles for each Clifford algebra $\C_i$, $i=0,1,2,\ldots$. We thus define
$$
\Psi_i(A,B):=\Psi_0(A,B\otimes\C_i),\qquad i=0,1,2,\ldots.
$$
By construction, these unbounded cycles enjoy the periodicity property 
$$
\Psi_{i-2}(A,B)\cong \Psi_{i}(A,B)
$$ 
for each $i=1,2,\ldots$. The same applies to Lipschitz cycles.

\begin{rem}\label{re:double}\textup{
In particular, this use of Clifford algebras means that every odd unbounded KK-cycle may be viewed as an even cycle by equipping $A,B$ with trivial gradings and then using the identifications
$$
\Psi_{-1}(\A,B)\cong \Psi_1(\A,B)\cong \Psi_0(\A,B\otimes\C_1).
$$
Indeed, each odd cycle $(\mathcal{E},D)$ in $\Psi_{-1}(\A,B)$ determines an element $(\widetilde{\mathcal{E}},\widetilde{D})$ of $\Psi_0(\A,B\otimes\C_1)$ with grading $\Gamma:\widetilde{\mathcal{E}}\to\widetilde{\mathcal{E}}$ by setting
\begin{equation}\label{doubled}
\widetilde{\mathcal{E}}:=\mathcal{E}\otimes\C^2, \qquad \widetilde{D}:=\begin{pmatrix}0&D\\D&0\end{pmatrix},\qquad \Gamma:=\begin{pmatrix}0&-i\\i&0\end{pmatrix}.
\end{equation}
The original cycle $(\mathcal{E},D)$ is recovered from $(\widetilde{\mathcal{E}},\widetilde{D})$ by viewing $\mathcal{E}\subset \widetilde{\mathcal{E}}$ as the diagonal submodule.
}
\end{rem}

The latter construction also applies to Lipschitz cycles $(\mathcal{E},D,\nabla)$ in $\Psi_{-1}^{\ell}(\mathcal{A},\mathcal{B})$ by doubling the connection as
\[\widetilde{\nabla}:=\begin{pmatrix}\nabla &0 \\ 0 & \nabla\end{pmatrix}.\]
These observations now make the various combinations of products of unbounded KK-cycles rather easy to describe. We sketch in turn how to form the product of even-with-odd, odd-with-even and odd-with-odd unbounded KK-cyles.

\begin{example}
Let $(\mathcal{E},S,\nabla)\in \Psi_0^{\ell}(\mathcal{A},\mathcal{B})$ and $(\mathpzc{F},T)\in\Psi_{-1}(\B,C)$. Then using the `doubling' construction described in Remark~\ref{re:double}, we may identify $(\mathpzc{F},T)$ with an element $(\widetilde{\mathpzc{F}},\widetilde T)$ of $\Psi_0(\B,C\otimes\C_1)$. The internal product
$$
\Psi_0^{\ell}(\mathcal{A},\mathcal{B})\times\Psi_0(\mathcal{B},C\otimes\C_1)\to\Psi_0(\mathcal{A},C\otimes\C_1)\cong\Psi_{-1}(\A,C)
$$
of the resulting elements is now well defined; 
the final step passing from even to odd cycles as described in Remark~\ref{re:double} yields the cycle $(\mathcal{E}\hotimes_\B\mathpzc{F},S\otimes 1+1\otimes_\n T)\in \Psi_{-1}(\mathcal{A},C)$.
\end{example}

\begin{example}
\label{ex:odd-ev}
Let $(\mathcal{E},S,\nabla)\in \Psi_{-1}^{\ell}(\mathcal{A},\mathcal{B})$ and $(\mathpzc{F},T)\in\Psi_{0}(\mathcal{B},C)$. Again using the doubling construction, we may identify $(\mathcal{E},S,\nabla)$ with an element $(\widetilde{\mathcal{E}},\widetilde S,\widetilde\nabla)$ of $\Psi_0^{\ell}(\mathcal{A},\mathcal{B}\otimes\C_1)$. Similarly, we may take the {\em external} product of $(\mathpzc{F},T)$ in $\Psi_{0}(\mathcal{B},C)$ with the trivial cycle $(\C^2,0)$ in $\Psi_0(\C_1,\C_1)$ to obtain the cycle
$$
(\mathpzc{F}\otimes\C^2,T\otimes1)\in \Psi_0(\mathcal{B}\otimes\C_1,C\otimes\C_1),
$$
graded by the tensor product of the grading $\Gamma_\F:\mathpzc{F}\to\mathpzc{F}$ with the grading on $\C_1$. The internal product
$$
\Psi_0^{\ell}(\mathcal{A},\mathcal{B}\otimes\C_1)\times\Psi_0(\mathcal{B}\otimes\C_1,C\otimes\C_1)\to\Psi_0(\mathcal{A},C\otimes\C_1)\cong\Psi_{-1}(\mathcal{A},C)
$$
is now well defined. 
The final step in passing from even to odd cycles is made using Remark~\ref{re:double}, resulting in the cycle $$(\mathcal{E}\hotimes_{\B}\mathpzc{F},S\otimes \Gamma_{\F}+1\otimes_\n T)$$ as an element of $\Psi_{-1}(\A,C)$. If we decompose the module $\mathpzc{F}$ into its graded components,
$$
\mathpzc{F}=\mathpzc{F}_+\oplus\mathpzc{F}_-,\qquad T=\begin{pmatrix} 0 & T_+ \\ T_- & 0 \end{pmatrix},\qquad \Gamma_{\mathpzc{F}}=\begin{pmatrix} 1 & 0 \\ 0 & -1 \end{pmatrix},
$$
the product operator has the explicit form
\begin{equation}\label{eq:ev-odd-op}
\begin{pmatrix} S\otimes 1 & 1\otimes_\n T_+ \\ 1\otimes_\n T_- & -S\otimes 1 \end{pmatrix}
\end{equation}
as an operator from $\left(\Dom(S\otimes 1)\cap \Dom(1\otimes_\n T)\right)$ to  $\mathcal{E}\hotimes_{\B}\mathpzc{F}$. 
\end{example}

\begin{example}\label{ex:KK-odd}
Finally we consider the product of odd KK-cycles $(\mathcal{E},S,\nabla)\in \Psi_{-1}^{\ell}(\mathcal{A},\mathcal{B})$ and $(\mathpzc{F},T)\in \Psi_{-1}(\mathcal{B},C)$. In this case we apply the doubling construction to each of these to obtain unbounded cycles $(\widetilde{\mathcal{E}},\widetilde{S},\widetilde{\nabla})\in \Psi_0^{\ell}(\mathcal{A},\mathcal{B}\otimes\C_1)$ and $(\widetilde{\mathpzc{F}},\widetilde{T})\in \Psi_0(\mathcal{B},C\otimes\C_1)$. 
Following this, we take the external product of $(\widetilde{\mathpzc{F}},\widetilde{T})\in \Psi_0(\mathcal{B},C\otimes\C_1)$ with $(\C_1,0)\in\Psi_0(\C_1,\C_1)$ to obtain the cycle $(\widetilde{\mathpzc{F}}\otimes\C_1,\widetilde{T}\otimes 1)\in \Psi_0(\mathcal{B}\otimes\C_1,C\otimes \textup{M}_2(\C))$. We now have a well defined internal product
$$
\Psi_0^{\ell}(\mathcal{A},\mathcal{B}\otimes\C_1)\times\Psi_0(\mathcal{B}\otimes\C_1,C\otimes\textup{M}_2(\C))\to\Psi_0(\mathcal{A},C\otimes\textup{M}_2(\C))\cong\Psi_{0}(A,C).
$$
One finds that the resulting even $(\mathcal{A},C)$ cycle is given by $$(\mathcal{E}\hotimes_{\B}\mathpzc{F}\otimes\C^2, S\otimes 1\otimes \gamma^1+1\otimes_\n T\otimes\gamma^2),
$$ 
where $\gamma^1$, $\gamma^2$ are (real) generators of the Clifford algebra $\C_1$. Upon making explicit choices of representatives for the gamma matrices, the product operator has the form
\begin{equation}\label{odd-odd-op}
\begin{pmatrix} 0 & S\otimes 1 -i 1\otimes_\n T \\ S\otimes 1 +i 1\otimes_\n T & 0 \end{pmatrix}
\end{equation}
as an operator from $\left(\Dom(S\otimes 1)\cap \Dom(1\otimes_\n T)\right)\otimes\C^2$ to  $\mathcal{E}\hotimes_{\B}\mathpzc{F}\otimes\C^2$ ({\em cf}. \cite{KaLe}).
\end{example}

\section{Gauge Theories from Noncommutative Manifolds and KK-Factorization}
\label{sect:gauge-KK}

In this section, we will show how spectral triples naturally give rise to (generalized) gauge theories. Starting from a given spectral triple, we first recall from \cite{C94,C96} how to associate to it a  gauge group together with a set of gauge potentials equipped with a natural action of the former. This can be done in two ways: one in terms of the unitary endomorphisms of a noncommutative vector bundle, which generalizes the classical notion of gauge theory ({\em cf.} \cite{C94} and references therein), the other using Connes' notion of `inner fluctuations' of the spectral triple arising via Morita self-equivalences \cite{C96}.

As already mentioned, however, these two approaches are in general mutually incompatible. The goal of this section is to put these notions of gauge theory into a commutative geometric context, starting with a factorization in unbounded KK-theory of a Hilbert bundle over a commutative base manifold. The above gauge groups and gauge potentials can then be naturally described in terms of the Hilbert bundle in a unified way.

\subsection{Gauge transformations and inner fluctuations}
The motivation for our proposal for gauge theories in noncommutative geometry comes from symmetries of spectral triples. The first candidate for our attention is unitary equivalence. Let $B_1$ and $B_2$ be unital $C^*$-algebras.

\begin{defn}
A pair of spectral triples $(B_1,\H_1,D_1)$ and $(B_2,\H_2,D_2)$ 
are said to be {\bf unitary equivalent} if $B_1 \simeq B_2$ and there exists a unitary 
operator $U: \H_1 \to \H_2$ such that
\begin{align*}
U D_1 U^* = D_2,\qquad U \pi_1(b) U^*  = \pi_2(b),
\end{align*}
for all $b\in \B_1$.
\end{defn}

In this situation we immediately find that $U$ implements an isomorphism between the corresponding Lipschitz algebras $\B_{1}$ and $\B_{2}$ which is an isometry for the norms induced by the Lipschitz representations \eqref{Liprep}, as the following result shows.

\begin{prop}\label{pr:ueq}
Let $(B_1,\H_1,D_1)$ and $(B_2,\H_2,D_2)$ be unitarily equivalent spectral triples. Then the corresponding Lipschitz algebras $\B_1$ and $\B_2$ are isometrically isomorphic.
\end{prop}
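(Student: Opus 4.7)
} The strategy is to lift the unitary $U:\H_1\to\H_2$ to a unitary $U\oplus U:\H_1\oplus\H_1\to\H_2\oplus\H_2$ and use it to conjugate the Lipschitz representation $\pi_{D_1}$ into $\pi_{D_2}$. Since conjugation by a unitary preserves operator norms, this will automatically give both the equality of the underlying algebras as subsets of $B_1\cong B_2$ and the isometry of the Lipschitz norms.

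First I would fix the $C^*$-isomorphism $B_1\simeq B_2$ and, for $b$ an element of this common algebra, denote by $T_1\in\BB(\H_1)$ the bounded extension of the densely defined commutator $[D_1,\pi_1(b)]$, assuming $b\in\B_1$. The key computation is to show that $UT_1U^*\in\BB(\H_2)$ agrees with the closure of $[D_2,\pi_2(b)]$. From $UD_1U^*=D_2$ one gets that $U$ restricts to a bijection $\Dom(D_1)\to\Dom(D_2)$, and so for any vector on which $[D_2,\pi_2(b)]$ is a priori defined, one may write
\[
UT_1U^*=U\bigl(D_1\pi_1(b)-\pi_1(b)D_1\bigr)U^*=D_2\pi_2(b)-\pi_2(b)D_2
\]
using the intertwining relations $U\pi_1(b)U^*=\pi_2(b)$ and $U^*U=1$. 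This identifies $UT_1U^*$ as the bounded extension of $[D_2,\pi_2(b)]$, hence $b\in\B_2$ with $\|[D_2,\pi_2(b)]\|=\|T_1\|=\|[D_1,\pi_1(b)]\|$. The symmetric argument with $U^*$ yields $\B_2\subseteq\B_1$, so $\B_1=\B_2$ as $*$-subalgebras of $B_1\cong B_2$.

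To upgrade this equality to an isometric isomorphism for the Lipschitz topologies, I would observe that the computation above shows that the unitary $U\oplus U$ intertwines $\pi_{D_1}$ and $\pi_{D_2}$, in the sense that
\[
(U\oplus U)\,\pi_{D_1}(b)\,(U\oplus U)^{*}
=\begin{pmatrix}U\pi_1(b)U^{*} & 0\\ U[D_1,\pi_1(b)]U^{*} & U\pi_1(b)U^{*}\end{pmatrix}
=\pi_{D_2}(b)
\]
for every $b\in\B_1=\B_2$. Since $U\oplus U$ is unitary, this equality preserves operator norms, giving the desired isometry.

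The only subtlety is making sure the formal commutator manipulation is legitimate when $\pi(b)$ does not preserve $\Dom(D)$. This is handled by restricting the computation to the dense subspace $\{x\in\Dom(D_1):\pi_1(b)x\in\Dom(D_1)\}$ (whose image under $U$ is the analogous dense subspace for $D_2$, by the intertwining) and then passing to the bounded extensions on all of $\H_1$ and $\H_2$. This is the only mildly technical point; the rest is straightforward diagram chasing, and no new analytical input beyond $UD_1U^*=D_2$ and $U\pi_1(b)U^*=\pi_2(b)$ is required.
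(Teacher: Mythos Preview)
Your proposal is correct and follows essentially the same route as the paper: both arguments boil down to the identity $(U\oplus U)\,\pi_{D_1}(b)\,(U\oplus U)^{*}=\pi_{D_2}(b)$, from which the isometric isomorphism of the Lipschitz algebras is immediate. You supply more detail on the domain issue for the commutators than the paper does, but the core computation is identical.
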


\proof We simply compute that
\[\begin{split}\pi_{D_2}(b)&=\begin{pmatrix} U\pi_{1}(b)U^{*} & 0 \\ [D_{2},U\pi_{1}(b)U^{*}] & U\pi_{1}(b)U^{*}\end{pmatrix}\\
&=\begin{pmatrix} U& 0\\ 0 & U\end{pmatrix}\begin{pmatrix} \pi_{1}(b) & 0 \\ [D_{1},\pi_{1}(b)] & \pi_{1}(a)\end{pmatrix}\begin{pmatrix} U^{*}& 0\\ 0 & U^{*}\end{pmatrix}\\
&=U\pi_{D_1}(b)U^{*},\end{split}\]
where the operators $\pi_{D_{i}}(b)$, $i=1,2$, are as in eq.~\eqref{Liprep}. \endproof

As a special case, we consider the situation where $B=B_1=B_2$ and the unitary equivalence is implemented by a unitary element of the Lipschitz algebra $\B$, leading to the following definition.

\begin{defn}\label{de:int-g}
The {\bf gauge group} of the spectral triple $(B,\H,D)$ is defined to be the group $\U(\B)$ of unitary elements of the Lipschitz algebra $\B$ equipped with the multiplication induced by the algebra structure of $\B$, and the topology it inherits as a subspace of $\mathcal{B}$.
\end{defn}

Each element $u\in \U(\B)$ of the internal gauge group induces a perturbation of the Dirac operator according to the transformation rule
\begin{equation}\label{pert}
D \mapsto D^u:=u D u^* = D + u [D,u^*].
\end{equation}
This in turn implements a unitary equivalence between the spectral triples $(B,\H,D)$ and  $(B,\H, D^u)$ ({\em cf}. \cite{C96}).

\begin{rem}
\textup{
Perturbing the Dirac operator according to the rule $D\mapsto D^u$ is equivalent to acting upon the algebra $\B$ by the automorphism 
$$
\alpha_u:\B\to\B,\qquad \alpha_u(b):=ubu^*.
$$
The set of automorphisms of $\B$ of this type form a group under the operation of composition.  The elements of this group,  which we denote by $\textup{Inn}(\B)$, are called {\bf inner automorphisms}, in contrast to the group $\textup{Out}(\B)$ of outer automorphisms, defined to be the quotient $\textup{Out}(\B):=\Aut(\B)/\textup{Inn}(\B)$. This is nicely summarized by the short exact sequence
$$
1 \to \textup{Inn}(\B) \to \Aut(\B) \to \textup{Out}(\B) \to 1.
$$ 
Note that if $\A = \Lip(M)\supset C^\infty(M)$ is the (commutative) algebra of Lipschitz functions on a classical smooth manifold $M$, there are no non-trivial inner automorphisms and so $\textup{Out}(\A)$ is the group of bi-Lipschitz homeomorphisms $M\rightarrow M$. In particular, there is an inclusion $\textup{Diff}(M)\subset \textup{Out}(\A)$.
} 
\end{rem}

In this way, we see that a non-Abelian gauge group appears naturally whenever $\A$ is a noncommutative algebra. Yet it turns out that we can do better than this. Noncommutative algebras allow for a more general and much more natural notion of equivalence than that afforded by inner automorphisms. Indeed, the most natural notion of an invertible morphism between  noncommutative $C^*$-algebras is given by {\em Morita equivalence}.
 Let us see if we can lift Morita equivalence to the level of spectral triples. 

Given a $C^*$-algebra $B$, any Morita equivalent $C^*$-algebra $A$ is necessarily isomorphic to the algebra of adjointable endomorphisms of some finitely generated (right) Hilbert module $\EE\leftrightharpoons B$, 
$$
A = \End^{*}_{B}(\EE).
$$
In this situation, let $(B,\H,D)$ be a spectral triple over $B$ and let $\E\leftrightharpoons\B$ be a right Lipschitz module over the Lipschitz algebra $\B$ whose $C^*$-envelope is isomorphic to $\EE$.

As already mentioned, the Lipschitz module $\E$ always admits a connection
$$
\nabla: \E \to \E \hotimes_{\B} \Omega^1_D(\B).
$$
Let us choose one. Then writing $\H_\E:=\E\hotimes_\B \H$, we construct the operator 
$$
D_\n:\mathfrak{Dom}(D_\n)\to \H_\E,\qquad D_\n:=1\otimes_\n D,
$$
and define 
$$
\A:=\{a\in A~|~[D_\n,a]\in \End^*_{B}(\EE)\}\cong \End^{*}_{\B}(\E).
$$
The last isomorphism follows from \cite[Thm~5.5.1]{Mes09b}. It follows immediately from the definition that the datum $(\E, 0 , \nabla)$ determines an element of the set of Lipschitz cycles $\Psi^\ell_0(\A,\B)$. Upon choosing such a connection, we find the following result. The construction first appeared in \cite{C96}, here we recast it in terms of our KK-theoretic language.

\begin{prop}\label{pr:gauge}
The Kasparov product of the Lipschitz cycle $(\E,0,\nabla) \in \Psi^\ell_0(\A,\B)$ with the spectral triple $(B,\H,D) \in \Psi_0(\B,\C)$, given by the formula
\begin{equation}
\label{eq:st-morita}
(\H_\E,  D_\n)=(\E\otimes_\A \H,1\otimes_\n D) \in \Psi_0(\A,\C),
\end{equation}
yields a spectral triple over the $C^*$-algebra $A$, with Lipschitz algebra cb-isomorphic to $\A$.
\end{prop}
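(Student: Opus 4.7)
The plan is to apply Theorem~\ref{thm:KK-prod} with the Lipschitz cycle $(\E,0,\nabla)$ acting on the spectral triple $(B,\H,D)$, regarded as a $(\B,\C)$ KK-cycle. First I would check that $(\E,0,\nabla)$ is a genuine element of $\Psi^\ell_0(\A,\B)$ in the sense of Definition~\ref{de:lipcyc}: the operator $S=0$ is trivially self-adjoint, regular and odd, and its commutator with any $a\in\A$ vanishes, so $a\mapsto [S,a]$ is a cb-derivation. The only substantive point is the compact-resolvent condition $(S\pm i)^{-1}\in\K_\B(\E)$, which reduces to $1_\E\in\K_\B(\E)$; this holds because the $C^*$-envelope $\EE$ of $\E$ implements a Morita equivalence and so is algebraically finitely generated projective. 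The connection $\nabla$ is completely bounded by hypothesis and commutes trivially with $S=0$.

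With these verifications in hand, Theorem~\ref{thm:KK-prod} applies directly. The product operator collapses to $0\otimes 1+1\otimes_\nabla D=D_\nabla$, and the resulting unbounded cycle
\[
(\H_\E,D_\nabla)\in\Psi_0(\A,\C)
\]
is, by the very definition of a spectral triple, a spectral triple over $A$: the target is $\C$, so compactness of the resolvent in $\K_\C(\H_\E)$ is just the ordinary compactness on a Hilbert space, and the theorem guarantees self-adjointness of $D_\nabla$, invariance of its domain under $\A$, and boundedness of the commutators $[D_\nabla,a]$ for $a\in\A$. Density of $\A$ in $A$ is inherited from the Morita equivalence, so all of the spectral triple axioms are satisfied.

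The second claim, that the Lipschitz algebra of $(A,\H_\E,D_\nabla)$ is cb-isomorphic to $\A=\End^*_\B(\E)$, decomposes into two inclusions. The containment $\A\subseteq\{a\in A\,:\,[D_\nabla,a]\in\BB(\H_\E)\}$ is immediate from the previous paragraph. The reverse inclusion, together with the completely bounded identification with $\End^*_\B(\E)$, is exactly the content of \cite[Thm~5.5.1]{Mes09b}: that theorem identifies adjointable bounded-commutator endomorphisms of $\H_\E$ with the algebra of $\B$-linear completely bounded operators on $\E$, and provides a cb-isomorphism compatible with the Lipschitz representation $\pi_{D_\nabla}$ of eq.~\eqref{Liprep}.

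The only genuinely nontrivial step is the identification of the Lipschitz algebra, which relies on the structural result of \cite{Mes09b}; the construction of the spectral triple itself is a streamlined application of Theorem~\ref{thm:KK-prod}, made simple by the choice $S=0$, which eliminates all of the analytic subtleties of anti-commutation between $S\otimes 1$ and $1\otimes_\nabla D$.
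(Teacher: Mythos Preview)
Your proof is correct and follows the same approach as the paper: the paper's proof is the single sentence ``This is an immediate consequence of Theorem~\ref{thm:KK-prod}'', and you have simply unpacked what that sentence entails. In particular, the verification that $(\E,0,\nabla)$ is a Lipschitz cycle and the citation of \cite[Thm~5.5.1]{Mes09b} for the identification of the Lipschitz algebra are both stated in the paragraph immediately preceding the proposition, so the paper treats them as already established by the time the proof begins.
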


\proof This is an immediate consequence of Theorem~\ref{thm:KK-prod}.  \endproof

Let us now focus upon \emph{Morita self-equivalences}, i.e. the situation in which $A =\EE = B$. Let $(A,\H,D)$ be a spectral triple over $A$ as above. In this setting we look at Hermitian connections 
$$
\nabla: \A \to \Omega^1_D(\A).
$$
By the Leibniz rule we automatically have $\nabla = d +\omega$, where $\omega \equiv \nabla(1) = \sum_j a_j[D,b_j]$ is a generic self-adjoint one-form in $\Omega^1_D(\A)$. Under the identification $\E\otimes_\A \H \simeq \H$ we have
$$
1 \otimes_\nabla D \equiv  D + \omega.
$$
In other words, upon choosing a connection $\n$ on $\E$, the Dirac operator $D$ is `internally perturbed' to $D_\omega := D+\omega$. The one-form $\omega^* =\omega \in \Omega^1_D(\A)$ will be interpreted as {\bf gauge field}. 

\begin{rem}
\textup{
The passage from the spectral triple $(A,\H,D)$ to the spectral triple $(A,\H,D_\omega)$ is called an {\bf inner fluctuation} of the Dirac operator $D$, since it is the algebra $\A$ that generates the gauge fields $\omega$ through Morita self-equivalences. In terms of the unbounded Kasparov product described in Proposition~\ref{pr:gauge}, the internal gauge fields $\omega$ are generated by taking the internal product of the spectral triple $(A,\H,D) \in \Psi_0(\A,\C)$ with the Lipschitz cycle $(\A,0,\nabla) \in \Psi^\ell_0(\A,\A)$. 
}
\end{rem}

Combining the two main ideas presented in this section is now easy.  The fluctuated spectral triple $(\A, \H, D_\omega)$ also carries an action of the internal gauge group group $\U(\A)$ by unitary equivalences as in eq.~\eqref{pert}, that is to say
$$
D_\omega \mapsto u D_\omega u^*, \qquad u\in \U(\A),
$$
or equivalently
\begin{equation}
\label{eq:gauge-action}
\omega \mapsto u \omega u^* + u [D,u^*], \qquad u\in\U(\A),~\omega\in\Omega^1_D(\A),
\end{equation}
which is the usual rule for the transformation of a gauge field. 

\subsection{KK-factorization and a proposal for gauge theories}
In this way, we have two different possibilities for perturbing a given spectral triple: the first via unitary equivalences and the second via Morita self-equivalences. However, in place of this noncommutative-geometric interpretation of these constructions, we would like to make contact with the classical world, by finding a unifying description of these two possibilities in terms of classical geometric objects, similar to the usual formulation of gauge theory on a vector bundle over a classical manifold. 

To this end, given a spectral triple $(A,\H,D)$, let us assume that we can {\it factorize} it in unbounded KK-theory over a {\it classical spin manifold}. That is to say, we assume that there exist a {\it commutative} $C^*$-algebra $B$ equipped with a spectral triple $(B,\H_0,D_0)$, together with a Lipschitz cycle $(\E,T,\nabla)$ for $(\A,\B)$ and such that $(A,\H,D)$ factors as an internal Kasparov product:
\begin{equation}
\label{eq:KK-fact}
(A,\H,D) \simeq (
\E\, \tilde\otimes_{\B}\, \H_0, T \otimes 1 + 1 \otimes_\nabla D_0 ) \in \Psi_0(\A,\C),
\end{equation}
{\em cf}. Theorem \ref{thm:KK-prod}. Since $B \simeq C(X)$ for some compact Hausdorff space $X$, the right $B$-module $\EE:= \E \hotimes_{\B} B$ consists of continuous sections of some Hilbert bundle $V\to X$ \cite{Tak79}. Our proposal is to consider this Hilbert bundle as the natural geometrical object on which to define a gauge theory, as we will now describe.

\begin{defn}
\label{defn:KK-gauge}
In the above notation, we define the {\bf Lipschitz gauge group} associated to the factorization \eqref{eq:KK-fact} to be
$$
\G(\E) :=  \left \{   U \in \End^{*}_{\B}(\E) : UU^* = 1_{\E} = U^*U,~ U \A U^* = \A, ~[T,U] \in \End^{*}_{B}(\mathpzc{E}) \right\}.
$$
The continuous gauge group is given similarly by $\mathpzc{G}(\EE)$, where we allow for $U\in \End^{*}_{B}(\mathpzc{E})$ and drop the bounded commutator condition in the definition above. The group $\mathpzc{G}(\mathpzc{E})$ is the $C^{*}$-closure of $\G(\mathcal{E})$.
\end{defn}

For practical reasons which will become apparent,  we would like this group to be realized as the group of unitary elements of some $C^*$-algebra $\tilde A$ which contains $A$, with $\tilde A$ the smallest possible $C^*$-algebra having this property. Inspired by the definition of reduced group $C^*$-algebras, we make the following definitions.

\begin{defn}
We define $\tilde \A$ to be the closure of the linear span of $\mathcal{G}(\mathcal{E})$ in the operator space topology given by the representation
\begin{equation}\label{eq: gaugerep} U\mapsto \begin{pmatrix} U & 0 \\ [T,U] & U\end{pmatrix}\in\End^{*}_{\mathcal{B}\oplus B}(\mathcal{E}\oplus\mathpzc{E}),\end{equation}
where $\mathcal{B}\oplus B$ denotes the matrix diagonal direct sum of involutive operator algebras. We define $\tilde A$ to be the $C^*$-closure of the algebra $\tilde A$.
\end{defn}

\begin{prop}
The $C^*$-algebra $\tilde A$ is the minimal $C^*$-algebra (ordered by inclusion) with the property that $\U(\tilde A)$ contains $\GG(\EE)$. It is isomorphic to the closure in $\End_{B}^{*}(\EE)$ of the complex linear span of $\GG(\EE)$:
\begin{equation}
\label{eq:tildeA}
\tilde A \simeq \overline{\text{\textup{Span}}}_\C \GG(\EE). 
\end{equation}
The $C^*$-algebra $\tilde A$ contains $A$ as a $C^*$-subalgebra. 
\end{prop}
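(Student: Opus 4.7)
The plan is to proceed in three steps: first, realize $\tilde A$ concretely as a $C^*$-subalgebra of $\End^*_B(\EE)$; second, establish the identification \eqref{eq:tildeA} together with the minimality property; third, verify the inclusion $A \subseteq \tilde A$.

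For the first step, observe that the operator-space completion $\tilde \A$ is defined via the graph-style representation \eqref{eq: gaugerep}, in complete analogy with the representation \eqref{Liprep} used to topologise Lipschitz algebras. Passing to the $C^*$-closure forgets the bounded-commutator lower entry of \eqref{eq: gaugerep} and retains only the top-left block, exactly as the $C^*$-envelope of a Lipschitz algebra $\B$ recovers $B$. Concretely, the projection onto the top-left block extends continuously, and $\tilde A$ is identified with the operator-norm closure of $\textup{Span}_\C\,\G(\E)$ inside $\End^*_B(\EE)$.

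Given this realization, the identification $\tilde A \simeq \overline{\textup{Span}}_\C\,\GG(\EE)$ follows by a two-way inclusion. Since $\GG(\EE)$ is by definition the $C^*$-norm closure of $\G(\E)$ in $\End^*_B(\EE)$, every $U \in \GG(\EE)$ is a norm limit of elements of $\G(\E) \subseteq \tilde A$, whence $\GG(\EE) \subseteq \tilde A$ and $\overline{\textup{Span}}_\C\,\GG(\EE) \subseteq \tilde A$; the reverse inclusion is clear from $\G(\E) \subseteq \GG(\EE)$. Minimality is then immediate: any $C^*$-subalgebra $C \subseteq \End^*_B(\EE)$ containing $\GG(\EE)$ in its unitary group contains $\textup{Span}_\C\,\GG(\EE)$ and, being norm-closed, contains $\tilde A$.

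For the final inclusion, I would show $\U(\A) \subseteq \G(\E)$. A unitary $u \in \U(\A)$ acts as a unitary in $\End^*_\B(\E)$, satisfies $u\A u^* = \A$ trivially, and has $[T,u]$ bounded on $\EE$ because $(\E,T,\nabla)$ is a Lipschitz cycle (Definition \ref{de:lipcyc}(iii)); hence $u \in \G(\E)$. Taking $C^*$-norm closures yields $\U(A) \subseteq \GG(\EE)$. Since every element of a unital $C^*$-algebra is a complex linear combination of at most four of its unitaries, $A = \overline{\textup{Span}}_\C\,\U(A) \subseteq \overline{\textup{Span}}_\C\,\GG(\EE) = \tilde A$. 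The main obstacle will be the first step: one must verify carefully that the $C^*$-closure of $\tilde \A$ coincides with the $C^*$-norm closure of $\textup{Span}_\C\,\G(\E)$ inside $\End^*_B(\EE)$, with no elements lost or gained, via a careful comparison of the operator-space norm from \eqref{eq: gaugerep} with the $C^*$-norm on the top-left block.
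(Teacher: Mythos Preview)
Your proposal is correct and follows essentially the same minimality argument as the paper: any $C^*$-algebra whose unitary group contains $\GG(\EE)$ must contain $\overline{\textup{Span}}_\C\,\GG(\EE)$, and the latter already has $\GG(\EE)$ in its unitary group. Your treatment is in fact more complete than the paper's: the paper asserts the identification $\tilde A \simeq \overline{\textup{Span}}_\C\,\GG(\EE)$ without further comment and does not prove the inclusion $A \subseteq \tilde A$ at all, whereas you correctly flag the $C^*$-envelope step as the point requiring care and supply the four-unitaries argument for $A \subseteq \tilde A$.
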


\proof
Let $\tilde B$ be the minimal $C^*$-algebra such that $\GG(\EE)\subset\U(\tilde B)$ and let $B$ be an arbitrary $C^*$-algebra with the property that $\GG(\EE) \subset \U(B)$. Then $\GG(\EE) \subset \U(B) \subset B$, which by continuity implies that $\tilde B \into B$. Since clearly $\GG(\EE) \subset \U( \overline{\text{Span}}_\C \GG(\EE) )$, it follows that $\tilde B$ is the minimal $C^*$-algebra with this property.  It follows immediately that $\tilde B$ is isomorphic to the $C^*$-closure $\tilde A$ of the algebra $\tilde\A$.
\endproof

Alternatively, $\tilde{\mathcal{A}}$ can be described as those elements $a \in \tilde A$ for which $a\in\End^{*}_{\mathcal{B}}(\mathcal{E})$ and $[T,a] \in\End^{*}_{B}(\mathpzc{E})$. This in turn contains $\A$ and by construction we have $(\E,T,\nabla) \in \Psi^{\ell}_{0}(\tilde \A,\B)$, so that the following definition makes sense.

\begin{defn}
\label{defn:KK-fields}
With respect to the factorization \eqref{eq:KK-fact}, we define the space of {\bf scalar fields} $\cC_s$ to be 
$$
\cC_s:=\Omega^1_T(\tilde \A) = \{ \sum_j a_j [T,b_j]: a_j, b_j \in \tilde \A\},
$$ 
a subset of $\End_{B}^{*}(\mathpzc{E})$, and the space of {\bf gauge fields} $\cC_g$ as the affine space of connections $\nabla: \E \to \E \hotimes_{\B} \Omega^1_{D_0}(\B)$, i.e.
$$
\cC_g := \Hom_{\B} (\E, \E \hotimes_{\B} \Omega^1_{D_0}(\B)).
$$
\end{defn}

Note that such a use of unitary endomorphisms to generate the gauge fields  (via the algebra $\tilde \A$) has already been exploited in the context of physical applications of noncommutative geometry (e.g. \cite{LS01}), albeit in a topologically trivial context.

\begin{lem}
The following defines an action of $\G(\E)$ on $\cC(\E)=\cC_s \oplus \cC_g$:
\begin{align*}
\G(\E) \times \cC(\E) &\to \cC(\E),\qquad
(U,\Phi,\nabla)\mapsto ( \Phi^U,\nabla^U):= (U \Phi U^*, U \nabla U^*).
\end{align*}
\end{lem}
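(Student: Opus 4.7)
The plan is a direct verification, with the key preliminary observation being that $\G(\E) \subset \tilde\A$. This is immediate from the definition of $\tilde\A$ as the closure of $\textup{Span}_\C\,\G(\E)$ in the topology induced by \eqref{eq: gaugerep}. Moreover, for $V \in \G(\E)$ the conjugation map $W \mapsto VWV^*$ preserves $\G(\E)$: unitarity and invariance $\A \mapsto \A$ are routine, while
$$[T, VWV^*] = [T,V]\,WV^* + V[T,W]V^* + VW[T,V^*]$$
is bounded whenever $[T,W]$ is. The same identity shows that conjugation is continuous in the topology defining $\tilde\A$ (where $W_n \to W$ means both $W_n \to W$ and $[T,W_n] \to [T,W]$ in norm). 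Hence $V \tilde\A V^* = \tilde\A$ for every $V \in \G(\E)$.

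For the stability of $\cC_s$ under the action, given $\Phi = \sum_j a_j[T, b_j] \in \Omega^1_T(\tilde\A)$ and $U \in \G(\E)$, I would apply the Leibniz rule $[T, b_j U^*] = [T, b_j] U^* + b_j [T, U^*]$ to rewrite
$$U\Phi U^* \;=\; \sum_j (Ua_j)\,[T, b_j]\, U^* \;=\; \sum_j (Ua_j)\,[T, b_j U^*] \;-\; \sum_j (Ua_j b_j)\,[T, U^*].$$
Since $U, U^* \in \tilde\A$ and $\tilde\A$ is an algebra, each of $Ua_j$, $b_j U^*$ and $Ua_j b_j$ lies in $\tilde\A$, so both sums are of the form prescribing elements of $\Omega^1_T(\tilde\A)$; thus $U\Phi U^* \in \cC_s$.

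For $\cC_g$, I would define the action on a connection $\n$ by $(U\n U^*)(e) := (U \otimes 1)\,\n(U^* e)$, where $U \otimes 1$ acts on $\E\hotimes_\B \Omega^1_{D_0}(\B)$. The $\B$-linearity of $U$ on $\E$ gives
$$(U\n U^*)(eb) \;=\; (U\otimes 1)\bigl(\n(U^* e)\,b + U^* e \otimes \d b\bigr) \;=\; (U\n U^*)(e)\,b + e \otimes \d b,$$
so $U\n U^*$ satisfies the Leibniz rule of Definition~\ref{connection} and hence is again a connection; it is completely bounded as a composition of cb maps.

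Finally, the group-action axioms $\mathrm{id}_{\E}\cdot(\Phi,\n) = (\Phi,\n)$ and $(UV)\cdot(\Phi,\n) = U\cdot\bigl(V\cdot(\Phi,\n)\bigr)$ are immediate from associativity of composition. The only nontrivial step is the closure of $\cC_s$ under the action, and this hinges entirely on the fact that the generators of $\G(\E)$ themselves lie inside $\tilde\A$ (so that the ``correction terms'' produced by the Leibniz rule remain in $\Omega^1_T(\tilde\A)$).
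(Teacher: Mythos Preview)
Your proposal is correct and follows essentially the same approach as the paper's own proof: the Leibniz-rule rewriting $U a[T,c]U^* = (Ua)[T,cU^*] - (Uac)[T,U^*]$ for the scalar-field part, and the verification that $U\nabla U^*$ again satisfies the Leibniz rule for the gauge-field part, are exactly what the paper does. Your proof is slightly more explicit in that you first justify $\G(\E)\subset\tilde\A$ and $U\tilde\A U^*=\tilde\A$ (which the paper uses tacitly) and you verify the group-action axioms, but the substance is the same.
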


\proof
Let $\Phi = a[T,c]$ be a scalar field with $a,c \in \tilde \A$. Then
$$
U a [T,c] U^* = (U a) [T,c U^*] - U a c [T,U^*].
$$
Since $U a, c U^*$, $U a c$, and $U^*$ are elements in $\tilde \A$, the element $\Phi^U$ is in $\Omega^1_T(\tilde \A)$. 

Also, if $\nabla: \E \to \E \hotimes_{\B} \Omega^1_{D_0}(B,\B)$ is a connection on $\E$, then we check that
$$
U \nabla U^* (e b) = U \nabla U^* (e) b + e \otimes [D_0,b]
$$
for all $e \in \E, b \in \B$. In other words, $\nabla^U$ satisfies the Leibniz rule and is a connection on $\E$. 
\endproof

\begin{lem}
\label{lem:inner-decomp}
In the above notation we have:
\begin{enumerate}[(i)]
\item the internal gauge group $\U(\A)$ is a normal subgroup of $\G(\E)$;
\item the space $\Omega^1_D(\A)$ of internal gauge fields is a subspace of $\cC_s\oplus \cC_g$.
\end{enumerate}
\end{lem}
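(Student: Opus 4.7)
The plan for part (i) is first to verify the inclusion $\U(\A) \subseteq \G(\E)$ and then to deduce normality. Given $u \in \U(\A)$, the factorization \eqref{eq:KK-fact} provides a $*$-representation $\A \to \End^{*}_{\B}(\E)$ which sends unitaries to unitaries; the condition $u \A u^{*} = \A$ is automatic since $u, u^{*} \in \A$; and the bounded commutator condition $[T, u] \in \End^{*}_{B}(\mathpzc{E})$ is exactly item (iii) of Definition~\ref{de:lipcyc} applied to the Lipschitz cycle $(\E, T, \nabla)$. For normality, given $U \in \G(\E)$ and $u \in \U(\A)$, a direct computation shows $U u U^{*}$ is unitary, and the defining condition $U \A U^{*} = \A$ places it inside $\A$, so $U u U^{*} \in \U(\A)$.

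For part (ii), the plan is to use the factorization $D \simeq T \otimes 1 + 1 \otimes_{\nabla} D_0$ to split each internal one-form additively. For $\omega = \sum_j a_j [D, b_j]$ with $a_j, b_j \in \A$, I would write
\[
a_j [D, b_j] = (a_j [T, b_j]) \otimes 1 + a_j [1 \otimes_{\nabla} D_0, b_j],
\]
since $b_j$ acts through the representation of $\A$ on the first tensor factor of $\E \otimes_{\B} \H_0$. The first term $\omega_s := \sum_j a_j [T, b_j]$ visibly lies in $\Omega^1_T(\A) \subseteq \Omega^1_T(\tilde \A) = \cC_s$, since $\A \subseteq \tilde \A$ by construction.

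For the remaining term, I would compute $[1 \otimes_{\nabla} D_0, b_j]$ on an elementary tensor $e \otimes h$ using the defining formula $(1 \otimes_{\nabla} D_0)(e \otimes h) = \gamma(e) \otimes D_0 h + \nabla_{D_0}(\gamma(e)) h$. Because $b_j$ is $\B$-linear on $\E$ and commutes with the grading, the terms containing the bare action of $D_0$ on $h$ cancel, and what remains is the action of the bimodule commutator $[\nabla, b_j] := \nabla \circ b_j - (b_j \otimes 1) \circ \nabla$, which is a genuine element of $\Hom_{\B}(\E, \E \hotimes_{\B} \Omega^1_{D_0}(\B))$ because the Leibniz terms in $\nabla$ cancel against one another. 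Left-multiplication by $a_j$ preserves $\B$-linearity, so $A_{\omega} := \sum_j a_j [\nabla, b_j] \in \cC_g$, and the resulting map $\omega \mapsto (\omega_s, A_{\omega})$ furnishes the required inclusion. The main obstacle I foresee is the grading bookkeeping: when $T$ is odd, as in the spin-geometric examples of later sections, the formula for $1 \otimes_{\nabla} D_0$ involves the grading $\gamma$ on $\E$, and one must check carefully that these gradings do not obstruct the cancellations and that no spurious Clifford factor on $\H_0$ contaminates $A_{\omega}$, so that it genuinely represents a $\B$-linear endomorphism of the type demanded by the definition of $\cC_g$.
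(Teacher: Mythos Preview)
Your proposal is correct and follows essentially the same approach as the paper: for (i) you verify the inclusion and then use the defining condition $U\A U^{*}=\A$ of $\G(\E)$ to conclude normality, and for (ii) you split $a[D,c]=a[T,c]+a[\nabla,c]$ via the factorization and check $\B$-linearity of the second term through cancellation of the Leibniz contributions. The paper's proof is terser and does not dwell on the grading bookkeeping you flag, but the underlying argument is identical.
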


\proof
Indeed, for all $u \in \U(\A)$ and $U \in \G(\E)$ we have
$U u U^* \in \A$, which is at the same time a unitary element. By Definition~\ref{de:lipcyc}, the element $u\in\A$ has the property that $[T,u]$ is a bounded operator in $\End^{*}_{B}(\mathpzc{E})$. 
%
For the second claim, write $D = T \otimes 1 + 1 \otimes_{\nabla} D_0$. Then for each element in $\Omega^1_T(\tilde \A)$ we have
$$
a [D,c] = a [T,c] + a[\nabla,c]; \qquad (a,c \in \tilde \A).
$$
The first term is an element in $\Omega^1_T(\tilde \A)$ whereas for the second we show that it is a $\B$-linear map from $\E \to \E \hotimes_{\B} \Omega^1_{D_0}(B,\B)$:
\begin{align*}
a[\nabla,c](e b) &= a \nabla (c(e) b) - ac \nabla(e b) \\
&= (a \nabla (c(e))) b + a c e \otimes [D_0,b] - (ac \nabla(e )) b - ac e \otimes [D_0,b] \\
&= (a [\nabla, c](e)) b 
\end{align*}
for all $e \in \E$ and $b \in \B$. This concludes the proof.
\endproof

We deduce that our proposal for a gauge theory associated to a factorization of the form \eqref{eq:KK-fact} encompasses the previous {\it internal} gauge theory determined by the original spectral triple $(\A,\H,D)$. However, it does more than this: it allows us to distinguish within $\Omega^1_D(\A)$ between scalar fields acting fibrewise upon the Hilbert bundle $V$ and gauge fields as connections thereon. Similarly, the gauge group $\GG(\E)$ (containing the unitary group $\U(\A)$) acts fibrewise upon the space of Lipschitz sections $\Gamma^{\ell}(X,V)$. This action extends to an action of the $C^{*}$-gauge group $\mathcal{G}(\EE)$ on the space of continuous sections $\Gamma(X,V)$.

\section{Yang--Mills Theory} 
\label{sect:YM}

Next we turn to studying the implications and consequences of our new setting for gauge theory in unbounded KK-theory. To set the scene for the more general noncommutative case, in this section we recall how to use unbounded KK-theory to describe Yang--Mills gauge theories over classical manifolds \cite{CC,BoeS10}. We connect to the usual theory of principal bundles and connections thereon. 

\subsection{The Yang--Mills spectral triple} 
Let $M$ be a closed Riemannian spin manifold with dimension $m$ and spinor bundle $\cS$ and let $(C(M),L^2(M,\cS),\dirac_M)$ be the canonical spectral triple of Definition~\ref{de:can-st}. 

Let $E$ be a Hermitian vector bundle over $M$. We write $A:=\Gamma(M,\Xi)$ for the unital $C^*$-algebra consisting of continuous sections of the endomorphism bundle $\Xi:=\textup{End}(E)$.  From the Serre-Swan theorem for $*$-algebra bundles \cite{BoeS10} and holomorphic stability of the inclusion $\Lip(M)\subset C(M)$ it follows that $\Xi$ admits a Lipschitz structure and we write $\A=\Gamma^\ell(M,\Xi)$ for the involutive operator algebra of Lipschitz sections of $\textup{End}(E)$, which sits densely inside $A$. 
The Lipschitz subalgebra $\A$ acts as bounded endomorphisms on the $\Lip(M)$-module 
$$
\Gamma^\ell(M,E\otimes\cS) \simeq \Gamma^\ell(M,E) \hotimes_{\Lip(M)} \Gamma^\ell(M,\cS).
$$
Combining the Hermitian structure on $E$ with the usual $L^2$-inner product on the spinor bundle $\cS$ induces a natural inner product on the module $\Gamma^{\ell}(M,E\otimes\cS)$, giving a Hilbert space $\H_E:=L^2(M,E\otimes\cS)$ of square-integrable sections of the vector bundle $E\otimes\cS$. 

This construction gives us the first two ingredients of a spectral triple: a pre-$C^*$-algebra and a Hilbert space. In order to define a Dirac-type operator on $L^2(M,E \otimes \cS)$ we twist $\dirac_M$ by a Hermitian connection on $E$,
$$\n:\Gamma^\ell(M,E)\to\Gamma^\ell(M,E)\hotimes_{\Lip(M)}\Omega^1_D(M),$$
writing $\n_\Xi$ for its lift to the endomorphism bundle $\Xi$. The associated Dirac operator with coefficients in $E$ is the unbounded operator $D_E$ on $\h_E$ defined by
\begin{equation}\label{tw-dir}
D_E:\Dom(D_E)\to \H_E,\qquad D_E:=c\circ(1\otimes\n_\cS+\n\otimes 1),
\end{equation}
where $\n_\cS$ denotes the canonical spin connection on $\cS$ and $c$ denotes ordinary Clifford action of differential forms upon spinors. The following is a well-known result, essentially already contained in \cite{C94} and \cite{CC} ({\it cf.} \cite[Thm~3.10]{BoeS10}).

\begin{prop}\label{pr:bdl}
The datum $(A,\H_E,D_E)$ constitutes an $m^+$-summable spectral triple over the $C^*$-algebra $A$.
\end{prop}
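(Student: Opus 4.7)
The plan is to derive this proposition as a direct application of the unbounded Kasparov product, Theorem \ref{thm:KK-prod}, by realising $(A,\mathcal{H}_E,D_E)$ as the product of a trivial Lipschitz cycle supported on the module $\Gamma^\ell(M,E)$ with the canonical spectral triple of Definition \ref{de:can-st}.

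First I would verify that the datum $(\Gamma^\ell(M,E),0,\nabla)$ constitutes a Lipschitz cycle in $\Psi_0^\ell(\mathcal{A},\Lip(M))$. By the Serre--Swan theorem for Lipschitz $*$-algebra bundles quoted above, $\Gamma^\ell(M,E)$ is finitely generated projective over $\Lip(M)$ and hence in particular a Lipschitz module. The zero operator $S=0$ is trivially odd, self-adjoint and regular; its resolvents $(S\pm i)^{-1}=\mp i\cdot\mathrm{id}$ are compact because the module is finitely generated. The commutator derivation $a\mapsto[S,a]=0$ is completely bounded, and since $S=0$ it trivially commutes with the Hermitian connection $\nabla$. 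The identification $\mathcal{A}\cong\End^*_{\Lip(M)}(\Gamma^\ell(M,E))$ arising from Serre--Swan provides the required faithful representation of $\mathcal{A}$.

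Next, the canonical spectral triple $(C(M),L^2(M,\mathcal{S}),\dirac_M)$ is an element of $\Psi_0(\Lip(M),\mathbb{C})$ by construction, with $f\mapsto[\dirac_M,f]$ a cb-derivation on $\Lip(M)$. Invoking Theorem \ref{thm:KK-prod}, the product
\[
\bigl(\Gamma^\ell(M,E)\,\hotimes_{\Lip(M)}L^2(M,\mathcal{S}),\;1\otimes_\nabla\dirac_M\bigr)\in\Psi_0(\mathcal{A},\mathbb{C})
\]
is a well-defined KK-cycle, hence a spectral triple over $A$. Two identifications are then required. On the one hand, Corollary \ref{co:*hom} gives a canonical unitary isomorphism of the Haagerup tensor product with $L^2(M,E\otimes\mathcal{S})=\mathcal{H}_E$, the $C^*$-envelope of $\Gamma^\ell(M,E)$ coinciding with $\Gamma(M,E)$. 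On the other hand, a short local computation on elementary tensors $e\otimes s$ shows that the product operator $1\otimes_\nabla\dirac_M$ acts as $e\otimes s\mapsto e\otimes\dirac_M s+c(\nabla e)s$, which upon unpacking $\dirac_M=c\circ\nabla_\mathcal{S}$ coincides precisely with $c\circ(1\otimes\nabla_\mathcal{S}+\nabla\otimes 1)=D_E$ of \eqref{tw-dir}.

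Finally, for the $m^+$-summability assertion I would use that $E\otimes\mathcal{S}$ is a finite rank Hermitian vector bundle on $M$ and $D_E$ is a first-order elliptic operator with the same principal symbol as $\dirac_M$ up to coupling to the endomorphism bundle. Consequently $(1+D_E^2)^{-1/2}$ has the same Weyl asymptotics as $(1+\dirac_M^2)^{-1/2}$ scaled by $\rank(E)$, placing it in the Dixmier ideal $\mathcal{L}^{m^+}(\mathcal{H}_E)$ since the canonical triple is $m^+$-summable. The main subtlety in the argument is the identification of the connection-induced product operator with the classical twisted Dirac operator $D_E$; this is essentially bookkeeping but must be handled carefully to confirm that the spin connection already built into $\dirac_M$ combines correctly with the external connection $\nabla$ under Clifford multiplication.
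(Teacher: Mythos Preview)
Your argument is correct but takes a genuinely different route from the paper. The paper's proof is a direct three-line appeal to classical elliptic theory: $\mathcal{A}$ acts boundedly on $\mathcal{H}_E$; $D_E$ is a first-order differential operator, so the commutators $[D_E,a]$ are bounded for $a\in\mathcal{A}$; and $D_E$ is elliptic on the compact manifold $M$, which yields both the compact resolvent and the $m^+$-summability at once. No Kasparov machinery is used.

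Your approach instead invokes the unbounded Kasparov product (Theorem~\ref{thm:KK-prod}) to manufacture the spectral triple structure, and then argues summability separately via Weyl asymptotics. This is precisely the factorization the paper establishes \emph{after} Proposition~\ref{pr:bdl} (in the lemma and proposition that immediately follow), so you are reversing the logical order: the paper first checks that $(A,\mathcal{H}_E,D_E)$ is a spectral triple by elementary means, and only then exhibits it as a Kasparov product; you build it as a Kasparov product and read off the spectral-triple conclusion. Your route is more in the spirit of the paper's overall programme but uses heavier machinery for a result that is immediate from standard elliptic PDO theory; the paper's route is more elementary and self-contained at this point in the exposition. One small caveat: as written, your argument places the canonical triple in $\Psi_0(\Lip(M),\mathbb{C})$, which requires $M$ to be even-dimensional; for odd $m$ you would need the doubling of Remark~\ref{re:double} and the odd--even product of Example~\ref{ex:odd-ev}, whereas the paper's direct argument is parity-insensitive.
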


\proof
The action of the pre-$C^*$-algebra $\A$ on $\Gamma^\ell(M,E \otimes \cS)$ extends to an action by bounded operators on $\H_E$. Since $D_E$ is a first order differential operator, the commutators $[D_E,a]$ are bounded for all $a\in \A$. Since $M$ is compact, the twisted Dirac operator $D_E$ is elliptic, from which the compact resolvent and summability conditions follow.\endproof

We shall refer to the spectral triple $(A,\H_E,D_E)$ as the {\bf Yang--Mills spectral triple} over $M$ determined by the vector bundle $E$ and the connection $\n$. To see why this terminology is appropriate, it is useful to recall the following alternative description of the geometry appearing in this section in terms of the parallel theory of principal bundles. 

As already mentioned, from the $*$-algebra bundle $\Xi=\textrm{End}(E)$ we obtain the $*$-algebra $\A=\Gamma^\ell(M,\Xi)$, which is finitely generated and projective as a $\Lip(M)$-module. Conversely, using the Serre-Swan theorem for $*$-algebra bundles \cite{BoeS10}, from $A$ we can reconstruct the original bundle $\Xi$ (up to isomorphism) as a locally trivial $*$-algebra bundle over $M$. For simplicity, we assume that $\Xi$ has typical fibre $\textup{M}_N(\C)$. 

\begin{lem}
There exists a principal bundle $P$ over $M$ with structure group $\textup{PSU}(N)$ such that 
\begin{equation}\label{assoc}
\Xi\simeq P\times_{\textup{ad}}\mathrm{M}_N(\C) 
\end{equation}
is the vector bundle associated to the adjoint representation $\textup{ad}:\textup{PSU}(N)\to \mathrm{M}_N(\C)$. Moreover, under this identification, Hermitian connections $\n_\Xi$ on $\Xi$ correspond bijectively to $\mathfrak{su}(N)$-valued connection one-forms $\omega$ on the principal bundle $P$.
\end{lem}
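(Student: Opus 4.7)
The plan is to pass through the standard principal bundle/associated bundle dictionary, but carried out in the category of $*$-algebra bundles. The starting observation is that any $*$-automorphism of $\mathrm{M}_N(\C)$ is inner, implemented by a unitary matrix (Skolem--Noether plus the fact that a $*$-preserving algebra automorphism is implemented by a unitary, not merely an invertible, element), and the kernel of the action $\U(N)\to\Aut^*(\mathrm{M}_N(\C))$ is precisely the scalar unitaries. Hence $\Aut^*(\mathrm{M}_N(\C))\cong \U(N)/\U(1)= \textup{PSU}(N)$, and the transition functions for the locally trivial $*$-algebra bundle $\Xi$ automatically land in $\textup{PSU}(N)$.

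Next I would construct $P$ intrinsically as the $*$-algebra frame bundle
\[
P_x:=\{\phi:\mathrm{M}_N(\C)\xrightarrow{\sim}\Xi_x\ \text{a $*$-algebra isomorphism}\},\qquad P:=\coprod_{x\in M}P_x,
\]
with right $\textup{PSU}(N)$-action $\phi\cdot [u]:=\phi\circ\textup{ad}(u)$. Local trivializations of $\Xi$ yield local sections of $P$, so $P$ is a principal $\textup{PSU}(N)$-bundle over $M$. The map
\[
P\times_{\textup{ad}}\mathrm{M}_N(\C)\longrightarrow \Xi,\qquad [\phi,a]\longmapsto \phi(a),
\]
is well-defined because $\phi(uau^*)=(\phi\circ\textup{ad}(u))(a)$, is fibrewise a $*$-algebra isomorphism, and is locally trivial, hence a global isomorphism of $*$-algebra bundles. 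This establishes \eqref{assoc}.

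For the connection correspondence, recall that a Hermitian connection on $\Xi$ is a $\C$-linear map $\nabla_\Xi:\Gamma^\ell(M,\Xi)\to\Gamma^\ell(M,\Xi)\hotimes_{\Lip(M)}\Omega^1(M)$ satisfying the Leibniz rule for the $\Lip(M)$-action, the derivation property $\nabla_\Xi(ab)=\nabla_\Xi(a)\,b+a\,\nabla_\Xi(b)$, and the $*$-compatibility $\nabla_\Xi(a^*)=\nabla_\Xi(a)^*$. Two such connections differ by an $\Omega^1(M)$-valued derivation of $\Xi$. Fibrewise, every $*$-derivation of $\mathrm{M}_N(\C)$ is inner, implemented by a unique \emph{traceless skew-Hermitian} matrix; that is, the derivation bundle of $\Xi$ is canonically isomorphic to the associated Lie algebra bundle $P\times_{\textup{ad}}\mathfrak{su}(N)=\textup{ad}\,P$. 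Choosing a base connection (e.g.\ via a partition of unity using local trivializations of $P$) turns the affine space of Hermitian connections $\nabla_\Xi$ on $\Xi$ into a torsor over $\Omega^1(M;\textup{ad}\,P)$, which by the classical Atiyah-sequence description of principal connections is exactly the space of $\mathfrak{su}(N)$-valued connection one-forms on $P$. The bijection is natural: a connection $\omega$ on $P$ induces $\nabla_\Xi$ by covariantly differentiating equivariant fibre functions $P\to \mathrm{M}_N(\C)$, and $\nabla_\Xi$ recovers $\omega$ since an equivariant frame $\phi\in P$ identifies $\nabla_\Xi$ with a derivation whose inner-implementer pulls back to $\omega$.

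The main technical point is the connection dictionary. The algebraic part (Skolem--Noether and identification of derivations with $\mathfrak{su}(N)$) is pointwise and clean; the subtlety is checking that the derivation bundle of $\Xi$ is genuinely $\textup{ad}\,P$ rather than $P\times_{\textup{ad}}\mathfrak{u}(N)$, which is exactly the tracelessness condition singled out by requiring that inner derivations $[X,\,\cdot\,]$ be parametrized injectively (one kills the central $\mathfrak{u}(1)$). Once that identification is in place, the passage to $\mathfrak{su}(N)$-valued connection one-forms on $P$ is standard Atiyah-sequence material and the bijection is functorial in local trivializations.
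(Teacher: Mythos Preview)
Your proof is correct and follows essentially the same approach as the paper: both invoke Skolem--Noether to identify the structure group of $\Xi$ as $\textup{PSU}(N)$ and then build $P$ from this (the paper via transition functions, you via the equivalent $*$-frame bundle). For the connection correspondence the paper simply defers to \cite{BoeS10}, whereas you spell out the argument via inner derivations and the Atiyah sequence; your treatment is more detailed but the content is the same.
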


\proof 
Since all $*$-automorphisms of $\mathrm{M}_N(\C)$ are inner, i.e. they are obtained by conjugation by a unitary matrix $u\in\mathrm{M}_N(\C)$, the transition functions of the vector bundle $\Xi$ take values in the adjoint representation $\mathrm{Ad}\,\mathrm{U}(N)=\textup{U}(N)/\textup{Z}(\textup{U(N)})\cong \mathrm{PSU}(N)$. From these transition functions we construct a principal bundle $P$ over $M$ with structure group $\mathrm{PSU}(N)$ to which $\Xi=\textrm{End}(E)$ is the associated vector bundle as stated. It is not difficult to see that every such pair $(P,\omega)$ arises in this way from the datum of a Yang--Mills spectral triple $(\A,\H_E,D_E)$. We refer to \cite{BoeS10} for full details.
\endproof

Now we are ready to recast the spectral triple description of Yang--Mills theory in terms of the unbounded Kasparov product, beginning with the following easy result.

\begin{lem}
The datum $(\Gamma^\ell(M,E),0,\nabla)$ defines an element of the set $\Psi_0^\ell(\A,\Lip(M))$ of classes of even Lipschitz cycles.
\end{lem}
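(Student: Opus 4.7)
The plan is to verify each item of Definition~\ref{de:lipcyc} in turn, where the source triple is $(A,\H_E,D_E)$ and the target triple is the canonical $(\Lip(M), L^2(M,\cS), \dirac_M)$, with $\mathcal{B}=\Lip(M)$, $\mathcal{E}=\Gamma^\ell(M,E)$, $S=0$, and $\nabla$ the prescribed Hermitian connection on $E$.

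First I would establish the Lipschitz module structure. Since $E$ is a finite-rank Hermitian vector bundle over the compact manifold $M$, the Serre--Swan theorem for Lipschitz bundles (as recalled earlier in the section, following \cite{BoeS10}) identifies $\Gamma^\ell(M,E)$ with $p\,\Lip(M)^N$ for some self-adjoint projection $p\in \M_N(\Lip(M))$. Hence $\Gamma^\ell(M,E)$ is stably rigged, and in particular qualifies as a Lipschitz module (viewed as a one-term direct sum of stably rigged modules). The left action of $\A=\Gamma^\ell(M,\End(E))$ by pointwise endomorphisms is completely bounded and commutes with the right $\Lip(M)$-action, giving the $(\A,\Lip(M))$-bimodule structure.

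Next I would dispose of conditions (ii) and (iii) trivially. The operator $S=0$ is odd (with respect to any grading), self-adjoint and regular on $\mathcal{E}$, and $(S\pm i)^{-1}=\mp i \cdot 1_{\mathcal{E}}$. The crucial point is that, because $\mathpzc{E}=\Gamma(M,E)\cong p\,C(M)^{N}$ is finitely generated projective over the commutative $C^*$-algebra $C(M)$, the identity operator $1_{\mathpzc{E}}$ lies in $\K_{C(M)}(\mathpzc{E})$; therefore $(S\pm i)^{-1}\in\K_{\Lip(M)}(\mathcal{E})$ as required. Condition (iii) is vacuous since $[S,a]=0$ is (trivially) a cb-derivation $\A\to\End^*_{C(M)}(\mathpzc{E})$.

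Finally I would treat the connection. The Hermitian connection $\nabla$ on $E$ extends $\Lip(M)$-linearly to a map $\nabla:\Gamma^\ell(M,E)\to \Gamma^\ell(M,E)\hotimes_{\Lip(M)}\Omega^{1}(M)$, and under the identification $\Omega^{1}(M)\cong \Omega^{1}_{\dirac_M}(\Lip(M))$ recalled below Definition~\ref{de:can-st}, this is a connection in the sense of Definition~\ref{connection}. Complete boundedness follows from the finite projective presentation $\mathcal{E}=p\,\Lip(M)^N$: writing $\nabla = p\circ\d + p\alpha$ for a matrix $\alpha\in \M_N(\Omega^{1}_{\dirac_M}(\Lip(M)))$, each summand is completely bounded since $p$ and $\alpha$ act by left multiplication of operator-valued matrices and the Grassmann connection $\d$ is completely contractive (as in the proof following Definition~\ref{connection}). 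Hermiticity is the usual compatibility of $\nabla$ with the Hermitian metric on $E$. Since $S=0$, the commutation condition $[\nabla,S]=0$ is automatic. Collecting these observations, $(\Gamma^\ell(M,E),0,\nabla)\in\Psi_0^\ell(\A,\Lip(M))$.

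The only non-routine point is the verification of the compact-resolvent condition, which rests entirely on the fact that $E$ has finite rank so that $1_{\mathpzc{E}}$ is itself compact; everything else is formal.
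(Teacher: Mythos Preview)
Your proof is correct and follows the same approach as the paper's own argument, simply spelling out in detail what the paper dismisses in a single sentence (``The remaining conditions are automatically satisfied''). The only point the paper makes explicit that you leave slightly implicit is the grading: the paper notes that $A$, $C(M)$ and $\Gamma(M,E)$ are all trivially graded, so that the zero operator is odd and the cycle is even; otherwise your verification of each item of Definition~\ref{de:lipcyc} is exactly the intended unpacking.
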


\proof The right Hilbert $C(M)$-module $\Gamma(M,E)$ is the completion of the vector space $\Gamma^\ell(M,E)$ in the topology defined by the Hermitian structure on $E$. By construction, it carries a bounded action of the $C^*$-algebra $A$ by left multiplication. The $C^*$-algebras $A$ and $C(M)$ and the Hilbert module $\Gamma(M,E)$ are all trivially graded, whence the zero operator is indeed odd. The remaining conditions are automatically satisfied. \endproof

For simplicity, let us assume that the manifold $M$ is even-dimensional, so that the canonical spectral triple $(C(M),\H,\dirac_M)$ is even and defines an unbounded KK-cycle in $\Psi_0(\Lip(M),\C)$. It is not difficult to see how to extend the following results to the case where $M$ is odd-dimensional.

\begin{prop}
As an element of $\Psi_0(\A,\C)$, the KK-cycle $(\H_E,D_E)$ is a Kasparov product of unbounded KK-cycles, namely
$$
(\H_E,D_E)\simeq (\Gamma^{\ell}(M,E),0,\nabla)\otimes_{\Lip(M)}(\H,\dirac_M),
$$
where $(\Gamma^\ell(M,E),0,\nabla)\in\Psi_0^\ell(\A,\Lip(M))$ and $(\H,\dirac_M)\in\Psi_0(\Lip(M),\C)$.
\end{prop}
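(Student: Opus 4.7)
My strategy is to apply Theorem~\ref{thm:KK-prod} to the pair $(\Gamma^\ell(M,E),0,\nabla)$ and $(\H,\dirac_M)$ and then to identify the resulting product cycle with $(\H_E, D_E)$. The key steps are (i) checking the hypotheses so that the product is well defined, (ii) recognising the Haagerup tensor product as $\H_E$, and (iii) matching the product operator with the twisted Dirac operator $D_E$ of equation~\eqref{tw-dir}.

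For step (i), I would first verify that $(\Gamma^\ell(M,E),0,\nabla)$ is a Lipschitz cycle in the sense of Definition~\ref{de:lipcyc}. By the Serre--Swan theorem for Lipschitz algebras used in \cite{BoeS10}, the module $\Gamma^\ell(M,E)$ is algebraically finitely generated projective over $\Lip(M)$, hence stably rigged and in particular Lipschitz; the $\A$-bimodule structure is the fibrewise action of $\A = \Gamma^\ell(M,\End(E))$. The operator $S=0$ is trivially odd, self-adjoint and regular, and its resolvent is a scalar multiple of $1_{\E}$, which is compact in $\K_{C(M)}(\Gamma(M,E))$ because $\Gamma(M,E)$ is finitely generated projective. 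The commutator $[0,a]=0$ gives a trivial cb-derivation and $[\nabla,0]=0$, so the last condition in Definition~\ref{de:lipcyc} holds vacuously. The second cycle $(\H,\dirac_M)$ lies in $\Psi_0(\Lip(M),\C)$ by the very definition of the Lipschitz subalgebra.

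For step (ii), the inclusion $\Lip(M)\hookrightarrow C(M)$ is a $*$-homomorphism, so Corollary~\ref{co:*hom} gives a cb-isomorphism
\[
\Gamma^\ell(M,E)\hotimes_{\Lip(M)} \H \;\cong\; \Gamma(M,E)\hotimes_{C(M)} L^2(M,\cS) \;\cong\; L^2(M,E\otimes\cS) = \H_E,
\]
where the last identification is the standard one for $C^*$-module tensor products of sections of Hermitian bundles over a compact manifold.

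Step (iii) is where the main work lies, and this is the step I expect to be the principal obstacle. The product operator reduces to $1\otimes_\nabla \dirac_M$ since $S=0$, and on elementary tensors $e\otimes\psi$ with $e\in\Gamma^\ell(M,E)$ and $\psi\in\Dom(\dirac_M)$ Theorem~\ref{thm:KK-prod} gives
\[
(1\otimes_\nabla \dirac_M)(e\otimes\psi) = e\otimes \dirac_M\psi + \nabla_{\dirac_M}(e)\psi,
\]
where $\nabla_{\dirac_M}$ is obtained from $\nabla$ by composition with the induction map $j_{\dirac_M}\colon \Omega^1(C(M),\Lip(M))\to \mathbb{B}(\H)$ sending $1\otimes b$ to $[\dirac_M,b]=c(db)$. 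Under the identification of step~(ii), the second term becomes Clifford multiplication by $\nabla e \in \Gamma(M,E\otimes T^*M)$ on the spinor factor, while the first reads $c\circ (1\otimes \nabla_\cS)(e\otimes\psi)$ in view of the defining relation $\dirac_M = c\circ\nabla_\cS$. Hence the sum recovers $c\circ(1\otimes \nabla_\cS + \nabla\otimes 1)$ applied to $e\otimes\psi$, i.e.\ $D_E(e\otimes\psi)$, on a common core of elementary tensors; since both $1\otimes_\nabla\dirac_M$ and $D_E$ are self-adjoint, their closures must agree. The delicate point is precisely translating the abstract Haagerup-tensor-product action of $\nabla_{\dirac_M}$ into pointwise Clifford multiplication and checking that this identification is independent of the local trivialisation, after which the unitary equivalence of cycles follows.
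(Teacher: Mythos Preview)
Your proposal is correct and follows essentially the same route as the paper: identify the Hilbert spaces via $\Gamma(M,E)\hotimes_{C(M)}L^2(M,\cS)\cong L^2(M,E\otimes\cS)$, recognise the product operator $1\otimes_\nabla\dirac_M$ as the twisted Dirac operator $D_E$, and invoke Theorem~\ref{thm:KK-prod}. The paper's proof is considerably terser (it simply asserts the Hilbert-space isomorphism and the operator identification, citing \cite[Thm~3.21]{BoeS10}), whereas you spell out the verification of the Lipschitz-cycle axioms and the pointwise matching of the operators; your added detail is sound and fills in what the paper leaves implicit.
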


\proof 
This result is a slight strengthening of \cite[Thm~3.21]{BoeS10}. We have an isomorphism of Hilbert spaces
$$
\H_E=L^2(M,E\otimes\cS)\simeq \Gamma(M,E)\hotimes_{C(M)}L^2(M,\cS).
$$
Under this identification, we clearly have that
$$
D_E\simeq 1\otimes_\n \dirac_M.
$$
The result now follows from Theorem \ref{thm:KK-prod}.
\endproof

With the aid of this result, we are now able to describe the Yang--Mills gauge theory of the previous section in terms of unbounded KK-cycles. The gauge group of the above KK-factorization is given by
$$
\G(\Gamma^\ell(M,E)) =  \U (\End_{\Lip(M)}^{*} (\Gamma^\ell(M,E))) \simeq \U(\A).
$$
In other words, in this case there is no difference between the internal gauge group $\U(\A)$ and the gauge group as defined in Definition \ref{defn:KK-gauge}. Recall that we can identify $\A$ with the space of Lipschitz sections of the adjoint algebra bundle $P \times_{\ad} \M_N(\C)$ associated to a $\textup{PSU}(N)$-principal bundle $P$. Consequently, 
$$
\G(\Gamma^\ell(M,E)) \simeq \Gamma^\ell(M,\textup{Ad}\,P),\qquad \text{where}\quad \textup{Ad}\,P:=P\times_\textup{Ad} \U(N).
$$
Let us now determine the gauge fields determined by the above KK-factorization, following our prescription of Definition~\ref{defn:KK-fields}. 

\begin{prop}
The gauge fields of the above KK-factorization are given by the affine space of connections $\nabla$ on $\Gamma^\ell(M,E)$; the scalar fields of the above KK-factorization all vanish. 
\end{prop}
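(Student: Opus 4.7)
The plan is to apply Definition~\ref{defn:KK-fields} directly to the Yang--Mills factorization and observe that both claims are essentially immediate once the relevant data are identified. The pertinent data from the factorization are: the Lipschitz module $\mathcal{E} = \Gamma^\ell(M,E)$ over $\mathcal{B} = \Lip(M)$; the vertical operator $T = 0$ appearing in the Lipschitz cycle $(\Gamma^\ell(M,E), 0, \nabla) \in \Psi^\ell_0(\A, \Lip(M))$; the base spectral triple $(C(M), L^2(M,\cS), \dirac_M)$; and the chosen connection $\nabla$.

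First I would dispatch the scalar sector. By Definition~\ref{defn:KK-fields}, $\cC_s = \Omega^1_T(\tilde \A)$ consists of finite sums $\sum_j a_j [T, b_j]$ with $a_j, b_j \in \tilde \A$. Since the vertical operator $T = 0$, every commutator $[T, b]$ vanishes identically, so $\cC_s = \{0\}$ regardless of what $\tilde\A$ turns out to be; hence no explicit computation of $\tilde\A$ is required at this stage.

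For the gauge sector, Definition~\ref{defn:KK-fields} states directly that
\[
\cC_g = \Hom_{\Lip(M)}\bigl(\Gamma^\ell(M,E),\, \Gamma^\ell(M,E) \hotimes_{\Lip(M)} \Omega^1_{\dirac_M}(\Lip(M))\bigr),
\]
viewed as the affine space of maps satisfying the Leibniz rule for $\d = [\dirac_M, \cdot\,]$. Under the identification $\Omega^1_{\dirac_M}(\Lip(M)) \cong \Omega^1(M)$ of Lipschitz one-forms recalled just after Definition~\ref{de:can-st}, this is exactly the usual affine space of Hermitian connections on the Hermitian vector bundle $E$. Non-emptiness is witnessed by the distinguished connection $\nabla$ already used to build the twisted Dirac operator $D_E$ in \eqref{tw-dir}, and the affine structure comes from the familiar observation that the difference of two connections is $\Lip(M)$-linear.

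There is no substantive analytic obstacle: the content is purely notational unpacking of the definitions, together with the already-cited identification of the differential calculus $\Omega^1_{\dirac_M}(\Lip(M))$ with continuous one-forms on $M$. Conceptually, the vanishing of $\cC_s$ reflects the fact that the Yang--Mills factorization carries trivial vertical geometry ($T=0$), so all of the field content is concentrated in the horizontal connection data on the Hilbert bundle $E \to M$.
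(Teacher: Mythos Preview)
Your proof is correct and follows essentially the same route as the paper: both observe that $T=0$ forces $\cC_s=0$ and then read off $\cC_g$ directly from Definition~\ref{defn:KK-fields}. The only minor difference is that the paper also records the identification $\tilde A = A$ (using that a $C^*$-algebra is generated by its unitaries), whereas you rightly note that this is irrelevant for the vanishing of $\cC_s$ once $T=0$.
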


\proof
First, since any $C^*$-algebra is generated by its unitary group, we have that $\tilde A = A$ and similarly that $\tilde \A = \A$. Moreover, since $T = 0$ we find that $\cC_s = 0$, leaving only $\cC_g$, the affine space of connections on $E$. 
\endproof

This justifies our use of the term \emph{gauge field}: the connections correspond bijectively to connections on the principal bundle $P$. Since $\Omega^1_D(\A) = \cC(\E)$, they also coincide with the internal gauge fields of Section~\ref{sect:gauge-KK}.

\subsection{Almost-commutative manifolds} 

An immediate generalization of the Yang--Mills spectral triple is given by the class of so-called almost-commutative spin manifolds. Roughly speaking, these are spin geometries described by a spectral triple whose function algebra is not commutative but rather consists of continuous sections of a finite-rank algebra bundle over some classical spin manifold. A very special example of such a manifold was constructed in \cite{CCM07} and subsequently applied to the Standard Model of particle physics. In this section we examine the structure of this class of (topologically trivial) almost-commutative manifolds from the point of view of unbounded KK-theory and their associated gauge theory.

As in the previous section, let $M$ be a closed Riemannian spin manifold with dimension $m$ and spinor bundle $\mathcal{S}$. Let $(C(M),L^2(M,\mathcal{S}),\dirac_M)$ be the canonical spectral triple over $M$. Moreover, let $(A_F,\H_F,D_F)$ be a {\em finite} spectral triple, that is to say a spectral triple in the sense of Definition~\ref{de:st} for which $\H_F$ is a finite-dimensional Hilbert space and so $A_F\subseteq \BB(\H_F)$ is a sum of matrix algebras.

We shall further assume for simplicity that $M$ is even-dimensional with grading operator $\gamma_M:L^2(M,\mathcal{S})\to L^2(M,\mathcal{S})$.  Let us fix a linear transformation $T:\H_F\to\H_F$ and define
$$
A:= A_F\otimes C(M), \qquad \H:= \H_F\otimes L^2(M,\mathcal{S}),\qquad D_T:=T\otimes \gamma_M
+ 1\otimes \dirac_M.
$$
Then we have a trivial $*$-algebra bundle $\Xi:=M\times A_F$ over $M$ with typical fibre $A_F$ such that $A\simeq \Gamma(M,\Xi)$. We write $\A=\Gamma^\ell(M,\Xi)$ for the Lipschitz algebra of the resulting spectral triple $(A,\H,D_T)$, that is to say the $*$-subalgebra of $A$ consisting of elements $a\in A$ such that $[D_T,a]$ extends to a bounded operator on $\H$. Since the algebra $A_{F}$ is finite and the operator $T$ is bounded, this algebra is just $\A=A_{F}\otimes \Lip(M)$.

\begin{lem}
The datum $(\Gamma^\ell(M, M\times \H_F),T,\d)$ determines a Lipschitz cycle in $\Psi^\ell_{-1}(\A,\Lip(M))$.
\end{lem}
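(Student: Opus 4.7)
The plan is to verify the four clauses of Definition~\ref{de:lipcyc}, read in the odd/ungraded case $\Psi_{-1}^\ell$ (no grading is present, so the odd/even parity requirements on $T$ and $\d$ are vacuous). First I would identify the module: since $\H_F$ is finite-dimensional of dimension $n$, the trivial Hilbert bundle yields
\[
\Gamma^\ell(M,M\times\H_F)\;\cong\;\H_F\otimes\Lip(M)\;\cong\;\Lip(M)^{\,n},
\]
a finitely generated free $\Lip(M)$-module. Being a finite algebraic direct sum of copies of $\Lip(M)$, Corollary~\ref{cor:finitesum} then ensures this is a Lipschitz module. The bimodule structure is the evident one: $\A=A_F\otimes\Lip(M)$ acts from the left with $A_F$ on the $\H_F$-factor and $\Lip(M)$ on the $\Lip(M)$-factor, and $\Lip(M)$ acts on the right on the second factor; this action is by adjointable endomorphisms since $A_F$ is finite-dimensional and embedded into $\End(\H_F)$.

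Next, I would realise the operator as $T\otimes 1\in\End^*_{\Lip(M)}(\E)$. Being a self-adjoint operator on the finite-dimensional Hilbert space $\H_F$, $T$ extends to a bounded self-adjoint adjointable endomorphism of $\E$, in particular regular, with $(T\otimes 1\pm i)^{-1}=(T\pm i)^{-1}\otimes 1$. Since $(T\pm i)^{-1}$ is a finite-rank operator on $\H_F$, it is expressible as a finite linear combination of rank-one operators $\theta_{v,w}$, so its extension lies in $\K_{\Lip(M)}(\E)$, verifying clause (ii). For clause (iii) I would compute on elementary tensors: for $a_F\otimes f\in\A$,
\[
[T\otimes 1,\,a_F\otimes f]=[T,a_F]\otimes f\in\End(\H_F)\otimes\Lip(M)=\End^*_{\Lip(M)}(\E),
\]
which is patently bounded (and in fact adjointable). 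Extending by linearity and continuity, $a\mapsto[T\otimes 1,a]$ defines a completely bounded derivation $\A\to\End^*_{\Lip(M)}(\E)$.

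For clause (iv) I would take $\d:\E\to\E\hotimes_{\Lip(M)}\Omega^1(C(M),\Lip(M))$ to be the obvious Grassmann connection, $v\otimes f\mapsto v\otimes\d f$, where $\d f=1\otimes f-f\otimes 1$. Identifying $\E\cong p\,\H_{\Lip(M)}$ via the bounded projection $p$ onto the first $n$ coordinates, this is exactly the compression $p\,\d\,p$ of the Grassmann connection on $\H_{\Lip(M)}$, hence completely contractive and Hermitian by the proposition preceding Theorem~\ref{indgraph}. Finally, $T\otimes 1$ and $\d$ act on different tensor factors, so
\[
(T\otimes 1)\circ\d\,(v\otimes f)=T(v)\otimes\d f=\d\circ(T\otimes 1)(v\otimes f),
\]
which gives $[\d,T\otimes 1]=0$. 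There is no real obstacle in this verification: every potentially analytic issue (regularity, compactness of resolvent, boundedness of commutators) is trivialised by the finite-dimensionality of $\H_F$ and the triviality of the bundle $M\times\H_F$.
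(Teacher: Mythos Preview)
Your proposal is correct and follows essentially the same approach as the paper: the paper's proof is a one-liner observing that, since $T$ acts on a finite-dimensional space and the bundle is trivial, all the conditions of Definition~\ref{de:lipcyc} are ``vacuously satisfied''; you have simply unpacked this, verifying each clause explicitly. One small remark: invoking Corollary~\ref{cor:finitesum} gives only a \emph{projective operator module}, whereas what is needed is a \emph{Lipschitz module}; but $\Lip(M)^n$ is itself stably rigged (via the bounded projection onto the first $n$ coordinates of $\H_{\Lip(M)}$), so it is a Lipschitz module directly, and the point is harmless.
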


\proof In writing $(\Gamma^\ell(M, M\times \H_F),T,\d)$, we are equipping the bundle $M\times \H_F$ with the trivial connection. It is obvious by definition that the space $\Gamma^\ell(M, M\times \H_F)$ is a Lipschitz $\A$-$\Lip(M)$-bimodule, equipped with the trivial gradings. Since $T:\H_F\to\H_F$ is a linear transformation of a finite-dimensional vector space, the conditions of Definition~\ref{de:lipcyc} are vacuously satisfied.\endproof

\begin{thm}
The spectral triple $(A,\H,D_T)$ defines an element of $\Psi_{-1}(\A,\C)$ which factors as an unbounded Kasparov product of the KK-cycles 
$$
(\Gamma^\ell(M, M\times \H_F),T,0)\in\Psi^\ell_{-1}(\A,\Lip(M)),\qquad  (L^2(M,\cS), \dirac_M)\in\Psi_0(\Lip(M),\C).
$$
\end{thm}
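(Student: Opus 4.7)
The plan is to recognise $(A,\H,D_T)$ as the internal Kasparov product described in Example~\ref{ex:odd-ev}, where an odd Lipschitz cycle is paired with an even spectral triple. First I would identify the Haagerup tensor product of Hilbert modules. Since $\Gamma^\ell(M,M\times\H_F)\cong \H_F\otimes\Lip(M)$ is free of finite rank $\dim\H_F$ over $\Lip(M)$, one has
$$\Gamma^\ell(M,M\times\H_F)\,\hotimes_{\Lip(M)}\,L^2(M,\cS)\;\cong\;\H_F\otimes L^2(M,\cS)\;=\;\H,$$
and under this identification the left action of $\A=A_F\otimes\Lip(M)$ matches the prescribed action on $\H$. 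Equipping $\H_F\otimes\Lip(M)$ with the trivial Grassmann connection $\d$ (which commutes with $T$, since $T$ acts only on the first tensor factor), Example~\ref{ex:odd-ev} with grading $\gamma_M$ produces the product operator
$$T\otimes\gamma_M+1\otimes_\d\dirac_M\;=\;T\otimes\gamma_M+1\otimes\dirac_M\;=\;D_T,$$
the middle equality following because $1\otimes_\d\dirac_M$ reduces to $1\otimes\dirac_M$ on the essential submodule $\H_F\otimes L^2(M,\cS)$ under the trivial connection.

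Second, I would verify the hypotheses of both input cycles. For $(\Gamma^\ell(M,M\times\H_F),T,\d)\in\Psi^\ell_{-1}(\A,\Lip(M))$: the underlying module is finitely generated projective over $\Lip(M)$ and hence a Lipschitz module by Corollary~\ref{cor:finitesum}; the operator $T$, acting only on $\H_F$, is bounded, self-adjoint and adjointable, and its resolvent is compact since every adjointable operator on a finitely generated projective module is compact; the commutators $[T,a_F\otimes f]=[T,a_F]\otimes f$ are bounded on $\A$; and the trivial connection is completely bounded, Hermitian, and commutes with $T$. For $(A,\H,D_T)\in\Psi_{-1}(\A,\C)$ the key point is that the two summands of $D_T$ anti-commute on the obvious common core, because $\gamma_M\dirac_M+\dirac_M\gamma_M=0$, so
$$D_T^2=T^2\otimes 1+1\otimes\dirac_M^2;$$
diagonalising on the spectrum of $T^2$ then reduces $(1+D_T^2)^{-1}$ to a finite direct sum of operators of the form $(1+\lambda+\dirac_M^2)^{-1}$, each compact on $L^2(M,\cS)$, which simultaneously yields self-adjointness, regularity and compactness of the resolvent. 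For any $a=a_F\otimes f\in\A$,
$$[D_T,a]=[T,a_F]\otimes f\gamma_M+a_F\otimes[\dirac_M,f]\in\BB(\H),$$
which is bounded because $T$ is bounded and $f\in\Lip(M)$.

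Third, I would close the argument by invoking Theorem~\ref{thm:KK-prod} in the odd-with-even variant of Example~\ref{ex:odd-ev}: since both input cycles satisfy the required hypotheses, the product operator so constructed is self-adjoint and regular, and agrees with $D_T$ on the common core $\A\otimes_{\Lip(M)}\Dom(\dirac_M)$; by essential self-adjointness the two operators coincide everywhere, identifying $(A,\H,D_T)$ as the claimed unbounded Kasparov product.

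The main obstacle is bookkeeping rather than analysis: one must correctly track how the doubling construction of Remark~\ref{re:double} converts the odd Lipschitz cycle into an even cycle over $\Lip(M)\otimes\C_1$, then apply the even product of Theorem~\ref{thm:KK-prod} inside $\Psi_0(\A,\C\otimes\C_1)$, and finally un-double to recover the asymmetric pattern in which $T$ is multiplied by $\gamma_M$ while $\dirac_M$ is not. Once this is straightened out, all the analytic content follows immediately from the finite-dimensionality of $\H_F$ and the boundedness of $T$.
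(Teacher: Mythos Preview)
Your proposal is correct and follows essentially the same approach as the paper: identify the tensor product $\Gamma^\ell(M,M\times\H_F)\hotimes_{\Lip(M)}L^2(M,\cS)\cong\H_F\otimes L^2(M,\cS)=\H$, observe that the trivial connection $\d$ makes $1\otimes_\d\dirac_M=1\otimes\dirac_M$, and invoke Theorem~\ref{thm:KK-prod} via Example~\ref{ex:odd-ev} to obtain the product operator $T\otimes\gamma_M+1\otimes\dirac_M=D_T$. The paper's own proof is a two-line sketch of exactly this; your version simply fills in the verifications (Lipschitz-module structure, compact resolvent, bounded commutators, the doubling bookkeeping) that the paper leaves implicit because $T$ is bounded and $\H_F$ is finite-dimensional.
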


\proof This is an immediate consequence of Theorem~\ref{thm:KK-prod} by using Example~\ref{ex:odd-ev} to take the product of an odd cycle with an even cycle. We find rather easily that
$$
\H=\H_F\otimes L^2(M,\mathcal{S})\simeq\Gamma(M,M\times\H_F)\otimes_{C(M)}L^2(M,\cS)
$$
and that, since the connection on the bundle $M\times\H_F$ is trivial, the product operator $1\otimes_\d \dirac_M$ coincides with $1\otimes \dirac_M$ under this identification.\endproof

In order to see how our proposal for gauge theories indeed captures physical models, let us compute the gauge group of the above situation in a special case. The gauge theory of more general (topologically non-trivial) examples can be found in \cite{Cac11,BD13}.

\begin{example}[Glashow--Weinberg--Salam electroweak model]
We consider the special case where $A_F := \C \oplus \M_2(\C)$, acting upon the Hilbert space $H_F = \C \oplus \C^2$. Together with the matrix
$$
T = \begin{pmatrix} 0 & z_1 & z_2 \\ 
\bar z_1 & 0 & 0 \\
\bar z_2 & 0 & 0\end{pmatrix},
$$
this gives rise to one of the spectral triples studied in \cite{CCM07} ({\em cf}.  \cite{DS11}),
$$
(A,\H,D_T) = (A_F \otimes C(M),  H_F \otimes L^2(M,\cS), T \otimes \gamma_M+ 1 \otimes D).
$$
In fact, it is an external product of spectral triples \cite[Section VI.3]{C94} ({\em cf}. \cite{Lnd97}). It is of the above almost-commutative type, being an internal Kasparov product of the KK-cycle
$$
(\Gamma(M,M \times H_F), T,\d ) \in \Psi_{-1}^\ell(\A,\Lip(M))
$$
with the canonical spectral triple $(L^2(M,\cS), D) \in \Psi_0(\Lip(M),\C)$.

Let us then compute the gauge group and the scalar and gauge fields for this KK-factorization. 
The algebra $\tilde \A$ is the algebra of endomorphisms of the bundle $M \times H_F$ which multiply the action of $\A$, i.e.
$$
\tilde \A = \Gamma^\ell(M,\C \oplus \M_2(\C)) = \A.
$$
This implies that the gauge group $\U(\tilde \A) = \U(\A)\simeq \Lip(M,\U(1)\times\U(2))$ coincides with the internal gauge group of the algebra $\A$. ({\em cf}. Section~\ref{sect:gauge-KK}). The gauge fields are given by connections $\nabla$ on the $\U(1) \times \U(2)$-bundle $M \times (\C \oplus \C^2)$, which take the form
$$
\nabla = d+ \omega^{(1)} + \omega^{(2)}
$$
for connection one-forms $\omega^{(1)} \in \Omega^1(M) \otimes \mathfrak{u}(1)$ and $\omega^{(2)} \in \Omega^1(M) \otimes \mathfrak{u}(2)$ taking values in the Lie algebras $\mathfrak{u}(1)$ and $\mathfrak{u}(2)$ respectively. They transform under gauge transformations $(u^{(1)},u^{(2)}) \in \Lip(M,\U(1) \times \U(2))$ according to the familiar rules
\begin{align*}
\omega^{(1)} &\mapsto \omega^{(1)} + u^{(1)} \d u^{(1)*},\\
\omega^{(2)} &\mapsto u^{(2)}\omega^{(2)}u^{(2)*} + u^{(2)} \d u^{(2)*}.
\end{align*}
In physics these two `gauge potentials' respectively describe the $B$ and $W$ bosons (ignoring for the moment the still required reduction in structure group from $\U(2)$ to $\SU(2)$; this can be handled by replacing $\M_2(\C)$ by the quaternions as in \cite{CCM07}).

Similarly, the scalar fields are given by elements in $\Omega^1_T(\tilde \A)$. We compute for $\lambda, \lambda' \in \C$ and $m,m' \in \M_2(\C)$ that
$$
\begin{pmatrix} \lambda(x) & 0 \\ 0 & m (x) \end{pmatrix} \left[ T, 
\begin{pmatrix} \lambda'(x) & 0 \\ 0 & m'(x) \end{pmatrix} \right]
=: \begin{pmatrix} 0 & \phi_1(x) & \phi_2(x) \\ \overline{\phi_1(x)} & 0 & 0 \\ \overline{\phi_2(x)} & 0 &  0 \end{pmatrix}.
$$
The components of the pair $(\phi_1(x),\phi_2(x))$ of complex fields respectively transform in the defining representations of $\U(1)$ and $\U(2)$. In physics, they describe the Higgs boson.
\end{example}

\section{The Noncommutative Torus}
\label{sect:nc-torus}
This section is dedicated to obtaining an understanding of the spin geometry of the noncommutative torus $\TT^2_\theta$ in our context. Following \cite{C80} (see also \cite{C94}), we illustrate its geometry in terms of a canonical spectral triple and then demonstrate how to factorize this geometry as a noncommutative principal bundle with a classical base space. Although we concentrate on the special case of the noncommutative two-torus, it is not difficult to imagine how one might extend the construction to noncommutative tori of higher rank.

\subsection{Isospectral deformations} One of the best known ways of obtaining interesting examples of noncommutative spin manifolds is to `isospectrally deform' a classical spin manifold along the isometric action of a two-torus $\TT^2$ \cite{CL01}. This extends the $C^*$-algebraic deformation of \cite{Rie93a} to the smooth (spin manifold) case. The noncommutative manifolds (often called {\em Connes-Landi deformations}) that we shall consider in the remainder of the present paper will be of this form, so here we briefly recall the details of the construction,  mainly in order to establish notation.

\begin{defn}\label{de:eqst}
A spectral triple $(A,\H,D)$ over a $C^*$-algebra $A$ is said to be \textbf{torus-equivariant} if there exists 
a unitary group action $U:\TT^2\to \BB(\H)$ and an isometric action $\alpha:\TT^2\to \textup{Aut}(A)$ such that
$$
U(t)D=DU(t), \qquad U(t) \pi(a) U(t)^*=\pi(\alpha_{t}(a)),
$$
for all $t\in \TT^2$ and all $a\in A$.
\end{defn}

In particular, let $(C(M),\H,D)$ be the canonical spectral triple over a closed Riemannian spin manifold $M$ with representation $\pi:C(M)\to\BB(\H)$. Write $\Lip(M)$ for the corresponding Lipschitz algebra. 

\begin{example}
Suppose that we are given a unitary action of $\TT^2$ upon the spinor bundle over $M$. We denote the corresponding action on sections by
\begin{equation}\label{toract1}
U:\TT^2\to \BB(\H),\qquad t\mapsto U(t).
\end{equation}
This action descends to a group homomorphism $\TT^2\rightarrow \textup{Isom}(M)$. We write
\begin{equation}\label{toract2}
\alpha:\TT^2\to \Aut(C(M)),\qquad t\mapsto \alpha_t
\end{equation}
for the resulting isometric action of $\TT^2$ upon continuous functions obtained by pull-back of the action on $M$. Then by construction it follows that $(C(M),\H,D)$ is a torus-equivariant spectral triple over $C(M)$. As already shown in Proposition~\ref{pr:ueq}, the action \eqref{toract2} induces a completely isometric action upon the Lipschitz algebra $\Lip(M)$.
\end{example}


Let $\delta=(\delta_1,\delta_2)$ be the infinitesimal generator of the action \eqref{toract1}, meaning that for each $t\in \TT^2$ we have
$$
U(t)=\exp(i(t_1\delta_1+t_2\delta_2))
$$
for some real numbers $0\leq t_1, t_2\leq 2\pi$. Using this, one obtains a grading of the $C^*$-algebra $\BB(\H)$ by the Pontrjagin dual group $\ZZ^2$ by declaring an operator $T\in\BB(\H)$ to be of degree $(n_1,n_2)\in \ZZ^2$ if and only if
\begin{equation}\label{grad}
\alpha_t(T)=\exp(t_1n_1+t_2n_2)T \qquad \text{for all}~t\in\TT^2,
\end{equation}
where $\alpha_t(T)=U(t)TU(t)^*$ as above. As in \cite{Mes09b}, denote by $\textup{Sob}_{\delta}(\mathcal{H})$ the algebra of operators $T\in \BB(\H)$ that preserve the domain of $\delta$ and for which $[\delta,T]$ extends to an operator in $\BB(\H)$. Any such operator may be written as a norm convergent sum of homogeneous elements
$$
T=\sum_{(n_1,n_2)\in\ZZ^2} T_{n_1,n_2},
$$
where the operators $T_{n_1,n_2}$ are of degree $(n_1,n_2)$.


Let $\lambda:=\exp(2\pi i\theta)$, where $\theta$ is a real deformation parameter. Then for each operator $T\in \textup{Sob}_{\delta}(\mathcal{H})$ we define its \textbf{left twist} $\LL(T)$ to be the operator
$$
\LL(T):=\sum_{(n_1,n_2)\in\ZZ^2} T_{n_1,n_2}\lambda^{n_2p_1}.
$$
In particular, for homogeneous operators $x,y$ of respective degrees $(n_1,n_2)$ and $(m_1,m_2)$, if we define
$$
x\star y:= \lambda^{m_1n_2}xy,
$$
then a simple computation \cite{CL01} shows that $\LL(x)\LL(y)=\LL(x\star y)$. This new product $\star$ extends by linearity to an associative product on the vector space $\textup{Sob}_{\delta}(\mathcal{H})$.

In particular, 
by defining $\Lip(M_\theta)$ to be the vector space $\Lip(M)$ equipped with the twisted product $\star$, we obtain a new $*$-algebra equipped with a representation
$$
\pi_\theta: \Lip(M_\theta)\to\BB(\H),\qquad \pi_\theta(a)=\LL(a)
$$
by bounded operators upon the same Hilbert space $\H$ as before. Upon completion in this representation, we obtain the $C^{*}$-algebra $C(M_{\theta})$. 

Since the Dirac operator commutes with the torus action \eqref{toract1}, one immediately finds by straightforward computation that
$$
[D,\LL(a)]=\LL ([D,a])
$$
for all $a\in\Lip(M)$. This is enough to deduce as in \cite{CL01} that the commutators $[D,\pi_\theta(a)]$ are bounded operators for all $a\in \Lip(M)$ and then, since $\Lip(M)$ is dense in $C(M_\theta)$, that the datum $(C(M_\theta),\H,D)$ constitutes a torus-equivariant spectral triple over $C(M_\theta)$. We interpret this spectral triple as defining a spin geometry over the noncommutative manifold $M_\theta$, obtained by isospectral deformation of the spin geometry of the classical manifold $M$. The Lipschitz algebra of $(C(M_\theta),\H,D)$ is $\Lip(M_\theta)$.

\subsection{Spin geometry of the noncommutative torus} Following the construction of the previous section, here we recall how to obtain the spin geometry of the noncommutative two-torus $\TT^2_\theta$ from that of its classical counterpart
$$
\TT^2:=\{(u_1,u_2)\in \C^2~|~u_1^*u_1=u_2^*u_2=1\}.
$$

\begin{defn}
The $C^*$-algebra $C(\TT^2)$ of continuous functions on the two-torus $\TT^2$ is the universal unital commutative $C^*$-algebra generated by the unitary elements $U_1, U_2$.
\end{defn}

The $C^*$-algebra $C(\TT^2)$ carries an action of the two-torus $\bT^2$ by $*$-automorphisms, determined by the formul{\ae}
\begin{equation}\label{ta}
\alpha:\TT^2\to \textrm{Aut}(C(\TT^2)),\qquad U_i \mapsto \alpha_t(U_j):= e^{i t_j} U_j,\qquad i=1,2,
\end{equation}
where for $0\leq t_1,t_2\leq2\pi$ we use the notation $t=( e^{i t_1}, e^{i t_2})$ to describe a general element $t\in\TT^2$. This action is generated by a pair $\delta_1,\delta_2$ of fundamental vector fields, obeying
\begin{equation}\label{dU}
\delta_j(U_k)=\delta_{jk},\qquad j,k=1,2,
\end{equation}
where $\delta_{jk}$ denotes the Kronecker delta symbol. The spinor bundle $\cS$ over $\TT^2$ is trivializable and of rank two, so $\H:=L^2(\TT^2,\cS)\simeq L^2(\TT^2)\otimes \C^2$. The action \eqref{dU} lifts to the diagonal action on Hilbert space $\H$, upon viewing $C(\TT^2)$ as a dense subspace of $L^2(\TT^2)$.

Then, in terms of the (real) $2\times 2$ matrix generators $\gamma^k$, $k=1,2$, of the Clifford algebra $\C_1$,
the Dirac operator $D$ on the Hilbert space $\H$
is defined to be
\begin{equation}\label{tor-D}
D :=  i\delta_1\otimes \gamma^1+i\delta_2\otimes\gamma^2=\begin{pmatrix} 0& \delta_1+i\delta_2 \\ -\delta_1+i\delta_2 & 0 \end{pmatrix},
\end{equation}
the second equality following after choosing a pair of representatives for the gamma matrices. It is straightforward to check that the action \eqref{ta} makes the datum $(C(\TT^2),\H,D)$ into a torus-equivariant spectral triple.

It is immediate that the generators $U_1,U_2$ of the algebra $C(\TT^2)$ have respective degrees $(1,0)$ and $(0,1)$ with respect to the action \eqref{ta}. Given a real deformation parameter $\theta$ and setting $\lambda:=\exp(2\pi i \theta)$, we obtain the twisted operators
$$
\LL(U_1):=\begin{pmatrix}U_1 & 0 \\ 0 & U_1\end{pmatrix},\qquad \LL(U_2):=\begin{pmatrix}U_2 & 0 \\ 0 & U_2\end{pmatrix} \lambda^{\delta_1}.
$$
One finds that these twisted operators satisfy the commutation relations $$\LL(U_1)\LL(U_2)=\lambda\,\LL(\U_2)\LL(U_1)$$ as elements of $\BB(\H)$ and so, dropping the symbol $\LL$, we arrive at the following definition of the noncommutative two-torus.

\begin{defn}
With $\lambda:=\exp(2 \pi {i} \theta)$, the $C^*$-algebra of continuous functions on the noncommutative torus $\TT^2_\theta$ is the universal unital $C^*$-algebra generated by the unitary elements $U_1,U_2$ obeying the relation
$$
U_1 U_2= \lambda U_2 U_1.
$$
We denote this $C^*$-algebra by $C(\TT^2_\theta)$.
\end{defn}

As we did in the previous section, by defining the Hilbert space $L^2(\bT^2_\theta):=L^2(\bT^2)$, the operator $D$ of eq.~\eqref{tor-D} defines a Dirac operator on the Hilbert space 
$$
\H_\theta:=L^2(\TT^2_\theta,\cS)\simeq L^2(\TT^2_\theta)\otimes\C^2.
$$ 
Thus we recover the well known fact that the datum $(C(\TT^2_\theta), \H_\theta, D)$ constitutes a $2^+$-summable  torus-equivariant spectral triple over the $C^*$-algebra $C(\TT^2_\theta)$ \cite{C80}. We denote the Lipschitz subalgebra of this spectral triple by $\Lip(\TT_{\theta}^{2})$. It coincides with the algebra of elements $a\in C(\TT_{\theta}^{2})$ for which the function 
$$
\TT^{2}\rightarrow C(\TT^{2}_{\theta}),\qquad t\mapsto \alpha_{t}(a),
$$ 
is a Lipschitz function. 

The $C^*$-algebra $C(\TT^2_\theta)$ continues to carry an action of the classical two-torus $\bT^2$ by $*$-automorphisms, determined by the formul{\ae}
\begin{equation}\label{tor-act}
\alpha:\TT^2\to \textrm{Aut}(C(\TT^2_\theta)),\qquad U_i \mapsto \alpha_t(U_j):= e^{i t_j} U_j,\qquad i=1,2,
\end{equation}
where we write $t=( e^{i t_1}, e^{i t_2})$, $0\leq t_1,t_2\leq2\pi$, to denote a general element $t\in\TT^2$. 


\subsection{The noncommutative torus as a fibration over the circle}
In order to describe the noncommutative torus $\TT^2_\theta$ as the total space of a fibration over a classical base space, let us now search for a commutative subalgebra of $C(\TT^2_\theta)$.  Consider the action $U:\TT^2\to\BB(\H_\theta)$  and extend it to $\mathcal{H}_{\theta}\oplus\mathcal{H}_{\theta}$ diagonally. By definition of $\Lip(\TT^{2}_{\theta})$, this action satisfies the equality
\begin{equation}\label{Lipact}\begin{pmatrix} U(t) & 0\\ 0 & U(t)\end{pmatrix}\begin{pmatrix} a & 0 \\ [D,a] & a\end{pmatrix}\begin{pmatrix} U(t^{-1}) & 0 \\ 0& U(t^{-1})\end{pmatrix}=\begin{pmatrix}\alpha_{t}(a) & 0\\ [D,\alpha_{t}(a)] & \alpha_{t}(a)\end{pmatrix},\end{equation}
so that $\alpha_{t}$ acts on $\Lip(\TT^{2}_{\theta})$ by isometries.
We write $\TT^2=\TT\times\TT'$, where 
\begin{equation}\label{tor-dec}
\TT:=\{(e^{i t},0)\}\in\TT^2~|~t\in\RR\}, \qquad \TT':=\{(0,e^{{i} t})\in\TT^2~|~t\in\RR\},
\end{equation}
and look for the fixed point subalgebra of $\Lip(\TT^2_\theta)$ under the action of $\TT$ induced by eq.~\eqref{Lipact}, which we continue to denote by $\alpha:\TT\to\textup{Aut}(\Lip(\TT^2_\theta))$.  

In terms of Definition~\ref{de:can-st}, the canonical spectral triple on the classical manifold $\bS^1$ is the $1^+$-summable datum $(C(\bS^1),\h,\dirac)$, where $\h=L^2(\bS^1)$ is the Hilbert space of square-integrable functions on $\bS^1$ and $\dirac=i \textup{d}/\textup{d}x$ is the Dirac operator on $\h$. The Lipschitz algebra of this spectral triple is the usual algebra $\Lip(\bS^{1})$ of Lipschitz functions on the circle.  The pair $(\h,\dirac)$ defines an odd unbounded KK-cycle and hence an element of $\Psi_{-1}(\Lip(\bS^1),\C)$. 

\begin{lem}\label{le:inv} The $\TT$-invariant part $(C(\TT^{2}_{\theta})_{0},L^2(\TT^2_\theta,\mathcal{S})_0,D_{0})$ of the datum $(C(\TT^2_\theta), \H_\theta, D)$  is a spectral triple that is unitarily equivalent to $(C(\bS^1),\h,\dirac)$.
\end{lem}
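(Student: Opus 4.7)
The plan is to verify the three ingredients of the spectral triple in turn and then exhibit the unitary intertwiner.

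First I would identify the $\TT$-fixed subalgebra. Since $\alpha_{t}(U_1)=e^{it}U_1$ and $\alpha_{t}(U_2)=U_2$ for $t\in\TT$, an element $a=\sum_{m,n}c_{mn}U_1^{m}U_2^{n}$ of $C(\TT^2_\theta)$ is $\TT$-invariant precisely when $c_{mn}=0$ for all $m\neq 0$; hence $C(\TT^2_\theta)_0$ is the closed $*$-subalgebra generated by $U_2$, which is commutative and isomorphic to $C(\bS^1)$ via $U_2\mapsto U$. Second, the Hilbert space $\H_\theta=L^2(\TT^2_\theta)\otimes\C^2$ carries the Fourier basis $\{U_1^{m}U_2^{n}\}_{(m,n)\in\ZZ^2}$, on which $\delta_1$ acts diagonally with eigenvalue $m$ and trivially on $\C^2$. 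The $\TT$-fixed subspace is therefore $\ker\delta_1\otimes\C^2\cong\h\otimes\C^2$ with $\h=L^2(\bS^1)$. Third, since $\delta_1$ vanishes on this subspace, the restriction $D_0$ of $D=i\delta_1\otimes\gamma^1+i\delta_2\otimes\gamma^2$ to the invariant part is simply $i\delta_2\otimes\gamma^2$.

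To conclude, I would exhibit the unitary intertwiner. Choosing the basis of $\C^2$ in which $\gamma^2=\begin{pmatrix}0&1\\1&0\end{pmatrix}$, the operator $D_0$ takes the block form $\begin{pmatrix}0&\dirac\\ \dirac&0\end{pmatrix}$ with $\dirac=i\textup{d}/\textup{d}x$ acting on $\h$; this is precisely the doubled operator $\tilde{\dirac}$ associated with the odd canonical circle triple through the construction of Remark~\ref{re:double}. Since the representation of $C(\TT^2_\theta)_0\cong C(\bS^1)$ on $\h\otimes\C^2$ is diagonal, it likewise matches the doubled representation. Under the identification $\Psi_{-1}(\Lip(\bS^1),\C)\cong\Psi_0(\Lip(\bS^1),\C\otimes\C_1)$, the invariant triple therefore corresponds exactly to the cycle $(C(\bS^1),\h,\dirac)$, as required.

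The main care point is the parity mismatch: the two-torus triple is even while the circle triple is odd. I expect to handle this by invoking the doubling correspondence of Remark~\ref{re:double}, as sketched above, so that the even $\TT$-invariant triple is identified with the canonical odd circle triple via doubling rather than through a literal isomorphism of rank-$1$ and rank-$2$ spinor bundles. The only other routine point is that the isospectral deformation leaves the Fourier spectrum of $\delta_1$ intact---so the $\TT$-invariant subspace and the restriction of $D$ coincide with their classical counterparts---which follows because the twist $\LL$ is built from the very $\TT^2$-action we are now restricting, and hence the deformation commutes with the restriction.
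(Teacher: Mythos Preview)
Your proof is correct and follows essentially the same approach as the paper: identify the $\TT$-fixed subalgebra as the unital $C^*$-algebra generated by $U_2$, identify the invariant Hilbert space as $L^2(\bS^1)\otimes\C^2$, compute that $D_0=i\delta_2\otimes\gamma^2=\begin{pmatrix}0&i\delta_2\\i\delta_2&0\end{pmatrix}$, and recognize this as the doubled cycle of Remark~\ref{re:double} applied to $(C(\bS^1),\h,\dirac)$. Your treatment is slightly more explicit (spelling out the Fourier basis and the matrix form of $\gamma^2$), but the argument is the same.
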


\begin{proof}It is clear by inspection of eq.~\eqref{Lipact} that the invariant subspace of the action of $U(t)$ on $\mathcal{H}_{\theta}$ is spanned by (the image in $\H_\theta$ of) the vectors $\{U_{2}^{j}~|~ j\in\ZZ\}$ and that this subspace is isomorphic to $L^{2}(\bS^{1})$. Similarly, the fixed-point subalgebra of $C(\TT^{2}_{\theta})$ under this action is the unital $C^*$-algebra generated by $U_{2}$, i.e. it is isomorphic to $C(\bS^{1})$. The invariant part of $D$ is the operator 
$$
D_0:\Dom(D_0)\to L^2(\TT^2_\theta,\mathcal{S})_0, \qquad D_{0}=\begin{pmatrix} 0& i\delta_{2} \\ i\delta_{2} & 0\end{pmatrix},
$$ 
so the invariant spectral triple is exactly the spectral triple one obtains by applying the doubling construction from Remark~\ref{re:double} to the datum $(C(\bS^1),\h,\dirac)$.
\end{proof}

\begin{prop}\label{pr:inv}
The fixed-point subalgebra $\Lip(\TT^{2}_{\theta})_{0}$ of $\Lip(\TT^2_\theta)$ under the action of $\TT$ induced by \eqref{Lipact} is isometrically isomorphic to the commutative operator algebra $\Lip(\bS^1)$ of Lipschitz functions on the circle.
\end{prop}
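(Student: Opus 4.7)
The plan is to combine Lemma \ref{le:inv} with Proposition \ref{pr:ueq}, and then to verify that the fixed-point subalgebra $\Lip(\TT^{2}_{\theta})_{0}$ coincides isometrically with the Lipschitz algebra of the invariant spectral triple $(C(\TT^{2}_{\theta})_{0}, L^2(\TT^2_\theta,\mathcal{S})_0, D_{0})$.

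First, I would apply Lemma \ref{le:inv} and Proposition \ref{pr:ueq} together: since the invariant spectral triple is unitarily equivalent to the doubling of $(C(\bS^1), \h, \dirac)$, the corresponding Lipschitz algebras are isometrically isomorphic. Moreover, the doubling does not change the Lipschitz algebra, because the commutator $[D_0, a]$ acts as an off-diagonal operator with $[\dirac,a]$ in both off-diagonal slots and so has the same operator norm as $[\dirac,a]$. Hence the Lipschitz algebra of the invariant spectral triple is isometrically isomorphic to $\Lip(\bS^1)$.

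The bulk of the work is then to identify $\Lip(\TT^{2}_{\theta})_{0}$ with this invariant Lipschitz algebra. The inclusion $\Lip(\TT^{2}_{\theta})_{0} \subseteq \{a \in C(\TT^{2}_{\theta})_0 : [D_0, a]\textnormal{ bounded}\}$ is immediate: if $a$ is $\TT$-invariant and Lipschitz, then $[D,a]$ commutes with $U(t)$ by \eqref{Lipact}, hence preserves the invariant subspace, and its restriction $[D_0, a]$ inherits the bound $\|[D_0,a]\|\leq\|[D,a]\|$. For the reverse inclusion and the isometry I would observe that any $a\in C(\TT^{2}_{\theta})_0$ lies in the commutative $C^{*}$-subalgebra generated by $U_2$, so that $\delta_1(a)=0$ and $[D,a]=i\delta_2(a)\otimes\gamma^2$. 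Under the $\TT$-isotypical decomposition $L^2(\TT^2_\theta)=\bigoplus_{n\in\ZZ}\H_n$ with $\H_n=\overline{\textup{span}}\{U_1^n U_2^j:j\in\ZZ\}$, one computes directly from the commutation relation $U_2U_1=\lambda^{-1}U_1U_2$ (and from $\LL(U_2)=U_2\lambda^{\delta_1}$) that $\LL(U_2)\cdot U_1^n U_2^j=\lambda^n U_1^n U_2^{j+1}$. Thus on each $\H_n\cong\ell^2(\ZZ)\cong L^2(\bS^1)$, the action of $C^{*}(U_2)$ is the regular representation twisted by the scalar automorphism $U_2\mapsto\lambda^n U_2$. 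Such a phase twist preserves operator norms, so $\|p(\LL(U_2))\|_{\H_n}$ is independent of $n$ and equal to $\sup_{s}|p(e^{is})|$, whence
\[
\|\delta_2(a)\|_{\BB(L^2(\TT^2_\theta))}=\|\delta_2(a)\|_{\BB(L^2(\TT^2_\theta)_0)},\qquad\textnormal{and so}\qquad \|[D,a]\|=\|[D_0,a]\|.
\]
Combined with the obvious isometric identification of the $C^{*}$-norms ($C^{*}(U_2)\subseteq C(\TT^{2}_{\theta})$ is a $C^{*}$-subalgebra), this gives equality of the full Lipschitz norms from the representation \eqref{Liprep}.

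The main obstacle is the last norm-equality: one has to control how the deformed multiplication by $U_2$ acts across the various $\TT$-isotypical components of $L^{2}(\TT^{2}_{\theta})$, and the argument hinges on the observation that the only obstruction is a phase twist, which is invisible to the operator norm. Once this is in place, the proposition follows by assembling the isometric identifications $\Lip(\TT^{2}_{\theta})_{0}\cong\{a\in C(\TT^{2}_{\theta})_0:[D_0,a]\textnormal{ bounded}\}\cong\Lip(\bS^1)$.
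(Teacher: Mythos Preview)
Your proposal is correct and rests on the same core observation as the paper: for $\TT$-invariant $a$ one has $[\delta_1,\pi(a)]=0$, so $[D,\pi(a)]$ reduces to the off-diagonal matrix with entries $i[\delta_2,\pi(a)]$, which is exactly the commutator one would get from the circle Dirac operator. The paper records this matrix identity and simply asserts that $\|\pi_D(a)\|=\|\pi_{\dirac}(a)\|$ ``immediately''. Your argument is structurally a little different: you factor the comparison through the Lipschitz algebra of the invariant spectral triple (invoking Lemma~\ref{le:inv} and Proposition~\ref{pr:ueq}), and you then carefully justify the norm equality by decomposing $L^2(\TT^2_\theta)$ into $\TT$-isotypical components and observing that the action of $C^*(U_2)$ on each $\H_n$ differs from the standard regular representation only by a scalar phase twist. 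This extra analysis is precisely the content the paper's ``follows immediately'' is hiding, so your route is a bit longer but more explicit; the detour through Proposition~\ref{pr:ueq} is harmless but not actually needed once you have the direct norm comparison in hand.
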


\begin{proof}  This follows from the above lemma by observing that for $a\in C(\TT^{2}_{\theta})_{0}\cap\Lip(\TT^2_\theta)$, we have that
\[\bigg[\begin{pmatrix} 0 & \delta_{1}+i\delta_{2} \\ -\delta_{1}+i\delta_{2} & 0\end{pmatrix},\begin{pmatrix}
\pi(a) & 0 \\ 0 & \pi(a)\end{pmatrix}\bigg] =\begin{pmatrix} 0 & i[\delta_{2},\pi(a)] \\ i[\delta_{2},\pi(a)] & 0\end{pmatrix}.\] From this it follows immediately that for such $a$ we have $\|\pi_{D}(a)\|=\|\pi_{\dirac}(a)\|$.
\end{proof}

We can integrate elements of $\mathbb{B}(\mathcal{H}_\theta)$ along the orbits of the torus $\bT \subseteq \bT^2$, this time defining a completely positive map $\tau_{0}:\mathbb{B}(\mathcal{H}_{\theta})\rightarrow \mathbb{B}(\mathcal{H}_{\theta})$. This map restricts to $\tau_0 : C(\TT^2_\theta) \to C(\bS^1)$ with values in the invariant subalgebra defined in Lemma~\ref{le:inv}:
$$
\tau_0:C(\TT^2_\theta)\to C(\bS^1),\qquad \tau_0(a):=\int_{\bT} \alpha_{t} (a) dt.
$$
\begin{lem}\label{lem:contractive} The map $\Lip(\TT_{\theta}^{2})\rightarrow \M_{2}(\mathbb{B}(\mathcal{H}_\theta))$ defined by $\begin
{pmatrix} a & 0\\ [D,a] & a\end{pmatrix}\mapsto \begin{pmatrix} a &0 \\ [D_{0},a] & a\end{pmatrix}$ is completely contractive.
\end{lem}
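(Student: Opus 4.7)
The plan is to exhibit the map $\pi_{D}(a) \mapsto \pi_{D_{0}}(a)$ as the restriction of a unital completely positive (UCP) map defined on all of $\M_{2}(\mathbb{B}(\H_{\theta}))$; since every UCP map is automatically completely contractive, this will suffice.

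The key step will be to recognize that $D_{0}$ is recoverable from $D$ by averaging over a Clifford symmetry. Writing $D = i\delta_{1}\otimes\gamma^{1} + i\delta_{2}\otimes\gamma^{2}$ on $\H_{\theta} \cong L^{2}(\TT^{2}_{\theta})\otimes\C^{2}$ and invoking the Clifford relations $(\gamma^{2})^{2} = 1$ and $\gamma^{1}\gamma^{2} + \gamma^{2}\gamma^{1} = 0$, I would compute $\gamma^{2}D\gamma^{2} = -i\delta_{1}\otimes\gamma^{1} + i\delta_{2}\otimes\gamma^{2}$ and conclude that
\[ D_{0} \;=\; \tfrac{1}{2}\bigl(D + \gamma^{2} D \gamma^{2}\bigr). \]
Spotting this identity is where the real content of the proof lies; everything else is mechanical.

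Next, on $\H_{\theta}\oplus\H_{\theta}$ I introduce the self-adjoint unitary $\Gamma := \mathrm{diag}(\gamma^{2},\gamma^{2})$. Since every $a \in \Lip(\TT^{2}_{\theta})$ acts on the $L^{2}(\TT^{2}_{\theta})$-factor of $\H_{\theta}$, it commutes with the spinor action of $\gamma^{2}$, and hence $\Gamma$ commutes with the diagonal embedding $a\oplus a$. A direct matrix computation then yields
\[ \Gamma\,\pi_{D}(a)\,\Gamma \;=\; \begin{pmatrix} a & 0 \\ \gamma^{2}\,[D,a]\,\gamma^{2} & a \end{pmatrix} \;=\; \pi_{\gamma^{2}D\gamma^{2}}(a), \]
so that $\tfrac{1}{2}\bigl(\pi_{D}(a) + \Gamma\,\pi_{D}(a)\,\Gamma\bigr) = \pi_{D_{0}}(a)$ by the Clifford identity above.

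Finally, I would define $\Phi : \M_{2}(\mathbb{B}(\H_{\theta})) \to \M_{2}(\mathbb{B}(\H_{\theta}))$ by $\Phi(X) := \tfrac{1}{2}(X + \Gamma X \Gamma)$. Since $\Gamma$ is a self-adjoint unitary, $\mathrm{Ad}(\Gamma)$ is a $*$-automorphism and hence UCP, so the average $\Phi$ is UCP and therefore completely contractive. The preceding step shows $\Phi(\pi_{D}(a)) = \pi_{D_{0}}(a)$ for all $a \in \Lip(\TT^{2}_{\theta})$, so the map of the lemma is the restriction of $\Phi$ to the image of $\pi_{D}$, and the claim follows. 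There is no serious obstacle here; the only creative ingredient is the Clifford-averaging identity for $D_{0}$, after which the UCP machinery handles the rest.
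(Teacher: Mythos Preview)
Your proof is correct and is essentially the same as the paper's: in the chosen gamma-matrix representation your $\gamma^{2}$ is precisely the swap unitary $u$ the paper uses, and both arguments reduce to the identity $\pi_{D_{0}}(a) = \tfrac{1}{2}\bigl(\pi_{D}(a) + \Gamma\,\pi_{D}(a)\,\Gamma\bigr)$ with $\Gamma=\mathrm{diag}(u,u)$. The only cosmetic difference is that you invoke the UCP formalism to conclude complete contractivity, whereas the paper writes the norm inequality directly and notes that it extends to matrices.
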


\begin{proof} Consider the odd self-adjoint unitary operator $u\in \mathbb{B}(\mathcal{H}_{\theta})$ that interchanges the two copies of $L^{2}(\TT^{2}_{\theta})$. Since $a$ acts by even endomorphisms, we have $u\pi(a)u=\pi (a)$, and also $uD_{0}u=D_{0}$. Since $u\begin{pmatrix}0&\delta_{1} \\ -\delta_{1} &0\end{pmatrix} u=\begin{pmatrix}0&-\delta_{1} \\ \delta_{1} &0\end{pmatrix}$, we have that
\[\pi_{D}(a)+u\pi_{D}(a)u=2\pi_{D_{0}}(a).\]
Therefore $\|\pi_{D_{0}}(a)\|\leq \|\pi_{D}(a)\|$. This clearly extends to matrices.
\end{proof}

Since $\tau_{0}$ is completely positive, its extension to $\mathbb{B}(\mathcal{H}_{\theta}\oplus\mathcal{H}_{\theta})\cong \M_{2}(\mathbb{B}(\mathcal{H}_{\theta}))$ is positive, and it is immediate that $\tau_{0}:\Lip(\TT^{2}_{\theta})\rightarrow\Lip(\bS^{1})$. We denote by $\mathcal{E}_{\theta}$ the completion of $\Lip(\TT_{\theta}^{2})$ in the norm
\begin{equation}\label{Lipnorm}\|a\|_{\mathcal{E}_{\theta}}:=\|\tau_{0}(\pi_{D_{0}}(a)^{*}\pi_{D_{0}}(a))\|^{1/2}.\end{equation}
The map $\tau_0$ induces a Hermitian structure on $C(\TT^2_\theta)$ with values in the invariant subalgebra $C(\bS^1)$. To prove this, we denote by $\mathpzc{E}_\theta$ the completion of $C(\TT^2_\theta)$ in the norm 
$$
\| a \|_{\mathpzc{E}_\theta} := \| \langle a, a \rangle \|_{C(S^1)}^{1/2},
$$ 
where the $C(S^1)$-valued inner product $\langle\cdot,\cdot\rangle$ is defined by \begin{equation}
\label{inp}\langle a, b \rangle := \tau_0 (a^* b) 
\end{equation}
for each $a,b \in C(\bT^2_\theta)$. It is clear that $\mathcal{E}_{\theta}\subset\mathpzc{E}_{\theta}$ densely and that $\mathpzc{E}_{\theta}$ is a $C^{*}$-module. Moreover, we find the following result.

\begin{prop}
\label{prop:hilbert-module-torus}
The completion $\mathcal{E}_\theta$ is a right Lipschitz module over $\Lip(\bS^1)$ isomorphic to $L^{2}(\bS^{1})\,\hotimes\, \Lip(\bS^{1})$. Multiplication in $\Lip(\TT_{\theta}^{2})$ induces a completely contractive $*$-homomorphism $\Lip(\TT_{\theta}^{2})\rightarrow \End^{*}_{\Lip(\bS^{1})}(\mathcal{E}_{\theta})$. Consequently, $C(\TT_{\theta}^{2})$ is represented upon $\mathpzc{E}_{\theta}$ by a $*$-homomorphism.
\end{prop}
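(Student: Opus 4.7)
My approach is to exploit the Fourier decomposition of $\Lip(\TT^2_\theta)$ under the action \eqref{Lipact} of $\TT$. Every homogeneous element of degree $n$ in $\Lip(\TT^2_\theta)$ factors as $U_1^n a_n$ for some $a_n$ in the fixed-point subalgebra $\Lip(\TT^2_\theta)_0 \cong \Lip(\bS^1)$ identified in Proposition~\ref{pr:inv}, so a general Lipschitz element admits a formal Fourier expansion $a = \sum_{n\in\ZZ} U_1^n a_n$ with $a_n\in\Lip(\bS^1)$. Using only the defining relation $U_1 U_2 = \lambda U_2 U_1$ (and hence $U_1^k\Lip(\bS^1) U_1^{-k}=\Lip(\bS^1)$), together with the fact that $\tau_0$ projects onto the zero-degree spectral subspace, a direct calculation yields
\[
\tau_0\bigl((U_1^n a_n)^*(U_1^m b_m)\bigr) \;=\; \delta_{nm}\, a_n^* b_n,
\]
so the inner product \eqref{inp} diagonalizes in the Fourier decomposition.

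Second, I would promote this algebraic identification to an isometric isomorphism of modules. On the $C^*$-level, the identity above immediately identifies $\mathpzc{E}_\theta$ with the standard right $C^*$-module $\H_{C(\bS^1)} \cong \ell^2(\ZZ)\otimes C(\bS^1)$. For the Lipschitz statement, Lemma~\ref{lem:contractive} replaces $\pi_D(a)$ by $\pi_{D_0}(a)$ without increasing the norm, and $\pi_{D_0}$ respects the Fourier decomposition since $D_0$ commutes with the generator $\delta_1$ of the $\TT$-action. Complete positivity of $\tau_0$ at the matrix level then upgrades the scalar identity to a completely isometric identification, giving
\[
\mathcal{E}_\theta \;\cong\; \ell^2(\ZZ)\,\hotimes\,\Lip(\bS^1) \;\cong\; L^2(\bS^1)\,\hotimes\,\Lip(\bS^1),
\]
where $L^2(\bS^1)$ is viewed as $\ell^2(\ZZ)$ via Fourier duality.

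Third, the representation $\Lip(\TT^2_\theta)\to\End^*_{\Lip(\bS^1)}(\mathcal{E}_\theta)$ is given by left multiplication inside $\Lip(\TT^2_\theta)$. This is $\Lip(\bS^1)$-linear on the right by associativity, and it is adjointable with respect to \eqref{inp} with adjoint equal to left multiplication by $c^*$ because $\tau_0(a^* c^* b) = \tau_0((ca)^* b)$. Complete contractivity then follows from two facts: the representation $\pi_{D_0}$ is multiplicative, $\pi_{D_0}(ca)=\pi_{D_0}(c)\pi_{D_0}(a)$ (this is just the Leibniz rule for $[D_0,\cdot]$ written at the matrix level), and $\tau_0$ is completely positive. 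Together with $\pi_{D_0}(c)^*\pi_{D_0}(c)\leq\|\pi_{D_0}(c)\|^2\cdot 1$ one obtains $\|ca\|_{\mathcal{E}_\theta} \leq \|\pi_{D_0}(c)\|\,\|a\|_{\mathcal{E}_\theta}$ and its matrix analogues, producing the required completely contractive $*$-homomorphism. The extension to a $*$-homomorphism $C(\TT^2_\theta)\to\End^*_{C(\bS^1)}(\mathpzc{E}_\theta)$ is then routine: by density of $\Lip(\TT^2_\theta)$ in $C(\TT^2_\theta)$ and the fact that the Lipschitz norm dominates the $C^*$-norm, the action extends by continuity.

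The principal obstacle lies in justifying the convergence of the Fourier expansion $a = \sum_n U_1^n a_n$ in the Lipschitz operator-space topology rather than merely in the $C^*$-topology, which is what is needed in order to promote the diagonalization of $\tau_0$ to a complete isometry of operator modules. The key ingredient is that $\delta_1$ has bounded commutators with elements of $\Lip(\TT^2_\theta)$ and commutes with $D_0$, so that the Ces\`aro means arising from averaging against the $\TT$-action produce approximate identities which converge uniformly in the norm \eqref{Lipnorm}; one then checks that these approximants intertwine with $\pi_{D_0}$ in a way compatible with $\tau_0$, securing the isometric rather than merely cb-isomorphic identification.
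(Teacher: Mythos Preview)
Your proposal is correct and follows essentially the same approach as the paper: diagonalize the inner product \eqref{inp} on the Fourier basis $\{U_1^k\}$ to identify $\mathcal{E}_\theta$ with $\ell^2(\ZZ)\,\hotimes\,\Lip(\bS^1)\cong L^2(\bS^1)\,\hotimes\,\Lip(\bS^1)$, and then use multiplicativity of $\pi_{D_0}$ together with complete positivity of $\tau_0$ and Lemma~\ref{lem:contractive} to bound left multiplication by $\|\pi_D(b)\|$. The paper sidesteps your ``principal obstacle'' by working directly on the dense finite $\Lip(\bS^1)$-span of $\{U_1^k\}$ and taking the completion, rather than worrying about convergence of Fourier expansions of arbitrary Lipschitz elements; your Ces\`aro argument is a valid alternative but not strictly needed.
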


\begin{proof} It is clear that, on the dense right $\Lip(\bS^{1})$-submodule of $\Lip(\TT^{2}_\theta)$ spanned by $\{U_{1}^{k}~|~ k\in\Z\}$, the norm \eqref{Lipnorm} coincides with the norm 
\[\|\sum_{k=-n}^{n}U_{1}^{k} g_{k}\|^{2}=\|\sum_{k=-n}^{n}\pi_{D_{0}}(g_{k})^{*}\pi_{D_{0}}(g_{k})\|,\]
and therefore its completion is isomorphic to $L^{2}(\bS^{1})\hotimes\Lip(\bS^{1})$, which is evidently a Lipschitz module. From positivity of the map $\tau_{0}$, and using Lemma \ref{lem:contractive}, we find that for $a,b\in\Lip(\TT_{\theta}^{2})$ we have
\[\begin{split}\tau_{0}\left(\pi_{D_{0}}(ba)^{*}\pi_{D_{0}}(ba)\right)&=\tau_{0}(\pi_{D_{0}}(a)^{*}\pi_{D_{0}}(b)^{*}\pi_{D_{0}}(b)\pi_{D_{0}}(a))\\
&\leq\|\pi_{D_{0}}(b)^{*}\pi_{D_{0}}(b)\|\tau_{0}(\pi_{D_{0}}(a)^{*}\pi_{D_{0}}(a))\\
&\leq\|\pi_{D}(b)^{*}\pi_{D}(b)\|\tau_{0}(\pi_{D_{0}}(a)^{*}\pi_{D_{0}}(a)),\end{split}\]
and therefore
\[\begin{split} \|ba\|_{\mathcal{E}_{\theta}}^{2}&=\|\tau_{0}(\pi_{D_{0}}(ba)^{*}\pi_{D_{0}}(ba))\| \\
&=\|\tau_{0}(\pi_{D_{0}}(a)^{*}\pi_{D_{0}}(b)^{*}\pi_{D_{0}}(b)\pi_{D_{0}}(a)\| \\
&\leq \|\pi_{D}(b)^{*}\pi_{D}(b)\| \|\tau_{0}(\pi_{D_{0}}(a)^{*}\pi_{D_{0}}(a))\| \\
&=\|b\|_{ \Lip(\TT_{\theta}^{2})}^{2}\|a\|^{2}_{\mathcal{E}_{\theta}}, 
\end{split}\]
proving that we have a contractive representation $\Lip(\TT_{\theta}^{2})\rightarrow \End^{*}_{ \Lip(S^{1}) }(\mathcal{E}_{\theta})$. The second statement now follows by taking $C^{*}$-envelopes.
\end{proof}

The generator of the $\TT$-action on $\Lip(\TT^{2}_{\theta})$ defines an unbounded operator $T=\delta_{1}$ on the Lipschitz module $\mathcal{E}_{\theta}$, and the isomorphism 
$$
\mathcal{E}_{\theta}\cong L^{2}(\bS^{1})\hotimes\Lip(\bS^{1})\cong \ell^{2}(\ZZ)\hotimes\Lip(\bS^{1})
$$ 
gives us a canonical Grassmann connection $\nabla:\mathcal{E}_{\theta}\rightarrow\mathcal{E}_{\theta}\hotimes_{\Lip(\bS^{1})}\Omega^{1}(C(\bS^{1}),\Lip(\bS^{1}))$.

\begin{thm}The triple $(\mathcal{E}_{\theta},T,\nabla)$ defines a Lipschitz cycle in $\Psi^{\ell}_{-1}(\Lip(\TT^{2}_{\theta}),\Lip(\bS^{1}))$.
\end{thm}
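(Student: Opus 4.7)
The plan is to verify in turn the four conditions of Definition~\ref{de:lipcyc}. Since we are working in $\Psi_{-1}^{\ell}$, all relevant $\ZZ_2$-gradings are trivial, so that the parity requirements on $T$ and $\nabla$ are automatic.

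First I would invoke Proposition~\ref{prop:hilbert-module-torus} to record that $\mathcal{E}_{\theta}\cong \ell^{2}(\ZZ)\,\hotimes\,\Lip(\bS^{1})$, which is a Lipschitz module by construction: it is the closure of the algebraic direct sum $\bigoplus_{n\in\ZZ}^{\textnormal{alg}}U_{1}^{n}\cdot\Lip(\bS^{1})$ of copies of the stably rigged module $\Lip(\bS^{1})$, one for each spectral subspace of the $\bT$-action. The same proposition provides the required completely contractive $*$-homomorphism $\Lip(\TT^{2}_{\theta})\rightarrow\End^{*}_{\Lip(\bS^{1})}(\mathcal{E}_{\theta})$, handling condition (i).

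Second, I would analyse the operator $T=\delta_{1}$. Under the above decomposition, $T$ acts as scalar multiplication by $n$ on the $n$-th summand $U_{1}^{n}\cdot\Lip(\bS^{1})$. On the algebraic direct sum this is a symmetric operator which is manifestly closable; its closure is essentially self-adjoint and regular on $\mathcal{E}_{\theta}$ by a direct application of Lemma~\ref{regsum} (more precisely, by the same argument adapted to the Lipschitz setting, checking via Lemma~\ref{pmi} that the inverses $(T\pm i)^{-1}$ act by the completely bounded scalars $(n\pm i)^{-1}$ on the $n$-th summand). To obtain the compact resolvent condition on the $C^{*}$-completion $\mathpzc{E}_{\theta}$, I would write
\[
(T\pm i)^{-1}=\sum_{n\in\ZZ}(n\pm i)^{-1}P_{n}\in\End^{*}_{C(\bS^{1})}(\mathpzc{E}_{\theta}),
\]
where $P_{n}$ denotes the rank-one $C(\bS^{1})$-linear projection onto the $n$-th summand. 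Since each $P_{n}\in\K_{C(\bS^{1})}(\mathpzc{E}_{\theta})$ and $|(n\pm i)^{-1}|\to 0$ as $|n|\to\infty$, the series converges in operator norm to an element of $\K_{C(\bS^{1})}(\mathpzc{E}_{\theta})$.

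Third, I would check the cb-derivation condition (iii). The matrix form $D=\begin{pmatrix}0 & \delta_{1}+i\delta_{2} \\ -\delta_{1}+i\delta_{2} & 0\end{pmatrix}$ of the Dirac operator yields the identity
\[
[\delta_{1},a]=\tfrac{1}{2}\bigl([\delta_{1}+i\delta_{2},a]-[-\delta_{1}+i\delta_{2},a]\bigr),
\]
from which $\|[T,a]\|\leq\|[D,a]\|$ for every $a\in\Lip(\TT^{2}_{\theta})$. Applied to matrices over $\Lip(\TT^{2}_{\theta})$ this gives complete boundedness of $a\mapsto [T,a]$ with respect to the Lipschitz operator norm, and the Leibniz rule is inherited from $\delta_{1}$ being a derivation on $\Lip(\TT^{2}_{\theta})$.

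Finally, condition (iv) reduces to verifying $[\nabla,T]=0$. Under the identification $\mathcal{E}_{\theta}\cong\bigoplus_{n\in\ZZ}\Lip(\bS^{1})$, the Grassmann connection $\nabla$ acts component-wise as the universal derivation $\d:\Lip(\bS^{1})\to\Omega^{1}(C(\bS^{1}),\Lip(\bS^{1}))$, whereas $T$ acts on the $n$-th summand by the scalar $n$. These operators commute trivially on the algebraic direct sum, which is a core for the commutator. The main obstacle I anticipate is the clean derivation of the complete-boundedness estimate in step three, but the off-diagonal matrix form of $D$ reduces this to a formal manipulation; every remaining condition follows directly from the spectral/direct-sum decomposition of $\mathcal{E}_{\theta}$.
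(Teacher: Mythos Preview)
Your proof is correct and follows essentially the same route as the paper: both use the identification $\mathcal{E}_{\theta}\cong\ell^{2}(\ZZ)\,\hotimes\,\Lip(\bS^{1})$ from Proposition~\ref{prop:hilbert-module-torus}, recognise $T$ as the number operator, invoke Lemma~\ref{pmi} for self-adjointness and regularity, and observe that $\nabla$ acts component-wise so that $[\nabla,T]=0$. One small point: Definition~\ref{de:lipcyc}(ii) asks for $(T\pm i)^{-1}\in\K_{\Lip(\bS^{1})}(\mathcal{E}_{\theta})$, i.e.\ compactness on the \emph{Lipschitz} module rather than on the $C^{*}$-completion $\mathpzc{E}_{\theta}$; your series argument works verbatim there (the $P_{n}$ are rank-one in $\K_{\Lip(\bS^{1})}(\mathcal{E}_{\theta})$ and the scalars $(n\pm i)^{-1}$ still tend to zero), and the paper handles this via the identification $\K_{\Lip(\bS^{1})}(\ell^{2}(\ZZ)\,\hotimes\,\Lip(\bS^{1}))\cong\K(\ell^{2}(\ZZ))\,\hotimes\,\Lip(\bS^{1})$.
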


\begin{proof} The operator $T$ is self-adjoint and regular with compact resolvent in $\mathcal{E}_{\theta}$ because, under the isomorphism
\begin{equation}\label{eq:triv}\mathcal{E}_{\theta}\cong L^{2}(\bS^{1})\,\hotimes\,\Lip(\bS^{1})\cong \ell^{2}(\Z)\,\hotimes\,\Lip(\bS^{1}) \end{equation}
determined by  $U_{1}^{k}\otimes f\mapsto e_{k}\otimes f$, it corresponds to the number operator $$t\otimes 1: e_{k}\otimes f\mapsto ke_{k}\otimes f.$$ 
This operator is closed since $\mathfrak{G}(t\otimes 1)\cong\mathfrak{G}(t)\hotimes\Lip(\bS^{1})$. Therefore $T\pm i$ have dense range in $\mathcal{E}_{\theta}$ and $(T\pm i)^{-1}$ is bounded for the Lipschitz operator norm. Thus, by Lemma \ref{pmi}, $T$ is self-adjoint and regular. The compact resolvent property follows from the fact that $$\K_{\Lip(\bS^{1})}(\ell^{2}(\Z)\,\hotimes\,\Lip(\bS^{1}))\cong\K(\ell^2(\ZZ))\,\hotimes\,\Lip(\bS^{1})$$ and $(t\pm i)^{-1}\in\K(\ell^2(\ZZ))$. For $a\in\Lip(\TT^{2}_{\theta})$, the commutators $[T,a]$ are by definition bounded for the norm in $\End^{*}_{C(\bS^{1})}(\EE_{\theta})$. Under the isomorphism \eqref{eq:triv}, the connection $\nabla$ corresponds to the connection $e_{k}\otimes f\mapsto e_{f}\otimes \d f$ and is thus completely bounded and satisfies $[\nabla,T]=0$.
\end{proof} 
On the level of KK-theory, this theorem is a special case of the construction of Kasparov modules from circle actions in \cite{CNNR08}. Next we consider the Hilbert space $L^2(\TT^2_\theta)$ and its relation to the space $L^2(\bS^1)$ of square-integrable functions on the base space $\bS^1$, whose inner product we denote by $(\cdot,\cdot)_{\bS^1}$. For this we consider the tensor product of Hilbert modules $\mathpzc{E}_\theta \otimes_{C(\bS^1)} L^2(\bS^1)$, which we equip with the inner product
\begin{equation}
\label{eq:inner-tensor-torus}
\left ( e \otimes h, e' \otimes h' \right ) := (h, \langle e,e'\rangle h')_{\bS^1}
\end{equation}
for each $e\otimes h,~ e' \otimes h' \in \mathpzc{E}_\theta\, \otimes_{C(\bS^1)} L^2(\bS^1)$.

\begin{prop}\label{pr:fac1}
The Hilbert space $L^2(\TT^2_\theta)$ is isomorphic to the completion $\mathcal{E}_{\theta}\hotimes_{\Lip(\bS^{1})} L^{2}(\bS^{1})\cong \mathpzc{E}_\theta\, \hotimes_{C(\bS^1)} L^2(\bS^1)$ of the tensor product $\mathpzc{E}_\theta \otimes_{C(\bS^1)} L^2(\bS^1)$
with respect to the inner product \eqref{eq:inner-tensor-torus}.
\end{prop}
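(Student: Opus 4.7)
The plan is to construct the desired isomorphism explicitly as the multiplication map and verify its isometric and surjective properties using the Fourier decomposition of $\mathcal{E}_\theta$ under the residual $\TT$-action established in Proposition~\ref{prop:hilbert-module-torus}.

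First, I would dispatch the second isomorphism by applying Corollary~\ref{co:*hom}. The canonical representation $\Lip(\bS^1)\hookrightarrow C(\bS^1)\to \BB(L^2(\bS^1))$ is a $*$-homomorphism into the $C^*$-envelope $C(\bS^1)$, and $L^2(\bS^1)$ is a $C^*$-module over $\C$. Corollary~\ref{co:*hom} then gives an immediate cb-isomorphism
$$
\mathcal{E}_\theta\,\hotimes_{\Lip(\bS^1)}\,L^2(\bS^1)\cong \mathpzc{E}_\theta\,\hotimes_{C(\bS^1)}\,L^2(\bS^1).
$$
This reduces the problem to identifying the latter $C^*$-module tensor product with $L^2(\TT^2_\theta)$.

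For the first isomorphism, I would define
$$
\Phi:\mathpzc{E}_\theta\otimes_{C(\bS^1)}L^2(\bS^1)\to L^2(\TT^2_\theta),\qquad \Phi(a\otimes h):=a\cdot h,
$$
where $a\in C(\TT^2_\theta)\subset \mathpzc{E}_\theta$ acts by left multiplication on $h\in L^2(\bS^1)$, with $L^2(\bS^1)$ included into $L^2(\TT^2_\theta)$ as functions depending only on the variable $U_2$. Well-definedness on balanced tensors follows from the associativity of the representation of $C(\TT^2_\theta)$ on $L^2(\TT^2_\theta)$: for $f\in C(\bS^1)$ we have $(af)\cdot h=a\cdot (fh)$, since $f$ acts by pointwise multiplication by a function of $U_2$ alone.

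The analytical heart of the proof is the isometry property. Using the $\TT$-Fourier decomposition $\mathpzc{E}_\theta\cong \ell^2(\Z)\,\hotimes\,C(\bS^1)$ with orthonormal frame $\{U_1^n\}_{n\in\Z}$ satisfying $\tau_0(U_1^{-n}U_1^p)=\delta_{n,p}\cdot 1_{C(\bS^1)}$, any vector in the algebraic tensor product is a finite sum $\xi=\sum_n U_1^n\otimes g_n$ with $g_n\in L^2(\bS^1)$. The inner product \eqref{eq:inner-tensor-torus} then collapses to
$$
(\xi,\xi)=\sum_{n,p}(g_n,\tau_0(U_1^{-n}U_1^p)g_p)_{\bS^1}=\sum_n\|g_n\|_{\bS^1}^2,
$$
while $\Phi(\xi)=\sum_n U_1^n g_n$ and the monomials $\{U_1^nU_2^m\}_{n,m\in\Z}$ form an orthonormal basis of $L^2(\TT^2_\theta)$ (the deformation does not alter the Hilbert space structure, only the multiplication), so the Peter-Weyl decomposition of $L^2(\TT^2_\theta)$ along the $\TT$-action in the first variable gives $\|\Phi(\xi)\|^2=\sum_n\|g_n\|_{\bS^1}^2$ as well. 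Density of range is immediate since the image contains every basis element $U_1^nU_2^m=\Phi(U_1^n\otimes U_2^m)$. The main obstacle is keeping track of the three potentially conflicting multiplicative structures --- the twisted product in $C(\TT^2_\theta)$, the right $C(\bS^1)$-action on $\mathpzc{E}_\theta$, and the pointwise multiplication on $L^2(\bS^1)$ --- but all three agree on the common dense subalgebra $\Lip(\TT^2_\theta)$, from which the compatibility propagates to the completions by continuity of the inner products.
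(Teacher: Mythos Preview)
Your proposal is correct and follows the same strategy as the paper: both use Corollary~\ref{co:*hom} to collapse the Lipschitz tensor product to the $C^*$-tensor product, and both rely on the Fourier decomposition of $\mathpzc{E}_\theta$ from Proposition~\ref{prop:hilbert-module-torus} to identify the resulting space with $L^2(\TT^2_\theta)$. The paper's proof is extremely terse---it simply invokes Proposition~\ref{prop:hilbert-module-torus} together with ``the corresponding isomorphism in the classical case''---whereas you have unpacked that classical isomorphism explicitly via the multiplication map and the orthonormal frame $\{U_1^n\}$, which is a welcome elaboration rather than a different route.
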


\proof  The first isomorphism follows directly from Corollary~\ref{co:*hom}, while the second follows from proposition \ref{prop:hilbert-module-torus} and the corresponding isomorphism in the classical case.
\endproof

Together with the spectral triple on  $\TT^2_\theta$, which in turn defines an even unbounded cycle in $\Psi_0(\Lip(\TT^2_\theta),\C)$, these considerations lead to the following theorem. The spin geometry of the noncommutative torus has of course remained a fundamental example since the beginning of the theory \cite{C80}.

\begin{thm}
As an element of $\Psi_0(\Lip(\bT^2_\theta),\C)$, the Riemannian spin geometry of $\TT^2_\theta$ factorizes as a Kasparov product of unbounded KK-cycles, namely 
$$
(\H_\theta, D)\simeq (\mathcal{E}_\theta, T,\nabla) \otimes_{\Lip(\bS^1)} (\H, \dirac),
$$
where $(\mathcal{E}_\theta, T,\nabla) \in \Psi_{-1}^{\ell}(\Lip(\bT^2_\theta), \Lip(\bS^1))$ and $(\H,\dirac) \in \Psi_{-1}(\Lip(\bS^1), \C)$. 
\end{thm}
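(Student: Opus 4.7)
The plan is to invoke Theorem~\ref{thm:KK-prod} in the odd-odd setting (Example~\ref{ex:KK-odd}): the hypotheses on both factors have already been established --- the datum $(\mathcal{E}_\theta,T,\nabla)$ was shown in the preceding theorem to be a Lipschitz cycle in $\Psi^\ell_{-1}(\Lip(\TT^2_\theta),\Lip(\bS^1))$, and $(\H,\dirac)$ is the canonical odd spectral triple on $\bS^1$. One therefore obtains an even unbounded KK-cycle in $\Psi_0(\Lip(\TT^2_\theta),\C)$ on the module $\mathcal{E}_\theta\hotimes_{\Lip(\bS^1)}\H\otimes \C^2$, and the content of the theorem reduces to exhibiting a unitary equivalence between this cycle and $(\H_\theta,D)$.

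The identification of Hilbert spaces is immediate from Proposition~\ref{pr:fac1}: we have
$\mathcal{E}_\theta\hotimes_{\Lip(\bS^1)}\H\cong L^2(\TT^2_\theta)$, and the extra $\C^2$-factor arising from the doubling of Remark~\ref{re:double} matches the spinor degree of freedom $\H_\theta=L^2(\TT^2_\theta)\otimes\C^2$. Under the further unitary isomorphism $\mathcal{E}_\theta\cong\ell^2(\Z)\hotimes\Lip(\bS^1)$ sending $U_1^k\otimes f$ to $e_k\otimes f$, the algebra $\Lip(\TT^2_\theta)$ acts in the standard way (via Proposition~\ref{prop:hilbert-module-torus}), so the algebra representation on $L^2(\TT^2_\theta)$ is preserved.

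With the module identification in hand, the remaining step is to match the product operator with the Dirac operator~\eqref{tor-D}. Using the explicit matrix formula~\eqref{odd-odd-op}, the product operator takes the form
\[
\begin{pmatrix} 0 & T\otimes 1 - i(1\otimes_{\nabla}\dirac)\\ T\otimes 1+i(1\otimes_{\nabla}\dirac) & 0 \end{pmatrix}.
\]
Since $T=\delta_1$, on elementary tensors $U_1^k\otimes f$ the operator $T\otimes 1$ acts as $k\,U_1^k\otimes f$, which corresponds to $\delta_1$ on $L^2(\TT^2_\theta)$. Since the connection $\nabla$ is the Grassmann connection on $\ell^2(\Z)\hotimes \Lip(\bS^1)$, the induced operator $1\otimes_\nabla\dirac$ acts on $U_1^k\otimes f$ as $U_1^k\otimes \dirac f$, which under the identification with $L^2(\TT^2_\theta)$ is precisely the action of $\dirac=i\delta_2$ in the second variable (since $\delta_2$ commutes with $U_1$-multiplication). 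A direct substitution then shows that the product operator agrees with $D$ up to a $\TT^2$-equivariant reshuffling of the $\C^2$-factor (i.e.\ a choice of representatives for the Clifford generators), which is implemented by a constant unitary in $\textup{M}_2(\C)$.

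The bulk of the analytic content --- essential self-adjointness, regularity, compactness of the resolvent, and Kucerovsky's conditions identifying the outcome with the Kasparov product --- is provided uniformly by Theorem~\ref{thm:KK-prod}, so that once the operators have been matched explicitly on the dense algebraic core spanned by the $U_1^k\otimes f$, the unitary equivalence of unbounded KK-cycles follows. The main conceptual point, and the step requiring the most care, is the explicit identification $\mathcal{E}_\theta\hotimes_{\Lip(\bS^1)}\H\cong L^2(\TT^2_\theta)$ together with the compatibility of this identification with both the left action of $\Lip(\TT^2_\theta)$ and with the way the Grassmann connection lifts $\dirac$ to $1\otimes_\nabla\dirac$; the rest is a bookkeeping of Clifford conventions.
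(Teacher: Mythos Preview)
Your proposal is correct and follows essentially the same route as the paper's own proof: invoke Example~\ref{ex:KK-odd} for the odd--odd product, use Proposition~\ref{pr:fac1} for the Hilbert space identification, and then match the product operator~\eqref{odd-odd-op} with the Dirac operator~\eqref{tor-D}. The paper dispenses with the operator comparison in a single sentence (``by inspection''), whereas you spell out how $T\otimes 1$ and $1\otimes_\nabla\dirac$ act and note the residual freedom in Clifford conventions; this added detail is correct and does not depart from the paper's argument.
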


\begin{proof}
In order to compute the Kasparov product of these two odd cycles, we follow the method of Example~\ref{ex:KK-odd}. Proposition~\ref{pr:fac1} gives us the necessary isomorphism at the level of modules. All of the necessary analytic details have been verified, so it remains to check that the product operator agrees with the Dirac operator on $\TT^2_\theta$. This follows immediately by inspection of the formula \eqref{odd-odd-op}.\end{proof}

This KK-factorization allows for the following gauge theoretical interpretation. The $C^*$-module $\EE_\theta$ is the space of continuous sections of some Hilbert bundle $V \to \bS^1$. Essentially, the fibers of $V$ are copies of $L^2(\bS^1)$. The internal gauge group $\U(\Lip(\T^2))$ is a normal subgroup of the group $\GG(\E_\theta)$ of Definition~\ref{defn:KK-gauge}, which acts fibrewise on $V$. 
The internal gauge fields in $\Omega^1_D(\Lip(\T^2))$ decompose according to Lemma \ref{lem:inner-decomp}: the scalar fields act vertically on the Hilbert bundle $V$, whilst the gauge fields are given by connections thereon. 

We have thus cast the gauge theory as described by the spin geometry of the noncommutative torus $\T^2$ into a geometrical setting consisting of a Hilbert bundle over the circle $\bS^1$, equipped with a connection and a fibrewise endomorphism. Interestingly, in passing from $\U(\Lip(\T^2))$ to $\GG(\E_\theta)$ we allow for more gauge degrees of freedom, and in particular those of a type encountered in noncommutative instanton searches \cite{BL09a,bvs,B13} (and for an early appearance \cite{lvs:tcs}). Namely, the Pontrjagin dual group $\Z$ of $\TT$ acts on $\E_\theta$ through the bounded operators $e^{2 \pi i n \theta T} \equiv \lambda^{n \theta T}$ for any $n \in \Z$. One easily checks that this $\Z$ is a subgroup of $\GG(\E_\theta)$ and the relevant extension of $\U(\Lip(\T^2))$ to consider is the semidirect product $\U(\Lip(\T^2)) \rtimes \Z < \GG(\E_\theta)$.

\section{The Noncommutative Hopf Fibration}
\label{sect:nc-hopf}
Next we come to investigate the spin geometry of the toric noncommutative Hopf fibration $\bS^3_\theta\to \bS^2$. In contrast with the example of the fibration $\TT^2_\theta\to \bS^1$ given in the previous section, this `quantum principal bundle' gives us an example of a topologically non-trivial fibration of noncommutative manifolds. 

This quantum fibration will be described at the topological level in terms of an algebra inclusion $C(\bS^2)\hookrightarrow C(\bS^3_\theta)$, as a noncommutative analogue of the familiar classical Hopf fibration $\bS^3 \to \bS^2$ with structure group $\U(1)$. The aim of this section is to spell out the noncommutative spin geometry of this fibration in full detail, using the language of Kasparov products in unbounded KK-theory.

\subsection{The noncommutative three-sphere} We begin by describing the spin geometry of the noncommutative three-sphere $\bS^3_\theta$, obtained as an isospectral deformation of the Riemannian spin geometry of the classical sphere
$$
\bS^3=\{ (v_1 ,v_2) \in \C^2~|~ v_1^*v_1 + v_2^*v_2=1\}.
$$
The latter has a natural parametrization in terms of polar coordinate functions
\begin{gather*}
v_1 = e^{i t_1} \cos \chi, \qquad 
v_2 = e^{i t_2} \sin \chi.
\end{gather*}
where the toroidal coordinates $0 \leq t_1,t_2 \leq 2\pi$ together parametrize a two-torus and $0 \leq \chi \leq \pi/2$ is the polar angle.
%
%

\begin{defn}
The $C^*$-algebra $C(\bS^3)$ of continuous functions on $\bS^3$ is the universal commutative unital $C^*$-algebra generated by the elements
$$
V_1:=U_1 \cos\chi,\qquad V_2:=U_2\sin\chi
$$
for unitary elements $U_1,U_2$ and $0\leq \chi\leq \pi/2$ the polar angle.
\end{defn}

The $C^*$-algebra $C(\bS^3)$ carries a natural action of the two-torus $\bT^2$ by $*$-automorphisms, defined on generators by 
\begin{equation}\label{sph-act}
\alpha:\TT^2\to\textup{Aut}(C(\bS^3)),\qquad \alpha_t(V_j):=e^{i t_j}V_j,\qquad j=1,2,
\end{equation}
where we denote a general element of the torus $\TT^2$ by $t=(e^{i t_1},e^{i t_2})$ for $0\leq t_1,t_2\leq 2\pi$.

Since the classical three-sphere $\bS^3\simeq \SU(2)$ is in particular a group, the rank two spinor bundle over $\bS^3$ is trivializable. Thus we find that
\begin{equation}\label{sp-triv}
\H:=L^2(\bS^3,\cS)\simeq L^2(\bS^3)\otimes \C^2
\end{equation}
is the Hilbert space of square-integrable sections of the spinor bundle $\cS$ over $\bS^3$. Immediately we obtain a continuous representation $\pi:C(\bS^3)\to \mathbb{B}(\H)$ of $C(\bS^3)$ on $\H$ acting as diagonal operators.

The Dirac operator $D$ for the round metric on the classical sphere $\bS^3$ is an unbounded self-adjoint operator on the Hilbert space $\H$. With $\gamma^2$, $\gamma^3$ the generators of the Clifford algebra $\C_1$ and $\gamma^1$ its $\ZZ_2$-grading, the Dirac operator has the explicit form
\begin{equation}
\label{dirac-S3}
D = i Z_1\otimes\gamma^1 + iZ_2\otimes\gamma^2 + iZ_3\otimes\gamma^3 + \frac{3}{2},
\end{equation}
where $Z_k$, $k=1,2,3$, denote the corresponding (right) fundamental vector fields on the group manifold $\SU(2)$ ({\em cf}. \cite{Hom00}).

Upon making an explicit choice of representatives for the gamma matrices, we may write 
$$
D=\begin{pmatrix} iZ_1 & Z_2+iZ_3 \\ -Z_2+i Z_3 & -iZ_1\end{pmatrix} +\frac{3}{2}.
$$
For later convenience we introduce the `laddering' and `counting' operators $Z_{\pm}:=\pm Z_2+i Z_3$ and $T:=iZ_1$. These satisfy $T^* = T$, $Z_\pm^* =Z_\mp$ and the crucial property
\begin{equation}\label{ladder}
[T,Z_\pm] = \pm 2 Z_\pm,
\end{equation}
explaining their respective names. From now on we shall omit the tensor product symbols from expressions such as eq.~\eqref{dirac-S3}

The spin lift of the torus action \eqref{sph-act} on $C(\bS^3)$ defines a unitary representation $U:\bT^2\to \mathbb{B}(\H)$ generated by the pair of commuting derivations
\begin{equation}\label{torgen} 
H_{1}=-\gamma_{1} -iZ_{1},\quad  H_{2}=i\tilde{Z}_{1},
\end{equation}
the latter coming from the {\it left} fundamental vector field $\tilde Z_1$ on the classical sphere $\bS^3\simeq\SU(2)$. Again we refer to \cite{Hom00} for further details (noting that the torus acting on $\H$ is in fact a double cover of the torus acting on $\bS^3$; we take the liberty of being notationally sloppy about this point).

In terms of the resulting $\ZZ^2$-grading \eqref{grad} of the algebra $\BB(\H)$, one easily checks that the generators $V_1,V_2$ of the $C^*$-algebra $C(\bS^3)$ have respective degrees $(1,1)$ and $(1,-1)$. It will be convenient to simplify our notation, often writing
\begin{equation}\label{relab}
a:=V_1,\qquad b:=V_2.
\end{equation}
This time taking $\lambda:=\exp(\pi i \theta)$ (due to the aforementioned issues regarding double covers), the corresponding twisted operators are therefore given by the formul{\ae}
\[\LL(a)=\begin{pmatrix}a & 0 \\ 0 & a\end{pmatrix}\lambda^{H_{1}},\qquad \LL(b)=\begin{pmatrix} b & 0 \\ 0 & b\end{pmatrix}\lambda^{-H_{1}}.\]
Immediately one verifies the commutation relation $\LL(a)\LL(b)=\lambda^2 \LL(b)\LL(a)$, leading to the following definition ({\em cf}. \cite{Mat91}).

\begin{defn}
With $\lambda:=\exp(\pi i \theta)$, the $C^*$-algebra of continuous functions on the noncommutative three-sphere $\bS^3_\theta$ is the universal unital $C^*$-algebra generated by the elements
\begin{gather*} V_1 = U_1\cos \chi, \qquad 
V_2 = U_2\sin \chi,
\end{gather*}
where $U_1,U_2$ are unitary elements obeying the relation $U_1U_2=\lambda^2 U_2U_1$ and $0 \leq \chi \leq \pi/2$ is the polar angle. We denote this $C^*$-algebra by $C(\bS^3_\theta)$.
\end{defn}

Following the isospectral deformation procedure described in the previous section, we take
\begin{equation}\label{sp-triv2}
\H_\theta=L^2(\bS^3_\theta,\cS):= L^2(\bS^3)\otimes \C^2
\end{equation}
for the Hilbert space of square-integrable sections of the spinor bundle $\cS$ over the noncommutative sphere $\bS^3_\theta$. Immediately we obtain a continuous representation $\pi:C(\bS^3)\to \mathbb{B}(\H_\theta)$ of $C(\bS^3)$ on $\H_\theta$ acting as diagonal operators The formula \eqref{tor-D} continues to define a Dirac operator on the Hilbert space $\H_\theta$.

\begin{prop}\label{eq triple}
The datum $(C(\bS^3_\theta), \H_\theta, D)$ constitutes a $3^+$-summable torus-equivariant spectral triple over the $C^*$-algebra $C(\bS^3_\theta)$.
\end{prop}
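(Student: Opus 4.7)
The plan is to deduce this proposition as a special case of the general isospectral deformation machinery recalled in the previous subsection, applied to the canonical spectral triple $(C(\bS^3),\H,\dirac_{\bS^3})$ on the classical three-sphere. Recall that the latter is $3^+$-summable (since $\bS^3$ is a three-dimensional closed Riemannian spin manifold) and is torus-equivariant for the spin lift $U:\TT^2\to\BB(\H)$ of the action \eqref{sph-act}, whose infinitesimal generators $H_1,H_2$ from \eqref{torgen} satisfy $[D,H_1]=[D,H_2]=0$. This last property — that $D$ commutes with the torus action — is the crucial input for everything that follows.

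First I would verify that $\pi_\theta(a):=\LL(a)$ extends to a faithful $*$-representation of $C(\bS^3_\theta)$ on $\H_\theta=\H$. On the generators $V_1,V_2$ an explicit computation using the $\ZZ^2$-grading \eqref{grad} and the fact that the $\lambda^{H_1}$ are unitary shows that $\LL(V_1)\LL(V_2)=\lambda^2\LL(V_2)\LL(V_1)$ and that $\LL(V_j)^*=\LL(V_j^*)$; thus $\LL$ respects the defining relations of $C(\bS^3_\theta)$. Faithfulness and the extension to the full $C^*$-algebra then follow exactly as in \cite{CL01}, using that the $H_j$ have joint pure-point spectrum and that $\LL$ is built diagonally on isotypic components. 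The induced representation on the Lipschitz algebra $\Lip(\bS^3_\theta)$ is completely bounded for the representation \eqref{Liprep}.

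The analytic heart of the argument is showing boundedness of commutators with $D$. For any homogeneous $a\in \Lip(\bS^3)$ of degree $(n_1,n_2)$, the operators $\lambda^{n_2 H_1}$ are unitary and commute with $D$. Writing a general $a\in\Lip(\bS^3)$ as a norm convergent sum of such homogeneous components and observing that left twisting is compatible with the $*$-derivation $[D,\cdot]$, one obtains the identity
\[
[D,\LL(a)]=\LL([D,a])
\]
on the common smooth domain. Since $[D,a]\in\BB(\H)$ for $a\in\Lip(\bS^3)$ and left twisting preserves the operator norm on the Sobolev algebra $\textup{Sob}_\delta(\H)$, the right-hand side extends to a bounded operator on $\H_\theta$. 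Density of $\Lip(\bS^3_\theta)$ in $C(\bS^3_\theta)$ is immediate from density in the classical case, since the underlying vector space is unchanged.

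The remaining conditions are then essentially tautological. Self-adjointness of $D$ on $\H_\theta=\H$ is inherited directly from the classical case, as are the compact resolvent property and the $3^+$-summability statement: these depend only on the operator $D$ itself and on the Hilbert space $\H$, both of which are untouched by the deformation. Finally, torus-equivariance in the sense of Definition \ref{de:eqst} follows because $U(t)$ still satisfies $U(t)DU(t)^*=D$ by construction, while
\[
U(t)\LL(a)U(t)^*=\LL(\alpha_t(a))
\]
is an immediate consequence of the definition of $\LL$ together with commutativity of $H_1,H_2$. The mild technicality to watch for is the appearance of a double cover of the torus in passing from the action on $\bS^3$ to its spin lift (reflected in the choice $\lambda=\exp(\pi i\theta)$ rather than $\exp(2\pi i\theta)$), but this is purely bookkeeping and does not affect the above computations.
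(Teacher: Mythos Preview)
Your proposal is correct and follows essentially the same approach as the paper: both rest on the fact that the infinitesimal generators $H_1=-\gamma^1-iZ_1$ and $H_2=i\tilde Z_1$ commute with $D$, from which torus-equivariance and the identity $[D,\LL(a)]=\LL([D,a])$ follow, with the remaining spectral-triple conditions inherited from the classical case. The paper's proof is considerably terser --- it simply records the commutation $[-\gamma^1-iZ_1,D]=0$ as the one concrete computation and defers the rest to the general Connes--Landi machinery --- whereas you spell out the representation-theoretic and analytic steps in more detail; the content is the same.
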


\proof By computing directly that $-\gamma^1 - iZ_1$ commutes with $D$, as obviously does $\tilde Z_1$, the result follows from checking that 
\begin{align}\label{eq-tr}
U(t) D U(t)^{-1} = D,\qquad U(t) \pi(a) U(t)^* = \pi(\alpha_{t}(a)),
\end{align}
for all $a \in C(\bS^3_\theta)$ and all $t\in\bT^2$.\endproof

The associated Lipschitz algebra is denoted $\Lip(\bS^{3}_{\theta})$. It consists of those elements $a\in C(\bS^{3}_{\theta})$ for which the map $t\mapsto \alpha_{t}(a)$ is a Lipschitz function $\TT^{2}\rightarrow C(\bS^{3}_{\theta})$.

\begin{cor}
The datum $(\H_\theta, D)$ constitutes a cycle in the set $\Psi_{-1}(\Lip(\bS^3_\theta), \C)$ of odd unbounded KK-cycles.
\end{cor}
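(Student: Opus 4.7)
The plan is to verify each of the defining properties of $\Psi_{-1}(\Lip(\bS^3_\theta),\C)$ against the data already produced by Proposition \ref{eq triple}. Recall that an odd KK-cycle in this set consists of an (ungraded) $(C(\bS^3_\theta),\C)$-bimodule---that is, a Hilbert space carrying a representation of $C(\bS^3_\theta)$---together with a self-adjoint regular operator with compact resolvent, such that the induced map $a \mapsto [D,a]$ is a cb-derivation on the Lipschitz subalgebra $\Lip(\bS^3_\theta)$. Proposition \ref{eq triple} supplies the Hilbert space $\H_\theta$ with its faithful representation of $C(\bS^3_\theta)$, the self-adjoint operator $D$, the density of $\Lip(\bS^3_\theta)$ in $C(\bS^3_\theta)$, and the boundedness of the commutators $[D,\pi(a)]$ for $a \in \Lip(\bS^3_\theta)$. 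Since the algebras carry no grading, the resulting cycle is automatically odd. Self-adjointness on a Hilbert space is equivalent to regularity in the sense of Definition \ref{defn:KK-unbounded}, so only two things remain to be observed.

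First, I would note that the compact resolvent condition is an immediate consequence of the $3^+$-summability asserted in Proposition \ref{eq triple}: since $(1+D^2)^{-1/2}$ lies in the Dixmier ideal $\mathcal{L}^{3^+}(\H_\theta) \subset \K(\H_\theta)$, the resolvents $(D \pm i)^{-1}$ are compact as well.

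Second, the completely bounded derivation property follows directly from the definition of the operator algebra structure on $\Lip(\bS^3_\theta)$, induced by the Lipschitz representation $\pi_D$ of eq.~\eqref{Liprep}. Namely, the assignment $a \mapsto [D,\pi(a)]$ is precisely the $(2,1)$-entry of the matrix $\pi_D(a)$, and hence is by construction a completely contractive linear map $\Lip(\bS^3_\theta) \to \BB(\H_\theta)$. That this map is a derivation is automatic from the Leibniz rule for commutators. Assembling these observations yields the desired cycle in $\Psi_{-1}(\Lip(\bS^3_\theta),\C)$. I do not anticipate any genuine obstacle: the content is entirely a bookkeeping check that the data of Proposition \ref{eq triple} matches the axioms laid down in Section \ref{sect:KK}, with the cb-derivation condition built into the very definition of $\Lip(\bS^3_\theta)$.
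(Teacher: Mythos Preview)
Your proposal is correct and follows the same approach as the paper: both deduce the corollary directly from the fact that $(C(\bS^3_\theta),\H_\theta,D)$ is an odd spectral triple with Lipschitz algebra $\Lip(\bS^3_\theta)$, as established in Proposition~\ref{eq triple}. The paper dispatches this in a single sentence, whereas you have unpacked the verification of compact resolvent and the cb-derivation condition explicitly; this extra detail is sound but not a different route.
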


\proof This follows immediately from the fact that the datum $(C(\bS^3_\theta), \H_\theta, D)$ constitutes an odd spectral triple over $C(\bS^3_\theta)$ with Lipschitz algebra $\Lip(\bS^{3}_{\theta})$.\endproof

\subsection{The classical two-sphere}  
This time we write $\TT^2=\TT\times\TT'$, where 
\begin{equation}\label{tor-dec2}
\TT:=\{(e^{i t},e^{i t})\in\TT^2~|~t\in\RR\}, \qquad \TT':=\{(e^{i t},e^{-i t})\in\TT^2~|~t\in\RR\},
\end{equation}
and look for the fixed point subalgebra of $C(\bS^3_\theta)$ under the action of $\TT$ induced by eq.~\eqref{sph-act}, which we continue to denote by $\alpha:\TT\to\textup{Aut}(C(\bS^3_\theta))$. Starting with the spectral geometry of $\bS^3_\theta$ described by the above spectral triple, we compute its $\bT$-invariant part and show that the resulting datum is unitarily equivalent to the canonical spectral triple on the base space $\bS^2$ of the Hopf fibration.

\begin{prop}\label{pr:inv2}
The fixed-point subalgebra of $C(\bS^3_\theta)$ under the action of $\TT$ induced by \eqref{sph-act} is isomorphic to the commutative $C^*$-algebra $C(\bS^2)$ of continuous functions on the classical two-sphere.
\end{prop}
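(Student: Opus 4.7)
The plan is to produce an explicit isomorphism of $C^*$-algebras by identifying invariant generators and verifying that they satisfy precisely the defining relations of $C(\bS^2)$. First, observe that the subtorus $\TT$ in the decomposition $\TT^2 = \TT \times \TT'$ acts on $C(\bS^3_\theta)$ by $\alpha_t(V_j) = e^{it} V_j$ for $j = 1, 2$, endowing it with a $\ZZ$-grading whose degree-zero part is the fixed-point subalgebra. Natural $\TT$-invariant elements are
\[
\alpha := V_1 V_2^*, \qquad \alpha^* = V_2 V_1^*, \qquad z := V_1 V_1^*.
\]
Using $V_j = U_j f_j$ with $f_1 = \cos\chi$ and $f_2 = \sin\chi$, and noting that $f_1, f_2$ are $\TT^2$-invariant and hence remain central under the isospectral deformation, one computes $z = \cos^2\chi$, $\alpha\alpha^* = \alpha^*\alpha = \cos^2\chi\sin^2\chi = z(1-z)$ and $[\alpha,z]=[\alpha^*,z]=0$. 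Setting $x_1 := \alpha + \alpha^*$, $x_2 := i(\alpha^* - \alpha)$ and $x_3 := 2z - 1$ yields three self-adjoint mutually commuting elements obeying $x_1^2 + x_2^2 + x_3^2 = 1$, so by Gel'fand duality the unital $C^*$-algebra that they generate is a quotient of $C(\bS^2)$.

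To see that these elements in fact generate the \emph{entire} fixed-point subalgebra, and that the above surjection is an isomorphism, I would exploit the isospectral deformation structure. Every element of $C(\bS^3_\theta)$ admits a decomposition into $\TT^2$-isotypical components indexed by $(p_1, p_2) \in \ZZ^2$, and the $\TT$-fixed part corresponds exactly to those with $p_1 + p_2 = 0$. On two such components of degrees $(p_1, -p_1)$ and $(q_1, -q_1)$, the Connes--Landi twisting cocycle vanishes, since the antisymmetric bilinear form $\sigma((p_1,p_2),(q_1,q_2)) := p_1 q_2 - p_2 q_1$ evaluates to $-p_1 q_1 + p_1 q_1 = 0$. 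Consequently the $\star$-product restricted to $\TT$-invariants coincides with the ordinary commutative product on $C(\bS^3)$, and the $\TT$-fixed subalgebra of $C(\bS^3_\theta)$ is isomorphic as a $*$-algebra to the $\TT$-fixed subalgebra of $C(\bS^3)$. The latter is exactly $C(\bS^2)$, by the classical Hopf fibration $\bS^3 \to \bS^2$ whose invariant functions are generated precisely by the Hopf coordinates $x_1, x_2, x_3$ above.

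The main obstacle will be justifying cleanly that the identification of the $\TT$-invariant part with its classical counterpart extends from the dense span of homogeneous elements to the full $C^*$-algebra in a norm-preserving manner. Since both sides arise as degree-zero subalgebras of $C^*$-algebras represented faithfully on the common Hilbert space $\H_\theta = L^2(\bS^3)\otimes\C^2$ (with the deformation affecting only the action of non-invariant elements), this should follow from a straightforward continuity argument using the conditional expectation onto the fixed-point subalgebra obtained by averaging over $\TT$.
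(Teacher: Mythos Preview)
Your proposal is correct and follows essentially the same approach as the paper: identify invariant generators (your $\alpha = V_1 V_2^*$ and $z=\cos^2\chi$ are, up to adjoint and a harmless scalar, the paper's $W = U_1^* U_2 \sin\chi\cos\chi$ and $x=\cos^2\chi$) and verify the defining relation $\alpha^*\alpha=\alpha\alpha^*=z(1-z)$ of $C(\bS^2)$. Your additional paragraph on the vanishing of the deformation cocycle on $\TT$-invariants is a welcome justification of what the paper leaves as ``clear by inspection''; note that in the paper's conventions the $\TT$-invariants are exactly the elements of $H_1$-degree zero (since original bidegree $(a,b)$ corresponds to $(H_1,H_2)$-degree $(a+b,a-b)$), so the twist $\lambda^{m_1 n_2}$ is literally trivial there and your claim that the $\star$-product coincides with the commutative one holds on the nose.
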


\proof It is clear by inspection that the fixed-point subalgebra of $C(\bS^3_\theta)$ under the action of $\TT$ is the universal unital $C^*$-algebra generated by the complex element $W:=U_1^*U_2\sin\chi \cos\chi$ and the real element $x:=\cos^2 \chi$, with $0\leq\chi\leq \pi/2$ the polar angle. One readily checks that 
$$
W^*W=WW^*=x(1-x),
$$
so this commutative $C^*$-algebra is nothing other than the algebra $C(\bS^2)$ of continuous functions on the classical two-sphere of radius $1/2$.\endproof

Next we describe the geometry of the base space $\bS^2$ of the Hopf fibration.
Denote by  $\Lip(\bS^3_\theta)_0$ and $L^2(\bS^3_\theta,\cS)_0$ the $\bT$-invariant subspaces of $\Lip(\bS^3_\theta)$ and $L^2(\bS^3_\theta,\cS)$, respectively. 
The space $\Lip(\bS^3_\theta)$ decomposes into homogeneous spaces of weight $n\in\ZZ$ under the action of the operator $T:={i}Z_1$, 
$$
\mathcal{L}_n:=\{a\in \Lip(\bS^3_\theta)~|~T(a)=na\}.
$$
Our choice of notation is deliberately suggestive of line bundles, where $\L_n$ is thought of  as the space of Lipschitz sections of the line bundle over $\bS^2$ with first Chern number $n$. This yields the familiar Peter-Weyl decomposition of $\Lip(\bS^3_\theta)$ into weight spaces
\begin{equation}\label{decomp-ln}
\Lip(\bS^3_\theta) \simeq \bigoplus_{n \in \Z} \L_n
\end{equation}
and from eq.~\eqref{ladder} we find that $Z_\pm : \L_n \to \L_{n\pm 2}$. 
In what follows, we shall say that each element $x\in\L_n$ is \textbf{homogeneous of degree $n\in\ZZ$} with respect to the decomposition \eqref{decomp-ln}. 

The operator $T$ is the infinitesimal generator of the $\TT$-action on $\Lip(\bS^3_\theta)$ induced by eq.~\eqref{sph-act} and so $\L_0$ is the algebra of invariant Lipschitz functions on $\bS^3_\theta$ which identifies with a dense $*$-subalgebra of $C(\bS^{2})$. The product and involution in $\Lip(\bS^3_\theta)$ therefore induce the identifications of (sections of) line bundles
\begin{equation}
\label{eq:line-product}
\L_m \otimes_{\L_0} \L_n \simeq \L_{m+n}, \qquad\mathcal{L}_n^*\simeq \mathcal{L}_{-n},
\end{equation}
as one should expect from the classical case. In particular, the generators $a,b$ of eq.~\eqref{relab} are elements of the $\L_0$-bimodule $\mathcal{L}_1$, whereas their conjugates $a^*,b^*$ are elements of $\L_{-1}$.

There are two combinations of these line bundles which are of particular interest, respectively forming the Lipschitz sections of the spinor bundle $\cS$ and the cotangent bundle $\Lambda^1$ on $\bS^2$, as the following result shows. These will prove useful in the final section of the paper.

\begin{lem}\label{le:idents}
There are explicit isomorphisms of $\mathcal{L}_{0}$-bimodules of Lipschitz sections
$$
\Gamma^\ell(\bS^2,\Lambda^1) \simeq \L_2 \oplus \L_{-2},\qquad \Gamma^\ell(\bS^2,\cS) \simeq \L_1 \oplus \L_{-1}.
$$
\end{lem}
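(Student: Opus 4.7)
The plan is to realize both bundles on $\bS^2$ as equivariant objects on $\bS^3$, using the parallelizability of $\bS^3 \simeq \SU(2)$ (which globally trivializes both $T\bS^3$ and $\cS_{\bS^3}$) together with the characterization of the $\L_n$ as weight-$n$ Lipschitz functions on $\bS^3_\theta$ under the Hopf circle generated by $T = iZ_1$. Since the isospectral deformation respects the weight decomposition, $\L_n$ is canonically isomorphic as an $\L_0$-bimodule to the weight-$n$ subspace of $\Lip(\bS^3)$, which via the Hopf projection is the space of Lipschitz sections of the associated line bundle $\bS^3 \times_{\U(1)} \C_n$ over $\bS^2$.

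For the cotangent bundle, I would first work with the tangent bundle and dualize using $\L_n^* \simeq \L_{-n}$ from \eqref{eq:line-product}. The complexified tangent bundle is trivialized by the right-invariant frame $\{Z_1, Z_+, Z_-\}$; since $Z_1$ spans the vertical distribution, a complexified horizontal vector field has the form $fZ_+ + gZ_-$ with $f, g \in \Lip(\bS^3_\theta)$, and it descends to a Lipschitz vector field on $\bS^2$ precisely when it is $\bT$-invariant. Using the ladder relation \eqref{ladder}, $[T, Z_\pm] = \pm 2 Z_\pm$, invariance forces $T(f) = -2f$ and $T(g) = +2g$, giving $(f,g) \in \L_{-2} \oplus \L_{+2}$. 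Thus $\Gamma^\ell(\bS^2, T\bS^2 \otimes \C) \simeq \L_{-2} \oplus \L_2$, and dualizing yields $\Gamma^\ell(\bS^2, \Lambda^1) \simeq \L_2 \oplus \L_{-2}$.

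For the spinor bundle, I would use the trivialization \eqref{sp-triv}, under which Lipschitz sections of $\cS_{\bS^3}$ become pairs $(f_+, f_-) \in \Lip(\bS^3_\theta)^{\oplus 2}$. Lipschitz sections of $\cS_{\bS^2}$ are then pulled back to those pairs annihilated by the infinitesimal generator of the spin lift of the Hopf $\bT$-action, which by \eqref{torgen} takes the form $-\gamma^1 - iZ_1 = -\gamma^1 - T$, with $\gamma^1 = \operatorname{diag}(1,-1)$ read off from \eqref{dirac-S3}. The vanishing condition becomes $Tf_+ = -f_+$ and $Tf_- = +f_-$, so $(f_+, f_-) \in \L_{-1} \oplus \L_1 \simeq \L_1 \oplus \L_{-1}$.

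The main subtlety is weight bookkeeping. The factor of $2$ in the cotangent decomposition reflects the double cover $\SU(2) \to \SO(3)$, visible at the infinitesimal level through the ladder relation $[T,Z_\pm]=\pm 2 Z_\pm$; the halving to factor-$1$ on the spinor side arises from the spin representation being the square root of the vector representation, encoded in the Clifford grading $\gamma^1$ in the spin lift. Both identifications are manifestly $\L_0$-bimodule isomorphisms since $\L_0$ acts by pointwise multiplication by $\bT$-invariant functions, which commutes with the chosen frame of $T\bS^3$ and the trivialization of $\cS_{\bS^3}$.
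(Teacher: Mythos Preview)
Your proposal is correct and is in fact the explicit content of the classical identifications the paper invokes. The paper's own proof is a two-line reduction: it observes that the Peter--Weyl decomposition \eqref{decomp-ln} is $\bT$-equivariant, so each $\L_n$ agrees with its undeformed counterpart as an $\L_0$-bimodule, and then appeals to ``the corresponding classical isomorphisms'' without writing them down. Your argument via the right-invariant frame, the ladder relation $[T,Z_\pm]=\pm 2Z_\pm$, and the spin-lift generator $H_1=-\gamma^1-iZ_1$ is precisely that classical computation; it matches the paper's subsequent conventions (cf.\ eq.~\eqref{oneforms} and the discussion of $H_1$ before Theorem~\ref{thm:equiv-S30-S2}), so the approaches coincide with yours supplying the details the paper omits.
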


\proof By definition, the decomposition \eqref{decomp-ln} is equivariant under the $\mathbb{T}$-action on $\Lip(\bS^3_\theta)$, whence the $\mathcal{L}_{0}$-bimodules $\mathcal{L}_n$ are isomorphic as vector spaces to their classical counterparts. The result now follows from the corresponding classical isomorphisms.
\endproof

More precisely, the latter result means that every one-form $\omega\in\Gamma^\ell(\bS^2,\Lambda^1)$ may be written uniquely as
\begin{equation}\label{oneforms}
\omega=f_+\omega_++f_-\omega_-,\qquad \text{for some}~f_\pm\in \mathcal{L}_{\pm 2},
\end{equation}
with $\omega_\pm$ the left-invariant one-forms on $\bS^3_\theta$ which are dual to the vector fields $Z_\pm$. The relationship between the vector bundles $\cS$ and $\Lambda^1$ is expressed through the usual Clifford multiplication 
\begin{align*}
c:\Gamma^\ell(\bS^2,\Lambda^1) \to\textup{End}_{\L_{0}}^{*}(\Gamma^\ell(\bS^2,\cS)).
\end{align*}
Recall our choice of representatives for the gamma matrices,
$$
\gamma^1= \begin{pmatrix}1&0\\0&-1\end{pmatrix},\qquad \gamma^2= \begin{pmatrix}0&-i\\i&0\end{pmatrix},\qquad  \gamma^3= \begin{pmatrix}0&1\\1&0\end{pmatrix},
$$
written in terms of the basis that decomposes $\Gamma^\ell(\bS^2,\cS)$ into a direct sum. 
Using these, we introduce the matrices
$$
\sigma^\pm:=\tfrac{1}{2}(\pm i\gamma^2+\gamma^3);\quad\qquad\sigma^+ = \begin{pmatrix} 0 & 1 \\ 0 & 0 \end{pmatrix}, \qquad 
\sigma^- = \begin{pmatrix} 0 & 0 \\ 1 & 0 \end{pmatrix}.
$$
The Clifford multiplication on $\bS^2$ is then conveniently expressed in the form
\begin{align*}
c(\omega)(s)=  f_+ c(\omega_+) s + f_- c(\omega_-) s 
=f_+ \sigma_+ s + f_- \sigma_- s  = (f_+ s_-,   f_- s_+),
\end{align*}
for each $s=(s_+,s_-)\in\mathcal{L}_1\oplus\mathcal{L}_{-1}$. The grading on $\Gamma^\ell(\bS^2,\cS)$ determined by the matrix $\gamma^1$ makes the Clifford action into an odd representation of $\Gamma^\ell(\bS^2,\Lambda^1)$ on $\Gamma^\ell(\bS^2,\cS)$.

From Proposition~\ref{eq triple} we know that the Dirac operator $D$ of eq.~\eqref{dirac-S3} is $\bT^2$-equivariant, whence it induces an unbounded self-adjoint operator $D_0$ on $L^2(\bS^3,\cS)_0$. Since $(\gamma^1)^2=1$, we may rewrite eq.~\eqref{dirac-S3} as
$$
D=-(-\gamma^1-iZ_1)\gamma^1 + i Z_2\gamma^2 +i Z_3\gamma^3 + \frac{1}{2}.
$$
The $\U(1)$-invariant part $L^2(\bS^3,\cS)_0$ is the closed subspace of the spinor space $L^2(\bS^3)\otimes\C^2$ that is annihilated by the operator $H_1=-\gamma^1-iZ_1$, the latter being the spin lift of the infinitesimal $\U(1)$ generator $T=iZ_1$. We deduce that we may write
$$
D_0 = Z_2\sigma^2 + Z_3\sigma^3 + \frac{1}{2}.
$$
as an operator on the invariant spinors $L^2(\bS^3_\theta,\cS)_0$. Since the matrices $\sigma_\pm$ correspond to the Clifford multiplication of the one-forms $\omega_\pm$, we deduce that the exterior derivative on the two-sphere has the form
\begin{equation}\label{extder}
\d:\Lip(\bS^2)\to\Omega^1_{D_0}(C(\bS^2),\Lip(\bS^2)),\qquad \d f=[D_0,f]=Z_+(f)\omega_+ + Z_-(f)\omega_-,
\end{equation}
where $Z_\pm:=\pm Z_2+iZ_3$ are the vector fields appearing in the above form of the Dirac operator and $\omega_\pm$ are the one-forms from eq.~\eqref{oneforms}. 

\begin{prop}
The datum $(C(\bS^3_\theta)_0, L^2(\bS^3_\theta,\cS)_0, D_0)$ constitutes a $2^+$-summable spectral triple.
\end{prop}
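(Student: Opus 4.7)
The plan is to identify the invariant spectral triple with the canonical spectral triple on the classical round two-sphere; because everything on the invariant side is commutative, no genuine deformation survives, and $2^+$-summability then follows from the standard fact (Definition~\ref{de:can-st}, Proposition~\ref{pr:bdl}) for the canonical triple on a closed two-dimensional Riemannian spin manifold.

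First I would dispose of the algebra and Hilbert space levels. Proposition~\ref{pr:inv2} already gives $C(\bS^3_\theta)_0\cong C(\bS^2)$, and the Lipschitz subalgebra of the prospective triple is just $\Lip(\bS^3_\theta)_0$, whose density in $C(\bS^3_\theta)_0$ follows by averaging the projection $\tau_0$ of Section~\ref{sect:nc-torus} applied to the $\TT$-action of~\eqref{tor-dec2} against dense elements of $\Lip(\bS^3_\theta)\subset C(\bS^3_\theta)$. For the Hilbert space, I would use the Peter-Weyl-type decomposition $\Lip(\bS^3_\theta)\cong\bigoplus_{n\in\ZZ}\L_n$ together with the trivialisation \eqref{sp-triv2} of the spinor bundle to compute the kernel of the spin generator $H_1=-\gamma^1-iZ_1$ explicitly: it consists of pairs $(s_+,s_-)$ with $iZ_1s_\pm=\mp s_\pm$. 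The $L^2$-closure of this kernel is identified via Lemma~\ref{le:idents} with $L^2(\bS^2,\cS)$, the square-integrable sections of the classical spinor bundle on $\bS^2$.

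Second I would identify $D_0$ with the canonical Dirac operator $\dirac_{\bS^2}$. The formula $D_0=Z_2\sigma^2+Z_3\sigma^3+\tfrac{1}{2}$ is already written in terms of the Pauli-type matrices $\sigma^\pm$ that realise Clifford multiplication by $\omega_\pm$, and after rewriting in the laddering basis $Z_\pm=\pm Z_2+iZ_3$ we find
\[
D_0=\begin{pmatrix}0 & -iZ_- \\ -iZ_+ & 0\end{pmatrix}+\tfrac{1}{2}
\]
acting on $\L_{-1}\oplus\L_1$. Comparing with \eqref{extder} and the expression for Clifford multiplication in the paragraph above Proposition~\ref{pr:inv2} shows that the off-diagonal piece is precisely the composition $c\circ \d$ on $\Gamma^\ell(\bS^2,\cS)$; the constant $\tfrac{1}{2}$ is the standard contribution of the spin connection on $\bS^2$ inherited from the Maurer-Cartan equation on $\SU(2)\to\bS^2$. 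Hence $D_0$ is unitarily equivalent to the canonical Dirac operator on the round two-sphere (of radius $\tfrac{1}{2}$).

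From there, the four conditions of Definition~\ref{de:st} follow by transport of structure from the canonical spectral triple $(C(\bS^2),L^2(\bS^2,\cS),\dirac_{\bS^2})$: faithfulness of the representation, self-adjointness and compact resolvent of $D_0$, and boundedness of commutators $[D_0,a]$ for $a\in\Lip(\bS^3_\theta)_0=\Lip(\bS^2)$. The $2^+$-summability is then the standard fact for an elliptic first-order differential operator on a closed two-dimensional Riemannian manifold. The main obstacle in this plan is the bookkeeping in Step~2: keeping straight the two gradings (the $\ZZ_2$-grading coming from $\gamma^1$ and the $\ZZ$-grading by weight spaces $\L_n$) and matching the frame $\{\omega_\pm\}$ on $\bS^2$ with the one induced from the horizontal lift of the Hopf connection, so that the identification of $D_0$ with $\dirac_{\bS^2}$ really is on the nose rather than merely up to an unspecified unitary and bounded perturbation.
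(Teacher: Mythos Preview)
Your proposal is correct in outline but takes a different route from the paper. You aim to identify $D_0$ geometrically with the Dirac operator on the round two-sphere and then invoke the standard $2^+$-summability for closed two-dimensional spin manifolds. The paper instead argues directly at the level of spectra: since $D$ on $\bS^3_\theta$ coincides with the classical Dirac operator on $\bS^3$, its eigenvalues $\pm(k+3/2)$ and the $\Spin(4)=\SU(2)\times\SU(2)$ structure of the eigenspaces are known explicitly (cf.~\cite{Hom00}). Restricting to the $\TT$-invariant subspace amounts to picking out zero-weight vectors for the relevant $\U(1)\subset\Spin(4)$, which cuts each eigenspace down to a single $\SU(2)$-irreducible $V_{2n+1}$; one reads off eigenvalues $2n+5/2$ and $-(2n+3/2)$ with multiplicity $2n+2$, and $2^+$-summability follows from this count. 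The identification with $\dirac_{\bS^2}$ is then established as a \emph{separate} subsequent result (Theorem~\ref{thm:equiv-S30-S2}), proved by matching these computed eigenvalues against the known spectrum of $\dirac$ on $\bS^2$.

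Your approach essentially merges the Proposition with Theorem~\ref{thm:equiv-S30-S2}. What the paper's ordering buys is that the spectral count is entirely elementary once the $\Spin(4)$-decomposition is in hand, and it sidesteps precisely the bookkeeping you flag as the obstacle---verifying that $D_0$ really is $c\circ\nabla_{\cS}$ for the round $\bS^2$ spin connection rather than merely agreeing with it up to an unidentified zeroth-order term. Conversely, your route, once that bookkeeping is carried out carefully, yields the stronger conclusion (the explicit unitary equivalence) in a single step.
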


\begin{proof}
The only non-trivial condition to check concerns the summability. Since the Dirac operator on $\bS^3_\theta$ coincides with that of the classical sphere $\bS^3$ acting on the Hilbert space $L^2(\bS^3,\cS)$, we may as well carry out the computation there. 

Recall \cite{Hom00} that the eigenvalues of the round Dirac operator on the three sphere are labeled by integers as 
$$
\lambda_{\pm k} = \pm (k+3/2), \qquad (k \geq 0),
$$
with multiplicities $(k+1)(k+2)$. Moreover, the $\lambda_{\pm k}$-eigenspaces are precisely the highest weight representations of $\Spin(4)=\SU(2) \times \SU(2)$ with highest weight $(k+1,k)$ and $(k,k+1)$ for the positive and negative eigenvalues, respectively. 

As such, there is a unique eigenvector in each eigenspace that is invariant under the action of one of the two copies of $\textup{U}(1)$ in $\Spin(4)$. It follows that, in passing from $L^2(\bS^3_\theta,\cS)$ to $L^2(\bS^3_\theta,\cS)_0$, one simply removes part of the degeneracies of the eigenvalues, so we have
\begin{equation}
\label{dec-S3-L2-0}
L^2(\bS^3_\theta,\cS)_0 \simeq { \bigoplus_{n \geq 0} V_{2n+1} \oplus V_{2n+1} }.
\end{equation}
where $V_{2n+1}$ denotes the $\textup{U}(1)$-invariant part of the highest weight $(2n+1)$-representation space for the relevant copy of $\SU(2)$ in $\Spin(4)$. 

In this way, each of the spaces $V_{2n+1}$ is an eigenspace for $D_0$. Since $D_0$ has eigenvalues $2n+5/2$ (in the first summand) and $-(2n+3/2)$ (in the second summand), each with multiplicity $2n+2$, we conclude that $D_0$ is $2^+$-summable.
\end{proof}

As a consequence we see that the base manifold for the Hopf fibration is two-dimensional. Let us make the geometric structure of this manifold more explicit by relating the operator $D_0$ on $L^2(\bS^3_\theta,\cS)_0$ to the Dirac operator $\dirac$ on the round two-sphere $\bS^2$. 

\begin{thm}
\label{thm:equiv-S30-S2}
Under the isomorphism $C(\bS^3_\theta)_0 \simeq C(\bS^2)$, there is a unitary equivalence between the spectral triples $(C(\bS^3_\theta)_0, L^2(\bS^3_\theta,\cS)_0, D_0-\half)$ and
$$
\left( C(\bS^2), L^2(\bS^2,\cS),2 \dirac \right).
$$
In particular, there is a completely bounded isomorphism $\mathcal{L}_{0}\cong\Lip(\bS^{2})$. 
\end{thm}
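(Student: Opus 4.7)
The strategy is to reduce the problem to a well-known classical computation. Observe first that both the Hilbert space $L^{2}(\bS^{3}_{\theta},\cS) = L^{2}(\bS^{3})\otimes\C^{2}$ and the generator $H_{1} = -\gamma^{1} - iZ_{1}$ of the $\TT$-action on it are independent of $\theta$, being part of the undeformed spin lift of the classical torus action. Hence the invariant subspace $L^{2}(\bS^{3}_{\theta},\cS)_{0}$ coincides as a Hilbert space with its classical counterpart $L^{2}(\bS^{3},\cS)_{0}$; and by Proposition~\ref{pr:inv2}, the invariant subalgebra $C(\bS^{3}_{\theta})_{0}$ is canonically isomorphic to $C(\bS^{2})$ independently of the deformation parameter. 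The entire datum $(C(\bS^{3}_{\theta})_{0}, L^{2}(\bS^{3}_{\theta},\cS)_{0}, D_{0}-\tfrac{1}{2})$ therefore coincides with its classical analogue, and the proof of the unitary equivalence reduces to a classical computation on the Hopf fibration $\bS^{3}\to\bS^{2}$.

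I would then invoke the standard identification of invariant spinors on the total space of the Hopf fibration with spinors on the base. The principal $\U(1)$-bundle structure $\bS^{3}\to\bS^{2}$, combined with the parallelisable spin structure on $\bS^{3}\simeq\SU(2)$, yields a canonical unitary isomorphism $U:L^{2}(\bS^{3},\cS)_{0}\xrightarrow{\sim}L^{2}(\bS^{2},\cS)$ intertwining the pointwise multiplication action of $C(\bS^{2})\simeq C(\bS^{3})_{0}$ on either side. This takes care of the algebra intertwiner and of the Hilbert space identification.

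The main task, which I expect to be the principal technical obstacle, is verifying the operator identity $U(D_{0}-\tfrac{1}{2})U^{*} = 2\dirac$. From the calculation leading to equation \eqref{extder} one has $D_{0}-\tfrac{1}{2} = iZ_{2}\gamma^{2} + iZ_{3}\gamma^{3}$ on the invariant subspace. Geometrically, the vector fields $Z_{2},Z_{3}$ span the horizontal distribution of the Hopf submersion, and the gamma matrices $\gamma^{2},\gamma^{3}$ act on the transverse component of the spinor bundle, which under $U$ is identified with the spinor bundle of $\bS^{2}$. The factor of $2$ arises because the submersion metric on the base is that of the sphere of radius $\tfrac{1}{2}$, as recorded in the proof of Proposition~\ref{pr:inv2}; expressing the Dirac operator on this half-radius sphere in terms of the Dirac operator $\dirac$ on the unit sphere introduces the required scaling, inverse to the radius. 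This is a standard Kaluza--Klein-type calculation for Riemannian submersions, requiring careful attention to the spin connection but presenting no conceptual difficulty.

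For the final assertion, note that shifting the Dirac operator by a scalar does not affect the boundedness of its commutators, so $\mathcal{L}_{0}$ is precisely the Lipschitz subalgebra of the restricted spectral triple $(C(\bS^{3}_{\theta})_{0},L^{2}(\bS^{3}_{\theta},\cS)_{0},D_{0}-\tfrac{1}{2})$. The completely bounded isomorphism $\mathcal{L}_{0}\cong\Lip(\bS^{2})$ then follows immediately from Proposition~\ref{pr:ueq}, which asserts that unitarily equivalent spectral triples have isometrically (and hence completely boundedly) isomorphic Lipschitz algebras.
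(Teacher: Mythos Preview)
Your proposal is correct in outline, but it takes a different route from the paper's proof. The paper argues purely spectrally: having already established in the preceding proposition the decomposition \eqref{dec-S3-L2-0} of $L^{2}(\bS^{3}_\theta,\cS)_{0}$ into $\SU(2)$-irreducibles $V_{2n+1}$ with the eigenvalues of $D_{0}$ recorded there, it then recalls that the harmonic spinors $Y^{\pm}_{jm}$ on the unit two-sphere have $\dirac$-eigenvalues $\pm(j+\tfrac12)$ and span the irreducible $V_{2j}$; writing $j=n+\tfrac12$ matches the two decompositions term by term, and the unitary equivalence follows by an abstract $\SU(2)$-equivariant identification of the isotypical pieces. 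No explicit submersion geometry or Kaluza--Klein computation is invoked.

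Your approach instead constructs the unitary geometrically, identifying invariant spinors on the Hopf total space with spinors on the base and then verifying $U(D_{0}-\tfrac12)U^{*}=2\dirac$ via a Riemannian-submersion calculation. This is a legitimate alternative and arguably more conceptual, but note that the ``standard'' Kaluza--Klein formula for Dirac operators on a submersion with totally geodesic fibres generally contains a curvature term coming from the bundle connection, and for the Hopf fibration that connection has nonzero curvature; you would need to check explicitly that on the $H_{1}$-invariant subspace this term combines correctly with the vertical contribution (this is exactly where the shift by $\tfrac12$ and the identification $L^{2}(\bS^{3},\cS)_{0}\simeq\overline{\L_{-1}}\oplus\overline{\L_{1}}$, implicit in Lemma~\ref{le:idents}, enter). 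The paper's spectral argument sidesteps this by comparing eigenvalues directly, at the cost of relying on the representation-theoretic bookkeeping already set up in the previous proposition. Your final paragraph, deriving the Lipschitz isomorphism from Proposition~\ref{pr:ueq}, is fine and matches the paper's implicit reasoning; one should perhaps remark that the Lipschitz structure for $2\dirac$ is only cb-equivalent (not isometric) to that for $\dirac$, which is why the theorem claims a completely bounded rather than isometric isomorphism.
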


\proof Recall that the spectrum of the Dirac operator $\dirac$ on the round two-sphere $\bS^2$ is $\Z-\{ 0 \}$ with multiplicities $2|\ell|$ for each $\ell \in \Z-\{ 0 \}$. The corresponding eigenfunctions in $L^2(\bS^2,\cS)$ are the well known harmonic spinors on $\bS^2$: 
$$
\dirac Y_{jm}^\pm = \pm (j+\half) Y_{jm}^\pm, \qquad (j \in \bN + \half,~ m= -j,-j+1, \ldots, j-1,j). 
$$
For each fixed half-integer $j$, the functions $Y_{jm}^\pm$ for $m=-j,-j+1,\ldots,j-1,j$ span the highest weight representation space $V_{2j}$ for $\SU(2)$. Then, upon writing $j=n+\half$ we can identify this representation space (for $\pm$) with the spaces $V_{2n+1}$ in the above decomposition \eqref{dec-S3-L2-0} of $L^2(\bS^3,\cS)_0$. Identifying the Hilbert spaces $L^2(\bS^3,\cS)$ and $L^2(\bS^3_\theta,\cS)$ yields the result.\endproof

\begin{rem}\textup{
Observe that the canonical spectral triple and, in particular, the Dirac operator $\dirac$ on $\bS^2$ are written in terms of a sphere of radius one. On the other hand, we have just seen that the torus-invariant part of the spectral triple on $\bS^3_\theta$ has a Dirac operator which is equivalent to $2\dirac+\half$. This is indeed consistent with the aforementioned fact that the base space of the Hopf fibration is in fact a two-sphere of radius $1/2$. Since the constant operator $1/2$ is not odd for the natural grading, the invariant part is not an even spectral triple in the strict sense and we have to consider $D_{0}-\half$. Later on, this will be important to obtain the correct commutation relation with the vertical KK-cycle. 
}
\end{rem}

It now makes sense for us to write $(C(\bS^2),\H_0,D_0-\half)$ unambiguously for the spectral triple on the base space $\bS^2$ of the noncommutative Hopf fibration.

\begin{cor}
The datum $(\mathcal{H}_0,D_0-\half)$  constitutes an element of the set of unbounded even KK-cycles $\Psi_0(\Lip(\bS^2),\C)$.
\end{cor}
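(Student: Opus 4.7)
The plan is to deduce everything directly from Theorem~\ref{thm:equiv-S30-S2} together with the fact that the canonical spectral triple on the round two-sphere is a well-known even unbounded KK-cycle.

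First I would note that by Theorem~\ref{thm:equiv-S30-S2} there is a unitary $U:L^2(\bS^3_\theta,\cS)_0\to L^2(\bS^2,\cS)$ implementing the equivalence
$$U(D_0-\tfrac12)U^*=2\dirac,\qquad U\pi_0(a)U^*=\pi(a)\quad (a\in C(\bS^2)),$$
where $\pi$ denotes the standard representation of $C(\bS^2)$ on $L^2(\bS^2,\cS)$. Since the conditions defining an unbounded KK-cycle are preserved by unitary equivalence, it suffices to verify that $(L^2(\bS^2,\cS),2\dirac)$ is an element of $\Psi_0(\Lip(\bS^2),\C)$.

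Next I would check the three points of Definition~\ref{defn:KK-unbounded}. The grading $\Gamma:L^2(\bS^2,\cS)\to L^2(\bS^2,\cS)$ coming from the $\ZZ_2$-grading of the spinor bundle $\cS$ on the even-dimensional manifold $\bS^2$ anticommutes with $\dirac$ and hence with $2\dirac$, and it commutes with $\pi(C(\bS^2))$ since the algebra acts diagonally on $L^2(\bS^2,\cS_+)\oplus L^2(\bS^2,\cS_-)$; this produces the even structure. The operator $2\dirac$ is self-adjoint with compact resolvent because it is a scalar multiple of the classical Dirac operator on a closed Riemannian spin manifold (elliptic and essentially self-adjoint on the smooth sections, with resolvent in $\mathcal{L}^{2^+}(L^2(\bS^2,\cS))$ in particular compact). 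Finally, by Proposition~\ref{pr:bdl} (applied with trivial bundle $E$) the Lipschitz subalgebra of $(C(\bS^2),L^2(\bS^2,\cS),\dirac)$ is $\Lip(\bS^2)$, which is dense in $C(\bS^2)$; scaling $\dirac$ by the factor $2$ leaves this Lipschitz algebra invariant since it preserves boundedness of commutators.

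Pulling back through $U$, the dense $*$-subalgebra of elements $a\in C(\bS^3_\theta)_0\simeq C(\bS^2)$ for which $[D_0-\tfrac12,a]$ is bounded coincides with $\mathcal{L}_0$, which by Theorem~\ref{thm:equiv-S30-S2} is cb-isomorphic to $\Lip(\bS^2)$ and dense in $C(\bS^2)$. The additive shift by $-\tfrac12$ clearly does not affect compact-resolvent properties or bounded-commutator conditions, but it is necessary so that the operator is odd with respect to the grading $\Gamma$ (the constant $\tfrac12$ being even would spoil the anticommutation with $\Gamma$). There is no real obstacle here: the content of the statement is essentially packaged in the preceding proposition and theorem, and the only subtle point worth flagging is precisely why the shift by $-\tfrac12$ is needed, namely to ensure the oddness required for an even unbounded KK-cycle.
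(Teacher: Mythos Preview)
Your proposal is correct and follows essentially the same route as the paper: both reduce the claim, via the unitary equivalence of Theorem~\ref{thm:equiv-S30-S2}, to the fact that $(C(\bS^2),L^2(\bS^2,\cS),2\dirac)$ is an even spectral triple with Lipschitz algebra $\Lip(\bS^2)$, the grading coming from the $\ZZ_2$-grading of the spinor bundle. Your version is simply more explicit about verifying the individual conditions and about the role of the $-\tfrac12$ shift in ensuring oddness, which the paper places in the remark preceding the corollary.
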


\proof The spinor bundle $\cS$ on the two-sphere is trivializable and of rank two, giving an obvious grading $\Gamma:\H_0\to\H_0$ of the Hilbert space $\H_0=L^2(\bS^2,\cS)$. The result is now immediate from the fact that $\left( C(\bS^2), L^2(\bS^2,\cS),2 \dirac \right)$ is a spectral triple which is even with respect to this grading and has Lipschitz algebra cb-isomorphic to $\Lip(\bS^{2})$.
\endproof

\subsection{The Lipschitz module of the Hopf fibration} The previous section described the horizontal part of the geometry of the noncommutative Hopf fibration. Next we come to describe its vertical geometry. To this end we use the completely positive map
\[\tau_{0}:\mathbb{B}(\mathcal{H})\rightarrow \mathbb{B}(\mathcal{H}),\qquad
\tau_0(a):= \int_{\bT}\alpha_{t}(a)dt, \]
where $\alpha_{t}$ is defined as in eq. \eqref{eq-tr}. We saw in Proposition \ref{eq triple} that this restricts to the $C^{*}$-algebra valued map
\begin{equation}\label{cpm}
\tau_0:C(\bS^3_\theta)\to C(\bS^2),\qquad \tau_0(a):=\int_{\bT} \alpha_{t} (a) dt.
\end{equation}
However, yet more is true:
just as it did for the noncommutative torus, the latter induces a right Hermitian structure $\langle\cdot,\cdot\rangle$ defined by $\langle a, b \rangle := \tau_0 (a^* b)$
on $\Lip(\bS^3_\theta)$ with values in the invariant subalgebra $\Lip(\bS^2)$. 

The corresponding $C^{*}$-norm is 
$$
\| a \|_{\mathpzc{E}_\theta} := \| \langle a, a \rangle \|_{C(\bS^3_\theta)}^{1/2},
$$
and we wish to identify the appropriate Lipschitz submodule $\mathcal{E}_{\theta}$ of $\mathpzc{E}_{\theta}$. 
Writing $(\cdot)^{\textup{tr}}$ to denote ordinary matrix transpose, we introduce for each $n\geq 1$ the partial isometries
\begin{align}
\Psi_{n}=(\Psi_{n,k})&:=
\begin{pmatrix}
a^{n} &  c_{1} a^{n-1} b & \cdots & c_{n-1} a b^{n-1} & b^{n}
\end{pmatrix}^{\textup{tr}},
\\
\Psi_{-n}=(\Psi_{-n,k})&:=
\begin{pmatrix}
a^{*n} &  c_{1} a^{*n-1} b^{*} & \cdots & c_{n-1} a^{*} b^{*n-1} & b^{*n} 
\end{pmatrix}^\textup{tr},
\end{align}
where $a,b$ are the generators of the $C^*$-algebra $C(\bS^3_\theta)$ and $c_{k}^2: = {n\choose k}$, so that each $\Psi_{n}$ is normalized in the sense that
\begin{equation}
\Psi^*_{n} \Psi_{n}= (|a|^2 + |b|^2 )^{|n|}= 1.
\end{equation}
Note that, with the convention $\Psi_{0}=1$, the matrices $\Psi_{n}$ are defined for all $n\in\Z$. We write $\Psi_{n,k}$ for the $k$th entry of the column vector $\Psi_n$, where $k=0,1,\ldots,|n|$. We will often employ the shorthand notation $[D,\Psi_n]$ to denote the column vector with entries $[D,\Psi_{n,k}]$, $k=0,1,\ldots,|n|$.

The following result describes the structure of the $\Lip(\bS^{2})$-submodules $\mathcal{L}_{n}$ appearing in the Peter-Weyl decomposition of $\Lip(\bS^{3}_{\theta})$. It will turn out to be a key step in describing the Lipschitz cycle for the Hopf fibration.

\begin{prop} \label{pr:projprop}
For each $n\in\Z$ the operator $p_{n}:=\Psi_{n}\Psi_{n}^{*}$ is a projection in $M_{|n|+1}(\Lip(\bS^{2}))$ and the map
\[ \mathcal{L}_{-n} \to \Lip(\bS^{2})^{|n|+1}, \qquad
x\mapsto (\Psi_{n}x),\]
implements a cb-isomorphism of finitely generated Lipschitz modules $\mathcal{L}_{-n} \simeq p_n\Lip(\bS^{2})^{|n|+1}$.
\end{prop}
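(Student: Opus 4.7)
The plan splits into three parts. First I would verify the identity $\Psi_n^*\Psi_n = 1$, which immediately implies $p_n := \Psi_n\Psi_n^*$ is a projection since $p_n^2 = \Psi_n(\Psi_n^*\Psi_n)\Psi_n^* = p_n$ and $p_n^* = p_n$ by inspection. For $n\ge 1$, expanding gives
\[
\Psi_n^*\Psi_n = \sum_{k=0}^{n}{n\choose k}\, b^{*k}a^{*(n-k)}a^{n-k}b^k.
\]
The crucial observation is that $a^*a = \cos^2\chi$ and $b^*b = \sin^2\chi$ both lie in the commutative $*$-subalgebra generated by the polar coordinate $\chi$; since the sole noncommutativity in $C(\bS^3_\theta)$ is the relation $U_1 U_2 = \lambda^2 U_2 U_1$, these elements commute with everything. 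The sum therefore collapses to $\sum_k {n\choose k}(a^*a)^{n-k}(b^*b)^k = (a^*a+b^*b)^n = 1$ by the binomial theorem and the sphere relation $\cos^2\chi + \sin^2\chi = 1$. The case $n \leq -1$ is entirely analogous.

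Second I would check that $p_n$ has entries in $\mathcal{L}_0 \cong \Lip(\bS^2)$ (via Theorem~\ref{thm:equiv-S30-S2}). Under the $\TT$-grading of \eqref{decomp-ln} both $a$ and $b$ lie in $\mathcal{L}_1$, so every monomial entry $a^{n-k}b^k$ of $\Psi_n$ lies in $\mathcal{L}_n$, and its adjoint lies in $\mathcal{L}_{-n}$. By eq.~\eqref{eq:line-product} the entries $(p_n)_{jk} = \Psi_{n,j}\Psi_{n,k}^*$ therefore belong to $\mathcal{L}_n\cdot\mathcal{L}_{-n} \subseteq \mathcal{L}_0$; the case $n<0$ is handled identically using $a^*,b^* \in \mathcal{L}_{-1}$.

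Third I would identify $\mathcal{L}_{-n}$ with $p_n\Lip(\bS^2)^{|n|+1}$. The map $x\mapsto \Psi_n x$ lands in $\Lip(\bS^2)^{|n|+1}$ because each $\Psi_{n,k}x \in \mathcal{L}_n\cdot\mathcal{L}_{-n} \subseteq \mathcal{L}_0$, and in fact in $p_n\Lip(\bS^2)^{|n|+1}$ because $p_n\Psi_n = \Psi_n\Psi_n^*\Psi_n = \Psi_n$. The inverse is $y \mapsto \Psi_n^* y$, which sends $p_n \Lip(\bS^2)^{|n|+1}$ into $\mathcal{L}_{-n}$ since $\Psi_{n,k}^* y_k \in \mathcal{L}_{-n}\cdot\mathcal{L}_0 = \mathcal{L}_{-n}$; the identities $\Psi_n^*\Psi_n = 1$ and $p_n\Psi_n = \Psi_n$ show these two maps are mutually inverse. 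Both are manifestly $\Lip(\bS^2)$-linear, and complete boundedness of each direction reduces to complete boundedness of left multiplication by a fixed matrix with entries in $\Lip(\bS^3_\theta)$, which holds because multiplication in the involutive operator algebra $\Lip(\bS^3_\theta)$ is by definition completely bounded.

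The one subtlety worth flagging is that, although each $p_n$ is a bounded projection in $M_{|n|+1}(\Lip(\bS^2))$ and the cb-isomorphism above is perfectly well behaved for each fixed $n$, the cb-norms of $\Psi_n$ and $p_n$ grow with $|n|$. This is not an obstacle to the proposition as stated, but it is exactly the phenomenon flagged in the motivating discussion preceding the proposition: the projections $\{p_n\}_{n\in\Z}$ cannot be assembled into a bounded projection on $\H_{\Lip(\bS^2)}$, so the total Hopf module $\bigoplus_n \mathcal{L}_n$ will only be a projective operator module in the sense of the unbounded projection theory developed in Section~\ref{sect:KK}.
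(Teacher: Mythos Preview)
Your first two parts are correct: the normalization $\Psi_n^*\Psi_n=1$ (already recorded in the text just before the proposition) together with self-adjointness gives that $p_n$ is a projection, and the grading argument via eq.~\eqref{eq:line-product} correctly places its entries in $\mathcal{L}_0$.

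The third part, however, glosses over the point that the paper's proof is really about. When you invoke ``complete boundedness of left multiplication in $\Lip(\bS^3_\theta)$'', what you obtain is a cb map
\[
\mathcal{L}_{-n}\longrightarrow \mathcal{L}_0^{\,|n|+1}
\]
where both sides carry the \emph{subspace} operator space structure inherited from $\Lip(\bS^3_\theta)$, i.e.\ the one induced by $\pi_D$. But the target in the statement is $p_n\Lip(\bS^2)^{|n|+1}$, and $\Lip(\bS^2)$ carries the operator space structure coming from the base Dirac operator $D_0$. Theorem~\ref{thm:equiv-S30-S2} identifies $\Lip(\bS^2)$ with $(\mathcal{L}_0,\pi_{D_0})$, not with $(\mathcal{L}_0,\pi_D|_{\mathcal{L}_0})$, so your appeal to that theorem does not by itself match the two structures. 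The same issue bites for the inverse map: multiplication by $\Psi_n^*$ is cb \emph{from} the $\pi_D$-structure on $\mathcal{L}_0^{\,|n|+1}$, whereas the domain should carry the $\pi_{D_0}$-structure.

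The paper closes exactly this gap: it observes that for any degree-zero element $y$ one has $[D,y]=[D_0,y]$ (since $D-D_0=T\gamma^1+\text{const}$, while $[T,y]=0$ and $\gamma^1$ commutes with the diagonal representation), whence $\pi_D(y)=\pi_{D_0}(y)$ on $\mathcal{L}_0$. With this in hand one writes, for $x\in\mathcal{L}_{-n}$,
\[
\pi_{D_0}(\Psi_n x)=\pi_D(\Psi_n x)=\pi_D(\Psi_n)\,\pi_D(x),
\]
which yields $\pi_{D_0}(\Psi_n x)^*\pi_{D_0}(\Psi_n x)\le \|\pi_D(\Psi_n)\|^2\,\pi_D(x)^*\pi_D(x)$ and hence complete boundedness in the correct target norm; the inverse direction is symmetric. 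Once you insert this one-line observation your argument becomes equivalent to the paper's, but as written it is incomplete.
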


\begin{proof} Since the generators $a$ and $b$ are in $\Lip(\bS^{3}_{\theta})$, it is immediate that $[D,\Psi_{n}]$ is bounded. For $x\in\L_{-n}$ we have
\[\pi_{D}(\Psi_{n})\pi_{D}(x)=\pi_{D}(\Psi_{n}x)=\pi_{D_{0}}(\Psi_{n}x),\]
since $(\Psi_{n}x)$ is a column vector consisting of elements of degree zero, whence \[[D,\Psi_{n}x]=[T+D_{0},\Psi_{n}x]=[D_{0},\Psi_{n}x].\] Hence we have
\[\pi_{D_{0}}(\Psi_{n}x)^{*}\pi_{D_{0}}(\Psi_{n}x)\leq \|\pi_{D}(\Psi_{n})\|^{2}\pi_{D}(x)^{*}\pi_{D}(x),\]
showing that $\Psi_{n}$ is completely bounded as a map $\mathcal{L}_{-n}\rightarrow p_{n}\Lip(\bS^{2})^{|n|+1}$. Its inverse $\Psi_{n}^{*}$ is completely bounded since, for each $v\in p_{n}\Lip(\bS^{2})^{|n|+1}$ we have
\[\pi_{D}(\Psi_{n}^{*}v)=\pi_{D}(\Psi_{n}^{*})\pi_{D}(v)=\pi_{D}(\Psi_{n}^{*})\pi_{D_{0}}(v),\]
which holds because $v$ is a column vector made up of elements of degree zero. Hence
\[\pi_{D}(\Psi_{n}^{*}v)^{*}\pi_{D}(\Psi_{n}^{*}v)\leq \|\pi_{D}(\Psi_{n})\|^{2}\pi_{D_{0}}(v)^{*}\pi_{D_{0}}(v),\]
implying complete boundedness.
\end{proof}

\begin{cor} There is a $C^{*}$-module isomorphism \[\mathpzc{E}_{\theta}\cong\bigoplus_{n\in\Z}p_{n}C(\bS^{2})^{|n|+1}\subset\mathcal{H}_{C(\bS^{2})}\] which, on the dense subspace $C(\bS^{3}_{\theta})\subset\mathpzc{E}_{\theta}$, is defined by $x\mapsto(\tau_{0}(\Psi_{n}x))_{n\in\Z}$.
\end{cor}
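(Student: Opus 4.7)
The plan is to combine the Peter-Weyl decomposition $\Lip(\bS^3_\theta)\supset\bigoplus_n^{\mathrm{alg}}\mathcal{L}_n$ of \eqref{decomp-ln} with Proposition~\ref{pr:projprop}. First I would observe that the codomain $\bigoplus_n p_n C(\bS^2)^{|n|+1}$ is well-defined as a $C^*$-submodule of $\mathcal{H}_{C(\bS^2)}$ by Corollary~\ref{cor:finitesum} together with Lemma~\ref{C*B}, and that the identity $\Psi_n^*\Psi_n=1$ makes the cb-isomorphism $\mathcal{L}_{-n}\to p_n\Lip(\bS^2)^{|n|+1}$, $x\mapsto\Psi_n x$, of Proposition~\ref{pr:projprop} into an isometry for the $\mathpzc{E}_\theta$-inner product on the left and the canonical $C(\bS^2)$-valued inner product on the right; passing to $C^*$-closures this lifts to a unitary $C^*$-module isomorphism $\overline{\mathcal{L}_{-n}}^{\mathpzc{E}_\theta}\xrightarrow{\sim} p_n C(\bS^2)^{|n|+1}$.

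Secondly, simple degree counting under the $\TT$-action generated by $T$ yields the key identity $\tau_0(\Psi_n x)=\Psi_n x_{-n}$: for homogeneous $x\in\mathcal{L}_m$ the product $\Psi_n x$ lies in $\mathcal{L}_{n+m}^{|n|+1}$ and so is annihilated by $\tau_0$ unless $m=-n$, and the statement for arbitrary $x\in C(\bS^3_\theta)$ follows by continuity of $\tau_0$, with $x_{-n}:=\int_\TT e^{int}\alpha_t(x)\,\d t\in\overline{\mathcal{L}_{-n}}$ the $(-n)$-th Fourier/Peter-Weyl coefficient. The same degree argument gives the orthogonality $\tau_0(x_m^*x_{m'})=\delta_{mm'}\,x_m^*x_m$, whence for any $x$ in the algebraic direct sum $\bigoplus_n^{\mathrm{alg}}\mathcal{L}_n$ the Plancherel-type identity
$$\tau_0(x^*x)=\sum_n x_{-n}^* x_{-n}=\sum_n(\Psi_n x_{-n})^*(\Psi_n x_{-n})$$
coincides with the $C(\bS^2)$-valued inner product of the finitely-supported sequence $(\Psi_n x_{-n})_n$ in the codomain. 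Hence the assignment $x\mapsto(\tau_0(\Psi_n x))_n$ defines a $C(\bS^2)$-linear isometry from $\bigoplus_n^{\mathrm{alg}}\mathcal{L}_n$ into $\bigoplus_n p_n C(\bS^2)^{|n|+1}$, whose image is dense: every $v\in p_{n_0}\Lip(\bS^2)^{|n_0|+1}$ is attained in slot $n_0$ by $\Psi_{n_0}^* v\in\mathcal{L}_{-n_0}$.

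The conclusion then follows by extending this isometry by density to the required unitary isomorphism $\mathpzc{E}_\theta\xrightarrow{\sim}\bigoplus_n p_n C(\bS^2)^{|n|+1}$. The step requiring the most care, which I expect to be the main obstacle, is the density of the algebraic direct sum $\bigoplus_n^{\mathrm{alg}}\mathcal{L}_n$ in $\mathpzc{E}_\theta$. I would handle this by a Fej\'er-summation argument for the isometric $\TT$-action on $C(\bS^3_\theta)$: the Ces\`aro means of the Fourier integrals $\int_\TT e^{-int}\alpha_t(x)\,\d t$ lie in the algebraic direct sum and converge to $x$ in the $C^*$-norm of $C(\bS^3_\theta)$, hence \emph{a fortiori} in the weaker $\mathpzc{E}_\theta$-norm.
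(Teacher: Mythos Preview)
Your proposal is correct and follows essentially the same route as the paper, which simply says the result follows by taking $C^*$-completions of the $\mathcal{L}_n$ in Proposition~\ref{pr:projprop} and observing that $\mathpzc{E}_\theta$ is the $C^*$-module direct sum of these completions. You have spelled out in detail precisely the ingredients the paper leaves implicit: the isometry via $\Psi_n^*\Psi_n=1$, the degree computation $\tau_0(\Psi_n x)=\Psi_n x_{-n}$, the Plancherel-type identity, and the Fej\'er density argument; none of these are mentioned explicitly in the paper's one-line proof, but they are exactly what makes that line work.
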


\begin{proof} This follows by taking $C^{*}$-completions of the $\mathcal{L}_{n}$ in the previous proposition and observing that $\mathpzc{E}_{\theta}$ is the $C^{*}$-module direct sum of these completions.
\end{proof}

We define the Lipschitz module $\mathcal{E}_{\theta}$ to be the direct sum of the Lipschitz modules $\mathcal{L}_{n}$ in the sense of Proposition~\ref{directsum}. By definition,  $\mathcal{E}_{\theta}$ is the completion of the dense subalgebra of finite sums of homogeneous elements $x\in\Lip(\bS^{3}_{\theta})$ in the norm
\begin{equation}\label{stabnorm}\|x\|_{\mathcal{E}_{\theta}}^{2}:=\|\sum_{n\in\Z} \pi_{D_{0}}(\tau(\Psi_{n}x))^{*}\pi_{D_{0}}(\tau(\Psi_{n}x))\|=\|\sum_{n\in\Z}\pi_{D_{0}}(\Psi_{n}x_{-n})^{*}\pi_{D_{0}}(\Psi_{n}x_{-n})\|\end{equation}
for $x\in \E_\theta$, where we denote by $x_{-n}\in\L_{-n}$ the component of $x$ of homogeneous degree $-n$. 
We will now analyze this norm in order to prove that multiplication in $\Lip(\bS^{3}_{\theta})$ induces a cb-homomorphism $\Lip(\bS^{3}_{\theta})\rightarrow \End^{*}_{\Lip(\bS^{2})}(\mathcal{E}_{\theta})$. 

\begin{lem}\label{derab} 
The derivatives of the generators $a,b$ of the $C^*$-algebra $C(\bS^3_\theta)$ with respect to the operator $D_{0}$ are
\[[D_{0},\pi_\theta(a)]=\begin{pmatrix} 0 & 0 \\ 2b^{*} & 0\end{pmatrix}\lambda^{H_{1}},\qquad [D_{0},\pi_\theta(b)]=\begin{pmatrix} 0 & 0 \\ -2a^{*} & 0\end{pmatrix}\lambda^{-H_{1}}.\] 
In particular they satisfy
\[\pi_\theta(a)[D_{0},\pi_\theta(a)]=[D_{0},\pi_\theta(a)]\pi_\theta(a),\qquad \pi_\theta(b)[D_{0},\pi_\theta(b)]=[D_{0},\pi_\theta(b)]\pi_\theta(b),\]
and the elements
\[\pi_\theta(b^{*})[D_{0},\pi_\theta(b)],\qquad \pi_\theta(a^{*})[D_{0},\pi_\theta(a)],\]
belong to the commutant of $\pi_\theta(C(\bS^{3}_{\theta}))$ in $\BB(\H_\theta)$.
\end{lem}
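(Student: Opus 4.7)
The computation hinges on the fact that $D_0$ commutes with the spin-lift generator $H_{1}=-\gamma^{1}-iZ_{1}$ of the $\bT$-action on $\mathcal{H}_\theta$ (the same computation that shows the original spectral triple is torus-equivariant), and therefore with the twist $\lambda^{H_{1}}$. Writing $\pi_\theta(a)=a_{\mathrm{diag}}\lambda^{H_{1}}$ and $\pi_\theta(b)=b_{\mathrm{diag}}\lambda^{-H_{1}}$, this immediately gives
\[
[D_{0},\pi_\theta(a)]=[D_{0},a_{\mathrm{diag}}]\,\lambda^{H_{1}},\qquad [D_{0},\pi_\theta(b)]=[D_{0},b_{\mathrm{diag}}]\,\lambda^{-H_{1}},
\]
so the first assertion reduces to a purely classical calculation. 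Rewriting $D_{0}=Z_{+}\sigma^{+}+Z_{-}\sigma^{-}+\tfrac12$ with $Z_{\pm}=\pm Z_{2}+iZ_{3}$, and using the standard identities $Z_{+}(a)=Z_{+}(b)=0$, $Z_{-}(a)=2b^{*}$, $Z_{-}(b)=-2a^{*}$ for the matrix coefficients of $\SU(2)$, one reads off the stated explicit formul{\ae}.

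For the self-commutation identities, the key structural input is that $a$, $b^{*}$, $\sigma^{-}$ carry $H_{1}$-degrees $+1$, $-1$, $+2$ respectively, the last one originating in the anti-commutation $\{\gamma^{1},\sigma^{-}\}=0$. Consequently
\[
\lambda^{H_{1}}a=\lambda\, a\,\lambda^{H_{1}},\qquad \lambda^{H_{1}}b^{*}=\lambda^{-1}\,b^{*}\,\lambda^{H_{1}},\qquad \lambda^{H_{1}}\sigma^{-}=\lambda^{2}\sigma^{-}\lambda^{H_{1}},
\]
and a short shuffle of these factors, combined with the classical commutativity $ab^{*}=b^{*}a$, shows that both $\pi_\theta(a)[D_{0},\pi_\theta(a)]$ and $[D_{0},\pi_\theta(a)]\pi_\theta(a)$ reduce to the common expression $2\lambda\, ab^{*}\sigma^{-}\lambda^{2H_{1}}$. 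The computation for $b$ is entirely analogous.

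The commutant statement follows from the same bookkeeping: the exponents of $\lambda^{\pm H_{1}}$ in the product $\pi_\theta(a^{*})[D_{0},\pi_\theta(a)]$ cancel exactly, and one finds
\[
\pi_\theta(a^{*})[D_{0},\pi_\theta(a)]=2a^{*}b^{*}\sigma^{-},\qquad \pi_\theta(b^{*})[D_{0},\pi_\theta(b)]=-2a^{*}b^{*}\sigma^{-}.
\]
Since $a^{*}b^{*}$ has $H_{1}$-degree $-2$ and $\sigma^{-}$ has $H_{1}$-degree $+2$, the product $a^{*}b^{*}\sigma^{-}$ has total $H_{1}$-degree zero, so it commutes with every $\lambda^{\pm H_{1}}$; combined with the facts that $\sigma^{-}$ commutes with scalar functions and that $a^{*}b^{*}$ commutes classically with each of $a,b,a^{*},b^{*}$, this proves commutation with $\pi_\theta(c)=c_{\mathrm{diag}}\lambda^{\pm H_{1}}$ on generators, and hence on all of $\pi_\theta(C(\bS^{3}_{\theta}))$ by continuity and density.

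The only real subtlety is keeping track of all the phases arising from conjugation by $\lambda^{H_{1}}$; the crucial observation making everything balance is the identification of $\sigma^{-}$ as an operator of $H_{1}$-degree $+2$ via the anti-commutation $\gamma^{1}\sigma^{-}=-\sigma^{-}\gamma^{1}$.
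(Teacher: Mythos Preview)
Your proposal is correct and follows essentially the same route as the paper: reduce the explicit formul{\ae} to the classical commutators via $[D_0,\lambda^{H_1}]=0$, and deduce the commutant statement from a degree-zero argument. The paper's proof is much terser than yours --- it simply asserts the classical commutators, invokes $\LL([D_0,\pi(a)])=[D_0,\LL(\pi(a))]$, and then says the two products have bidegree $(0,0)$ under the full $\ZZ^2$-grading and therefore commute with $\pi_\theta(C(\bS^3_\theta))$. In particular, the paper does not spell out the self-commutation identities $\pi_\theta(a)[D_0,\pi_\theta(a)]=[D_0,\pi_\theta(a)]\pi_\theta(a)$ at all, whereas you verify them explicitly by tracking $H_1$-degrees, and your commutant argument (via $H_1$-degree zero plus classical commutativity of $a^*b^*$ and scalar-commutation of $\sigma^-$) is a more hands-on version of the paper's bidegree-$(0,0)$ observation. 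Both arguments are valid; yours is more self-contained, while the paper's relies on the reader unpacking what bidegree $(0,0)$ implies about the twisted product.
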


\begin{proof} For the generators of the classical algebra $C(\bS^3)$ one computes rather easily that
$$ 
[D_{0},\pi(a)]=\begin{pmatrix} 0 & 0 \\ 2b^{*} & 0\end{pmatrix}, \qquad [D_{0},\pi(b)]=\begin{pmatrix} 0 & 0 \\ -2a^{*} & 0\end{pmatrix}.
$$
The first claim follows directly from the fact that $\LL([D_{0},\pi(a)])=[D_{0},\LL(\pi(a))]$ and similarly so for the generator $b$. It follows immediately that the elements
\[\pi_\theta(b^{*})[D_{0},\pi_\theta(b)],\qquad \pi_\theta(a^{*})[D_{0},\pi_\theta(a)],\]
have bidegree $(0,0)$ and therefore commute with all of $\pi_\theta(C(\bS^{3}_{\theta})).$
\end{proof}

Since no confusion will arise, from now on we shall omit the subscript $\theta$ from the representation $\pi_\theta:C(\bS^3_\theta)\to \BB(\H_\theta)$. From the latter result it follows that, for each $k=0,1,\ldots,|n|$, the commutator $ [D_{0},\pi(\Psi_{n,k})] $ is a lower triangular matrix when $n$ is positive and upper triangular when $n$ is negative. For positive $n$ we have
\[ [D_{0},\pi(\Psi_{n,k})]=\begin{pmatrix} n \\ k\end{pmatrix}^{\frac{1}{2}}\left( (n-k)[D_{0},\pi(a)]\pi(a^{n-k-1}b^{k})+k\pi(a^{n-k}b^{k-1})[D_{0},\pi(b)]\right)\]
for each $k=0,1,\ldots,n$, with a similar formula for negative $n$. For positive $n$, each component of the matrix $[D,\pi(\Psi_{n})]^{*}[D,\pi(\Psi_{n})]$ is non-zero only in the upper left corner, whereas for negative $n$ it is non-zero only in the lower right corner. 

From now on we assume $n>1$, as the calculations are similar for $n<-1$. The cases $n=-1,0,1$ are trivial and can be done by hand. A straightforward calculation using the above lemma shows that the only non-zero entry of the $k$th component of $[D_{0},\pi(\Psi_{n})]^{*}[D_{0},\pi(\Psi_{n})]$ is equal to
\begin{align}\label{derivative}
4\sum_{k=0}^{n}\begin{pmatrix} n \\ k\end{pmatrix}&\left( (n-k)^{2}|a|^{2(n-k-1)}|b|^{2(k+1)}+\right.
\\
&\left.+k^{2}|a|^{2(n-k+1)}|b|^{2(k-1)}-2k(n-k)|a|^{2(n-k)}|b|^{2k}\right) \nonumber 
\\
=4\sum_{k=0}^{n}\begin{pmatrix} n \\ k\end{pmatrix}&\left( \left((n-k)+((n-k)^{2}-(n-k))\right)|a|^{2(n-k-1)}|b|^{2(k+1)}+\right. \nonumber
\\
&\left.+\left(k+(k^{2}-k)\right)|a|^{2(n-k+1)}|b|^{2(k-1)}-2k(n-k)|a|^{2(n-k)}|b|^{2k}\right). \nonumber 
\end{align}

\begin{lem}\label{binom} For $n,k\geq 1$, the binomial coefficients $ \begin{pmatrix} n \\ k\end{pmatrix}$ satisfy the identities
$$
k\begin{pmatrix} n \\ k\end{pmatrix}=n\begin{pmatrix} n-1 \\ k-1\end{pmatrix}, \qquad
(n-k)\begin{pmatrix} n \\ k\end{pmatrix}=n\begin{pmatrix} n-1 \\ k\end{pmatrix}.$$
\end{lem}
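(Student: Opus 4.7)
The plan is to prove both identities by direct calculation from the factorial definition $\binom{n}{k} = \frac{n!}{k!(n-k)!}$, valid for integers $0 \le k \le n$. This is an elementary combinatorial identity with no real content beyond cancellation of factorials, so no clever argument is needed; the only ``obstacle'' is bookkeeping the ranges of indices (which is why the statement restricts to $n,k \ge 1$, ensuring $(k-1)!$ makes sense in the first identity, and implicitly requiring $k \le n$ for the second identity to involve a nonzero right-hand side).

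For the first identity, I would write
\[
k \binom{n}{k} \;=\; \frac{k \cdot n!}{k!\,(n-k)!} \;=\; \frac{n!}{(k-1)!\,(n-k)!} \;=\; \frac{n \cdot (n-1)!}{(k-1)!\,((n-1)-(k-1))!} \;=\; n \binom{n-1}{k-1}.
\]
For the second identity the computation is symmetric:
\[
(n-k)\binom{n}{k} \;=\; \frac{(n-k)\cdot n!}{k!\,(n-k)!} \;=\; \frac{n!}{k!\,(n-k-1)!} \;=\; \frac{n\cdot (n-1)!}{k!\,((n-1)-k)!} \;=\; n\binom{n-1}{k}.
\]
Both identities can alternatively be read off from a combinatorial argument (choosing a $k$-subset of an $n$-set together with a distinguished element, from either inside or outside the subset), but since the lemma is being invoked purely to simplify the sum in equation \eqref{derivative}, the direct factorial computation is the cleanest route.
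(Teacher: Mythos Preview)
Your proof is correct and takes essentially the same approach as the paper: direct computation from the factorial formula. The only cosmetic difference is that for the second identity the paper appeals to the symmetry $\binom{n}{k}=\binom{n}{n-k}$ to reduce it to the first, whereas you compute it independently; either route is fine.
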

 
\begin{proof}These are verified by direct computation. For the first identity one uses the fact that $\begin{pmatrix} n \\ k\end{pmatrix}=\frac{n}{k}\begin{pmatrix} n-1 \\ k-1\end{pmatrix}$.
For the second claim one combines this with the fact that $\begin{pmatrix} n \\ k\end{pmatrix}=\begin{pmatrix} n \\ n-k\end{pmatrix}$.
\end{proof}

\begin{cor}\label{co:comb}For $n\geq 2$, we have
\begin{enumerate}[(i)]\item for $2\leq k\leq n$ : $k(k-1)\begin{pmatrix} n \\ k\end{pmatrix}=n(n-1)\begin{pmatrix} n-2 \\ k-2\end{pmatrix};$
\item for $1\leq k\leq n-1$ :  $k(n-k)\begin{pmatrix} n \\ k\end{pmatrix}=n(n-1)\begin{pmatrix} n-2 \\ k-1\end{pmatrix};$
\item for $0\leq k \leq n-2$ : $(n-k)(n-k-1)\begin{pmatrix} n \\ k\end{pmatrix}=n(n-1)\begin{pmatrix} n-2 \\ k\end{pmatrix}$.
\end{enumerate}
\end{cor}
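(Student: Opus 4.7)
The plan is to derive each of the three identities as a direct double application of the two identities established in Lemma~\ref{binom}, namely
\[
k\binom{n}{k} = n\binom{n-1}{k-1}, \qquad (n-k)\binom{n}{k} = n\binom{n-1}{k}.
\]
All three statements of the corollary have the same flavour: one pulls out a factor of $n$ (or $n-k$, or $k$) from $\binom{n}{k}$ using one of these identities to drop the upper index by one, and then repeats the manoeuvre on the resulting $\binom{n-1}{\cdot}$ to drop it by one more. No combinatorial cleverness is required beyond bookkeeping of the indices.

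For part (i), I would write $k(k-1)\binom{n}{k} = (k-1)\bigl(k\binom{n}{k}\bigr) = (k-1)\,n\binom{n-1}{k-1}$, and then apply the first identity of Lemma~\ref{binom} again, this time to $\binom{n-1}{k-1}$, to obtain $n(n-1)\binom{n-2}{k-2}$. The hypothesis $k\geq 2$ is exactly what is needed to legitimately apply the identity the second time (the index $k-1$ must be $\geq 1$), and $k\leq n$ guarantees that $\binom{n}{k}$ is nonzero so that the equation is nontrivial. Part (iii) is completely analogous with the roles of the two identities of Lemma~\ref{binom} interchanged: apply the second identity twice, giving $(n-k)(n-k-1)\binom{n}{k} = (n-k-1)\,n\binom{n-1}{k} = n(n-1)\binom{n-2}{k}$, with the range $0\leq k\leq n-2$ ensuring the second step makes sense.

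For part (ii), which mixes the two factors, I would first apply one of the Lemma identities and then the other: for instance $k(n-k)\binom{n}{k} = (n-k)\bigl(k\binom{n}{k}\bigr) = (n-k)\,n\binom{n-1}{k-1}$, and then use the second identity of Lemma~\ref{binom} in the form $((n-1)-(k-1))\binom{n-1}{k-1} = (n-1)\binom{n-2}{k-1}$ to conclude $k(n-k)\binom{n}{k} = n(n-1)\binom{n-2}{k-1}$. The index constraints $1\leq k\leq n-1$ are precisely what is needed for both steps to apply.

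There is no real obstacle here; the only point requiring minor care is tracking the index ranges so that each application of Lemma~\ref{binom} is valid, but the stated hypotheses in (i), (ii), (iii) are tailored exactly to this. The whole corollary is thus a short bookkeeping exercise and could be dispatched in a few lines.
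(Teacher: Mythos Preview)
Your proposal is correct and is precisely the paper's approach: the paper's proof reads in its entirety ``These identities follow from applying the previous lemma twice,'' which is exactly the double application of Lemma~\ref{binom} that you spell out.
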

\begin{proof}These identities follow from applying the previous lemma twice.
\end{proof}

We will now split the expression \eqref{derivative} into four parts:
\begin{equation}\label{split1a} 
4(n|a|^{2}|b|^{2(n-1)}+n|a|^{2(n-1)}|b|^{2});
\end{equation}
\begin{equation}\label{split1b}
4\sum_{k=1}^{n-1}n\begin{pmatrix} n-1 \\ k\end{pmatrix}|a|^{2(n-k-1)}|b|^{2(k+1)}+n\begin{pmatrix} n-1 \\ k-1\end{pmatrix}|a|^{2(n-k+1)}|b|^{2(k-1)};
\end{equation}
\begin{equation}\label{split2}
4\sum_{k=0}^{n}\begin{pmatrix} n \\ k\end{pmatrix}\left(((n-k)^{2}-(n-k))|a|^{2(n-k-1)}|b|^{2(k+1)}+(k^{2}-k)|a|^{2(n-k+1)}|b|^{2(k-1)}\right);
\end{equation}
\begin{equation}\label{split3}
4\sum_{k=1}^{n-1}-2k(n-k)\begin{pmatrix} n \\ k\end{pmatrix}|a|^{2(n-k)}|b|^{2k}.\end{equation}
By applying Lemma~\ref{binom} to the expression \eqref{split1b}, it is straightforward to check that \eqref{split1a} and \eqref{split1b} precisely cancel some of the terms in eq.~\eqref{split2} and so together these four terms add up to give the expression \eqref{derivative}. We claim that equations \eqref{split1a} and \eqref{split1b} add up to $4n$, whereas \eqref{split2} and \eqref{split3} add up to zero.

\begin{lem} \label{n}Equations \eqref{split1a} and \eqref{split1b} add up to $4n$.
\end{lem}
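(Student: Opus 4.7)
The plan is a direct combinatorial rearrangement using Pascal's identity together with the sphere relation $|a|^{2}+|b|^{2}=1$. First I would split the two summands inside \eqref{split1b} and reindex each one so that both sums are expressed in powers of $|a|^{2}$ and $|b|^{2}$ with a common exponent convention $|a|^{2(n-j)}|b|^{2j}$. Concretely, substituting $j=k+1$ in the first piece and $j=k-1$ in the second gives
\[
\eqref{split1b}=4n\!\left(\sum_{j=2}^{n}\binom{n-1}{j-1}|a|^{2(n-j)}|b|^{2j}+\sum_{j=0}^{n-2}\binom{n-1}{j}|a|^{2(n-j)}|b|^{2j}\right).
\]

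The second step is to compare the index ranges of the two resulting sums. They agree for $2\leq j\leq n-2$, where Pascal's rule $\binom{n-1}{j-1}+\binom{n-1}{j}=\binom{n}{j}$ combines the two contributions into $4n\binom{n}{j}|a|^{2(n-j)}|b|^{2j}$. The only nonmatching indices are $j\in\{0,n\}$ (appearing in exactly one of the two sums, with binomial coefficient $1=\binom{n}{0}=\binom{n}{n}$), and $j\in\{1,n-1\}$ (appearing with coefficient $\binom{n-1}{1}=n-1$, respectively $\binom{n-1}{n-2}=n-1$, whereas the corresponding full coefficients $\binom{n}{1}=\binom{n}{n-1}=n$ would require an extra $1$ at each of these two places). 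Precisely these two missing unit contributions are supplied by the two summands of \eqref{split1a}: the term $4n|a|^{2(n-1)}|b|^{2}$ supplies the missing $j=1$ piece and the term $4n|a|^{2}|b|^{2(n-1)}$ supplies the missing $j=n-1$ piece.

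Combining everything, \eqref{split1a}+\eqref{split1b} equals
\[
4n\sum_{j=0}^{n}\binom{n}{j}|a|^{2(n-j)}|b|^{2j}.
\]
Finally I would invoke the fact that $|a|^{2}=\cos^{2}\chi$ and $|b|^{2}=\sin^{2}\chi$ are central, commuting, positive elements of $C(\bS^{3}_{\theta})$ satisfying $|a|^{2}+|b|^{2}=1$. The binomial theorem is therefore applicable and yields $(|a|^{2}+|b|^{2})^{n}=1$, giving the desired value $4n$.

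There is no real obstacle beyond careful bookkeeping of boundary indices; the commutativity of $|a|^{2}$ and $|b|^{2}$ (needed to legitimately apply the binomial theorem in the noncommutative algebra $C(\bS^{3}_{\theta})$) is the only nontrivial point, and it is immediate from the polar presentation of the generators.
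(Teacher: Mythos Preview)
Your proof is correct and follows essentially the same strategy as the paper: both reduce the expression to $4n(|a|^{2}+|b|^{2})^{n}=4n$ via elementary binomial manipulations. The only cosmetic difference is that the paper pairs each of the two terms in \eqref{split1a} with one of the two sums in \eqref{split1b} and factors out $|a|^{2}$ (respectively $|b|^{2}$) to obtain $n|a|^{2}(|a|^{2}+|b|^{2})^{n-1}+n|b|^{2}(|a|^{2}+|b|^{2})^{n-1}$ directly, whereas you reindex to a common exponent and invoke Pascal's identity; both routes are equivalent and equally short.
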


\begin{proof} Omitting the constant $4$, we observe that
\[\begin{split}n|a|^{2}|b|^{2(n-1)}+\sum_{k=1}^{n-1}n\begin{pmatrix} n-1 \\ k-1\end{pmatrix}|a|^{2(n-k+1)}|b|^{2(k-1)}&=n|a|^{2}\sum_{k=1}^{n}\begin{pmatrix} n-1 \\ k-1\end{pmatrix}|a|^{2(n-k)}|b|^{2(k-1)};\\
n|a|^{2(n-1)}|b|^{2}+\sum_{k=1}^{n-1}n\begin{pmatrix} n-1 \\ k\end{pmatrix}|a|^{2(n-k-1)}|b|^{2(k+1)}&=n|b|^{2}\sum_{k=1}^{n}\begin{pmatrix} n-1 \\ k-1\end{pmatrix}|a|^{2(n-k)}|b|^{2(k-1)}.
\end{split}\]
Adding these equations together yields
\[n(|a|^{2}+|b|^{2})\sum_{k=1}^{n}\begin{pmatrix} n-1 \\ k-1\end{pmatrix}|a|^{2(n-k)}|b|^{2(k-1)}=n(|a|^{2}+|b|^{2})^{n}=n,\]
as was required.\end{proof} 

\begin{lem} \label{Rn}
Equations \eqref{split2} and \eqref{split3} add up to zero. 
\end{lem}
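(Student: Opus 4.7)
The plan is to use Corollary \ref{co:comb} to rewrite every binomial coefficient in \eqref{split2} and \eqref{split3} in terms of $\binom{n-2}{\cdot}$, then reindex so that the two resulting sums cancel term-by-term. Everything reduces to careful combinatorial bookkeeping; there is no real analytic content.

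First I would observe that $(n-k)^{2}-(n-k)=(n-k)(n-k-1)$ vanishes unless $k\leq n-2$, and $k^{2}-k=k(k-1)$ vanishes unless $k\geq 2$. Hence, applying parts (iii) and (i) of Corollary \ref{co:comb}, equation \eqref{split2} becomes
\[
4n(n-1)\sum_{k=0}^{n-2}\binom{n-2}{k}|a|^{2(n-k-1)}|b|^{2(k+1)}
+4n(n-1)\sum_{k=2}^{n}\binom{n-2}{k-2}|a|^{2(n-k+1)}|b|^{2(k-1)}.
\]
In the first sum I would substitute $j=k+1$ (so $j$ runs from $1$ to $n-1$), and in the second sum I would substitute $j=k-1$ (so $j$ again runs from $1$ to $n-1$). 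Both substitutions produce the same summand $\binom{n-2}{j-1}|a|^{2(n-j)}|b|^{2j}$, so \eqref{split2} reduces to
\[
8n(n-1)\sum_{j=1}^{n-1}\binom{n-2}{j-1}|a|^{2(n-j)}|b|^{2j}.
\]

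Next, applying part (ii) of Corollary \ref{co:comb} to \eqref{split3}, I would rewrite
\[
-8\sum_{k=1}^{n-1}k(n-k)\binom{n}{k}|a|^{2(n-k)}|b|^{2k}
=-8n(n-1)\sum_{k=1}^{n-1}\binom{n-2}{k-1}|a|^{2(n-k)}|b|^{2k},
\]
which is exactly the negative of the expression obtained for \eqref{split2}. Adding the two therefore gives zero.

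The only step that requires a little care is verifying that the index ranges line up correctly after the two reindexings in \eqref{split2}; once that is established, the identity is immediate. Since $n\geq 2$ is assumed, all the binomial identities in Corollary \ref{co:comb} apply with the stated ranges of $k$, and no boundary terms are left over.
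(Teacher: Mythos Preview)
Your argument is correct and follows essentially the same route as the paper: both proofs apply Corollary~\ref{co:comb} to convert the three binomial coefficients in \eqref{split2} and \eqref{split3} into $\binom{n-2}{\cdot}$, then reindex so the two sums in \eqref{split2} coincide and together cancel \eqref{split3}. Your version is in fact slightly more explicit about the vanishing of the boundary terms and about which part of the corollary is invoked at each step.
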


\begin{proof} Again omitting the constant $4$, eq.~\eqref{split3} equals 
\[\sum_{k=1}^{n-1}-2k(n-k)\begin{pmatrix} n \\ k\end{pmatrix}|a|^{2(n-k)}|b|^{2k}=\sum_{k=1}^{n-1}-2n(n-1)\begin{pmatrix} n-2 \\ k-1\end{pmatrix}|a|^{2(n-k)}|b|^{2k},\]
whereas \eqref{split2} may be rewritten using Corollary~\ref{co:comb} as 
\begin{equation}\label{cancel} n(n-1)\left(\sum_{k=0}^{n-2}\begin{pmatrix} n-2 \\ k\end{pmatrix}|a|^{2(n-k-1)}|b|^{2(k+1)}+\sum_{k=2}^{n}\begin{pmatrix} n-2 \\ k-2\end{pmatrix}|a|^{2(n-k+1)}|b|^{2(k-1)}\right).\end{equation}
Now 
\[\sum_{k=2}^{n}\begin{pmatrix} n-2 \\ k-2\end{pmatrix}|a|^{2(n-k+1)}|b|^{2(k-1)}=\sum_{k=1}^{n-1}\begin{pmatrix} n-2 \\ k-1\end{pmatrix}|a|^{2(n-k)}|b|^{2k},\]
and
\[\sum_{k=0}^{n-2}\begin{pmatrix} n-2 \\ k\end{pmatrix}|a|^{2(n-k-1)}|b|^{2(k+1)}=\sum_{k=1}^{n-1}\begin{pmatrix} n-2 \\ k-1\end{pmatrix}|a|^{2(n-k)}|b|^{2k},\]
so \eqref{cancel} and \eqref{split3} cancel one another.\end{proof}

\begin{prop}\label{diffes} We have the following operator identities:
\begin{align*} 
[D_{0},\pi(\Psi_{n})]^{*}[D_{0},\pi(\Psi_{n})]&=\begin{pmatrix}4n &0 \\ 0 & 0\end{pmatrix} \qquad (n>0); \\
[D_{0},\pi(\Psi_{n})]^{*}[D_{0},\pi(\Psi_{n})]&=\begin{pmatrix}0 &0 \\ 0 & 4|n|\end{pmatrix} \qquad (n<0).
\end{align*}
\end{prop}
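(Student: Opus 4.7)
The proof essentially reduces to assembling the combinatorial identities already established in Lemmas \ref{binom}, \ref{n}, \ref{Rn} and Corollary \ref{co:comb}, together with the structural observation made about the triangularity of the commutators $[D_0,\pi(\Psi_{n,k})]$. My plan is as follows.

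First, I would treat the case $n > 0$ in detail, since $n < 0$ will follow by the symmetric computation with $a,b$ replaced by $a^*,b^*$ (and the roles of upper-left and lower-right entries interchanged). By Lemma \ref{derab}, each commutator $[D_0,\pi(a)]$ and $[D_0,\pi(b)]$ is strictly lower triangular as a $2\times 2$ matrix, and moreover $\pi(a^*)[D_0,\pi(a)]$, $\pi(b^*)[D_0,\pi(b)]$ lie in the commutant of $\pi(C(\bS^3_\theta))$. A direct Leibniz expansion of $[D_0,\pi(\Psi_{n,k})]$ using the explicit formula for $\Psi_{n,k}$ therefore yields a strictly lower triangular matrix whose only potentially nonzero entry sits in the lower-left corner; consequently $[D_0,\pi(\Psi_{n,k})]^*[D_0,\pi(\Psi_{n,k})]$ has its unique nonzero entry in the upper-left corner. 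Summing over $k$ gives \eqref{derivative}.

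Second, I would split the resulting scalar (i.e.\ the upper-left entry) into the four terms \eqref{split1a}--\eqref{split3}, exactly as in the excerpt. One checks directly, using Lemma \ref{binom}, that \eqref{split1a} together with \eqref{split1b} equals $4n(|a|^2+|b|^2)^n = 4n$ by Lemma \ref{n}. Similarly, using Corollary \ref{co:comb}, the cross-term \eqref{split3} exactly cancels the ``square minus linear'' piece \eqref{split2} by Lemma \ref{Rn}. Adding these contributions yields the scalar value $4n$ in the upper-left corner, proving the first identity.

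Third, for $n < 0$ I would observe that the generators $\Psi_{-n,k}$ are obtained from the $\Psi_{n,k}$ by the substitution $(a,b) \mapsto (a^*,b^*)$, which by Lemma~\ref{derab} swaps upper and lower triangularity of $[D_0,\pi(a)],[D_0,\pi(b)]$. The same combinatorial computation then produces the value $4|n|$ but located in the lower-right corner. The main (essentially the only) obstacle is a bookkeeping one: ensuring that the ``triangularity'' argument is applied correctly so that the many product terms arising from the Leibniz rule really do collapse onto a single matrix entry, so that the combinatorial identities of Lemmas \ref{n} and \ref{Rn} can be invoked at the level of scalars rather than matrices. The small cases $n = -1, 0, 1$ should be disposed of by direct inspection before applying this general scheme.
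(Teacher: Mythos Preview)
Your proposal is correct and follows essentially the same approach as the paper: the paper's proof reduces the $n\geq 2$ case to eq.~\eqref{derivative} and then invokes Lemmata~\ref{n} and \ref{Rn}, handles $n=1$ by direct inspection, and declares $n<0$ similar. Your write-up is simply a more explicit unpacking of that same argument, including the triangularity observation from Lemma~\ref{derab} that underlies eq.~\eqref{derivative}.
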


\begin{proof} For $n=1$ one checks directly that $[D_{0},\pi(\Psi_{n})]^{*}[D_{0},\pi(\Psi_{n})]=\textup{diag}(4,0)$. 
For $n\geq 2$ the required equality follows from eq.~\eqref{derivative} as a direct consequence of Lemmata~\ref{n} and \ref{Rn}. The case where $n<0$ is similar. 
\end{proof}

\begin{lem} \label{miracle}
 We have $\pi(\Psi_{n}^{*})[D_{0},\pi(\Psi_{n})]=0$.
\end{lem}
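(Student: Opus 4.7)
The plan is to exploit the partial-isometry relation $\Psi_n^* \Psi_n = 1$ together with a sign/triangularity argument. First, applying the graded Leibniz rule to the constant identity gives
\[
0 = [D_0, \Psi_n^* \Psi_n] = [D_0, \pi(\Psi_n^*)]\pi(\Psi_n) + \pi(\Psi_n^*)[D_0, \pi(\Psi_n)],
\]
so $\pi(\Psi_n^*)[D_0, \pi(\Psi_n)] = -[D_0, \pi(\Psi_n^*)]\pi(\Psi_n)$.

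Next I would argue that this element is self-adjoint. Since $D_0$ is self-adjoint and every entry $\Psi_{n,k}$ of $\Psi_n$ is an even element acting by an even operator, for each $k$ one has $[D_0, \pi(\Psi_{n,k})]^* = -[D_0, \pi(\Psi_{n,k}^*)]$. Taking adjoints entrywise of the identity above,
\[
\bigl(\pi(\Psi_n^*)[D_0, \pi(\Psi_n)]\bigr)^* = [D_0,\pi(\Psi_n)]^*\pi(\Psi_n) = -[D_0,\pi(\Psi_n^*)]\pi(\Psi_n) = \pi(\Psi_n^*)[D_0, \pi(\Psi_n)],
\]
so $\pi(\Psi_n^*)[D_0, \pi(\Psi_n)]$ is self-adjoint as a $2\times 2$ matrix over $\BB(L^2(\bS^3))$.

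The final step is to exhibit the off-diagonal structure. By Lemma~\ref{derab}, the commutators $[D_0,\pi(a)]$ and $[D_0,\pi(b)]$ are both strictly lower-triangular $2\times 2$ matrices (after absorbing the diagonal twists $\lambda^{\pm H_1}$), whereas $\pi(a),\pi(b),\pi(a^*),\pi(b^*)$ are all diagonal. For $n>0$, the element $\Psi_{n,k}=c_k a^{n-k}b^k$ is a product of diagonal operators, and by the Leibniz rule $[D_0,\pi(\Psi_{n,k})]$ is a sum of products in which exactly one factor is one of $[D_0,\pi(a)]$ or $[D_0,\pi(b)]$ and the remaining factors are diagonal. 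Since the product of a strictly lower-triangular matrix with diagonal matrices is again strictly lower-triangular, each $[D_0,\pi(\Psi_{n,k})]$ is of the form $\begin{pmatrix} 0 & 0 \\ * & 0 \end{pmatrix}$. Multiplying on the left by the diagonal row vector $\pi(\Psi_n^*)$ preserves this shape, so $\pi(\Psi_n^*)[D_0,\pi(\Psi_n)]$ is strictly lower-triangular. A strictly lower-triangular matrix that equals its own adjoint must vanish, which gives the claim. The case $n<0$ is handled identically, replacing ``lower-triangular'' by ``upper-triangular'' throughout since $[D_0,\pi(a^*)]$ and $[D_0,\pi(b^*)]$ are strictly upper-triangular. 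The main (very mild) obstacle is simply bookkeeping the triangular shape of $[D_0,\pi(\Psi_{n,k})]$ under the Leibniz expansion, which the commutation identities of Lemma~\ref{derab} make straightforward.
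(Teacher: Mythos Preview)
Your argument is correct and genuinely different from the paper's. The paper proceeds by brute force: having noted (just before the lemma) that the matrix is lower-triangular, it writes out the $(2,1)$ entry explicitly as a binomial sum in $|a|^2,|b|^2$ and uses the identities of Lemma~\ref{binom} to show that the two halves of that sum cancel. Your route is conceptual: you combine the Leibniz identity applied to $\Psi_n^*\Psi_n=1$ with the adjoint relation $[D_0,\pi(\Psi_{n,k})]^*=-[D_0,\pi(\Psi_{n,k}^*)]$ to see that $\pi(\Psi_n^*)[D_0,\pi(\Psi_n)]$ is self-adjoint, and then observe that it is strictly lower- (resp.\ upper-) triangular by Lemma~\ref{derab}, forcing it to vanish. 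This is shorter and avoids all combinatorics; the paper's explicit computation, by contrast, is in the same spirit as the neighbouring calculation for Proposition~\ref{diffes}, where the analogous quantity is nonzero and one actually needs its value, so the methods there cannot be replaced by a symmetry argument.
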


\begin{proof}From Lemma~\ref{derab} we already know that the matrix $\pi(\Psi_{n}^{*})[D_{0},\pi(\Psi_{n})]$ is lower triangular. Using Lemmata~\ref{binom} and \ref{derab} we directly compute the lower left entry of this matrix to be
\[\begin{split}
\left(\pi(\Psi_{n}^{*})[D_{0},\pi(\Psi_{n})]\right)_{21} &=2a^{*}b^{*}\sum_{k=0}^{n}\begin{pmatrix}n \\ k\end{pmatrix}k|a|^{2(k-1)}|b|^{2(n-k)}a^{*}b^{*}-(n-k)|a|^{2k}|b|^{2(n-k-1)} \\
&=2a^{*}b^{*}\sum_{k=1}^{n}k\begin{pmatrix}n \\ k\end{pmatrix}|a|^{2(k-1)}|b|^{2(n-k)} \\
& \qquad\qquad\qquad\qquad\qquad-\sum_{k=0}^{n-1}(n-k)\begin{pmatrix}n \\ k\end{pmatrix}|a|^{2k}|b|^{2(n-k-1)}\\
&=2a^{*}b^{*}\sum_{k=0}^{n-1}n\begin{pmatrix}n-1 \\ k\end{pmatrix}|a|^{2k}|b|^{2(n-1-k)} \\
& \qquad\qquad\qquad\qquad\qquad -\sum_{k=0}^{n-1}n\begin{pmatrix}n-1 \\ k\end{pmatrix}|a|^{2k}|b|^{2(n-k-1)}\\
&=2n(|a|^{2}+|b|^{2})^{n-1}a^{*}b^{*}-2n(|a|^{2}+|b|^{2})^{n-1}a^{*}b^{*}=0,
\end{split}\]
which is the result we were looking for.\end{proof}

\begin{lem}\label{lem:contractive2} The maps $\Lip(\bS^{3}_{\theta})\rightarrow\M_2(\mathbb{B}(\mathcal{H}_{\theta}))$ defined by 
$$
\pi_{D}(a)\mapsto \pi_{D_{0}}(a),\qquad \pi_{D}(a)\mapsto \pi_{T}(a),
$$ 
are completely bounded.
\end{lem}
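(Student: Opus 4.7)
The idea is to mimic the averaging trick used in Lemma~\ref{lem:contractive} for the noncommutative torus, now exploiting the Clifford structure of the spin representation on $\H_\theta = L^2(\bS^3_\theta) \otimes \C^2$. Recall the decomposition $D = T\gamma^1 + D_0$, where $T = iZ_1$ (which acts trivially on the Clifford factor) and $D_0 = iZ_2\gamma^2 + iZ_3\gamma^3 + \tfrac{3}{2}$. Since $T$ involves no gamma matrices, it commutes with $\gamma^1$; on the other hand, $\gamma^1\gamma^j\gamma^1 = -\gamma^j$ for $j=2,3$, so conjugation by $\gamma^1$ preserves $T\gamma^1$ but negates $D_0 - \tfrac{3}{2}$. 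The scalar shift drops out of commutators with $\pi(a)$, yielding the identities
\[
[D,\pi(a)] + \gamma^1[D,\pi(a)]\gamma^1 = 2[T,\pi(a)]\gamma^1, \qquad
[D,\pi(a)] - \gamma^1[D,\pi(a)]\gamma^1 = 2[D_0,\pi(a)].
\]

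Set $u := \gamma^1 \otimes I_2 \in \M_2(\BB(\H_\theta))$, a self-adjoint unitary. Because the action of $C(\bS^3_\theta)$ on $\H_\theta$ is diagonal on the Clifford factor, $\gamma^1$ commutes with $\pi(a)$ for every $a$, so $u$ commutes with the diagonal block of $\pi_D(a)$. A direct computation using the identities above then gives
\[
\tfrac{1}{2}\bigl(\pi_D(a) + u\,\pi_D(a)\,u\bigr)
= \begin{pmatrix} \pi(a) & 0 \\ [T,\pi(a)]\gamma^1 & \pi(a)\end{pmatrix} =: \pi_{T\gamma^1}(a),
\qquad
\tfrac{1}{2}\bigl(\pi_D(a) - u\,\pi_D(a)\,u\bigr)
= \begin{pmatrix} 0 & 0 \\ [D_0,\pi(a)] & 0 \end{pmatrix},
\]
so $\pi_{D_0}(a) = \mathrm{diag}(\pi(a),\pi(a)) + \tfrac{1}{2}(\pi_D(a) - u\,\pi_D(a)\,u)$. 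Using that the diagonal $\mathrm{diag}(\pi(a),\pi(a))$ is a compression of $\pi_D(a)$, and hence is norm-dominated by it, one obtains $\|\pi_{D_0}(a)\| \leq 2\|\pi_D(a)\|$.

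To handle $\pi_T$, note that $[T,\pi(a)]$ commutes with $\gamma^1$, so $\pi_{T\gamma^1}(a) = W\pi_T(a)W^{-1}$ for the unitary $W = \mathrm{diag}(1,\gamma^1) \in \M_2(\BB(\H_\theta))$. Hence $\|\pi_T(a)\| = \|\pi_{T\gamma^1}(a)\| \le \|\pi_D(a)\|$. Complete boundedness now follows immediately: every step above is either a unitary conjugation by an element of $\M_2(\BB(\H_\theta))$ (which is a complete isometry upon amplification $I_n\otimes(\cdot)$) or a compression; the same chain of bounds applied to a matrix $[a_{ij}] \in \M_n(\Lip(\bS^3_\theta))$ yields $\|[\pi_{D_0}(a_{ij})]\| \le 2\|[\pi_D(a_{ij})]\|$ and $\|[\pi_T(a_{ij})]\| \le \|[\pi_D(a_{ij})]\|$. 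The only non-routine ingredient is the identification $D = T\gamma^1 + D_0$ together with the matching commutation/anti-commutation behaviour under $\gamma^1$, which is exactly the analogue of the sign-flip computation at the heart of Lemma~\ref{lem:contractive}; the main thing to verify carefully is that the scalar $\tfrac{3}{2}$ in $D_0$ plays no role once one passes to commutators.
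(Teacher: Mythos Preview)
Your argument is correct and is essentially the same averaging-by-Clifford-unitaries trick as the paper's proof, only organized a bit more economically. The paper conjugates by the swap $u=\gamma^3$ and by $v=-i\gamma^2$ separately, obtaining $2\pi_D(a)+u\pi_D(a)u^*+v\pi_D(a)v^*=4\pi_{\frac12 D_0}(a)$ and then rescaling; for $\pi_T$ it conjugates by the product $uv$, which is exactly your $\gamma^1$, so that part is literally identical. Your single conjugation by $\gamma^1$ isolates both pieces at once (since $\gamma^1$ commutes with $\gamma^1$ and anticommutes with $\gamma^2,\gamma^3$), at the cost of having to add back the diagonal $\mathrm{diag}(\pi(a),\pi(a))$ for $\pi_{D_0}$; the paper's two-unitary average keeps the diagonal automatically but lands on $\frac12 D_0$ and needs the rescaling $g=\mathrm{diag}(1,2)$. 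One small wording point: $\mathrm{diag}(\pi(a),\pi(a))$ is not literally a single compression of $\pi_D(a)$, but your norm bound is of course fine since $\|\mathrm{diag}(\pi(a),\pi(a))\|=\|\pi(a)\|\le\|\pi_D(a)\|$ via corner compression followed by the (completely isometric) diagonal embedding.
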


\begin{proof} This is proved in the same spirit as Lemma~\ref{lem:contractive}. Consider the unitary operator $u$ interchanging the two copies of $L^{2}(\bS^{3}_{\theta})$ in $\mathcal{H}_{\theta}$, together with the unitary $v$ from eq.~\eqref{v}. We have the identities
\[uDu^{*}= \begin{pmatrix} -iZ_1 & -Z_2+iZ_3 \\ Z_2+i Z_3 & 
iZ_1\end{pmatrix} +\frac{3}{2}, \qquad vDv^{*}=\begin{pmatrix} -iZ_1 & Z_2-iZ_3 \\ -Z_2-i Z_3 & iZ_1\end{pmatrix} +\frac{3}{2},\]
and clearly $u\pi(a)u^{*}=v\pi(a)v^{*}=\pi(a)$. Therefore
\[2\pi_{D}(a)+u\pi_{D}(a)u^{*}+v\pi_{D}(a)v^{*}=4\pi_{\frac{1}{2}D_{0}}(a).\]
Now since
\[\pi_{D_{0}}(a)=g\pi_{\frac{1}{2}D_{0}}(a)g^{-1},\quad\textnormal{with}\quad g=\begin{pmatrix} 1 & 0 \\ 0 & 2\end{pmatrix},\]
we can write
\[\|\pi_{D_{0}}(a)\|=\|g\pi_{\frac{1}{2}D_{0}}(a)g^{-1}\|\leq \|g\|\|g^{-1}\|\|\pi_{D}(a)\|=2\|\pi_{D}(a)\|,\]
a fact which clearly extends to matrices. Using the same unitaries, we have
\[uvDv^{*}u^{*}=\begin{pmatrix} iZ_{1} & -Z_{2}-iZ_{3} \\ Z_{2}-iZ_{3} & -iZ_{1}\end{pmatrix} + \frac{3}{2},\]
from which it follows that
\[\pi_{D}(a)+uv\pi_{D}(a)v^{*}u^{*}=2\pi_{T}(a),\]
so that $\|\pi_{T}(a)\|\leq\|\pi_{D}(a)\|$ as desired.\end{proof}

\begin{prop} \label{prop:id} For any homogeneous element $x\in \Lip(\bS^{3}_{\theta})_{-n}$ we have
\[\pi_{D_{0}}(\Psi_{n}x)^{*}\pi_{D_{0}}(\Psi_{n}x)=\begin{pmatrix}(4|n|+1)x^{*}x+[D_{0},x]^{*}[D_{0},x] & [D_{0},x]^{*}x \\
x^{*}[D_{0},x] & x^{*}x\end{pmatrix}.
\]
\end{prop}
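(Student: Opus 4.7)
The approach is a direct computation that exploits three structural identities already established: the normalization $\Psi_n^*\Psi_n = 1$, the vanishing $\Psi_n^*[D_0,\Psi_n] = 0$ from Lemma \ref{miracle}, and the explicit evaluation of $[D_0,\Psi_n]^*[D_0,\Psi_n]$ from Proposition \ref{diffes}.

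I would first view $\pi_{D_0}(\Psi_n x)$ entry-by-entry as a column of $2\times 2$ block matrices
$$\pi_{D_0}(\Psi_{n,k} x) = \begin{pmatrix} \Psi_{n,k} x & 0 \\ [D_0, \Psi_{n,k} x] & \Psi_{n,k} x \end{pmatrix}, \qquad k = 0, 1, \ldots, |n|,$$
and apply the Leibniz rule $[D_0, \Psi_{n,k} x] = [D_0, \Psi_{n,k}]\, x + \Psi_{n,k}\, [D_0, x]$ in each slot. The product then factors as
$$\pi_{D_0}(\Psi_n x)^* \pi_{D_0}(\Psi_n x) = \sum_k \pi_{D_0}(\Psi_{n,k} x)^* \pi_{D_0}(\Psi_{n,k} x),$$
so each of the four block entries becomes a sum over $k$ of quadratic expressions in $\Psi_{n,k}^{(*)}$, $[D_0, \Psi_{n,k}]^{(*)}$, $x^{(*)}$ and $[D_0,x]^{(*)}$.

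Three of the four blocks are handled immediately. The $(2,2)$-entry collapses to $x^* \bigl(\sum_k \Psi_{n,k}^* \Psi_{n,k}\bigr) x = x^* x$ by normalization. Each of the $(1,2)$ and $(2,1)$ entries splits as the sum of two terms; the one containing $\sum_k \Psi_{n,k}^* [D_0, \Psi_{n,k}] = \Psi_n^*[D_0, \Psi_n]$ (respectively its adjoint) vanishes by Lemma \ref{miracle}, leaving $[D_0,x]^*x$ and $x^*[D_0,x]$. The same cancellations reduce the $(1,1)$-entry to the three-term expression
$$x^* x \;+\; x^* \bigl([D_0, \Psi_n]^*[D_0, \Psi_n]\bigr)\, x \;+\; [D_0, x]^*[D_0, x].$$

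The last step is to apply Proposition \ref{diffes}: the middle operator is the rank-one spinor matrix with value $4|n|$ concentrated in a single component (the upper-left for $n>0$, the lower-right for $n<0$). Since $x \in \L_{-n}$ acts as a scalar in the spinor direction, it commutes with this matrix and contributes $4|n| \, x^*x$ in the corresponding spinor slot, which together with the $x^*x$ term assembles into the claimed coefficient $(4|n|+1)\,x^*x$. The main place where care is required is precisely this final bookkeeping: one must track how the rank-one spinor-support of $[D_0, \Psi_n]^*[D_0, \Psi_n]$ interacts with the complementary spinor-support of $[D_0,x]^*[D_0,x]$ (which for $x \in \L_{-n}$ is forced, by the triangular structure of the generators' derivatives in Lemma \ref{derab}, to sit in the opposite component) so that the total equals the displayed matrix. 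Apart from this bookkeeping, the argument is routine algebraic manipulation.
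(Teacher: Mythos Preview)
Your approach is essentially identical to the paper's: both expand $[D_0,\Psi_n x]$ via the Leibniz rule, kill the cross terms using $\Psi_n^*[D_0,\Psi_n]=0$ from Lemma~\ref{miracle} and the normalization $\Psi_n^*\Psi_n=1$, and then substitute Proposition~\ref{diffes} for the remaining quadratic term $[D_0,\Psi_n]^*[D_0,\Psi_n]$. The paper does not elaborate on the spinor-support bookkeeping you flag at the end; it simply invokes Proposition~\ref{diffes} and concludes.
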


\begin{proof} From Lemma \ref{lem:contractive2} it follows that $[D_{0},x]$ is bounded whenever $[D,x]$ is bounded. By Lemma \ref{miracle} we have
\[\begin{split}[D_{0},\Psi_{n}x]^{*}[D_{0},\Psi_{n}x] &=x^{*}[D_{0},\Psi_{n}][D_{0},\Psi_{n}]x+x^{*}[D_{0},\Psi_{n}]^{*}\Psi_{n}[D_{0},x] \\ &\qquad\qquad +[D_{0},x]\Psi_{n}^{*}[D_{0},\Psi_{n}]x+[D_{0},x]^{*}[D_{0},x] \\
&=x^{*}[D_{0},\Psi_{n}][D_{0},\Psi_{n}]x+ [D_{0},x]^{*}[D_{0},x]\end{split}\]
and similarly that
\[x^{*}\Psi_{n}^{*}[D_{0},\Psi_{n}x]= x^{*}\Psi_{n}^{*}[D_{0},\Psi_{n}]x+x^{*}\Psi_{n}^{*}\Psi_{n}[D_{0},x]=x^{*}[D_{0},x].\]
Therefore
\[ \pi_{D_{0}}(\Psi_{n}x)^{*}\pi_{D_{0}}(\Psi_{n}x)=\begin{pmatrix}x^{*}x+x^{*}[D_{0},\Psi_{n}]^{*}[D_{0},\Psi_{n}]x+[D_{0},x]^{*}[D_{0},x] & [D_{0},x]^{*}x \\
x^{*}[D_{0},x]& x^{*}x\end{pmatrix},\]
from which the desired equality follows by using Proposition~\ref{diffes}.
\end{proof}

\begin{cor}\label{eqnorm} The norm \eqref{stabnorm} on the Lipschitz module $\E_\theta$ is cb-isometric to the norm
\begin{equation}\label{newnorm}
\|x\|^{2}=\|\tau_0(\pi_{D_{0}}(x)^{*}\pi_{D_{0}}(x)) + \sum_{n\in\Z} 4|n|x_{n}^{*}x_{n}\|,
\end{equation}
where $\tau_0:\B(\mathcal{H}_{\theta}\oplus \mathcal{H}_{\theta})\to \B(\mathcal{H}_{\theta}\oplus \mathcal{H}_{\theta})$ is the map \eqref{cpm}. Consequently, $\mathcal{E}_{\theta}$ is the completion of $\Lip(\bS^3_\theta)$ in this norm.
\end{cor}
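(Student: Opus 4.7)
My plan is to derive the identity of norms termwise from Proposition \ref{prop:id}, using the Peter--Weyl decomposition and the $\TT$-equivariance of $D_0$ to match the expressions coming from $\tau_0$ with the sum defining $\|\cdot\|_{\mathcal{E}_\theta}$.

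First I would expand each summand in \eqref{stabnorm}. For $x_{-n}\in\mathcal{L}_{-n}$, Proposition \ref{prop:id} gives
\[
\pi_{D_{0}}(\Psi_{n}x_{-n})^{*}\pi_{D_{0}}(\Psi_{n}x_{-n})=
\begin{pmatrix}(4|n|+1)x_{-n}^{*}x_{-n}+[D_{0},x_{-n}]^{*}[D_{0},x_{-n}] & [D_{0},x_{-n}]^{*}x_{-n} \\ x_{-n}^{*}[D_{0},x_{-n}] & x_{-n}^{*}x_{-n}\end{pmatrix}.
\]
Summing over $n\in\Z$ and reindexing $m=-n$ yields a $2\times 2$ matrix whose entries are sums of products of homogeneous terms $x_m$ and their $D_0$-commutators.

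Next I would compute $\tau_{0}(\pi_{D_{0}}(x)^{*}\pi_{D_{0}}(x))$ using the Peter--Weyl decomposition $x=\sum_{m}x_{m}$. The key point is that $D_{0}$ commutes with the generator $T$ of the $\TT$-action (being the $\TT$-invariant part of $D$), so $[D_{0},x_{m}]$ is also homogeneous of degree $m$. Consequently, applying $\tau_0$ to each entry of the $2\times 2$ matrix
\[
\pi_{D_{0}}(x)^{*}\pi_{D_{0}}(x)=\begin{pmatrix} x^{*}x+[D_{0},x]^{*}[D_{0},x] & [D_{0},x]^{*}x \\ x^{*}[D_{0},x] & x^{*}x\end{pmatrix}
\]
picks out only the diagonal (in $m,n$) contributions, producing
\[
\tau_{0}(\pi_{D_{0}}(x)^{*}\pi_{D_{0}}(x))=\begin{pmatrix}\sum_{m}x_{m}^{*}x_{m}+\sum_{m}[D_{0},x_{m}]^{*}[D_{0},x_{m}] & \sum_{m}[D_{0},x_{m}]^{*}x_{m} \\ \sum_{m}x_{m}^{*}[D_{0},x_{m}] & \sum_{m}x_{m}^{*}x_{m}\end{pmatrix}.
\]

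Comparing the two expressions, the sums differ precisely by the diagonal embedding of $\sum_{m}4|m|\,x_{m}^{*}x_{m}$ in the top-left entry, giving
\[
\sum_{n}\pi_{D_{0}}(\Psi_{n}x_{-n})^{*}\pi_{D_{0}}(\Psi_{n}x_{-n})=\tau_{0}(\pi_{D_{0}}(x)^{*}\pi_{D_{0}}(x))+\begin{pmatrix}\sum_{m}4|m|\,x_{m}^{*}x_{m} & 0 \\ 0 & 0\end{pmatrix}.
\]
Taking norms yields the cb-isometric identification stated in the corollary (with the analogous identity at all matrix levels, interpreting $\sum_{n}4|n|\,x_{n}^{*}x_{n}$ via its obvious diagonal embedding); the completion statement then follows immediately. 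The only mildly delicate point is justifying convergence of the various series in the $\End^{*}_{\Lip(\bS^{2})}$-norm, which is handled by the same finite-sum truncation argument used implicitly in the definition of $\mathcal{E}_{\theta}$ through Proposition \ref{directsum}, combined with Lemma \ref{lem:contractive2} to control the passage between $\pi_{D}$ and $\pi_{D_{0}}$ norms.
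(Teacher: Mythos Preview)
Your approach is essentially the same as the paper's: the cb-isometric identity is derived termwise from Proposition~\ref{prop:id} and the $\TT$-equivariance of $D_0$, exactly as you do (the paper just says ``an easy computation using Proposition~\ref{prop:id}, extended to linear combinations of homogeneous elements'').

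The one place where your sketch is thinner than the paper is the completion statement. You say it ``follows immediately'' and then gesture at Lemma~\ref{lem:contractive2}, but the point is not merely convergence on finite sums: one must show that the expression \eqref{newnorm} is finite for \emph{every} $x\in\Lip(\bS^3_\theta)$, i.e.\ that $\sum_{n}4|n|\,x_n^*x_n$ converges in norm. The paper does this via the explicit chain
\[
\sum_{n}4|n|\,x_n^*x_n \;\leq\; \sum_{n}4n^2\,x_n^*x_n \;\leq\; \tau_0\bigl(p\,\pi_T(2x)^*\pi_T(2x)\,p\bigr) \;\leq\; 4\|\pi_T(x)\|^2,
\]
with $p$ the projection onto the first summand, and then invokes Lemma~\ref{lem:contractive2} to bound $\|\pi_T(x)\|$ by $\|\pi_D(x)\|$. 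Your reference to Lemma~\ref{lem:contractive2} is the right endpoint, but the intermediate $|n|\leq n^2$ trick that converts the weight $|n|$ into something controlled by $\pi_T$ is the step that actually does the work and is worth making explicit.
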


\proof This is now an easy computation using Proposition~\ref{prop:id}, extended to linear combinations of homogeneous elements in $\E_\theta$. To see that this norm is defined on all of $\Lip(\bS^{3}_{\theta})$, write 
\[p=\begin{pmatrix}1 & 0 \\ 0 & 0\end{pmatrix},\]
and estimate
\[\sum_{n\in\Z}4|n|x_{n}^{*}x_{n}\leq \sum_{n\in\Z}4n^{2}x_{n}^{*}x_{n}\leq\tau_{0}(p\pi_{T}(2x)^{*}\pi_{T}(2x)p)\leq \|\pi_{T}(2x)\|^{2}\tau_{0}(p)=4 \|\pi_{T}(x)\|^{2}.\]
Then apply Lemma \ref{lem:contractive2} to obtain that
\[\|x\|^{2}=\|\tau_0(\pi_{D_{0}}(x)^{*}\pi_{D_{0}}(x)) + \sum_{n\in\Z} 4|n|x_{n}^{*}x_{n}\|\leq 8\|\pi_{D}(x)\|^{2},\]
so $\mathcal{E}_{\theta}$ is a completion of $\Lip(\bS^{3}_{\theta})$. \endproof

\subsection{The Hopf fibration as a Lipschitz cycle}
The work of the previous two sections now places us in a position to describe the Hopf fibration as defining an odd Lipschitz cycle in $\Psi^{\ell}_{-1}(\Lip(\bS^{3}_{\theta}),\Lip(\bS^{2}))$. First, we describe $\mathcal{E}_{\theta}$ as a left $\Lip(\bS^{3}_{\theta})$-module. 

We first need a lemma about circle actions on $C^{*}$-algebras. Let $A$ be a $C^{*}$-algebra with a circle action, $F\subset A$ its fixed point algebra and $\tau_{0}:A\rightarrow F$ the associated conditional expectation. The $C^{*}$-module $\mathpzc{E}$ is defined by completing $A$ in the norm associated to the $F$-valued inner product $\langle a,b\rangle:=\tau_{0}(a^{*}b)$. Multiplication induces a $*$-homomorphism $A\rightarrow \End^{*}_{F}(\mathpzc{E})$. The infinitesimal generator $T$ of the circle action is then a self-adjoint regular operator with spectrum $\Z\subset \R$ and the commutators $[T,a]$ are bounded for all $a$ in the dense subalgebra generated by homogeneous elements.

\begin{lem} Let $a\in A$ be such that $[T,a]\in \End^{*}_{F}(\mathpzc{E})$ and let $0<\alpha<1$. There is a positive constant $C_{\alpha}$ independent of $a$ such that
\[ \|[|T|^{\alpha},a]\|_{cb}\leq C_{\alpha}\|[T,a]\|_{cb}.\] 
\end{lem}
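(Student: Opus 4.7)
My plan is to represent $|T|^\alpha$ as a Balakrishnan integral over the resolvents of $T^2$ and then exploit the identity $[T^{2},a]=T[T,a]+[T,a]T$ to turn the fractional commutator into an integral whose integrand is controlled by $\|[T,a]\|$. Concretely, for $0<\alpha<1$ one has, on the positive self-adjoint regular operator $T^2$, the standard formula
\[
|T|^{\alpha}=(T^{2})^{\alpha/2}=\frac{\sin(\pi\alpha/2)}{\pi}\int_{0}^{\infty}\mu^{\alpha/2-1}\,\frac{T^{2}}{\mu+T^{2}}\,d\mu.
\]
Writing $R_{\mu}:=(\mu+T^{2})^{-1}$ and using $[R_{\mu},a]=-R_{\mu}[T^{2},a]R_{\mu}$ together with $[T^{2},a]=T[T,a]+[T,a]T$ (legitimate because $[T,a]\in\End^{*}_{F}(\mathpzc{E})$ is adjointable and hence preserves $\Dom(T)$), one then obtains the representation
\[
[|T|^{\alpha},a]=\frac{\sin(\pi\alpha/2)}{\pi}\int_{0}^{\infty}\mu^{\alpha/2}\,R_{\mu}\bigl(T[T,a]+[T,a]T\bigr)R_{\mu}\,d\mu.
\]

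Next I would bound the integrand by two complementary estimates. The spectral calculus gives $\|R_{\mu}\|\leq\mu^{-1}$ and $\|TR_{\mu}\|=\|R_{\mu}T\|\leq(2\sqrt{\mu})^{-1}$, which together produce the refined bound $\|\mu R_{\mu}(T[T,a]+[T,a]T)R_{\mu}\|\leq\|[T,a]\|/\sqrt{\mu}$ suitable for large $\mu$. For small $\mu$, the identity $T^{2}R_{\mu}=1-\mu R_{\mu}$ yields the trivial estimate $\|\mu R_{\mu}(T[T,a]+[T,a]T)R_{\mu}\|=\|\mu[R_{\mu},a]\|\leq 2\|a\|$. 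Splitting the integral at a threshold $\mu_{0}>0$ and optimising gives the interpolation-type inequality
\[
\|[|T|^{\alpha},a]\|\leq C_\alpha\,\|a\|^{1-\alpha}\|[T,a]\|^{\alpha}.
\]
The stated linear bound then follows by absorbing the factor $\|a\|^{1-\alpha}$ into the constant $C_\alpha$, which is legitimate in the context of the lemma since $a$ lies in the Lipschitz subalgebra of a unital $C^*$-algebra where $\|a\|$ can be bounded uniformly in terms of the Lipschitz norm.

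Finally, the completely bounded version is obtained by applying the entire argument verbatim with $a$ replaced by the matrix amplification $a\otimes 1_{n}\in A\otimes M_{n}(\C)$ acting on $\mathpzc{E}\otimes\C^{n}$, since $(\mu+T^{2})^{-1}\otimes 1_{n}$ is still the resolvent of $T\otimes 1_{n}$ squared, and all Balakrishnan integrals, spectral bounds and splitting estimates respect tensoring with $M_{n}(\C)$. The constant $C_\alpha$ produced is independent of $n$, so the estimate is genuinely cb. The main obstacle is the divergence of the naive integral $\int_{0}^{\infty}\mu^{\alpha/2-3/2}d\mu$ at both endpoints, which is exactly what forces the use of the two complementary bounds and the splitting argument; correctly matching the exponents so that the $\mu_{0}$--split yields a bound in the right normalisation is the delicate point.
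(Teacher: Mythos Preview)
Your Balakrishnan integral computation is correct and the splitting argument genuinely yields the interpolation inequality
\[
\|[|T|^{\alpha},a]\|\leq C'_{\alpha}\,\|a\|^{1-\alpha}\|[T,a]\|^{\alpha}.
\]
The gap is the final step: you cannot absorb $\|a\|^{1-\alpha}$ into a constant independent of $a$. The lemma asserts a bound that is \emph{linear} in $\|[T,a]\|$ with $C_{\alpha}$ not depending on $a$, and the interpolation inequality simply does not imply this. Your justification --- that ``$\|a\|$ can be bounded uniformly in terms of the Lipschitz norm'' --- is circular, since the Lipschitz norm is essentially $\|a\|+\|[T,a]\|$; invoking it reintroduces the dependence on $a$ you are trying to eliminate. (By Young's inequality your estimate does give $\|[|T|^{\alpha},a]\|\leq C'_{\alpha}(\|a\|+\|[T,a]\|)$, which would suffice for the Corollary that follows, but not for the lemma as stated.)

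The paper's route is genuinely different and explains what you are missing: it uses that $T$ has \emph{discrete} spectrum $\Z$, so one may replace $x\mapsto|x|^{\alpha}$ by a smooth function $g$ agreeing with it on $\Z$ without changing $g(T)=|T|^{\alpha}$. One then writes $g(T)=\frac{1}{2\pi}\int_{\R}\hat g(t)e^{itT}\,dt$ and uses the Duhamel estimate $\|[e^{itT},a]\|\leq|t|\,\|[T,a]\|$ to get
\[
\|[|T|^{\alpha},a]\|\leq\Bigl(\tfrac{1}{2\pi}\int_{\R}|t\hat g(t)|\,dt\Bigr)\|[T,a]\|,
\]
which is the required linear bound with no $\|a\|$ factor. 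The smoothing at $0$ is exactly what makes $t\hat g(t)=\widehat{g'}(t)$ integrable; without the discreteness of the spectrum this step fails, and your argument --- which nowhere uses discreteness --- cannot be repaired to give the linear estimate. A minor additional point: your claim that ``$[T,a]$ is adjointable and hence preserves $\Dom(T)$'' is not valid as stated; adjointability of a bounded operator says nothing about domain preservation, and the identity $[T^{2},a]=T[T,a]+[T,a]T$ on $\Dom(T^{2})$ needs a more careful justification.
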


\begin{proof} The proof relies on the fact that $T$ has discrete spectrum, so the function $x\mapsto |x|^{\alpha}$ can be smoothened around $0$ to a function $g$, without affecting the resulting operator $g(T)=|T|^{\alpha}$. The presence of the spectral projections $p_{n}:\mathpzc{E}\rightarrow A_{n}$ allows one to proceed as in \cite[Lem.~10.15, 10.17]{GVF01} to show that
\[\|[ |T|^{\alpha},a]\|\leq \frac{1}{2\pi}\int_{\R}|t\hat{g}(t)|dt\|[T,a]\|,\]
where $\hat g$ denotes the Fourier transform of $g$. By applying this reasoning to the finite direct sums $\mathpzc{E}^{\oplus n}$, one obtains the same bound for matrices $a_{ij}$ for which $[T,a_{ij}]$ is bounded. Since $g$ is smooth, we have $t\hat{g}(t)=\widehat{g'}(t)$, so it remains to show that $\widehat{g'}$ is integrable, which is done as in \cite[Lem.~10.17]{GVF01}.\end{proof}

\begin{cor}\label{cor:half} For any $0<\alpha <1$ the map $\pi_{T}(a)\mapsto \pi_{|T|^{\alpha}}(a)$ is completely bounded.
\end{cor}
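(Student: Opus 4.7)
The plan is to deduce this essentially immediately from the preceding lemma. The map in question sends
\[
\pi_T(a) = \begin{pmatrix} a & 0 \\ [T,a] & a \end{pmatrix} \quad \longmapsto \quad \pi_{|T|^\alpha}(a) = \begin{pmatrix} a & 0 \\ [|T|^\alpha, a] & a \end{pmatrix},
\]
so the diagonal blocks agree and only the $(2,1)$-entry changes. Thus the content of the statement reduces to controlling the off-diagonal commutator $[|T|^\alpha, a]$ by the commutator $[T,a]$, which is exactly what the previous lemma provides.

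More precisely, I would first observe that the norm of $\pi_T(a)$ dominates both $\|a\|$ (from the diagonal) and $\|[T,a]\|$ (after subtracting the diagonal piece, which is itself controlled by $\|\pi_T(a)\|$). Then, applying the previous lemma, one obtains the estimate
\[
\|\pi_{|T|^\alpha}(a)\| \;\leq\; \|a\| + \|[|T|^\alpha,a]\| \;\leq\; \|a\| + C_\alpha \|[T,a]\| \;\leq\; (1 + 2C_\alpha)\,\|\pi_T(a)\|.
\]
This establishes boundedness of the map.

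For the complete boundedness, the same argument goes through verbatim after passing to matrix entries $(a_{ij})$: the previous lemma was already proved at the matrix level (by applying the scalar argument to finite direct sums $\mathpzc{E}^{\oplus n}$), so one gets the matrix version $\|[|T|^\alpha \otimes 1_n, (a_{ij})]\| \leq C_\alpha \|[T\otimes 1_n, (a_{ij})]\|$ with the same constant $C_\alpha$, independent of $n$. Hence the analogous inequality
\[
\|\pi_{|T|^\alpha} \otimes 1_n\|_{\M_n(\cdot)} \;\leq\; (1+2C_\alpha)\,\|\pi_T \otimes 1_n\|_{\M_n(\cdot)}
\]
holds uniformly in $n$, which is the definition of complete boundedness. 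No further obstacle is anticipated, since all analytic work (the smoothing of $x \mapsto |x|^\alpha$ near zero, integrability of $\widehat{g'}$, and the passage to matrices) has already been carried out in the preceding lemma.
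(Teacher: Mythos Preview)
Your proof is correct and follows essentially the same route as the paper: estimate $\|\pi_{|T|^\alpha}(a)\|\le \|a\|+\|[|T|^\alpha,a]\|\le \|a\|+C_\alpha\|[T,a]\|\le (1+C_\alpha)\|\pi_T(a)\|$ using the preceding lemma, then pass to matrices. Your constant $(1+2C_\alpha)$ is slightly looser than necessary --- one has $\|[T,a]\|\le\|\pi_T(a)\|$ directly from the $(2,1)$-block, without subtracting the diagonal --- and your explicit remark on complete boundedness is in fact more careful than the paper's own proof, which leaves that step implicit.
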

\begin{proof} This now follows immediately by estimating
\[\|\pi_{|T|^{\alpha}}(a)\|\leq \|a\|+\|[|T|^{\alpha},a]\|\leq \|a\|+C_{\alpha}\|[T,a]\|\leq (C_{\alpha}+1)\|\pi_{T}(a)\|\]
for all $a\in A$ such that $[T,a]\in \End^{*}_{F}(\mathpzc{E})$.\end{proof}

\begin{thm} 
The norm \eqref{stabnorm} on $\E_\theta$ is equivalent to the norm given by
\[\|x\|^{2}_{\tau_0,\|T|^{\frac{1}{2}}}=\|\tau_0\left(\pi_{D_{0}}(x)^{*}\pi_{D_{0}}(x) \right)+\langle\begin{pmatrix} x \\ 2|T|^{\frac{1}{2}}x\end{pmatrix},\begin{pmatrix} x \\ 2|T|^{\frac{1}{2}}x\end{pmatrix}\rangle\|.\]
Consequently, the module $\mathcal{E}_{\theta}$ is cb-isomorphic to the completion of $\Lip(\bS^{3}_{\theta})$ in this norm. Multiplication in $\Lip(\bS^{3}_{\theta})$ induces a completely bounded $*$-homomorphism $\Lip(\bS^{3}_{\theta})\rightarrow\End^{*}_{\Lip(\bS^{2})}(\mathcal{E}_{\theta})$.
\end{thm}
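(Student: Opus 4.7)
The plan is to break the statement into three parts: first, equivalence of the two norms on $\E_\theta$; second, continuous boundedness of the left multiplication representation; third, the $*$-homomorphism property. I would handle each in turn, relying on Corollary~\ref{eqnorm}, Corollary~\ref{cor:half} and Lemma~\ref{lem:contractive2}.

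For the norm equivalence, I would first unpack the $\Lip(\bS^{2})$-valued inner product occurring in $\|\cdot\|_{\tau_{0},|T|^{1/2}}$. Since $\langle x,y\rangle=\tau_{0}(x^{*}y)$ and $|T|$ acts as multiplication by $|n|$ on the Peter--Weyl summand $\mathcal{L}_{n}$, a direct computation using orthogonality of the decomposition $x=\sum_{n}x_{n}$ with respect to $\tau_{0}$ gives
\[
\bigg\langle\begin{pmatrix}x\\ 2|T|^{1/2}x\end{pmatrix},\begin{pmatrix}x\\ 2|T|^{1/2}x\end{pmatrix}\bigg\rangle=\tau_{0}(x^{*}x)+4\tau_{0}(x^{*}|T|x)=\sum_{n\in\Z}(1+4|n|)\,x_{n}^{*}x_{n}.
\]
On the other hand, the lower right entry of the $2\times 2$ matrix $\tau_{0}(\pi_{D_{0}}(x)^{*}\pi_{D_{0}}(x))$ is already $\tau_{0}(x^{*}x)=\sum_{n}x_{n}^{*}x_{n}$. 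Thus the new expression differs from the one in Corollary~\ref{eqnorm} only by the addition of an extra copy of $\tau_{0}(x^{*}x)$, which is dominated by the matrix $\tau_{0}(\pi_{D_{0}}(x)^{*}\pi_{D_{0}}(x))$; conversely the new norm dominates the old one termwise. This yields a two-sided estimate, and extends to matrix amplifications by the same calculation, giving the cb-equivalence.

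For boundedness of the left action, I would fix $a\in\Lip(\bS^{3}_{\theta})$ and estimate each of the two summands in $\|\cdot\|_{\tau_{0},|T|^{1/2}}$ applied to $ax$. The $D_{0}$-summand is handled by writing $\pi_{D_{0}}(ax)=\pi_{D_{0}}(a)\pi_{D_{0}}(x)$, applying positivity of $\tau_{0}$ to obtain
\[
\tau_{0}\bigl(\pi_{D_{0}}(ax)^{*}\pi_{D_{0}}(ax)\bigr)\leq\|\pi_{D_{0}}(a)\|^{2}\,\tau_{0}\bigl(\pi_{D_{0}}(x)^{*}\pi_{D_{0}}(x)\bigr),
\]
and then invoking Lemma~\ref{lem:contractive2} to control $\|\pi_{D_{0}}(a)\|$ by $\|\pi_{D}(a)\|$. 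For the $|T|^{1/2}$-summand, I would write
\[
2|T|^{1/2}(ax)=a\,(2|T|^{1/2}x)+2[|T|^{1/2},a]\,x,
\]
and use Corollary~\ref{cor:half} (with $\alpha=\tfrac{1}{2}$), which furnishes a constant $C$ with $\|\pi_{|T|^{1/2}}(a)\|_{\mathrm{cb}}\leq C\,\|\pi_{T}(a)\|_{\mathrm{cb}}$, so that the commutator term is completely bounded in terms of $\|\pi_{T}(a)\|$, itself controlled by $\|\pi_{D}(a)\|$ via the second half of Lemma~\ref{lem:contractive2}. Combining these with the Cauchy--Schwarz-type inequality for the $C^{*}$-module inner product gives a cb-bound on the action of $a$ on the $|T|^{1/2}$-summand. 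All of these estimates pass to matrix amplifications in the standard way since every step is expressed through cb-norms.

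The $*$-homomorphism property follows from the fact that $\langle ax,y\rangle=\tau_{0}((ax)^{*}y)=\tau_{0}(x^{*}a^{*}y)=\langle x,a^{*}y\rangle$, so left multiplication by $a^{*}$ is the adjoint of left multiplication by $a$ on $\mathcal{E}_{\theta}$. The main obstacle in the whole argument is the $|T|^{1/2}$-summand: the naive choice $|T|$ does not yield bounded commutators with algebra elements. This is exactly what forces the square-root power and thus the appeal to Corollary~\ref{cor:half}, whose proof in turn rests on smoothing the absolute value function on the discrete spectrum of $T$ in order to use a Fourier integral representation for the commutator.
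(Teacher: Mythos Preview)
Your proposal is correct and follows essentially the same route as the paper: the norm comparison via Corollary~\ref{eqnorm} plus the observation that the extra term $\tau_{0}(x^{*}x)$ is dominated by $\tau_{0}(\pi_{D_{0}}(x)^{*}\pi_{D_{0}}(x))$, and then control of left multiplication via Lemma~\ref{lem:contractive2} and Corollary~\ref{cor:half}. The paper packages the $|T|^{1/2}$-step slightly more efficiently by noting the exact factorization
\[
\begin{pmatrix} ax \\ |T|^{1/2}(ax) \end{pmatrix}=\pi_{|T|^{1/2}}(a)\begin{pmatrix} x \\ |T|^{1/2}x \end{pmatrix},
\]
which yields the inner-product inequality directly without the commutator splitting and Cauchy--Schwarz; your version works but gives a worse constant.
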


\begin{proof} In Corollary~\ref{eqnorm} we showed that the norm \eqref{stabnorm} is cb-isometric to the norm \eqref{newnorm},
and that this norm is defined on all of $\Lip(\bS^{3}_{\theta})$.
By definition of $|T|^{\frac{1}{2}}$ we see that
\[\langle\begin{pmatrix} x \\ 2|T|^{\frac{1}{2}}x\end{pmatrix},\begin{pmatrix} x \\ 2|T|^{\frac{1}{2}}x\end{pmatrix}\rangle=\sum_{n\in\Z}(4|n|+1)x_{n}^{*}x_{n},\]
So the above norm differs from $\eqref{newnorm}$ by a term
\[\tau_{0}(x^{*}x)=\sum_{n\in\Z}x_{n}^{*}x_{n} ,\]
and since
\[\begin{pmatrix} x^{*}x & 0 \\ 0 & 0\end{pmatrix}\leq \pi_{D_{0}}(x)^{*}\pi_{D_{0}}(x)\]
we have the estimate
\[\|x\|^{2}\leq \|x\|^{2}_{\tau_{0},|T|^{\frac{1}{2}}}\leq 2\|x\|^{2}.\]
Furthermore,
because $\pi_{|T|^{\frac{1}{2}}}$ is a homomorphism, for each $b\in \Lip(\bS^3_\theta)$ and $x\in\E_\theta$ the estimate
\[\begin{split}\langle\begin{pmatrix} bx \\ |T|^{\frac{1}{2}}bx\end{pmatrix},\begin{pmatrix} bx \\ |T|^{\frac{1}{2}}bx\end{pmatrix}\rangle 
&=\langle\pi_{|T|^{\frac{1}{2}}}(b)\begin{pmatrix} x \\ |T|^{\frac{1}{2}}x\end{pmatrix},\pi_{|T|^{\frac{1}{2}}}(b)\begin{pmatrix} x \\ |T|^{\frac{1}{2}}x\end{pmatrix}\rangle\\
&\leq\|\pi_{|T|^{\frac{1}{2}}}(b)\|^{2}\langle\begin{pmatrix} x \\ |T|^{\frac{1}{2}}x\end{pmatrix},\begin{pmatrix} x \\ |T|^{\frac{1}{2}}x\end{pmatrix}\rangle \\
&\leq (C_{\frac{1}{2}}+1)\|\pi_{T}(b)\|^{2}\langle\begin{pmatrix} x \\ |T|^{\frac{1}{2}}x\end{pmatrix},\begin{pmatrix} x \\ |T|^{\frac{1}{2}}x\end{pmatrix}\rangle \\
&\leq( C_{\frac{1}{2}}+1)\|\pi_{D}(b)\|^{2}\langle\begin{pmatrix} x \\ |T|^{\frac{1}{2}}x\end{pmatrix},\begin{pmatrix} x \\ |T|^{\frac{1}{2}}x\end{pmatrix}\rangle,
\end{split}\]
holds by Lemma \ref{lem:contractive2} and Corollary \ref{cor:half}. Since $\pi_{D_{0}}$ is also a homomorphism, the estimate
\[
\pi_{D_{0}}(bx)^{*}\pi_{D_{0}}(bx)\leq \|\pi_{D_{0}}(b)\|^{2}\pi_{D_{0}}(x)^{*}\pi_{D_{0}}(x)
\]
is immediate. For the norm $\|\cdot\|_{\tau_0,|T|^{\frac{1}{2}}},$ we can now estimate that

\[\|bx\|_{\tau_0,|T|^{\frac{1}{2}} }^{2}\leq (3+C_{\frac{1}{2}})\|\pi_{D}(b)\|^{2}\|\tau_0\left(\pi_{D_{0}}(x)^{*}\pi_{D_{0}}(x) \right)+\langle\begin{pmatrix} x \\ |T|^{\frac{1}{2}}x\end{pmatrix},\begin{pmatrix} x \\ |T|^{\frac{1}{2}}x\end{pmatrix}\rangle \| ,\] 
which proves that multiplication induces a cb-homomorphism.
\end{proof}

\begin{prop} The map $T:\Dom (T)\rightarrow\mathcal{E}_{\theta}$ defined on homogeneous elements $x_n\in\mathcal{L}_n$ by 
$$
T:\Dom (T)\rightarrow\mathcal{E}_{\theta},\qquad x_{n}\mapsto nx_{n},
$$ 
is a self-adjoint regular linear operator with compact resolvent on the Lipschitz module $\mathcal{E}_{\theta}$. The map $a\mapsto [T,a]\in\End^{*}_{C(\bS^{2})}(\mathpzc{E}_{\theta})$ is a cb-derivation $\Lip(\bS^{3}_{\theta})\rightarrow \End^{*}_{C(\bS^{2})}(\mathpzc{E}_{\theta})$.
\end{prop}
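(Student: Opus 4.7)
The plan is to exploit the Peter--Weyl decomposition $\mathcal{E}_{\theta}=\bigoplus_{n\in\Z}\mathcal{L}_{n}$ of eq.~\eqref{decomp-ln}, which realises the Lipschitz module $\mathcal{E}_{\theta}$ as a direct sum of projective operator modules in the sense of Proposition~\ref{directsum}. The inner product decomposes orthogonally because $\tau_{0}$ annihilates non-zero $\TT$-weights: for $x_{m}\in\mathcal{L}_{m}$ and $y_{n}\in\mathcal{L}_{n}$, the product $x_{m}^{*}y_{n}$ has weight $n-m$, whence $\tau_{0}(x_{m}^{*}y_{n})=0$ unless $m=n$. On each summand $\mathcal{L}_{n}$ the operator $T$ is simply scalar multiplication by $n$, so the algebraic direct sum $\bigoplus_{n}^{\mathrm{alg}}\mathcal{L}_{n}$ is a natural dense core, upon which $T$ is manifestly symmetric.

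Self-adjointness and regularity I would establish via Lemma~\ref{pmi}. The images $\mathrm{im}(T\pm i)$ contain the core and are therefore dense. The inverses $(T\pm i)^{-1}$ act as scalar multiplication by $(n\pm i)^{-1}$ on $\mathcal{L}_{n}$, with modulus $(n^{2}+1)^{-1/2}$ uniformly bounded by one. Extending by the direct-sum structure produces a completely bounded endomorphism of $\mathcal{E}_{\theta}$; the required cb-bound in the Lipschitz operator-space norm (as expressed by the equivalent norm of the preceding theorem, involving $\tau_{0}$ and $|T|^{1/2}$) reduces to the Fourier-multiplier type estimate $\sup_{n}|n|^{1/2}|(n\pm i)^{-1}|\leq 1$ that controls the $|T|^{1/2}$-term, together with the straightforward bound on the $\tau_{0}$-term provided by orthogonality of the $\mathcal{L}_{n}$.

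For the compact resolvent condition on the $C^{*}$-envelope $\mathpzc{E}_{\theta}$, Proposition~\ref{pr:projprop} identifies $\mathcal{L}_{n}$ with the finitely generated projective $\Lip(\bS^{2})$-module $p_{n}\Lip(\bS^{2})^{|n|+1}$, so the projection $P_{n}$ onto $\mathcal{L}_{n}$ is a compact endomorphism of $\mathpzc{E}_{\theta}$. The finite partial sums $R_{N}:=\sum_{|n|\leq N}(n\pm i)^{-1}P_{n}$ are then compact, and the uniform estimate
\[
\|(T\pm i)^{-1}-R_{N}\|\leq\sup_{|n|>N}(n^{2}+1)^{-1/2}\xrightarrow{N\to\infty}0
\]
yields $(T\pm i)^{-1}\in\K_{C(\bS^{2})}(\mathpzc{E}_{\theta})$.

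For the cb-derivation property, decompose $a\in\Lip(\bS^{3}_{\theta})$ into $\TT$-homogeneous components $a=\sum_{m}a_{m}$. A direct computation on the algebraic core gives $[T,a]x_{n}=\sum_{m}m\,a_{m}x_{n}$ for $x_{n}\in\mathcal{L}_{n}$, so $[T,a]$ is formally multiplication by $\sum_{m}m\,a_{m}$. Lemma~\ref{lem:contractive2} provides the Hilbert-space bound $\|[T,\pi(a)]\|_{\mathrm{cb}}\leq C\|\pi_{D}(a)\|_{\mathrm{cb}}$ on $\H_{\theta}$, and the argument of the preceding theorem---which uses Lemma~\ref{lem:contractive2} to promote multiplication by elements of $\Lip(\bS^{3}_{\theta})$ to a cb-homomorphism into $\End^{*}_{\Lip(\bS^{2})}(\mathcal{E}_{\theta})$---adapts verbatim to the commutator $[T,a]$, yielding a completely bounded adjointable endomorphism of $\mathpzc{E}_{\theta}$; the Leibniz rule is automatic on the core. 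The main obstacle throughout is precisely this careful transfer of bounds from the Hilbert-space picture to the projective operator module, for which the equivalent norm of the preceding theorem and its $|T|^{1/2}$-correction are the essential tools.
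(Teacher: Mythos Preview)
Your approach is essentially the paper's: invoke Lemma~\ref{pmi} via the direct-sum structure, approximate the resolvent by finite partial sums of spectral projections, and appeal to Lemma~\ref{lem:contractive2} for the derivation. Two points deserve correction.

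First, the compact resolvent is required on the \emph{Lipschitz} module $\mathcal{E}_{\theta}$, i.e.\ you need $(T\pm i)^{-1}\in\K_{\Lip(\bS^{2})}(\mathcal{E}_{\theta})$, not merely $\K_{C(\bS^{2})}(\mathpzc{E}_{\theta})$ as you conclude. Your argument in fact transfers verbatim: the projections $P_{n}$ onto $\mathcal{L}_{n}$ are contractive for the direct-sum norm \eqref{stabnorm} (this is immediate from its form), hence finite-rank in $\End^{*}_{\Lip(\bS^{2})}(\mathcal{E}_{\theta})$, and the same tail estimate $\sup_{|n|>N}(n^{2}+1)^{-1/2}\to 0$ gives convergence in the Lipschitz operator norm. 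The paper argues exactly this way, working with \eqref{stabnorm} throughout.

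Second, your detour through the $|T|^{1/2}$-equivalent norm of the preceding theorem is unnecessary. The original norm \eqref{stabnorm} already exhibits $\mathcal{E}_{\theta}$ as an orthogonal direct sum, so multiplication of the $n$th component by the scalar $(n\pm i)^{-1}$ is visibly completely contractive; no Fourier-multiplier estimate involving $|T|^{1/2}$ is needed. The paper simply observes this contractivity and invokes Lemma~\ref{pmi}. Likewise, for the cb-derivation the paper notes that $a\mapsto [T,a]$ lands in the $C^{*}$-algebra $\End^{*}_{C(\bS^{2})}(\mathpzc{E}_{\theta})$, where representations are automatically completely contractive, so Lemma~\ref{lem:contractive2} alone suffices; there is no need to revisit the argument of the preceding theorem.
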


\begin{proof} It is immediate that the operators $T\pm i$ have dense range, since they map the algebraic direct sum $\bigoplus_{n\in\Z}\mathcal{L}_{n}$ onto itself. Using the norm \eqref{stabnorm}, we see that the operators $(T\pm i)^{-1}$ are contractive for this norm. By Lemma~\ref{pmi}, the closure of $T$ is self-adjoint and regular in $\mathcal{E}_{\theta}$. The resolvents $(T\pm i)^{-1}$ are elements of $\K_{\Lip(\bS^{2})}(\mathcal{E}_{\theta})$, since they are the uniform limit of the operators $$r_{k}:=\sum_{n=-k}^{k}(n\pm i)^{-1}p_{n}$$ with $p_{n}$ the projections $x\mapsto\Psi_{n}\tau_0(\Psi_{n}^{*}x)$. Indeed, these are contractive as operators in $\mathcal{E}_{\theta}$ (as can be seen directly from the norm \eqref{stabnorm}) and thus for $m> k$ we find
\[\|r_{k}-r_{m}\|_{\mathcal{E}_{\theta}}=\|\sum_{k<|n|\leq m}(n\pm i)^{-1}p_{n}\|_{\mathcal{E}_{\theta}}\leq\frac{1}{\sqrt{1+k^{2}}}\rightarrow 0\]
as $k\rightarrow\infty$. 
The statement that the map $a\mapsto [T,a]$ is a cb-derivation from $\Lip(\bS^{3}_{\theta})$ into the $C^{*}$-algebra $\End^{*}_{C(\bS^{2})}(\mathpzc{E})$ follows directly from Lemma~\ref{lem:contractive2} and the fact that $C^{*}$-algebra representations are completely contractive.
\end{proof}
As with the noncommutative torus, the $C^{*}$-module version of this proposition is an example of a circle module as described in \cite{CNNR08}. The Lipschitz structure we have constructed allows us to study connections on this circle module.

Let us denote by $\nabla_n:\L_n\to\L_n\otimes_{\Lip(\bS^2)}\Omega^1_{D_0}(\bS^2)$ the canonical Grassmann connection on each of the projective modules $\L_n$ defined by $\n_n:=p_n\circ \d$, where $p_n=\Psi_n\Psi_n^*$ is the projection which defines the modules $\L_n$ for each $n\in\ZZ$.

\begin{prop}\label{KK-conn}
Under the isomorphism $\E_\theta\cong \bigoplus_{n\in\ZZ}\mathcal{L}_n$, the linear map
$$
\n:{\mathcal{E}_\theta}\to{\mathcal{E}_\theta}\,\hotimes_{\Lip(\bS^2)}\Omega^1_{D_0}(C(\bS^2),\Lip(\bS^{2}))\qquad \nabla:=\oplus_{n\in\ZZ} \nabla_n,
$$
yields a well defined connection on the right Lipschitz $\Lip(\bS^2)$-module ${\mathcal{E}_\theta}$. It has the property that $[\nabla,T]=0$.
\end{prop}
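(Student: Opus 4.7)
The plan is to verify three things: (i) the direct sum $\nabla=\oplus_n\nabla_n$ extends from the algebraic direct sum $\bigoplus_n^{\textnormal{alg}}\mathcal{L}_n$ to a completely bounded map on the Lipschitz completion $\mathcal{E}_\theta$; (ii) $\nabla$ satisfies the Leibniz rule over $\Lip(\bS^2)$; (iii) $[\nabla,T]=0$. Each individual $\nabla_n=p_{|n|}\circ d$ is the standard Grassmann connection on the finitely generated projective module $\mathcal{L}_n\cong p_{|n|}\Lip(\bS^2)^{|n|+1}$, and hence satisfies the Leibniz rule by construction; summing over $n$ this holds on the algebraic direct sum, so that (ii) will follow from (i) by density.

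The first step to carry out is an explicit computation of $\nabla_n$ after transporting through the isomorphism $\mathcal{L}_{-n}\cong p_n\Lip(\bS^2)^{|n|+1}$ of Proposition~\ref{pr:projprop}. Using $\Psi_n^*\Psi_n=1$ and the pivotal identity $\Psi_n^*[D_0,\Psi_n]=0$ of Lemma~\ref{miracle}, for $x\in\mathcal{L}_{-n}$ one finds
\[
\nabla_{-n}(x)=\Psi_n^*\,p_n\,d(\Psi_n x)=\Psi_n^* d(\Psi_n)x+\Psi_n^*\Psi_n\,dx=[D_0,x],
\]
where we view the result in $\mathcal{L}_{-n}\hotimes_{\Lip(\bS^2)}\Omega^1_{D_0}(C(\bS^2),\Lip(\bS^2))$ via its embedding into $\Lip(\bS^3_\theta)\hotimes_{\Lip(\bS^2)}\Omega^1_{D_0}$. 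This identification is the main technical simplification and it reveals that on the algebraic direct sum $\nabla$ is simply the restriction of the derivation $[D_0,\cdot]$.

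The hard part will be the complete boundedness estimate in (i). The obstacle is that the individual projections $p_n$ and their differentials have Lipschitz norms growing with $n$ (essentially like $\sqrt{|n|}$, by Proposition~\ref{diffes}); fortunately the norm on $\mathcal{E}_\theta$ was tailored precisely to absorb this growth via the extra $|T|^{1/2}$ summand identified in Corollary~\ref{eqnorm}. Concretely, writing $\nabla(x)=[D_0,x]$ on homogeneous components and using the $\Lip(\bS^2)$-valued inner product $\langle y,z\rangle=\tau_0(y^*z)$, one has
\[
\langle\nabla(x),\nabla(x)\rangle=\tau_0\bigl([D_0,x]^*[D_0,x]\bigr)\leq \tau_0\bigl(\pi_{D_0}(x)^*\pi_{D_0}(x)\bigr)\leq \|x\|_{\mathcal{E}_\theta}^2,
\]
since $[D_0,x]^*[D_0,x]$ is the lower-right corner of $\pi_{D_0}(x)^*\pi_{D_0}(x)$. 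Promoting this to complete boundedness proceeds by repeating the argument at every matrix level, combining Lemma~\ref{lem:contractive2} with the fact that $\tau_0$ is completely positive; thereby $\nabla$ extends to a completely bounded map into $\mathcal{E}_\theta\hotimes_{\Lip(\bS^2)}\Omega^1_{D_0}$ and the Leibniz rule extends by continuity.

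Finally, for (iii), both operators preserve the Peter--Weyl $\mathbb{Z}$-grading: the derivation $[D_0,\cdot]$ commutes with the $\mathbb{T}$-action (being its $\mathbb{T}$-invariant part, by construction) and hence sends $\mathcal{L}_n$ into itself tensored with the degree-zero forms $\Omega^1_{D_0}(C(\bS^2),\Lip(\bS^2))$. Consequently on each homogeneous component $x_n\in\mathcal{L}_n\cap\mathfrak{Dom}(T)$ one has $\nabla(Tx_n)=n\nabla(x_n)=(T\otimes 1)\nabla(x_n)$, so $[\nabla,T]=0$ on the algebraic direct sum, and the identity extends to the common domain of both operators by the density and continuity established in the previous step.
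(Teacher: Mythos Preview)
Your approach is genuinely different from the paper's and contains a nice idea, but there is a gap in the complete boundedness step.

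The paper's proof is much shorter: it simply invokes the general result (proved earlier in Section~2) that the Grassmann connection on a projective operator module $p\,\mathcal{H}_{\mathcal{B}}$ is completely \emph{contractive}. Since each $\nabla_n$ is therefore completely contractive in the Lipschitz topology on $\mathcal{L}_n\cong p_n\Lip(\bS^2)^{|n|+1}$, the direct sum $\oplus_n\nabla_n$ is uniformly bounded and extends to the Lipschitz direct sum $\mathcal{E}_\theta$. No explicit computation is needed, and the $[\nabla,T]=0$ claim is immediate because $\nabla$ respects the $\mathbb{Z}$-grading and the algebraic sum is a core for $T$.

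Your route --- computing $\nabla_{-n}(x)=[D_0,x]$ via Lemma~\ref{miracle} --- is correct and in fact anticipates the \emph{next} proposition in the paper (the one identifying $\nabla$ with $\nabla_Z=iZ_2\gamma^2+iZ_3\gamma^3$). However, your estimate for (i) does not establish what is required. First, a minor slip: $[D_0,x]^*[D_0,x]$ is \emph{not} the lower-right corner of $\pi_{D_0}(x)^*\pi_{D_0}(x)$; that corner is $x^*x$, while $[D_0,x]^*[D_0,x]$ appears as a summand in the upper-left corner. More seriously, the quantity $\|\tau_0([D_0,x]^*[D_0,x])\|$ you bound is a $C^*$-module--type norm of $\nabla(x)$ (essentially the norm of $[D_0,x]$ as an operator on spinors), whereas complete boundedness of $\nabla$ demands control of the Haagerup tensor product norm on $\mathcal{E}_\theta\hotimes_{\Lip(\bS^2)}\Omega^1_{D_0}(C(\bS^2),\Lip(\bS^2))$. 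These are not the same in general, and you give no argument linking them. The invocation of Lemma~\ref{lem:contractive2} does not bridge this gap either, since that lemma compares the representations $\pi_D$, $\pi_{D_0}$, $\pi_T$ of $\Lip(\bS^3_\theta)$, not norms on the tensor product with forms.

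The clean fix is precisely the paper's argument: rather than estimating directly, appeal to the abstract complete contractivity of Grassmann connections and the fact that a supremum of contractive maps on a Lipschitz direct sum is bounded.
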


\proof It is clear that the maps $\n_n:\L_n\to \L_n\otimes_{\Lip(\bS^2)}\Omega^1_{D_0}(C(\bS^2),\Lip(\bS^2))$ make sense as connections on the line bundles $\L_n$ for each $n\in\ZZ$. Each of these connections is completely contractive for the Lipschitz topology on $\L_n$ and so the algebraic direct sum $\oplus_n \n_n$ extends to a well defined completely bounded map $\n$ on the Lipschitz direct sum as claimed. The property that $[\nabla,T]=0$ is immediate, since it evidently holds on each $\mathcal{L}_{n}$ and the algebraic direct sum is a core for the operator $T$. 
\endproof

Recall that all along it was our goal to identify a Lipschitz unbounded KK-cycle which captures the vertical part of the geometry of the noncommutative Hopf fibration. Following the accomplishments to this point, finally we arrive at the desired theorem.

\begin{thm} The datum $(\mathcal{E}_{\theta},T,\nabla)$ defines a Lipschitz cycle in $\Psi_{-1}^\ell(\Lip(\bS^{3}_{\theta}),\Lip(\bS^{2}))$.
\end{thm}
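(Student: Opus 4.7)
The strategy is essentially a compilation of the results established throughout the preceding subsections, since each of the four conditions of Definition~\ref{de:lipcyc} has already been verified in isolation. I will simply match each condition to the corresponding result and check that the odd/even grading requirements are consistent with the trivially graded setting of $\Psi_{-1}^{\ell}$.

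First, I would verify condition (i): that $\mathcal{E}_{\theta}$ is a Lipschitz $(\Lip(\bS^{3}_{\theta}),\Lip(\bS^{2}))$-bimodule. The right $\Lip(\bS^{2})$-module structure, together with its expression as the direct sum $\bigoplus_{n\in\Z}\mathcal{L}_{n}$ of the finitely generated projective modules $\mathcal{L}_{n}\cong p_{n}\Lip(\bS^{2})^{|n|+1}$, is exactly Proposition~\ref{pr:projprop} combined with the definition of $\mathcal{E}_{\theta}$ as a Lipschitz direct sum in the sense of Proposition~\ref{directsum}. The left action of $\Lip(\bS^{3}_{\theta})$ by completely bounded adjointable endomorphisms is precisely the content of the last theorem preceding Proposition~\ref{KK-conn}, where the norm equivalence with $\|\cdot\|_{\tau_{0},|T|^{1/2}}$ is used to show that multiplication induces a cb $\ast$-homomorphism $\Lip(\bS^{3}_{\theta})\rightarrow \End^{*}_{\Lip(\bS^{2})}(\mathcal{E}_{\theta})$.

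Next, for conditions (ii) and (iii), I would invoke the proposition preceding Proposition~\ref{KK-conn}, which already established that $T$ is self-adjoint regular on $\mathcal{E}_{\theta}$ with resolvents in $\K_{\Lip(\bS^{2})}(\mathcal{E}_{\theta})$ (via the approximation by finite-rank truncations $r_{k}$), and that $a\mapsto[T,a]$ is a cb-derivation from $\Lip(\bS^{3}_{\theta})$ into $\End^{*}_{C(\bS^{2})}(\mathpzc{E}_{\theta})$ (using Lemma~\ref{lem:contractive2} together with the fact that $C^{*}$-representations are completely contractive). Since $\Psi_{-1}^{\ell}$ deals with ungraded (equivalently trivially $\Z/2$-graded) cycles, the oddness requirement on $T$ is automatic in this setting. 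Condition (iv) is then Proposition~\ref{KK-conn} itself, which gives a completely bounded Hermitian connection $\nabla=\bigoplus_{n\in\Z}\nabla_{n}$ satisfying $[\nabla,T]=0$ (the commutator vanishes on each weight component $\mathcal{L}_{n}$, and the algebraic direct sum $\bigoplus_{n}\mathcal{L}_{n}$ is a core for $T$).

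The only remaining step is to collect these facts and confirm that the triple $(\mathcal{E}_{\theta},T,\nabla)$ satisfies all four clauses of Definition~\ref{de:lipcyc} simultaneously. There is no additional analytic obstacle to overcome at this stage: the genuinely nontrivial work, namely the combinatorial identities of Lemmata~\ref{n}, \ref{Rn}, and \ref{miracle} leading to Proposition~\ref{prop:id} and the norm equivalence of Corollary~\ref{eqnorm}, has already been discharged in order to construct $\mathcal{E}_{\theta}$ as a bona fide Lipschitz module in the first place. Thus the theorem follows by direct assembly of these results.
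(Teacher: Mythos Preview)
Your proposal is correct and takes essentially the same approach as the paper: the paper's own proof is a single sentence stating that all conditions of Definition~\ref{de:lipcyc} have been verified in the preceding discussion, and your version simply makes explicit which prior result corresponds to which clause.
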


\begin{proof} The above discussion shows that all of the conditions prescribed in Definition~\ref{de:lipcyc} are indeed satisfied.
\end{proof}


\subsection{The noncommutative three-sphere as a fibration over the two-sphere} Just as we did for the noncommutative two-torus, we are now ready to spell out the factorization of the spin geometry of the noncommutative sphere $\bS^3_\theta$ over a classical base space. We shall present this factorization as a product in unbounded KK-theory of the canonical spectral triple over the two-sphere $\bS^2$ with the Lipschitz cycle $(\E_\theta,T,\n)$ constructed in the previous section. That is to say, we claim that 
\begin{equation}\label{fac-claim}
(\H_\theta,D-\half)\simeq (\mathcal{E}_\theta, T,\n) \otimes_{\Lip(\bS^2)} ( \H_0, D_0-\half)
\end{equation}
as unbounded Lipschitz cycles in $\Psi_{-1}(\Lip(\bS^3_\theta),\C)$, $\Psi^\ell_{-1}(\Lip(\bS^3),\Lip(\bS^2) )$ and $\Psi_0(\Lip(\bS^2), \C)$, respectively.

We begin with the Hilbert modules appearing in \eqref{fac-claim}. Let us write $(\cdot,\cdot)_{\bS^2}$ for the inner product on the Hilbert space $L^2(\bS^2)$ and consider the tensor product of Hilbert modules $\mathpzc{E}_\theta \hotimes_{C(\bS^2)} L^2(\bS^2)$, which we equip with the inner product
\begin{equation}
\label{eq:inner-tensor-sph}
\left ( e \otimes h, e' \otimes h' \right ) := (h, \langle e,e'\rangle h')_{\bS^2}
\end{equation}
for each $e\otimes h,~ e' \otimes h' \in \mathpzc{E}_\theta\, \otimes_{C(\bS^2)} L^2(\bS^2)$. Recall also our notation $\H_0:=L^2(\bS^2)\otimes\C^2$.

\begin{prop}\label{pr:fac2}
The Hilbert space $\H_\theta:=L^2(\bS^3_\theta)\otimes\C^2$ is isometrically isomorphic to the completion $\mathpzc{E}_\theta\hotimes_{C(\bS^2)} \H_0$ of the tensor product $\mathpzc{E}_\theta \otimes_{C(\bS^2)} \H_0$
with respect to the inner product \eqref{eq:inner-tensor-sph}.
\end{prop}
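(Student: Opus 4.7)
The plan is to mimic the strategy used for Proposition~\ref{pr:fac1} in the noncommutative torus case, suitably adapted to accommodate the fact that $\mathpzc{E}_\theta$ is now a topologically non-trivial bundle over $\bS^2$. Concretely, I would construct the isometric isomorphism at the level of the dense submodules by means of the multiplication map
\[
m : C(\bS^3_\theta)\,\otimes_{C(\bS^2)}\, \H_0 \;\longrightarrow\; \H_\theta,\qquad a\otimes h \;\longmapsto\; \pi(a)\,h,
\]
viewing $\H_0 = L^2(\bS^2)\otimes\C^2$ as the $\TT$-invariant subspace of $\H_\theta = L^2(\bS^3_\theta)\otimes\C^2$ through Lemma~\ref{le:inv} and Theorem~\ref{thm:equiv-S30-S2}. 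This map is clearly $C(\bS^2)$-balanced since $C(\bS^2)\subset C(\bS^3_\theta)_0$ acts by multiplication from both sides.

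First I would verify that $m$ is isometric for the inner product \eqref{eq:inner-tensor-sph}. The crucial point is that the Haar state on $C(\bS^3_\theta)$ factors through the conditional expectation as $\int_{\bS^3_\theta} = \int_{\bS^2}\circ\tau_0$; this is the defining feature of the fibration structure and follows from $\TT$-invariance of the Haar measure. Combining this with the fact that elements of $\H_0$ are $\TT$-invariant so that they can be pulled through $\tau_0$, one computes for $a,a'\in C(\bS^3_\theta)$ and $h,h'\in\H_0$ that
\[
(\pi(a)h,\pi(a')h')_{\H_\theta} = \int_{\bS^2}\tau_0(h^{*} a^{*} a' h') = \bigl(h,\tau_0(a^{*}a')\,h'\bigr)_{\H_0} = \bigl(h,\langle a,a'\rangle\, h'\bigr)_{\H_0},
\]
which is the inner product \eqref{eq:inner-tensor-sph} on elementary tensors. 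Consequently $m$ descends to an isometry on the balanced tensor product and extends by continuity to the Hilbert space completion $\mathpzc{E}_\theta\hotimes_{C(\bS^2)}\H_0$; alternatively, by Corollary~\ref{co:*hom} applied to the $*$-representation of Proposition~\ref{prop:hilbert-module-torus}'s analogue for $\bS^3_\theta$, the Haagerup completion coincides with the usual $C^{*}$-module completion, allowing us to work directly in the latter category.

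It then remains to establish that $m$ has dense range. For this I would invoke the Peter--Weyl decomposition $\Lip(\bS^3_\theta) = \bigoplus_{n\in\Z}\L_n$ together with Proposition~\ref{pr:projprop}, which identifies each $\L_n$ with $p_{-n}\Lip(\bS^2)^{|n|+1}$. Under the multiplication map, the image of $\L_n\otimes_{\Lip(\bS^2)}\H_0$ lies in (and in fact spans a dense subspace of) the $n$-th spectral subspace of $\H_\theta$ for the torus action; summing over $n$ yields a dense subspace of $\H_\theta$ by the full Peter--Weyl decomposition of $L^2(\bS^3_\theta)$. This forces $m$ to be surjective on the completion, hence a unitary isomorphism.

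The main technical point that needs care is the density argument: one must check that the classical Peter--Weyl decomposition survives after isospectral deformation and that the projections $p_n$ indeed cut out the correct spectral subspaces at the level of square-integrable sections. This is where the explicit description of $\L_n$ via the partial isometries $\Psi_n$ and the computations of Proposition~\ref{pr:projprop} become essential; everything else (isometry, balancing, continuity) is essentially a bookkeeping exercise using the factorization of the Haar integral through $\tau_0$.
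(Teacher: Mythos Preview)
Your argument is correct. The multiplication map is the right object, the isometry computation via the factorization of the Haar state through $\tau_0$ is sound (though your notation $h^{*}a^{*}a'h'$ treats Hilbert space vectors as algebra elements; more precisely one should write $(\pi(a)h,\pi(a')h')_{\H_\theta}=(h,\pi(a^{*}a')h')_{\H_\theta}$ and then use $\TT$-invariance of $h,h'$ to replace $a^{*}a'$ by $\tau_0(a^{*}a')$), and the Peter--Weyl density argument goes through.

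The paper's own proof is considerably shorter: it simply invokes the identification $\H_\theta = L^2(\bS^3)\otimes\C^2$ from eq.~\eqref{sp-triv2} and appeals to the corresponding isomorphism in the undeformed case. In other words, since isospectral deformation leaves the Hilbert space untouched, one may transport the classical isomorphism $L^2(\bS^3,\cS)\cong \mathpzc{E}\hotimes_{C(\bS^2)}\H_0$ directly. Your route works intrinsically with the deformed algebra and rederives this isomorphism from scratch via the conditional expectation and Peter--Weyl; the paper's route trades that work for a one-line reduction to the commutative situation. Both are valid; yours is more self-contained, while the paper's exploits the specific feature of Connes--Landi deformations that only the algebra product, not the underlying analytic data, is altered.
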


\proof  This follows immediately from the identification of Hilbert spaces in eq.~\eqref{sp-triv2} and the corresponding isomorphism in the classical case. \endproof

In order to form the product of an odd Lipschitz cycle with an even KK-cycle, we follow Example~\ref{ex:odd-ev}. Proposition \ref{pr:fac2} establishes the necessary isomorphism at the level of Hilbert spaces. Thus we turn to the connection prescribed in the datum $(\E_\theta,T,\n)$.

Let us introduce for each $n\in\ZZ$ the shorthand notation $\E_n:=p_n \Lip(\bS^2)^{|n|+1}$. Recall from Proposition~\ref{pr:projprop} that we have for each $n\in \ZZ$ a cb-isomorphism 
\begin{equation}\label{rec-iso}
\Psi_n:\L_{-n}\to \E_n,\qquad f\mapsto (f_k):=(\Psi_{n,k}f),
\end{equation}
where $k=0,1,\ldots n$. The Grassmann connections $\n_n:\L_n\to\L_n\hotimes_{\Lip(\bS^2)}\Omega^1_{D_0}(\Lip(\bS^2))$ are in fact defined via these isomorphisms to be
$$
\n_n:\E_n:\to\E_n\hotimes_{\Lip(\bS^2)}\Omega^1_{D_0}(\Lip(\bS^2)),\qquad \n_n:=p_n\circ\d,
$$
where $\d:\Lip(\bS^2)\to\Omega^1_{D_0}(\Lip(\bS^2))$ is the exterior derivative on the classical two-sphere. On the other hand, just as we did for the exterior derivative in eq.~\eqref{extder}, we may express each of the Grassmann connections in terms of the vector fields $Z_\pm$ on the total space $\bS^3_\theta$ of the Hopf fibration, as the following result now shows.

\begin{prop}
Under the decomposition $\E_\theta\cong \bigoplus_{n\in\ZZ}\L_n$, the connection $\n:=\oplus_n\n_n$ of Proposition~\ref{KK-conn} coincides with the linear map
$$
\n_Z:{\mathcal{E}_\theta}\to{\mathcal{E}_\theta}\,\hotimes_{\Lip(\bS^2)}\Omega^1_{D_0}(C(\bS^2),\Lip(\bS^{2})),\qquad \n_Z:=i Z_2\gamma^2+i Z_3\gamma^3.
$$
\end{prop}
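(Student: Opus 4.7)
The plan is to verify the identity separately on each homogeneous summand $\mathcal{L}_n$, using the isomorphism $\Psi_{-n}\colon\mathcal{L}_n\xrightarrow{\sim}p_{-n}\Lip(\bS^2)^{|n|+1}$ of Proposition~\ref{pr:projprop}. For $f\in\mathcal{L}_n$, each component $\Psi_{-n,k}f$ has bidegree zero and therefore lies in $\Lip(\bS^2)$. Since $p_{-n}=\Psi_{-n}\Psi_{-n}^{*}$ satisfies $p_{-n}\Psi_{-n}=\Psi_{-n}$, pulling the Grassmann connection $\nabla_n=p_{-n}\circ d$ back along $\Psi_{-n}$ gives the compact formula
\[
\nabla_n(f) \;=\; \Psi_{-n}^{*}\,d(\Psi_{-n}f),
\]
with $d$ the exterior derivative on $\bS^2$ of eq.~\eqref{extder}.

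I would then expand the right-hand side. Writing $d(\Psi_{-n,k}f)=Z_+(\Psi_{-n,k}f)\omega_+ + Z_-(\Psi_{-n,k}f)\omega_-$ via \eqref{extder} and using the derivation property $Z_{\pm}(\Psi_{-n,k}f)=Z_{\pm}(\Psi_{-n,k})f+\Psi_{-n,k}Z_{\pm}(f)$, the sum $\sum_k\Psi_{-n,k}^{*}\,d(\Psi_{-n,k}f)$ splits into a diagonal piece and a horizontal piece. The normalisation $\Psi_{-n}^{*}\Psi_{-n}=1$ collapses the diagonal piece to $Z_+(f)\omega_+ + Z_-(f)\omega_-$, while the horizontal piece is $\bigl(\Psi_{-n}^{*}Z_{+}(\Psi_{-n})\bigr)f\omega_+ + \bigl(\Psi_{-n}^{*}Z_{-}(\Psi_{-n})\bigr)f\omega_-$.

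The main---in fact only---obstacle is the vanishing of the horizontal piece, i.e.\ $\Psi_{-n}^{*}Z_{\pm}(\Psi_{-n})=0$ for every $n\in\Z$. Fortunately, this has essentially already been proved: Lemma~\ref{derab} exhibits $[D_0,c]$ in Clifford matrix form as $Z_+(c)\sigma^+ + Z_-(c)\sigma^-$, so the matrix identity $\Psi_{-n}^{*}[D_0,\Psi_{-n}]=0$ of Lemma~\ref{miracle} decouples entry by entry into the two desired scalar identities. (One could equivalently argue directly via the binomial identities of Lemma~\ref{binom} and Corollary~\ref{co:comb}.) With the horizontal terms dispatched, it remains only to perform the purely algebraic matching of $Z_+(f)\omega_+ + Z_-(f)\omega_-$ with $\nabla_Zf$: using $Z_{\pm}=\pm Z_2+iZ_3$ together with the Clifford identification $c(\omega_{\pm})=\sigma^{\pm}=\tfrac12(\pm i\gamma^2+\gamma^3)$, a one-line substitution gives $Z_+(f)\sigma^+ + Z_-(f)\sigma^- = iZ_2(f)\gamma^2 + iZ_3(f)\gamma^3$, which is precisely $\nabla_Zf$.
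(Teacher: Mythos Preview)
Your proposal is correct and follows essentially the same route as the paper: both arguments restrict to a single homogeneous summand, expand the Grassmann connection via the Leibniz rule, use $\Psi^*\Psi=1$ to collapse the diagonal term to $Z_+(f)\omega_++Z_-(f)\omega_-$, and invoke Lemma~\ref{miracle} (i.e.\ $\Psi^*[D_0,\Psi]=0$) to kill the remaining ``horizontal'' piece before matching with the Clifford expression $iZ_2\gamma^2+iZ_3\gamma^3$. The only cosmetic difference is that you pull the connection back to $\mathcal{L}_n$ via $\Psi_{-n}^*$, whereas the paper phrases the same computation as commutativity of a diagram in the projective module $p_n\Lip(\bS^2)^{|n|+1}$; your indexing convention (using $\Psi_{-n}$ for $\mathcal{L}_n$) is in fact the one consistent with Proposition~\ref{pr:projprop}.
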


\proof The analysis of the previous sections means that we are now free to check everything purely at the algebraic level, safe in the knowledge that our computations will extend to the level of Lipschitz modules. One readily verifies the equality
$$
i Z_2 \gamma^2 +i Z_3 \gamma^3 = Z_+ \sigma_+ + Z_- \sigma_-
$$
(in fact we already saw this in obtaining the expression \eqref{extder}).
Since the matrices $\sigma_\pm$ represent the Clifford multiplication corresponding to the one-forms $\omega_\pm$ of eq.~\eqref{oneforms} we therefore need to verify that, under each of the isomorphisms \eqref{rec-iso}, we have a commutative diagram
\begin{diagram}
\L_n & \rTo^{\n_Z} & & \L_n\hotimes_{\L_0}(\L_2\oplus\L_{-2})\\
\dTo^{\Psi_n} & & & \dTo_{\Psi_n\otimes\iota} \\
\E_n &  \rTo_{\n_n} & & \E_n\hotimes_{\Lip(\bS^2)}\Omega^1_{D_0}(\Lip(\bS^2)),
\end{diagram}
where $\n_Z:\L_n\to \L_{n+2}\oplus\L_{n-2}\simeq  \L_n\hotimes_{\L_0}(\L_2\oplus\L_{-2})$ and $\iota:(\L_2\oplus\L_{-2})\to\Omega^1_{D_0}(\Lip(\bS^2))$ is the map induced by the identifications of modules in Lemma~\ref{le:idents}.
To this end, we compute that
\begin{align*}
(p_n\d (\Psi_n f))_k&=p_{n,kl}\d(\Psi_{n,l}f)
\\
&=p_{n,kl}\Psi_{n,l}\left(Z_+(f)\omega_+ + Z_-(f)\omega_-\right) + \Psi_{n,k}\Psi_{n,l}^*[D_0,\Psi_{n,l}]f
\\
&=\Psi_{n,k}\left(Z_+(f)\omega_+ + Z_-(f)\omega_-\right)
\\
&=\left( \Psi_{n}(Z_+(f)\omega_+ + Z_-(f)\omega_-) \right)_k
\end{align*}
where we have written $p_n:=(p_{n,kl})$ for $k,l=0,1,\ldots,n$. The third equality follows by using Proposition~\ref{miracle} to deduce the vanishing of the appropriate terms, together with the identity $\Psi_n^*\Psi_n=1$.
\endproof

\begin{thm}
As an element of $\Psi_{-1}(\Lip(\bS^3_\theta),\C)$, the Riemannian spin geometry of $\bS^3_\theta$ factorizes as a Kasparov product of Lipschitz cycles, namely
$$
(\H_\theta, D-\half) \simeq (\mathcal{E}_\theta, T,\n) \otimes_{\Lip(\bS^2)} (\H_0, D_0-\half),
$$
where $(\mathcal{E}_\theta, T,\n)\in\Psi^\ell_{-1}(\Lip(\bS^3_\theta),\Lip(\bS^2) )$ and $(\H_0, D_0-\half)\in\Psi_0(\Lip(\bS^2), \C)$.
\end{thm}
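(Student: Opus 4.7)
The plan is to identify the stated unitary equivalence as an instance of the odd–even Kasparov product described in Example~\ref{ex:odd-ev}. That construction takes an odd Lipschitz cycle $(\mathcal{E},S,\nabla)\in\Psi^{\ell}_{-1}(\mathcal{A},\mathcal{B})$ together with an even cycle $(\mathpzc{F},T)\in\Psi_{0}(\mathcal{B},\C)$ and produces an odd cycle with underlying module $\mathcal{E}\hotimes_{\mathcal{B}}\mathpzc{F}$ and product operator $S\otimes\Gamma_{\mathpzc{F}}+1\otimes_{\nabla}T$, where $\Gamma_{\mathpzc{F}}$ is the grading on $\mathpzc{F}$. Specializing this to $(\mathcal{E},S,\nabla)=(\mathcal{E}_{\theta},T,\nabla)$ and $(\mathpzc{F},T)=(\mathcal{H}_{0},D_{0}-\tfrac{1}{2})$ with grading $\Gamma_{\mathcal{H}_{0}}=\gamma^{1}$, one obtains a representative of the Kasparov product in $\Psi_{-1}(\Lip(\bS^{3}_{\theta}),\C)$, and it suffices to check that this representative is unitarily equivalent to $(\mathcal{H}_{\theta},D-\tfrac{1}{2})$.

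At the level of Hilbert modules, Proposition~\ref{pr:fac2} provides the required isometric isomorphism $\mathcal{H}_{\theta}\cong\mathpzc{E}_{\theta}\hotimes_{C(\bS^{2})}\mathcal{H}_{0}$, so the only remaining task is to identify the product operator with $D-\tfrac{1}{2}$ under this identification. This is where the identification $\nabla=\nabla_{Z}=iZ_{2}\gamma^{2}+iZ_{3}\gamma^{3}$ of the previous proposition does the real work: on the algebraic tensor product, $T\otimes\gamma^{1}$ reproduces the term $iZ_{1}\gamma^{1}$, while $1\otimes_{\nabla}(D_{0}-\tfrac{1}{2})$ reproduces $iZ_{2}\gamma^{2}+iZ_{3}\gamma^{3}$ because the $+\tfrac{1}{2}$ in $D_{0}$ is precisely cancelled by the shift $D\mapsto D-\tfrac{1}{2}$ and the horizontal part of $D_{0}-\tfrac{1}{2}$ lifts via $\nabla_{Z}$ to the two remaining terms of $D$. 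Comparing with the explicit form \eqref{dirac-S3}, namely $D=iZ_{1}\gamma^{1}+iZ_{2}\gamma^{2}+iZ_{3}\gamma^{3}+\tfrac{3}{2}$, then gives the equality $T\otimes\gamma^{1}+1\otimes_{\nabla}(D_{0}-\tfrac{1}{2})=D-\tfrac{1}{2}$ on a common core.

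The remaining analytic content, namely essential self-adjointness and regularity of the sum, anti-commutation of $T\otimes\gamma^{1}$ with $1\otimes_{\nabla}(D_{0}-\tfrac{1}{2})$, compactness of the resolvent on $\mathcal{H}_{\theta}$, and the bounded-commutator condition for elements of $\Lip(\bS^{3}_{\theta})$, is delivered uniformly by Theorem~\ref{thm:KK-prod} once its hypotheses are verified. These hypotheses are in place: $(\mathcal{E}_{\theta},T,\nabla)$ was shown to define a Lipschitz cycle in the preceding theorem, the condition $[\nabla,T]=0$ was established in Proposition~\ref{KK-conn}, and $(\mathcal{H}_{0},D_{0}-\tfrac{1}{2})$ lies in $\Psi_{0}(\Lip(\bS^{2}),\C)$ by Theorem~\ref{thm:equiv-S30-S2}. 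The main potential obstacle would have been to check Kucerovsky's positivity condition by hand for this rather intricate pair, but Theorem~\ref{thm:KK-prod} in the commuting-connection setting subsumes exactly this verification, so once the algebraic identification of operators above is established, the unitary equivalence of the two cycles in $\Psi_{-1}(\Lip(\bS^{3}_{\theta}),\C)$ follows.
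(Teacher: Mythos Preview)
Your outline follows the paper's strategy and correctly invokes Proposition~\ref{pr:fac2}, Example~\ref{ex:odd-ev}, and Theorem~\ref{thm:KK-prod} for the analytic content. However, the identification of the vertical operator is wrong, and this is precisely the geometric crux of the proof.

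You assert that ``$T\otimes\gamma^{1}$ reproduces the term $iZ_{1}\gamma^{1}$''. It does not. The isomorphism $\mathpzc{E}_{\theta}\hotimes_{C(\bS^{2})}\mathcal{H}_{0}\simeq\mathcal{H}_{\theta}$ is implemented via $\mathcal{L}_{n}\otimes_{\mathcal{L}_{0}}(\mathcal{L}_{1}\oplus\mathcal{L}_{-1})\simeq\mathcal{L}_{n+1}\oplus\mathcal{L}_{n-1}$, and the spinor components of $\mathcal{H}_{0}$ themselves carry nonzero $T$-weight (recall $\Gamma^{\ell}(\bS^{2},\cS)\simeq\mathcal{L}_{1}\oplus\mathcal{L}_{-1}$ and $H_{1}=-\gamma^{1}-iZ_{1}$ vanishes on invariant spinors). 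Take $f\in\mathcal{L}_{n}$ and $s_{+}$ in the $\gamma^{1}=+1$ component, so $s_{+}\in\mathcal{L}_{-1}$; then $(T\otimes\Gamma_{0})(f\otimes s_{+})=n\,f\otimes s_{+}$, while on the image $fs_{+}\in\mathcal{L}_{n-1}$ (upper spinor component of $\mathcal{H}_{\theta}$) the operator $iZ_{1}\gamma^{1}$ has eigenvalue $n-1$. The same check on the $\gamma^{1}=-1$ component gives a discrepancy of $+1$ as well. Hence, under the identification one has
\[
T\otimes\Gamma_{0}\;\simeq\;(iZ_{1}+\gamma^{1})\gamma^{1}\;=\;iZ_{1}\gamma^{1}+1,
\]
not $iZ_{1}\gamma^{1}$. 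With your accounting the product operator equals $iZ_{1}\gamma^{1}+iZ_{2}\gamma^{2}+iZ_{3}\gamma^{3}=D-\tfrac{3}{2}$, off by $1$ from $D-\tfrac{1}{2}$. Your sentence ``the $+\tfrac{1}{2}$ in $D_{0}$ is precisely cancelled by the shift $D\mapsto D-\tfrac{1}{2}$'' conflates two unrelated things: the $\tfrac{1}{2}$ in $D_{0}$ is removed by passing to $D_{0}-\tfrac{1}{2}$, and has nothing to do with the constant in $D$. The missing $+1$ comes entirely from the line-bundle shift in the spinor identification, which you have not addressed.
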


\proof We have already established the necessary isomorphism of Hilbert spaces. 
The operator in the Kasparov product on the Hilbert module $\mathcal{E}_\theta\hotimes_{C(\bS^2)}\H_0$ is given by the expression \eqref{eq:ev-odd-op}, that is to say
$$
f\otimes s \mapsto Tf\otimes \Gamma_0 s+(\n f)s +f\otimes D_0s,\qquad \text{for each}~ f\in\mathpzc{E}_\theta,~s\in \H_0,
$$
where $\Gamma_0:\H_0\to\H_0$ is the grading on the spinors over the two-sphere. We shall check that this product operator agrees with the Dirac operator $D$ on the three-sphere $\bS^3_\theta$. 
To this end, let us consider the operator
$$
1 \otimes_\n D_0: \L_n \otimes_{\L_0} (\L_1 \oplus \L_{-1}) \to \L_n \otimes_{\L_0} (\L_1 \oplus \L_{-1})
$$
The matrices $\sigma_\pm$ appearing in $\nabla$ act on the Hilbert space $\H_0$, as does the Dirac operator $D_0$. Upon using eq.~\eqref{eq:line-product} we deduce from the fact that both $\nabla$ and $D_0$ can be written in terms of $Z_\pm$ and $\sigma_\pm$ that
$$
1 \otimes_\n (D_0-\half)  \simeq i Z_2 \gamma^2 + i Z_3 \gamma^3 ,
$$ 
through the identification $\L_n \otimes_{\L_0} (\L_1 \oplus \L_{-1})\simeq\L_{n+1} \oplus \L_{n-1}$. Next, we consider how $T \otimes \Gamma_0 = i Z_1 \gamma^1$ behaves under the latter identification of line bundles. Correcting for the shift in $n$, we find that
$$
T\otimes \Gamma_0 \simeq (T +\Gamma) \Gamma = (iZ_1 +\gamma^1) (\gamma^1) = i Z_1 \gamma^1 +1.
$$
As a consequence we find that 
$$
T \otimes \Gamma + 1 \otimes_\n (D_0-\half) \simeq i Z_1\gamma^1 + i Z_2 \gamma^2 + i Z_3 \gamma^3 + 1.
$$ 
This is the required explicit factorization of the Dirac operator on $\bS^3_\theta$ in terms of a vertical part $T$ and a horizontal part $D_0$, linked via the connection $\nabla$. 
\endproof

In conclusion, we have cast the gauge theory described by the spin geometry of the noncommutative three-sphere $\bS^3_\theta$ into a geometrical setting, consisting of a Hilbert bundle over $\bS^2$ equipped with a connection and a fibrewise endomorphism.  

Indeed, the $C^*$-module $\EE_\theta$ is the space of continuous sections of some Hilbert bundle $V \to \bS^2$ whose fibres are essentially copies of the Hilbert space $L^2(\bS^1)$. According to Definition~\ref{defn:KK-gauge}, the internal gauge group $\U(\Lip(\bS^3_\theta))$ of the fibration is a normal subgroup of the gauge group $\GG(\E_\theta)$, acting fibrewise upon the Hilbert bundle $V$. The internal gauge fields in $\Omega^1_D(\Lip(\bS^3_\theta))$ decompose according to Lemma \ref{lem:inner-decomp}: the scalar fields $\mathcal{C}_s$ act vertically upon the Hilbert bundle $V$, whilst the gauge fields $\mathcal{C}_g$ are given by connections thereon. 

As in the case of the noncommutative two-torus considered in the previous section, in passing from $\U(\Lip(\bS^3_\theta))$ to $\GG(\E_\theta)$ we find that the Pontrjagin dual group $\Z$ of $\TT$ acts vertically on the bundle $V$. In other words, we can consider the semi-direct product $\U(\Lip(\bS^3_\theta)) \rtimes \Z\subset \GG(\E_\theta)$ as a natural extension of $\U(\Lip(\bS^3_\theta))$. We leave the potential application of our notion of gauge theories ---after extending it to four-dimensional examples such as $\bS^4_\theta$--- to instanton moduli spaces \cite{BL09a,bvs,B13} for future work.

\appendix

\bibliography{references}

\newcommand{\noopsort}[1]{}\def\cprime{$'$}

\end{document}